\newcommand{\bE}{\mathbb{E}}
\newcommand{\sparagraph}[1]{{{\em #1}}\ }
\newcommand{\E}{\bE}      % Expectation
\newcommand{\KL}{\mathop{\bf KL\/}}
\newcommand{\bone}{\boldsymbol{1}}
\DeclareMathOperator{\Var}{Var}
\DeclareMathOperator{\Cov}{Cov}
\DeclareMathOperator{\sech}{sech}
\DeclareMathOperator{\NAE}{\text{NAE}}
\newtheorem{theorem}{Theorem}[section]
\newtheorem*{namedtheorem}{\theoremname}
\newcommand{\theoremname}{testing}
\newtheorem{lemma}[theorem]{Lemma}
\newtheorem{fact}[theorem]{Fact}
\newtheorem{corollary}[theorem]{Corollary}
\newtheorem{conjecture}{Conjecture}
\newtheorem*{question*}{Question}
\theoremstyle{definition}
\newtheorem{defn}[theorem]{Definition}
\newtheorem{remark}[theorem]{Remark}
\theoremstyle{plain}
\begin{document}
\title{A Phase Transition in Arrow's Theorem}
%\title{A Quantitative Arrow's Theorem for Networked Societies}
%\author{Anonymous submission (submission 223)}
\author{Frederic Koehler\thanks{Department of Mathematics, Massachusetts Institute of Technology. Supported in part by NSF CCF-1453261.} \and Elchanan Mossel\thanks{Department of Mathematics and IDSS, Massachusetts Institute of Technology. Supported by NSF Award DMS-1737944, by
ARO MURI W911NF1910217 and by a Simons Investigator Award in Mathematics (622132)}} %\email{elmos@mit.edu}}}
%\author{Elchanan Mossel}
%\author{f.koehler427 }
\date{}

%\title{A Quantitative Arrow's Theorem for Networked Societies}
%\author{Anonymous submission (submission 223)}
%\author{Frederic Koehler\thanks{Department of Mathematics, Massachusetts Institute of Technology. Supported in part by NSF CCF-1453261.} \and Elchanan Mossel\thanks{Department of Mathematics and IDSS, Massachusetts Institute of Technology. Supported by NSF Award DMS-1737944, by
%ARO MURI W911NF1910217 and by a Simons Investigator Award in Mathematics (622132)}} %\email{elmos@mit.edu}}}
%\author{Elchanan Mossel}
%\author{f.koehler427 }
%\date{}

\maketitle

\begin{abstract}
Arrow's Theorem concerns a fundamental problem in social choice theory: given the individual preferences of members of a group, how can they be aggregated to form rational group preferences? Arrow showed that in an election between three or more candidates, there are situations where any voting rule satisfying a small list of natural ``fairness'' axioms must produce an apparently irrational intransitive outcome. Furthermore, quantitative versions of Arrow's Theorem in the literature show that when voters choose rankings in an i.i.d.\ fashion, the outcome is intransitive with non-negligible probability.

It is natural to ask if such a quantitative version of Arrow's Theorem holds for non-i.i.d.\ models. 
To answer this question, we study Arrow's Theorem under a natural non-i.i.d.\ model of voters inspired by canonical models in statistical physics; indeed, a version of this model was previously introduced by Raffaelli and Marsili in the physics literature. This model has a parameter, temperature, that prescribes the correlation between different voters. We show that the behavior of Arrow's Theorem in this model undergoes a striking phase transition: in the entire high temperature regime of the model, a Quantitative Arrow's Theorem holds showing that the probability of paradox for any voting rule satisfying the axioms is non-negligible; this is tight because the probability of paradox under pairwise majority goes to zero when approaching the critical temperature, and becomes exponentially small in the number of voters beyond it. We prove this occurs in another natural model of correlated voters and conjecture this phenomena is quite general.
%We prove a Quantitative Arrow's Theorem showing that the probability of paradox for any voting rule satisfying the axioms is non-negligible in the entire high-temperature regime. This is tight because we show the probability of paradox under pairwise majority goes to zero when approaching the critical temperature, and becomes exponentially small in the number of voters beyond it.
\end{abstract}

% title page
%\thispagestyle{empty}
%\newpage 
%\setcounter{page}{1}

\section{Introduction}
Arrow's Theorem concerns a fundamental problem in social choice theory: given the individual preferences of members of a group, how can these preferences be aggregated to form rational group preferences?
This problem is often discussed in the context of voting, where the goal is for society to choose between different candidates in an election based upon the ranked preferences of individual voters. The underlying problem is considerably more general and has also attracted renewed attention in computer science, see e.g. \cite{faliszewski2010ai,rothe2015economics} -- a key example outside of the classical voting context involves the aggregation of search results from different experts \cite{dwork2001rank,cohen1998learning}. 

Already in the 18th century, the Marquis de Condorcet \cite{de2014essai} considered the problem of aggregating votes and observed the following paradox: in majority-based pairwise elections between three candidates $A$, $B$, and $C$ it is possible that $A$ beats $B$, $B$ beats $C$, and $C$ beats $A$, so society's preferences may be intransitive if they are determined by pairwise majority elections. 
Arrow's Impossibility Theorem \cite{Arrow:50,Arrow:63} shows that this phenomenon is very general: there is no way to aggregate individual preferences in a way which guarantees a transitive outcome (i.e. a consistent ranking of candidates) and satisfies Independence of Irrelevant Alternatives (IIA) and Unanimity\footnote{IIA says that the aggregated relative order of $A$ and $B$ is a function only of individual preferences between $A$ and $B$ (e.g. if society ranks $A$ over $B$, an individual swapping the ranking of $C$ vs. $D$ does not affect this).
Unanimity is the condition that if all voters prefer $A$ to $B$, then the aggregated preference must also rank $A$ over $B$. Unanimity rules out constitutions where one candidate always wins/loses; in the statement of quantitative versions of Arrow's Theorem, this assumption is dropped and the possibility of a constant winner/loser is listed explicitly.} except for a ``dictator'' function which ignores the preferences of all but a single person. In other words, for any aggregation scheme except for dictator and satisfying IIA and Unanimity, there exists \emph{some} setting of individual ranked preferences such that the outcome of the election is intransitive. 

The extent to which Arrow's Theorem applies in practice has been debated extensively. See e.g. \cite{gehrlein2006condorcet,kurrild2001empirical} where some real examples of the Condorcet paradox are noted, such as cyclical voter preferences between three candidates for Prime Minister of Denmark.
%One of the major questions which has been raised concerns 
A major question of concern is whether the situations where intransitivity occurs are atypical, i.e. unlikely to occur in realistic scenarios. A priori, in an election with $n$ voters and three candidates Arrow's Theorem only guarantees that one out of $6^n$ possible voting profiles -- an exponentially small proportion -- leads to intransitivity. This raises the following natural question, studied in previous work: \emph{if we ignore a $o(1)$ fraction of possible voter preferences,
does the conclusion of Arrow's Theorem still hold?}

\sparagraph{Quantitative Versions of Arrow's Theorem.}
A long line of work in quantitative social choice theory has sought to answer this question, by studying the extent to which the above stated guarantee for Arrow's Theorem can be improved. One of the earliest works along these lines was in 1952, when Guilbaud \cite{Guilbaud:66} analytically computed the asymptotic proportion of voting profiles under which pairwise majority avoids the Condorcet paradox. Guilbaud determined that for a three-candidate election, as the number of voters goes to infinity, the answer is approximately $91.2 \%$. Phrased in a more probabilistic language, if all voters independently pick a uniformly random ordering of the candidates then the probability of a Condorcet paradox occuring is over $8 \%$. A number of works considered related versions of this problem --- see e.g. \cite{black1958theory,demeyer1970probability} and several other references listed in \cite{kurrild2001empirical,gehrlein2006condorcet}.

The result of Guilbaud was restricted to pairwise majorities and so it was only a quantitative analogue of Condorcet's result as opposed to the general %scope of
Arrow's Theorem.
Using tools from Fourier analysis on the hypercube, Kalai \cite{kalai2002fourier} (and a follow-up work of Keller \cite{Keller:09}) proved the first quantitative analogue of Arrow's theorem under an assumption that the aggregation rule is perfectly balanced. In other words, under this assumption Kalai showed that any voting rule which satisfies IIA and is at least $\delta$-far from any dictator must admit an intransitive outcome with probability at least $\epsilon(\delta)$ which is \emph{independent of $n$}. 

Unfortunately, the techniques used in \cite{kalai2002fourier,Keller:09} relied heavily on the balance assumption that the probability one candidate beats another is \emph{exactly} equal to $1/2$, leaving open the possibility that unbalanced constitutions could still avoid paradox with high probability. Finally,
Mossel \cite{mossel2012quantitative} removed the assumption of balance and proved the desired quantitative generalization of Arrow's Theorem, making use of powerful analytic tools like reverse hypercontractivity \cite{Borell:82} and the Invariance Principle \cite{mossel2005noise}. In particular, ignoring $o(1)$ fraction of possible voter preferences cannot avoid the conclusion of Arrow's Theorem. 

Repeating the same proof with more general hypercontractive estimates in the work \cite{mossel2013reverse} allowed to prove the
appropriate quantitative analogue of Arrow's Theorem in the setting where voter preferences are still i.i.d., but can be sampled from an arbitrary distribution with full support on the set of rankings. Some related lines of work in quantitative social choice theory include quantitative versions of the celebrated Gibbard-Satterthwaite Theorem \cite{Gibbard:73,Satterthwaite:75} on the manipulability of elections, see e.g. \cite{friedgut2011quantitative,isaksson2012geometry,mossel2015quantitative}, and quantitative results for judgement aggregation \cite{nehama2013approximately,filmus2019testing}.

\sparagraph{On The Independence Assumption.} The Quantitative Arrow's Theorem discussed above shows that when individuals choose their preferences uniformly at random and independent of each other, any aggregation rule which satisfies IIA and is far from a dictator is intransitive with positive probability independent of $n$. %The assumption that the preferences are drawn uniformly at random is relative natural for a probabilistic model, because e.g. if we change the model so that individuals choose a ranking from a biased distribution, then by the Law of Large Numbers the outcome of the election will be deterministic in the limit of $n \to \infty$. 
However, the assumption that voters choose their preferences independently of others is somewhat problematic in real voting scenarios (as mentioned in e.g. \cite{mossel2013reverse}). We know that individuals usually do not make their choices in a vacuum but instead are influenced by their interactions with other people --- both directly with members of their immediate social circle and indirectly with others through the internet, mass media, polling, etc. and it is has been suggested that such mechanisms could reduce the probability of intransitive outcome in practice (see e.g. \cite{kurrild2001empirical}).

To investigate this problem, we should modify our previous question and ask: \emph{if we allow the distribution of votes between different individuals to be correlated, does the conclusion of Arrow's Theorem still hold if we can ignore a probability $o(1)$ fraction of outcomes?} Unfortunately, in greatest possible generality this question is too broad; for example, if we allow for voters to be so correlated that they all vote in exactly the same way there is obviously no possibility of paradox and the problem of aggregation is not interesting. Also, at a technical level the arguments used in the works \cite{kalai2002fourier,Keller:09,mossel2012quantitative} rely quite strongly on techniques and results from discrete Fourier analysis over product measures, which made it unclear if they could say anything about the case where voters interact.  

\sparagraph{Statistical Mechanics Models for Consensus Formation.} What is a good model of interacting voters? In a separate line of work, also inspired by Condorcet's paradox and Guilbaud's asymptotic calculation, physicists studied group opinion formation from a statistical mechanics perspective --- see e.g. \cite{galam1997rational,raffaelli2005statistical,columbu2008nature} and \cite{gehrlein2006condorcet} for a discussion of this work in the context of the broader social choice literature. 
Notably, Raffaelli and Marsili \cite{raffaelli2005statistical} introduced a more complex model of voter interaction where voters both want to agree with the majority consensus of society and have their own random preference. 
%Notably, Raffaelli and Marsili \cite{raffaelli2005statistical} proposed a natural model of interacting voters based upon canonical models of spin systems in statistical mechanics: they first proposed of their model in the zero-temperature limit (infinitely strong interactions), where spins are strongly correlated and their model can be understood as a simple model of utility maximizing agents choosing individual rankings in $\mathfrak{S}_q$ (the set of permutations of the $q$ candidates)
%$\mathfrak{S}_q$ {\bf explain notation!} 
%according to Nash equilibrium strategies. They then argued the
%Raffaelli and Marsili 
%potential modeling benefits of studying the finite temperature version of their model and determined its phase diagram using heuristic methods. 
Using heuristic methods, they described the phase diagram of their model and how to estimate the probability of Condorcet paradox. This analysis is specific to pairwise majority elections, so it doesn't give any analogue of Arrow's Theorem or tell us what happens for other models of elections (e.g. pairwise elections under an electoral college system). 

\sparagraph{Our Contribution.}
In this paper, we aim to prove best-of-both-worlds results: we establish versions of the full Quantitative Arrow's Theorem under the more complex models of voter interaction inspired by statistical mechanics. First, we study the same type of (mean-field) interaction model as \cite{raffaelli2005statistical} and give a very precise (and rigorous) analysis of the behavior of the general voting schemes in this model, in terms of the parameter $\beta$ (referred to as inverse temperature in the context of statistical mechanics). Our analysis reveals a phase transition for the Quantitative Arrow's Theorem exactly at the critical temperature (i.e. the natural phase transition point) of the model and shows as a byproduct that whenever pairwise majority elections suffer from a $\Omega(1)$ probability of paradox, this actually extends to \emph{all voting rules} which satisfy IIA, are not close to dictator, and do not fix the winner or loser of any pairwise election. So pairwise majority is threshold-optimal in this model.
%To the best of our knowledge, this model is the first model of correlated voters and the first model of the type studied in statistical physics for which quantitative social choice results are obtained. Our analysis reveals a phase transition for the Quantitative Arrow's Theorem exactly at the critical temperature (i.e. the natural phase transition point) of the model. 
%In this paper, we propose and analyze a natural model which interpolates between the i.i.d. setting studied in previous works, where we know that the Quantitative Arrow's Theorem is true, and (at its very extreme) the extremely correlated model where all voters behave in the same way and there is no analogue of Quantitative Arrow's Theorem. We give a detailed analysis of the behavior of this model in terms of a parameter $\beta$ (referred to as inverse temperature in the context of statistical mechanics) and characterize how the probability of intransitive outcome behaves as a function of $\beta$. To the best of our knowledge, this model is the first model of correlated voters and the first model of the type studied in statistical physics for which quantitative social choice results are obtained. Our analysis reveals a phase transition for the Quantitative Arrow's Theorem exactly at the critical temperature (i.e. the natural phase transition point) of the model. 

We formulate a precise conjecture stating that this behavior holds in more general models and give evidence for this by verifying the conjecture in a second model which otherwise exhibits qualitatively different behavior. In this model, correlations are all local and the model does not exhibit a phase transition at any temperature; accordingly, the Quantitative Arrow's Theorem we prove in this setting shows that the probability of paradox for any constitution satisfying IIA is $\Omega(1)$ for all fixed $\beta \ge 0$, unless the constitution is close to a dictator or constant on a pairwise election.
%To give evidence that this behavior is in some sense universal, we investigate a second model of voter behavior with very localized interactions and show that, despite noticeably different behavior in other aspects of this model, the analogous Quantitative Arrow's Theorem holds.

\subsection{Our Model}
In this section we first describe a general family of models we consider for correlated voters, which is closely related to both classical models of random permutations and fundamental models of correlated spins from statistical physics, and then the two special cases we will study in detail in this work. As mentioned before, the ``mean-field'' version is the same as the interaction model in \cite{raffaelli2005statistical} and some additional justification for this model can be found there.

\sparagraph{Model on general graphs.}
Let $\mathfrak{S}_q$ denote the symmetric group (i.e. set of permutations) on $q$ elements, which corresponds to the possible individual voter preferences in an election with $q$ candidates. Let $d_{\tau}$ denote the Kendall's Tau distance between permutations, i.e. $d_{\tau}(\pi_1,\pi_2)$ is the total number of pairs $(i,j)$ such that $\pi_1(i), \pi_1(j)$ are in the opposite order from $\pi_2(i),\pi_2(j)$. 
 The previous discussion motivates the definition of the following energy function for a society of $n$ voters which interact with neighbors on a graph
 %on a weighted graph with edge weights $J_{ij}$
\[ E(x) = 2\sum_{i \sim j} d_{\tau}(x_i,  x_j) \]
where the sum ranges over edges $(i,j)$ in the graph,
and consideration of the pairwise graphical model (Markov Random Field) over voting profiles $x \in \mathfrak{S}_q^n$ given by $e^{- \beta E(x)}$; in other words, we will consider a random vector of preferences $X \sim Q$ where
\begin{equation}\label{eqn:general-model}
Q(x) = \frac{1}{Z} \exp\left(-2 \beta \sum_{i \sim j} d_{\tau}(x_i, x_j)\right)
\end{equation}
is a probability measure on $\mathfrak{S}_q^n$, $\beta \ge 0$ is the \emph{inverse temperature} which controls the strength of interactions and $Z$ is a normalizing constant. The factor of 2 here is just to maintain consistency with \cite{raffaelli2005statistical}.
Note that in this model the marginal law of a single coordinate is uniform over $\mathfrak{S}_q$,
%which preserves the \emph{impartial culture} assumption, 
however the coordinates are no longer independent. %If $J_{ij} \ge 0$ for all $i,j$ then the model is ferromagnetic and all neighbors have positively correlated voting profiles (some form of Griffith's inequality).
Some justifications for this model: 
\begin{enumerate}
    \item Motivation: this reweights the uniform measure towards low-energy configurations so that neighbors are less likely to disagree, which seems like a more plausible behavior for voters who interact in a social network than if their votes are uncorrelated.
    \item Connections to the Mallows model: the Mallows model under Kendall's tau \cite{mallows1957non} is probably the most popular and well-studied model of a distribution over permutations, used in numerous fields including economics, psychometrics, and machine learning (see e.g. \cite{mallows1957non,doignon2004repeated,lu2011learning}), and for which there has been a lot of recent progress in inference and learning  (see e.g. \cite{braverman2009sorting,awasthi2014learning,liu2018efficiently}).
    If we consider the general model above in the $n = 2$ case where the graph has a single edge, the conditional law of one spin given a fixing of the other is exactly the Mallows model.
    %\item In statistical mechanics this is the canonical ensemble, models like this have been extensively studied.
    \item Connections to Ising models: the $q = 2$ case of this model is the Ising model on a general graph, one of the most important models in statistical mechanics; this model and variants have been successfully applied in numerous other contexts including biology (e.g. \cite{bialek2012statistical}), image segmentation \cite{li2009markov}, machine learning (e.g. \cite{hinton2012practical}), and dynamics in social networks (e.g. \cite{montanari2010spread,lynn2018maximizing}). See also the discussion in \cite{raffaelli2005statistical}.
    %Recent work \cite{dagan2019learning} considered high-temperature Ising models as a natural generalization of the i.i.d. model in statistical learning theory.
    \item Maximum entropy principle: This is the maximum entropy distribution among all probability distributions $P$ on $\mathfrak{S}_3^n$ achieving the same value of $\E_P[\sum_{i \sim j} d_{\tau}(X_i, X_j)]$;
    in statistical mechanics and Bayesian statistics, taking the maximum-entropy distribution given observed constraints is generally considered to be the most natural choice of model (see Jaynes's principle \cite{jaynes1957information}).
\end{enumerate}
We also note that this model has an interpretation as the equilibrium distribution of a simple Markov chain called Glauber dynamics or Gibbs sampling \cite{levin2017markov}; a discrete time version of the chain chooses at every step a uniformly random coordinate $i$ and resamples $X_i$ from the conditional law given all other entries of $X$; the conditional law is a tilt (see Section~\ref{sec:tau-inner-product}) of the uniform measure on $\mathfrak{S}_q$ in the direction of the (appropriately-defined) average vote of its neighbors. 

\sparagraph{Mean-field model \cite{raffaelli2005statistical}.} In statistical physics, when faced with the problem of understanding classes of models like \eqref{eqn:general-model}, one often starts by solving the \emph{mean-field model} where the underlying graph is complete, so that all pairs of vertices interact symmetrically. % -- see e.g. \cite{talagrand2010mean} for several examples. 
Following this principle, in this work we primarily focus on understanding the complete graph (mean-field) model.
%case where the underlying graph is complete, i.e. all vertices are neighbors. 
In the $q = 2$ case, the complete graph model is known as the Curie-Weiss model and it is a fundamental example of a solvable Ising model. It is well-known that in a variety of ways the behavior of other Ising models on large degree graphs (e.g. the Ising model on the square lattice $\mathbb{Z}^d$ or on a $d$-regular random graph for $d$ large) is similar to the behavior of the Curie-Weiss model -- see e.g. \cite{ellis2007entropy,parisi1988statistical,basak2017universality}. Explicitly, the distribution of the mean-field model we consider is
\begin{equation}\label{eqn:mean-field-intro}
Q(X = x) = \frac{1}{Z} \exp\left(-\frac{\beta}{n} \sum_{i=1}^n \sum_{j = 1}^n d_{\tau}(x_i,x_j)\right)
\end{equation}
where $x \in \mathfrak{S}_q^n$ and the $1/n$ scaling is the same as in the $q = 2$ case, the Curie-Weiss model (where this is the scaling at which the model exhibits its phase transition from high to low temperature \cite{ellis-newman}). In this case, the Glauber dynamics at every step picks a coordinate $i$ uniformly at random and then resamples the spin $X_i$ in a way which is slightly correlated with the average vote of the rest of society\footnote{As an additional complication, in the dynamics we could suppose that the agents also have a fixed inherent bias in the ranking they choose. In \cite{raffaelli2005statistical} they consider the effect of choosing such a bias randomly (a ``random field'' model) and observed that this can increase the critical $\beta$. For simplicity, and following the main focus in the quantitative social choice literature (e.g. \cite{kalai2002fourier,mossel2012quantitative}), we focus on the case with no external field, though many parts should extend to the case with biases (e.g. in \cite{mossel2013reverse} it was shown how to analyze the biased product measure case, using the same general argument as \cite{mossel2012quantitative} for the unbiased case).}. Informally, we expect this to be representative of the behavior of the model defined by \eqref{eqn:general-model} for general graphs with large average degree and good connectivity properties. 

\sparagraph{Perfect matching model.} In order to understand how the behavior of \eqref{eqn:general-model} may change on sparse graphs, we consider the extreme case where every node has degree one: a perfect matching. This is the sparsest graph possible without having isolated nodes. We write this model explicitly as
\begin{equation}\label{eqn:matching-intro}
Q(X = x,Y=y) = \frac{1}{Z} \exp\left(- 2\beta \sum_{i = 1}^n d_{\tau}(x_{i},y_{i}) \right)
\end{equation}
where we have labeled the pair of voters in matching $i$ so that the first voter has ranking $X_i$ and the second voter has ranking $Y_i$. The Glauber dynamics step would be to pick an index $i$ uniformly at random, then pick uniformly at random between $X_i$ or $Y_i$ and resample this spin it in a way correlated with its neighbor, so this model exhibits only local instead of global interactions.
\subsection{Our Results}
%The main result of this paper is an essentially complete analysis of the quantitative behavior of the Condorcet Paradox and Arrow's Theorem in the mean-field model with $q = 3$ candidates. 
%The main result of this paper is 
The main result of this paper is an essentially complete analysis of the $q = 3$ mean-field model and the quantitative behavior of Arrow's Theorem in this model.
Our analysis shows that $\beta = 3/4$ is the {\em critical temperature} of this model and at this point a sharp phase transition also occurs in the behavior of general voting schemes. 
%In the high-temperature regime $\beta < 3/4$, we show that the Quantitative Arrow's Theorem holds despite the weak correlation between different voters, so that for rules satisfying Arrow's axioms, intransitivity occurs with positive probability independent of $n$; on the other hand, as soon as we enter the low-temperature regime $\beta > 3/4$, for pairwise majority as the aggregation rule the probability of seeing an intransitive outcome becomes exponentially unlikely in the number of voters $n$.
%society spontaneously breaks the symmetry between candidates and avoids paradox with high probability, just by using pairwise majority as the aggregation rule. 
To state the high-temperature result precisely, we use the following standard notation (specialized to the $q = 3$ case) to describe the aggregation scheme:
\begin{itemize}
    \item We will refer to the candidates in the election as candidates $1$, $2$, and $3$. We let $\mathfrak{S}_3$ be the symmetric group on 3 elements, i.e. the set of permutations of $1,2,3$.
    \item $f,g,h$ are Boolean functions $\{\pm 1\}^n \to \{\pm 1\}$ which represent the aggregation schemes for each pairwise election.
    \item The (random) vector of individual voter preferences is $X$ valued in $\mathfrak{S}_3^{n}$.
    \item Based on $X$, we define vectors $X^1,X^2,X^3$ all in $\{\pm 1\}^n$ where the entries of $X^1 \in \{\pm 1\}^n$ correspond to the individual preferences between candidates $1$ and $2$, $X^2$ to the preferences between candidates $1$ and $3$, and $X^3$ between $2$ and $3$. In other words, $X^1_i = 1$ if $1$ comes before $2$ in the permutation $X_i$.
    \item The aggregated preferences of society are given by the vector $(f(X^1),g(X^2),h(X^3)) \in \{\pm 1\}^3$. Since $f,g,h$ only depend on $X^1$,$X^2$,$X^3$ respectively, this automatically encodes the Independence of Irrelevant Alternatives (IIA) assumption.
    \item %$\NAE_3(x,y,z)$ is the predicate which is true when $x,y,$ and $z$ are not all equal.
    $\NAE_3$ is defined to be the subset of $\{\pm 1\}^3$ given by removing $\{1,1,1\}$ and $\{-1,-1,-1\}$, i.e. it represents the set of vectors not satisfying the all-equals predicate. 
    Note that intransitivity occurs exactly when $f,g,$ and $h$ are all equal.
    \item We say that $f,g$ are $\epsilon$-close with respect to probability measure $P$ if $P(f \ne g) \le \epsilon$. When omitted, the measure $P$ is always the distribution of $X$ under consideration (usually the mean-field model).
    \item We define $\mathcal{F}_3$ to be the class of functions $\mathfrak{S}_3^n \to \{\pm 1\}^3$ consisting of dictators (i.e. functions that depend only on one coordinate) and functions where two of the output coordinates are constant with opposite sign. As explained above, the coordinates represent the pairwise elections so the latter functions represent constitutions where a fixed candidate is ranked top or bottom; Arrow's Theorem says any IIA rule which completely avoids a paradox must be in $\mathcal{F}_3$.
    %and $f$ and $g$ are functions of the relevant vector of pairwise preferences (e.g. below, $f$ close to a dictator means $f(X^1)$ equals $D(X^1)$ with probability at least $1 - \epsilon$).
\end{itemize}
\begin{theorem}[Mean-field Quantitative Arrow's Theorem, $q = 3$\label{thm:qa-mf-3-intro}]
Fix $\beta < 3/4, \epsilon > 0$. Suppose the vector of voter preferences $X \in \mathfrak{S}_3^{n}$ is drawn from the mean field model $Q$ defined in \eqref{eqn:mean-field-intro} with $q = 3$ candidates. There exists $\delta = \delta(\epsilon,\beta) > 0$, in particular independent of $n$, such that at least one of the following occurs, for any Boolean functions $f,g,h : \{\pm 1\}^n \to \{\pm 1\}$:
\begin{enumerate}
    \item The function $X \mapsto (f(X^1),g(X^2),h(X^3))$ which maps $\mathfrak{S}_3^n \to \{\pm 1\}^3$ is $\epsilon$-close to a function in $\mathcal{F}_3$, i.e. the constitution is close to dictator or close to having a fixed top or bottom candidate.
    \item The probability of paradox (an intransitive outcome) is lower bounded by $\delta$: \[ Q((f(X^1),g(X^2),h(X^3)) \in \NAE_3) < 1 - \delta. \]
\end{enumerate}
%\begin{enumerate}
%    \item Two of $f,g,h$ are $\epsilon$-close to constant functions of opposite sign.
%    \item There exists a variable $i$ such that $f,g,h$ are all $\epsilon$-close to the same dictator on voter $i$.
%    \item The probability of paradox is lower bounded by $\delta$: $Q(\NAE_3(f(X^1),g(X^2),h(X^3))) < 1 - \delta$.
%\end{enumerate}
\end{theorem}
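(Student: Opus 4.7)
\textbf{Writing $Q$ as a tilt of $U^{\otimes n}$.} The plan is to reduce Theorem~\ref{thm:qa-mf-3-intro} to the existing Quantitative Arrow's Theorem for product measures \cite{mossel2013reverse} via a two-sided Radon--Nikodym comparison of $Q$ with the uniform product measure $U^{\otimes n}$ on $\mathfrak{S}_3^n$. Using $d_\tau(x_i,x_j) = \tfrac12 \sum_{k=1}^3 (1 - X^k_i X^k_j)$, the measure \eqref{eqn:mean-field-intro} rewrites as
\[
Q(x) \;=\; U^{\otimes n}(x)\cdot \frac{1}{Z_{\mathrm{rel}}}\exp\!\Bigl(\tfrac{\beta}{2n}\|\mathbf S(x)\|^2\Bigr), \qquad \mathbf S(x) := \sum_{i=1}^n \mathbf X(x_i),
\]
where $\mathbf X(\pi) \in \{\pm 1\}^3$ denotes the pairwise-sign vector of $\pi$ and $Z_{\mathrm{rel}} = \E_{U^{\otimes n}}[\exp(\beta\|\mathbf S\|^2/(2n))]$. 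The Radon--Nikodym derivative depends on $x$ only through the aggregate statistic $\mathbf S$.

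\textbf{The critical constant $3/4$ from the covariance spectrum.} Under $U^{\otimes n}$ the i.i.d.\ vectors $\mathbf X(x_i) \in \{\pm 1\}^3$ have mean zero and covariance matrix $\Sigma$; a direct enumeration of $\mathfrak{S}_3$ gives eigenvalues $\{4/3,\,4/3,\,1/3\}$ for $\Sigma$, so $\lambda_{\max}(\Sigma) = 4/3$, and $I - \beta\Sigma \succ 0$ precisely when $\beta < 3/4$. For such $\beta$ the central limit theorem for $\mathbf S/\sqrt n$ together with uniform integrability of bounded exponential moments gives $Z_{\mathrm{rel}} \to \det(I - \beta\Sigma)^{-1/2} \in (0,\infty)$, so $Z_{\mathrm{rel}}$ is bounded away from $0$ and $\infty$ uniformly in $n$ (and diverges as $\beta \uparrow 3/4$). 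Standard Chernoff/MGF estimates then yield $U^{\otimes n}(\|\mathbf S\|^2/n > K) \le e^{-c_\beta K}$, and the same bound holds under $Q$ because $\E_Q[e^{s\|\mathbf S\|^2/(2n)}] = Z_{\mathrm{rel}}^{-1}\,\E_{U^{\otimes n}}[e^{(s+\beta)\|\mathbf S\|^2/(2n)}] = O(1)$ for any $s > 0$ with $s+\beta < 3/4$.

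\textbf{Two-sided comparison and conclusion.} On the event $A_K := \{\|\mathbf S\|^2/n \le K\}$ the Radon--Nikodym derivative $dQ/dU^{\otimes n}$ lies in $[Z_{\mathrm{rel}}^{-1},\, Z_{\mathrm{rel}}^{-1} e^{\beta K/2}]$, yielding a constant $C = C(\beta, K)$, independent of $n$, for which
\[
Q(E) \le C\, U^{\otimes n}(E) + Q(A_K^c),\qquad U^{\otimes n}(E) \le C\, Q(E) + U^{\otimes n}(A_K^c),
\]
hold for every event $E$, with both tails at most $e^{-c_\beta K}$. Assume the $Q$-probability of paradox is at most some small $\delta = \delta(\epsilon,\beta)$; taking $K$ large so both tails are below $\epsilon/4$, the first inequality yields $U^{\otimes n}(\text{paradox}) \le C\delta + \epsilon/4$. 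For $\delta$ chosen small enough, the Quantitative Arrow's Theorem for the uniform product measure \cite{mossel2013reverse} produces some $F^* \in \mathcal F_3$ with $U^{\otimes n}((f,g,h) \ne F^*) \le \epsilon/(2C)$, and applying the second inequality with $E = \{(f,g,h) \ne F^*\}$ gives $Q((f,g,h) \ne F^*) \le \epsilon$, establishing the required dichotomy with $\delta$ independent of $n$.

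\textbf{Main obstacle.} The step I expect to be the most delicate is the uniform-in-$n$ control of $Z_{\mathrm{rel}}$ and of the tails of $\|\mathbf S\|^2/n$ under both measures throughout the regime $\beta < 3/4$. It is precisely here that the critical temperature appears: the limiting Gaussian MGF $\det(I - \beta\Sigma)^{-1/2}$ diverges exactly at $\beta = 3/4$, signalling the breakdown of the sub-Gaussian concentration of $\mathbf S$ and hence of the product-measure comparison --- matching the sharp phase transition asserted in Theorem~\ref{thm:qa-mf-3-intro}.
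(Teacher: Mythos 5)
Your strategy is the same as the paper's: write $Q$ as a tilt of the uniform product measure by $\exp(\beta\|\mathbf S\|^2/(2n))/Z_{\mathrm{rel}}$, establish a two-sided Radon--Nikodym comparison on a high-probability event $\{\|\mathbf S\|^2/n\le K\}$, and transfer the i.i.d.\ Quantitative Arrow's Theorem \cite{mossel2012quantitative,mossel2013reverse} across the comparison. This is exactly Lemma~\ref{lem:mutual-contiguity} and the proof of Theorem~\ref{thm:qa-mf-3}. Your identification of the threshold with $1/\lambda_{\max}(\Sigma)=3/4$ is also correct, and your change-of-measure identity $\E_Q[e^{s\|\mathbf S\|^2/(2n)}]=Z_{\mathrm{rel}}^{-1}\,\E_{U^{\otimes n}}[e^{(s+\beta)\|\mathbf S\|^2/(2n)}]$ is arguably a cleaner way to control the $Q$-side tail than the paper's route, which instead proves a CLT for $Y_n$ under the mean-field measure itself (Lemma~\ref{lem:clt-magnetization}, used in Lemma~\ref{lem:contig2}).

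The genuine gap is exactly the step you flag as the ``main obstacle'': the uniform-in-$n$ control $\sup_n\E_{U^{\otimes n}}[\exp(\gamma\|\mathbf S\|^2/(2n))]<\infty$ for every $\gamma<3/4$. The CLT only gives the pointwise limit of this quantity (the Gaussian MGF $\det(I-\gamma\Sigma)^{-1/2}$); to upgrade to the needed uniform bound you must establish a non-asymptotic sub-Gaussian tail for $\mathbf S/\sqrt n$, and --- this is the crux --- you need the sub-Gaussian constant to be \emph{sharp}, equal to $\lambda_{\max}(\Sigma)=4/3$, in order for the comparison to extend all the way to $\gamma\uparrow 3/4$. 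A generic Hoeffding bound (using $|\langle\mathbf X_i,u\rangle|\le\sqrt 3$ for unit $u$) only gives sub-Gaussian constant $3$, hence the argument would close only for $\beta<1/3$, not $\beta<3/4$. The paper closes this gap with the elementary but essential inequality $\tfrac13(\cosh a+\cosh b+\cosh c)\le e^{(a^2+b^2+c^2)/6}$ (Lemma~\ref{lem:cosh-inequality}), combined with a quadratic-form calculation (top eigenvalue $4$) and an $\epsilon$-net over the unit sphere, to prove the optimal tail estimate in Lemma~\ref{lem:w_n-ui}. Without this sharp concentration estimate or an equivalent replacement, your scheme proves the theorem only on a strictly smaller range of $\beta$, so it does not by itself establish the phase transition at the claimed critical temperature.
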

With this notation, the result of \cite{mossel2012quantitative} is exactly the special case of the previous Theorem with $\beta = 0$. Theorem~\ref{thm:qa-mf-3-intro} is shown by establishing mutual contiguity of the mean-field model with the i.i.d. model in the entire high-temperature regime, which in turn relies upon establishing sharp concentration estimates for averages of random vectors drawn from $Uni(\NAE_3)$. As mentioned before, we show that in the low-temperature regime $\beta > 3/4$ intransitivity is avoided by taking $f,g,h$ to be majority functions, because the probability of Condorcet's paradox is exponentially small in $n$. In fact, we give an exact formula for the asymptotic probability of Condorcet's paradox in our model as a function of $\beta$, showing that the probability of paradox goes continuously to $0$ as $\beta$ goes to $3/4$:
\begin{theorem}[Generalized Guilbaud Formula, $q = 3$]\label{thm:guilbaud-3-intro}
Let $E_n$ be the event that there is a Condorcet winner under pairwise majority elections, where the vector $X$ of individual preferences is drawn from the mean-field model $Q_n$ (see \eqref{eqn:mean-field-intro}) with $q = 3$. Then the asymptotic probability of paradox is given by
\[ \lim_{n \to \infty} Q_n(E_n) = \begin{cases}
\frac{3}{2\pi} \arccos(\frac{3}{4\beta - 9}) & \text{ if $\beta < 3/4$} \\
1 & \text{ if $\beta > 3/4$.}
\end{cases}. \]
This function is graphed in Figure~\ref{fig:guilbaud}.
\end{theorem}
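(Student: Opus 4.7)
The plan is to analyze the rescaled pairwise--majority magnetization vector $\bar m(x)=(\bar m_1,\bar m_2,\bar m_3)$ with $\bar m_k(x):=n^{-1}\sum_i X^k_i$ via a Hubbard--Stratonovich plus Laplace argument, and then combine a multivariate CLT with the classical orthant formula for a centered equi-correlated trivariate Gaussian. First, I choose the cyclic orientation of pairwise preferences so that the six rankings map bijectively onto $\sigma(\mathfrak{S}_3)=\NAE_3\subset\{\pm 1\}^3$ (so that a Condorcet paradox is the all-equal-signs event) and observe $d_{\tau}(x_i,x_j)=\tfrac{1}{2}\sum_k(1-X^k_iX^k_j)$. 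Collapsing the double sum in \eqref{eqn:mean-field-intro} then gives
\[ Q_n(x)\propto\exp\!\left(\tfrac{\beta n}{2}\|\bar m(x)\|^2\right), \]
a ``vector Curie--Weiss'' form in which pairwise majority depends only on $\operatorname{sign}\bar m$.

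Next, I apply Hubbard--Stratonovich in each coordinate, $e^{\beta n m_k^2/2}\propto\int_{\R}e^{-nh_k^2/(2\beta)+nh_km_k}\,dh_k$; summing over voter configurations gives $Z\propto\int_{\R^3}e^{-nF(h)}dh$ with $F(h)=\|h\|^2/(2\beta)-\log M(h)$ and $M(h)=\sum_{\pi\in\mathfrak{S}_3}e^{h\cdot\sigma(\pi)}$. By $S_3$-symmetry $h=0$ is critical, and $\nabla^2\log M(0)$ equals the single--voter covariance $\Sigma_0$ (diagonal $1$, off-diagonal $-1/3$), whose eigenvalues are $1/3$ on $\mathrm{span}(1,1,1)$ and $4/3$ on its orthogonal complement. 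Thus $F''(0)=\beta^{-1}I-\Sigma_0$ is positive definite iff $\beta<3/4$; in this high-temperature regime Laplace's method yields the multivariate CLT $\sqrt n\,\bar m(X)\Rightarrow\mathcal{N}(0,\Sigma_\beta)$ with $\Sigma_\beta=(\Sigma_0^{-1}-\beta I)^{-1}$.

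Using $\Sigma_0=\tfrac{4}{3}I-\tfrac{1}{3}J$ (so $\Sigma_0^{-1}=\tfrac{3}{4}(I+J)$) and inverting the resulting $aI+bJ$-matrix, $\Sigma_\beta$ is again equi-correlated with pairwise correlation $\rho_\beta=3/(4\beta-9)$. Since a Condorcet winner exists iff $\operatorname{sign}\bar m\notin\{(+,+,+),(-,-,-)\}$, the classical Sheppard/Plackett orthant formula for centered equi-correlated trivariate Gaussians gives
\[ \lim_{n\to\infty}Q_n(E_n)=1-\tfrac{1}{4}-\tfrac{3}{2\pi}\arcsin\rho_\beta=\tfrac{3}{4}-\tfrac{3}{2\pi}\arcsin\rho_\beta=\tfrac{3}{2\pi}\arccos\!\left(\tfrac{3}{4\beta-9}\right), \]
which is the claimed formula for $\beta<3/4$. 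For $\beta>3/4$ the Hessian $F''(0)$ is indefinite along $(1,1,1)$, so $h=0$ is a strict saddle; by $S_3$-symmetry and a direct analysis of the self-consistency equation $h=\beta\nabla\log M(h)$, $F$ attains its global minimum at six equivalent nonzero points, each corresponding to concentration of $\bar m$ near one of the six transitive sign patterns in $\sigma(\mathfrak{S}_3)$. A standard large-deviation/Laplace argument then shows that the paradox event is exponentially unlikely, so $Q_n(E_n)\to 1$.

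The principal analytic difficulty is the rigorous Laplace expansion producing the precise limiting covariance $\Sigma_\beta$: one must prove that the Gaussian contribution near $h=0$ truly dominates $\int e^{-nF(h)}dh$ and track the relationship between the fluctuations of the auxiliary field $h$ and those of the magnetization $\bar m$ itself. The analysis must also be robust as $\beta\uparrow 3/4$, where $F''(0)$ degenerates in the $(1,1,1)$-direction and a naive Gaussian approximation breaks down. Identifying all six global minima in the $\beta>3/4$ regime, while morally clear from $S_3$-symmetry, is a separate step requiring explicit analysis of the fixed-point equation along rank-direction vectors.
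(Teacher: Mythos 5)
Your high-temperature argument takes a genuinely different route from the paper. You perform Hubbard--Stratonovich followed by Laplace's method on the auxiliary field $h$, then transfer the resulting Gaussian fluctuation structure to $\bar m$, whereas the paper works directly with the change of measure $\frac{dQ}{dP}\propto\exp(\tfrac{\beta}{2}\langle Y_n,Y_n\rangle)$ against the product measure $P$, proves uniform integrability of this density via nets and Chernoff bounds (Lemma~\ref{lem:w_n-ui}), and deduces $Y_n\Rightarrow N(0,\Sigma_\beta)$ under $Q_n$ from the ordinary CLT under $P$ plus uniform integrability (Lemmas~\ref{lem:logz},~\ref{lem:clt-magnetization}). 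The paper's approach avoids the delicate step of making Laplace's method rigorous and proving that fluctuations of $h$ match fluctuations of $\bar m$ --- precisely the step you flag as the ``principal analytic difficulty'' and leave open. Both routes produce the same $\Sigma_\beta=(\Sigma_0^{-1}-\beta I)^{-1}$, and your use of the Sheppard/Plackett orthant formula is equivalent (via $\arccos\rho=\pi/2-\arcsin\rho$) to the paper's use of the bivariate sign-correlation formula (Lemma~\ref{lem:gaussian-sgn}) composed with $\frac{3}{4}(1-\E[\operatorname{sgn}(Y^1)\operatorname{sgn}(Y^2)])$.

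One concrete error in the low-temperature paragraph: you correctly compute that $\Sigma_0$ has eigenvalue $1/3$ on $\operatorname{span}(1,1,1)$ and $4/3$ on its orthogonal complement, but then claim ``$F''(0)$ is indefinite along $(1,1,1)$.'' Since $F''(0)=\beta^{-1}I-\Sigma_0$, the eigenvalue that crosses zero at $\beta=3/4$ is $\beta^{-1}-4/3$, associated with the \emph{orthogonal complement} of $(1,1,1)$; the eigenvalue $\beta^{-1}-1/3$ along $(1,1,1)$ stays positive for all $\beta<3$. This matters qualitatively: symmetry breaking pushes $\bar m$ away from the all-equal-signs directions, which is why paradox becomes exponentially unlikely. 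Beyond this, your low-temperature sketch (identify the six global minima, apply large deviations) matches the paper's strategy in spirit, but the paper implements it quite differently --- via the Gibbs variational principle, the mean-field equations in $\lambda$-space (Lemma~\ref{lem:mf-eqn}), an explicit classification of their solutions (Lemma~\ref{lem:mean-field-solns}, which itself requires nontrivial work), and an entropy subadditivity argument (Theorem~\ref{thm:low-temperature-soln}) rather than a Laplace expansion of the $h$-integral. Your outline is plausible but the hard steps (rigorous Laplace, explicit solution of the self-consistency equation, and the fluctuation transfer at $\beta$ near $3/4$) remain to be filled in.
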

The proof of the $\beta < 3/4$ case is given in Theorem~\ref{thm:generalized-guilbaud} and of the $\beta > 3/4$ case is given in Corollary~\ref{corr:no-paradox}; the first result is shown by proving Gaussianity of the limiting law of the averaged vote using tools from probability theory and Fourier analysis; the second result is also proved by deriving the limiting law of the average vote, which we reduce to solving an explicit (and involved) variational problem coming from
%large deviation analysis and 
large deviation analysis of $X \sim Uni(\NAE_3^n)$ arising from the naive mean-field approximation for $\log Z$. In both cases, the probability of a Condorcet winner converges to $1$ in the limit $\beta \to 3/4$, which strongly suggests that this is the correct answer at the critical temperature $\beta = 3/4$; however we do not provide a rigorous analysis of the critical behavior in this paper. The proof of this result also establishes that $\beta = 3/4$ is the critical temperature for the model in the standard sense for statistical physics models, which is to say that the limit of $\frac{1}{n} \log Z$ is not analytic at this point. This critical temperature was previously determined in the physics literature \cite{raffaelli2005statistical} using a heuristic argument; they also gave a way to estimate the probability of paradox (using Monte-Carlo simulations) but do not seem to have observed an analytical expression as above. 

\begin{figure}
    \centering
    \includegraphics[trim={0 9.7cm 0 1cm},clip,scale=0.25]{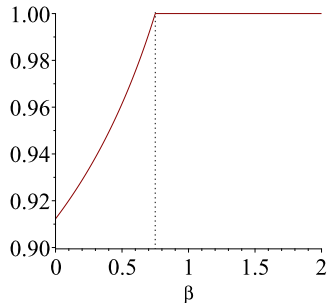}
    \caption{Asymptotic probability of a Condercet winner with pairwise majorities as a function of $\beta$ in the three candidate mean-field model. At $\beta = 0$ we reproduce Guildbaud's formula;  $\beta_c = 3/4$ (indicated by dotted line) is where the value goes to 1 and corresponds to a phase transition between high and low temperature.}
    \label{fig:guilbaud}
\end{figure}

The combination of Theorem~\ref{thm:qa-mf-3-intro} and Theorem~\ref{thm:guilbaud-3-intro} provide a striking phase transition: 
For $\beta >3/4$ we avoid paradoxes with probability $1-o(1)$ by using the most natural pairwise voting method, i.e. the majority function.
On the other hand for $\beta < 3/4$, there is no way to avoid paradoxes with probability $1-o(1)$ other than in one of the trivial ways: being $o(1)$ close to a dictator, or $o(1)$ close to a function which always outputs one of the alternatives at the top/bottom. 

\sparagraph{Mean-field model with larger $q$.} We extend the mean-field Quantitative Arrow's Theorem to the case of general $q$. In this setting, we define a constitution as a function $\mathfrak{S}_q^n \to \{\pm 1\}^{q \choose 2}$ where the coordinates of the output vector are the pairwise preferences between each pair of candidates. Here $\mathfrak{S}_q$ is the symmetric group (i.e. set of permutations) on $q$ elements.

Surprisingly, it turns out that the needed concentration/large deviations analysis (which determines the high-temperature regime for the model and the nature of the phase transition) becomes technically much more involved even when $q = 4$. The concentration problem is related to (but significantly more general than) the classical study of concentration for the number of inversions in a random permutation, i.e. Kendall's $\tau$ (see \cite{Hoeffding:63}).
Based on a new concentration estimate, we prove a result which is tight up to constants: we show the model is in a high-temperature regime for $\beta < 1/(q - 1)$, sharp for all $q$ up to a multiplicative constant of at most $3$, and prove a Quantitative Arrow's Theorem under this condition, which we now state.

Generalizing the $q = 3$ case, for any $q \ge 3$ we define a class $\mathcal{F}_q$ of functions $\mathfrak{S}_q^n \to \{\pm 1\}^{q \choose 2}$ which completely avoid paradox (i.e. satisfy IIA and transitivity and we require this for any input, not just with high probability). This class has an explicit characterization (Theorem 1.2 of \cite{mossel2012quantitative}, see also \cite{Wilson:72}) which we give now. First, note that any constitution $F$ satisfying IIA can be restricted to $F_S : \mathfrak{S}_{|S|}^n \to \{\pm 1\}^{S \choose 2}$ for any $S \subset [q]$. The constitutions $F \in \mathcal{F}_k$ are parameterized by a partition of the candidates into disjoint sets, $[q] = A_1 \cup \cdots \cup A_t$ such that:
\begin{itemize}
    \item For any $r < s$, the constitution $F$ always ranks every candidate in $A_r$ above every candidate in $A_s$.
    \item For all $A_r$ with $|A_r| \ge 3$, there exists a voter $j$ such that the restriction $F_{A_r}$ of $F$ to the candidates in $A_r$ is a dictator on voter $j$.
    \item For all $A_r$ with $|A_r| = 2$, the constitution $F_{A_r}$ given by restricting $F$ to the candidates in $A_r$ is an arbitrary non-constant function.
\end{itemize}
In particular, such constitutions always either rank one candidate above another, or follow a dictator with respect to some candidates. The Quantitative Arrow's Theorem says that any constitution with low probability of paradox is close to such a constitution:
\begin{theorem}[Mean-Field Quantitative Arrow's Theorem for $q \ge 3$]\label{thm:quantitative-arrow-allq-intro}
Fix $q \ge 3$, $\epsilon > 0$, and $\beta < \frac{1}{q - 1}$.
Let $n \ge 1$ be arbitrary and let $X$ valued in $\mathfrak{S}_q^n$ be the random vector of votes drawn from the mean-field model \eqref{eqn:mean-field-intro} with at inverse temperature $\beta$.
There exists $\delta = \delta(\epsilon,\beta,q) > 0$ such that for any constitution $F : \mathfrak{S}_q^n \to \{\pm 1\}^{q \choose 2}$ satisfying Independence of Irrelevant Alternatives (IIA), either:
\begin{enumerate}
    \item $F$ is $\epsilon$-close to a function in $\mathcal{F}_q$ with respect to the law of $X$; in particular, $F$ is close to being a dictator in some elections, or having some fixed pairwise elections.
    \item Or, the probability of paradox is lower bounded by $\delta$: if $X$ is the vector of votes drawn from the model \eqref{eqn:mean-field-intro}, the probability that the aggregated preference vector $F(X) \in \{\pm 1\}^{q \choose 2}$ satisfies transitivity is at most $1 - \delta$.
\end{enumerate}
%at least one of the following holds:
\begin{comment}
\begin{enumerate}
    \item There exists a pair of alternatives $a$ and $b$ such that $a$ is ranked above $b$ with probability at most $\epsilon$.
    \item There exists a variable $i$ such that the constitution is $\epsilon$-close to a dictator on voter $i$.
    \item The probability of a non-transitive outcome is lower bounded by $\delta$.
    %: $P(\NAE_3(f(X^1),g(X^2),h(X^3))) < 1 - \delta$.
\end{enumerate}
\end{comment}
\end{theorem}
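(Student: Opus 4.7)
My plan is to reduce to the i.i.d.\ case, where the Quantitative Arrow's Theorem is already available \cite{mossel2012quantitative,mossel2013reverse}, by representing $Q$ as a mixture of product measures via Hubbard--Stratonovich and showing that the mixing measure is tightly concentrated throughout the regime $\beta < 1/(q-1)$. Writing $v(\sigma) \in \{\pm 1\}^{\binom{q}{2}}$ for the pairwise preference vector $v(\sigma)_{(k,l)} := \mathrm{sign}(\sigma(k) - \sigma(l))$ and $S(x) := \sum_i v(x_i)$, the identity $\sum_{i,j} d_\tau(x_i, x_j) = \binom{q}{2} n^2/2 - |S(x)|^2/2$ recasts the mean-field tilt as a quadratic in $S$, and a Gaussian integral turns it into a mixture
\[ Q(x) = \int P_m^{\otimes n}(x)\, d\mu(m), \qquad P_m(\sigma) \propto e^{m\cdot v(\sigma)}, \qquad \mu(m) \propto \exp\!\Bigl(n\bigl(\psi(m) - \tfrac{1}{2\beta}|m|^2\bigr)\Bigr), \]
where $\psi(m) := \log \E_{\sigma \sim \mathrm{Unif}(\mathfrak{S}_q)}[e^{m \cdot v(\sigma)}]$ is the log-MGF of the pairwise preference vector of a uniform random permutation. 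Conditionally on the latent $m$ the voters are i.i.d.\ from the tilted distribution $P_m$, so it suffices to apply the i.i.d.\ theorem to each mixture component and integrate.

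The heart of the proof --- and the main obstacle --- is to show that for $\beta < 1/(q-1)$ the mixing measure $\mu$ concentrates at $m = 0$ on the Gaussian scale $|m| = O(n^{-1/2})$. By the symmetry $\sigma \mapsto \sigma \circ (k\,l)$ one has $\psi(0) = 0$ and $\nabla\psi(0) = 0$, while $\nabla^2\psi(m)$ equals the covariance matrix of $v$ under the tilted measure $P_m$; the effective potential $\psi(m) - |m|^2/(2\beta)$ is strictly concave and uniquely maximized at the origin provided
\[ \|\mathrm{Cov}_{P_m}(v)\| \le q - 1 \qquad \text{for all } m, \]
after which Laplace's method yields the desired concentration. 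This uniform operator-norm bound on a family of tilted covariances is precisely the ``new concentration estimate'' advertised in the introduction. Classical tools (Hoeffding's inequality for Kendall's $\tau$, or the bounded differences method for permutations) bound the variance of individual pair-inversion statistics and suffice to handle the untilted case $m=0$ (where $\nabla^2\psi(0)$ is the pair-preference covariance whose spectrum can be read off from the line graph of $K_q$), but they do not deliver joint operator-norm control over all tilts uniformly in $q$; producing the threshold $1/(q-1)$ --- sharp up to a multiplicative constant of at most $3$ --- is the novel technical ingredient required for general $q$.

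Once $\mu$ is concentrated, the transfer is routine. Arguing by contrapositive, suppose the paradox probability under $Q$ is at most $\delta$. By the mixture representation and Markov's inequality, the paradox probability under $P_m^{\otimes n}$ is $O(\delta)$ for all $m$ outside a set of small $\mu$-measure; on the Gaussian concentration scale $|m|=O(n^{-1/2})$ the product measures $P_m^{\otimes n}$ are uniformly mutually contiguous with the uniform i.i.d.\ measure $P_0^{\otimes n}$, hence remain full-support with marginals close to uniform. The Quantitative Arrow's Theorem for full-support product measures of \cite{mossel2013reverse} then produces some $G \in \mathcal{F}_q$ with small $P_m^{\otimes n}(F \neq G)$ uniformly for $m$ in the typical set (the finitely many choices of the governing partition in $\mathcal{F}_q$ can be handled by a pigeonhole on the typical set), and integrating back against $\mu$ gives small $Q(F \neq G)$, as required.
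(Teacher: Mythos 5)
Your proposal takes a genuinely different route from the paper, but it mis-identifies the key estimate in a way that matters. The paper does \emph{not} pass through a Hubbard--Stratonovich decomposition and concentration of the mixing measure; it establishes mutual contiguity of $Q$ and $P_0^{\otimes n}$ directly, by showing the Radon--Nikodym factor $W_n = \exp\bigl(\tfrac{\beta}{2}\langle Y_n,Y_n\rangle\bigr)$ is uniformly integrable (exactly as in Lemma~\ref{lem:w_n-ui}), where the sub-Gaussian input is Lemma~\ref{lem:general-large-deviations}: a bound on the cumulant generating function,
\[
\psi(\lambda) = \log\E_{\pi\sim\mathrm{Unif}(\mathfrak{S}_q)}\bigl[e^{\langle\lambda,\varphi(\pi)\rangle}\bigr] \le \tfrac{q-1}{2}\|\lambda\|_2^2,
\]
proved by a Doob-martingale/Fisher--Yates-shuffle symmetrization and iterated $\cosh(x)\le e^{x^2/2}$. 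That is the ``new concentration estimate'' advertised in the introduction --- not the uniform operator-norm bound $\|\mathrm{Cov}_{P_m}(v)\|\le q-1$ over all tilts $m$ that your proposal singles out as the crux.

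This distinction is not cosmetic, and is the genuine gap. Asking for $\nabla^2\psi(m)\preceq (q-1)I$ uniformly in $m$ is strictly stronger than asking $\psi(m)\le\frac{q-1}{2}\|m\|^2$: a CGF can obey the latter pointwise bound while having a larger Hessian at intermediate $m$, and it is not at all clear that the uniform Hessian bound holds for the inversion-tilt family (certainly the paper never proves it, and the martingale argument does not yield it). So the lemma your route leans on is unsupplied, and would likely be substantially harder to establish than the CGF bound. Fortunately, you do not need it: for your mixing measure $\mu(m)\propto e^{n(\psi(m)-|m|^2/2\beta)}$ the pointwise CGF bound already gives $\psi(m)-\tfrac{1}{2\beta}|m|^2\le -\tfrac12\bigl(\tfrac1\beta-(q-1)\bigr)|m|^2$ whenever $\beta<1/(q-1)$, which, combined with $\psi(0)=0$ and $\nabla\psi(0)=0$, is enough for the Laplace concentration at scale $|m|=O(n^{-1/2})$ --- no global concavity of the effective potential required. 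If you replace your uniform covariance requirement by this weaker pointwise bound, the Hubbard--Stratonovich route closes. Two smaller cautions on the ``routine'' transfer step: (i) integrating the i.i.d.\ theorem against $\mu$ component-by-component needs a quantitatively uniform version of Mossel's theorem over the tilts $P_m$, which you should avoid by first transferring all the way back to $P_0^{\otimes n}$ via contiguity (as the paper does) and then applying the product-measure theorem once; (ii) $\mathcal{F}_q$ is not a finite set (dictator functions and arbitrary non-constant pair functions live there), so ``pigeonhole on the typical set'' does not literally apply --- again the contiguity-then-single-application ordering sidesteps this.
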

\noindent
We also show (Section~\ref{sec:low-temp}) that the model is in a low-temperature regime when $\beta > 3/(q + 1)$; we conjecture that this bound is optimal and state a simple large deviations conjecture (Conjecture~\ref{conj:large-q-conjecture}) which would imply this and generalize the $q = 3$ case in a natural way. In \cite{raffaelli2005statistical} the authors also gave strong computational evidence that this is the correct critical temperature by solving the relevant non-convex variational problem using a gradient descent-like procedure (iterating the mean field equations).

\sparagraph{Sparse models, local interaction, and non-monotone behavior.} The above results show in the mean-field model that when the interactions between different voters is weak, the Quantitative Arrow's Theorem continues to hold true. We conjecture that this is a general phenomenon which should in fact hold in all models at sufficiently high temperature, as long as $\beta$ is normalized correctly. 
\begin{conjecture}[Universal High-Temperature Quantitative Arrow's Theorem]\label{conj:universal-arrow} %\todo{stronger statement from Theorem 1.3}
For any $q \ge 3$ there exists $\beta' = \beta'(q) > 0$ such that the following result is true. For any $\epsilon > 0$, there exists a constant $\delta = \delta(\epsilon,q) > 0$, such that for any $n \ge 1$, $d \le n$, and any $\beta d < \beta'$, the following result holds for $X$ drawn from the Gibbs measure \eqref{eqn:general-model} at inverse temperature $\beta$ on any graph $G$ of maximum degree $d$. For any constitution $F$ satisfying IIA, either:
\begin{enumerate}
    \item $F$ is $\epsilon$-close to a function in $\mathcal{F}_q$ with respect to the law of $X$; in particular, $F$ is close to being a dictator in some elections, or having some fixed pairwise elections.
    \item Or, the probability of paradox is lower bounded by $\delta$: if $X$ is the vector of votes drawn from the model \eqref{eqn:mean-field-intro}, the probability that the aggregated preference vector $F(X) \in \{\pm 1\}^{q \choose 2}$ is transitive is at most $1 - \delta$.
\end{enumerate}
\begin{comment}For any constitution satisfying Independence of Irrelevant Alternatives (IIA), at least one of the following is true:
\begin{enumerate}
    \item There exists a pair of alternatives $a$ and $b$ such that $a$ is ranked above $b$ with probability at most $\epsilon$.
    \item There exists a variable $i$ such that the constitution is $\epsilon$-close to a dictator on voter $i$.
    \item The probability of a non-transitive outcome is lower bounded by $\delta$.
    %: $Q(\NAE_3(f(X^1),g(X^2),h(X^3))) < 1 - \delta$.
\end{enumerate}
\end{comment}
\end{conjecture}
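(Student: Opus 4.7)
The plan is to prove Conjecture \ref{conj:universal-arrow} by combining classical high-temperature machinery for Markov random fields with the analytic framework developed in \cite{mossel2012quantitative,mossel2013reverse}. The natural choice for $\beta'(q)$ is the threshold at which Dobrushin's uniqueness condition holds: a direct coupling argument using the tilt structure of the conditional law (as described after \eqref{eqn:general-model}) shows that the one-step Dobrushin influence of one neighbor on a vertex is $O(\beta)$, so Dobrushin's condition is satisfied once $\beta d < \beta'(q)$ for a suitable explicit $\beta'(q) > 0$.

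Under this Dobrushin regime, a battery of standard consequences becomes available with constants independent of $n$: uniqueness of the infinite-volume Gibbs measure, exponential decay of correlations, Gaussian concentration of Lipschitz statistics (via Marton-type transportation inequalities), and a modified log-Sobolev inequality for the Glauber dynamics in the style of Stroock--Zegarlinski or Marton. The MLSI with dimension-free constant in turn yields a reverse hypercontractive inequality for the Gibbs measure, which is the direct analogue of the one used in the product-measure proof of the Quantitative Arrow's Theorem. Note that the single-site marginals of \eqref{eqn:general-model} remain uniform on $\mathfrak{S}_q$, so that the pairwise marginals $X^{ab}$ are still symmetric Bernoulli under the Gibbs measure.

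I would then re-run the structural argument of \cite{mossel2012quantitative}. Assume $F$ is not $\epsilon$-close to $\mathcal{F}_q$. Then for some triple of candidates $\{a,b,c\}$, the three pairwise aggregation functions $f,g,h$ acting on $X^{ab}, X^{ac}, X^{bc}$ are each simultaneously bounded away from every dictator and bounded away from constants in $L^2(Q)$. Using the reverse hypercontractive inequality from the previous step, together with a version of the invariance/noise-sensitivity argument adapted to the Gibbs measure, one obtains a dimension-free lower bound on $Q(f = g = h)$ over the event that the three functions all output the same sign on the correlated triple of pairwise preferences, which is exactly the probability of paradox restricted to $\{a,b,c\}$.

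The main obstacle is extending the invariance principle of \cite{mossel2005noise}, which is essential in the original proof, beyond the product-measure setting: the argument uses CLT-type behavior of low-influence multilinear polynomials of i.i.d.\ variables, and this does not apply directly here because of graph-induced correlations in $X$. Two natural routes to circumvent this: (i) a multi-scale block decomposition, partitioning the graph into constant-size blocks, using exponential decay of correlations to treat well-separated blocks as nearly independent, and applying the product-measure invariance principle conditional on boundaries between blocks; or (ii) bypassing invariance by coupling $X$ on a random set of low-influence coordinates with an i.i.d.\ reference configuration drawn from the uniform measure on $\mathfrak{S}_q$, and controlling the total-variation cost of this coupling via Dobrushin. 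I expect route (i) to be the more broadly applicable approach, and the quantitative control of the correlation-decay constants under \eqref{eqn:general-model} to be the technically most delicate step.
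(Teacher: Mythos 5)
This statement is explicitly labeled a \emph{conjecture} in the paper, and the authors state outright that ``proving this conjecture appears to be a significant technical challenge.'' There is no proof in the paper to compare against; the paper only establishes the two extreme special cases (the mean-field complete graph, via contiguity to the product measure, and the perfect matching, via a direct adaptation of the reverse-hypercontractivity argument exploiting that the matching is still a product over pairs). So what you have written is a research plan for an open problem, not a proof, and it should be evaluated as such.

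As a plan it starts in the right place---the paper itself points to Dobrushin's uniqueness condition as the natural meaning of ``high temperature'' here---but it contains gaps that are more than technical bookkeeping. First, the step ``MLSI with dimension-free constant in turn yields a reverse hypercontractive inequality'' is not correct as stated: reverse hypercontractivity is not a consequence of log-Sobolev or modified log-Sobolev inequalities. The reverse-hypercontractive estimates used in \cite{mossel2012quantitative,mossel2013reverse} and in the matching argument of this paper (Lemma~\ref{lem:reverse-discrete}) rest on a lower bound on the pointwise density ratio $\alpha$ for a \emph{product} of two-variable blocks, which then tensorizes; for a Gibbs measure on a general bounded-degree graph that product structure is gone, and establishing a usable reverse-hypercontractive inequality (with a constant depending only on $\beta d$ and $q$, not on $n$ or the graph) is itself an open problem, not a corollary of Dobrushin. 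Second, you correctly identify the breakdown of the invariance principle as the main obstacle, but neither route (i) nor route (ii) is carried out: in route (i), after conditioning on block boundaries the marginals within a block are no longer uniform on $\mathfrak{S}_q$ (they are inversion-tilt models with random parameters), so the statement you would feed to the product-measure invariance principle changes, and the quantitative $\delta$ you extract would a priori depend on the boundary conditioning in an uncontrolled way; in route (ii), the total-variation distance between the Gibbs measure restricted to a set of $\Omega(n)$ low-influence coordinates and the corresponding product measure is generically $1-o(1)$ even in the Dobrushin regime, so a coupling at that scale does not exist. Until one of these is made to work, what you have is a programme consistent with the paper's discussion, not a proof, and the authors' warning about the technical difficulty stands.
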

% Move this question elsewhere
%Supposing the conjecture is true, it's also interesting to ask for each $q$ about the supremum of $\beta^*(q)$ such that the result holds; it seems very plausible that the sharp $\beta^*(q)$ in general is determined by the mean-field case where $G$ is the complete graph. 
For context, a classical result of Dobrushin (Dobrushin's uniqueness condition, \cite{dobrushin-uniqueness}) tells us that there does exist such a $\beta$ such that model is in a high-temperature phase, e.g. in the sense that Glauber dynamics mixes rapidly \cite{levin2017markov}, and the conjecture is asserting that this kind of high temperature assumption also implies the Quantitative Arrow's Theorem. Proving this conjecture appears to be a significant technical challenge. One reason is that sparse models can generate new behavior which is qualitatively different from the behavior of the mean-field model. As an illustration, we solve in the $q = 3$ case for the probability of a Condorcet paradox occurring in \eqref{eqn:general-model} on a perfect matching (i.e. where voters are paired and only interact with their paired neighbor):
\begin{theorem}[Generalized Guilbaud Formula on a Perfect Matching, Theorem~\ref{thm:matching-guilbaud}]\label{thm:matching-guilbaud-intro}
For $\beta \ge 0$ and $n \ge 1$, let $X,Y$ be drawn from the Gibbs measure $Q_n$ \eqref{eqn:matching-intro} on the matching graph with $q = 3$ candidates.
%\[ Q(X = x) = \frac{1}{Z} \sum_{x \in \mathfrak{S}_3^n} \exp\left(- 2\beta \sum_{0 \le i < n/2} d_{\tau}(x_{2i},x_{2i + 1}) \right) \]
%\[ Q(X = x,Y=y) = \frac{1}{Z} \sum_{x \in \mathfrak{S}_3^n} \exp\left(- 2\beta \sum_{0 \le i < n/2} d_{\tau}(x_{i},y_{i}) \right) \]
%on $\mathfrak{S}_3^{2n}$.
%\beta \langle \varphi(x_{2i}), \varphi(x_{2i + 1}) \rangle\right) \]
Let $E_n$ be the event that there is a Condorcet winner under pairwise majority elections. Then
\[ \lim_{n \to \infty} Q_n(E_n) = \frac{3}{2\pi} \arccos \left(\frac{-1/3 - \frac{\sinh(3\beta) + 2\sinh(\beta)}{3(\cosh(3\beta) + 2\cosh(\beta))}}{ 1 + \frac{3\sinh(3\beta) + 2\sinh(\beta)}{3(\cosh(3\beta) + 2\cosh(\beta))}}\right). \]
%This function is graphed in Figure~\ref{fig:matching-behavior}.
\end{theorem}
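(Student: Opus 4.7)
Because the matching graph makes the energy in \eqref{eqn:matching-intro} a sum over disjoint pairs, the $n$ pairs $\{(X_i,Y_i)\}_{i=1}^n$ are i.i.d., each distributed as $\mu_\beta(x,y)\propto e^{-2\beta d_\tau(x,y)}$ on $\mathfrak{S}_3^2$. Identifying each permutation with its pairwise-comparison sign vector via $\phi:\mathfrak{S}_3\to\NAE_3\subset\{\pm 1\}^3$ and using $d_\tau(x,y) = (3-\langle\phi(x),\phi(y)\rangle)/2$, the measure pushes forward to $\tilde\mu_\beta(u,v)\propto e^{\beta\langle u,v\rangle}$ on $\NAE_3\times\NAE_3$. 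The Condorcet event is $E_n = \{\mathrm{sign}(S_n)\in\NAE_3\}$ for $S_n := \sum_{i=1}^n(\phi(X_i)+\phi(Y_i))$. The multivariate CLT gives $S_n/\sqrt{n}\Rightarrow Z$, a centered Gaussian on $\R^3$ with covariance $\Sigma := \Cov(\phi(X_1)+\phi(Y_1))$; ties ($S_n^k = 0$) contribute $O(n^{-1/2})$ by a local CLT, and the sign-orthants have zero Gaussian boundary measure, so by the Portmanteau theorem $\lim_n Q_n(E_n) = \bP(\mathrm{sign}(Z)\in\NAE_3)$.

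The heart of the argument is the explicit computation of $\Sigma$. Symmetry under candidate relabeling forces $\tilde\mu_\beta$ to have uniform marginals on $\NAE_3$ and $\Sigma$ to be equicorrelated, with diagonal $2(1+b)$ and off-diagonal $2(a+c)$ where $a := \E[u^1u^2]$, $b := \E[u^1v^1]$, $c := \E[u^1v^2]$. Since $u^1+u^2+u^3 \in \{\pm 1\}$ on $\NAE_3$, squaring yields $a=-1/3$. For $b$, coordinate symmetry gives $3b = \E[\langle u,v\rangle] = \partial_\beta\log Z_0$; stratifying pairs $(u,v)\in\NAE_3^2$ by their inner product (values $3,1,-1,-3$ with multiplicities $6,12,12,6$) produces $Z_0 = 12(\cosh 3\beta + 2\cosh\beta)$ and $b = (3\sinh 3\beta + 2\sinh\beta)/[3(\cosh 3\beta + 2\cosh\beta)]$.

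To compute $c$, I use the symmetry identity $6c + 3b = \E[T(u)T(v)]$ with $T(u) := u^1+u^2+u^3\in\{\pm 1\}$. Splitting the $36$ pairs by the signature $(T(u),T(v))$ and, within each, by $\langle u,v\rangle$ (e.g.\ for $T(u)=T(v)=+1$, per fixed $u$ there is one $v$ at $\langle u,v\rangle = 3$ and two at $-1$; for $T(u)=+1,T(v)=-1$ there is one $v$ at $-3$ and two at $+1$) collapses the expectation to a short finite sum, yielding $\E[T(u)T(v)] = (\sinh 3\beta - 2\sinh\beta)/(\cosh 3\beta + 2\cosh\beta)$ and $c = -(\sinh 3\beta + 2\sinh\beta)/[3(\cosh 3\beta + 2\cosh\beta)]$. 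The resulting correlation coefficient $\rho := (a+c)/(1+b) = (-1/3 + c)/(1+b)$ matches exactly the argument of arccos in the theorem.

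The proof concludes by invoking the classical Sheppard orthant formula for a trivariate centered Gaussian, $\bP(Z_1,Z_2,Z_3>0) = 1/8 + (4\pi)^{-1}\sum_{i<j}\arcsin\rho_{ij}$; in the equicorrelated case this gives $\bP(\mathrm{sign}(Z)\in\{\pm(1,1,1)\}) = 1/4 + (3/2\pi)\arcsin\rho$, and complementation produces
\[ \bP(\mathrm{sign}(Z)\in\NAE_3) = \tfrac{3}{4} - \tfrac{3}{2\pi}\arcsin\rho = \tfrac{3}{2\pi}\arccos\rho. \]
The only step requiring genuine work is the computation of $c$: unlike $b$, it is not a $\beta$-derivative of $\log Z_0$, but the symmetry reduction via $T$ collapses it to an elementary sum over four equivalence classes of pairs. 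Because the matching graph has no macroscopic connectivity, the model has no phase transition and the displayed formula holds for every $\beta \ge 0$.
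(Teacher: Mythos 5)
Your proof is correct and follows the same basic outline as the paper's (Appendix C): exploit that the matching decomposes the model into i.i.d.\ pair-blocks, apply the multivariate CLT to the block-sums $\phi(X_i)+\phi(Y_i)$, compute the covariance, and convert to a Gaussian sign-orthant probability. The two points of departure are both pleasant improvements in bookkeeping rather than genuinely different routes: where the paper computes $\E[X_1Y_1^T]$ by summing over all $36$ ordered pairs of $\NAE_3$, you extract $b$ as $\tfrac{1}{3}\partial_\beta\log Z_0$ and $c$ from the identity $\E[T(u)T(v)]=3b+6c$ with $T(u)=u^1+u^2+u^3\in\{\pm 1\}$, each stratified by inner product — an elegant way to avoid writing out the $3\times3$ matrix entry by entry. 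For the final step the paper uses the bivariate $\E[\mathrm{sgn}(Y_1)\mathrm{sgn}(Y_2)]=1-\tfrac{2}{\pi}\arccos\rho$ formula together with the representation $\bone_{\NAE_3}(s)=\tfrac{1}{4}(3-\sum_{i<j}s_is_j)$, while you invoke Sheppard's trivariate orthant formula and complement; these are equivalent, though the paper's reduction to a bivariate statement is arguably cleaner since it needs no three-dimensional orthant result. You also explicitly address the tie probability via a local CLT, which the paper glosses over — that attention to detail is welcome.
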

As we can see in Figure~\ref{fig:matching-behavior}, the probability
of Condorcet election goes up and then goes back down to its original value as we increase $\beta$. The fact that the boundary cases $\beta = 0$ and $\beta = \infty$ behave the same way is not a coincidence: when $\beta = \infty$ the two nodes in a matching are perfectly correlated, so they should act the same way as a single node in the $\beta = 0$ setting. (This limiting behavior would happen for any graph with connected components of size bounded by a constant and for any $q$.) On the other hand, the behavior in the regime $0 < \beta < \infty$ does not seem so easy to determine from first principles.

%As shown in Figure~\ref{fig:matching-behavior}, this model exhibits a surprising \emph{non-monotonicity} property: while for small values of $\beta$ the probability of Condorcet paradox is a decreasing function of $\beta$, for all sufficiently large values of $\beta$ increasing the strength of interactions actually \emph{increases} the probability of a non-transitive outcome. 
Clearly, this model exhibits the following non-monotone behavior: below some threshold ($\beta \approx 0.503$) the probability of a Condorcet paradox is a \emph{decreasing} function of $\beta$, and after the same threshold the probability of a paradox becomes an \emph{increasing} function of $\beta$. 
%increasing the strength of interactions $\beta$ actually \emph{increases} the probability of a non-transitive outcome. 
In particular, this illustrates that increased coordination within well-connected subcommunities can actually contribute to an increase in intransitive outcomes: e.g. similar behavior likely occurs if we consider graphs with a few dense components and with very few edges in between components.
We note that different from the mean-field model, this model does not exhibit a phase transition %(in the sense of non-analyticity of the limiting value of $(1/n) \log Z$)
between high and low temperature phases at any value of $\beta$.
%For space reasons, the proof of this Theorem is deferred to Appendix~\ref{sec:matching}.
\begin{figure}
    \centering
    \includegraphics[trim={0 12.2cm 0 1cm},clip,scale=0.27]{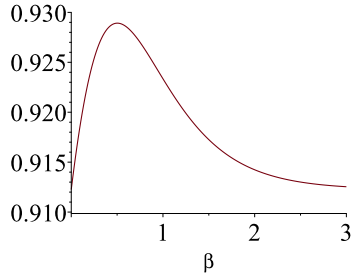}
    \caption{Asymptotic probability of a Condorcet winner with pairwise majority elections on the perfect matching, as a function of $\beta$. The maximum probability is approximately $0.929$ attained at $\beta = (1/2)\log(1 + \sqrt{3}) \approx 0.503$.}
    \label{fig:matching-behavior}
\end{figure}

Despite the notable differences between the matchings model from the mean-field model, we show that as in the mean-field model, the Quantitative Arrow's Theorem holds for the entire high-temperature regime of this model --- in this case, all $\beta \ge 0$. As above we focus on the $q = 3$ case, though an extension of the same techniques should be able to prove the result for larger $q$.
\begin{theorem}[Quantitative Arrow's Theorem on a Matching]\label{thm:qa-matching}
For $\beta \ge 0$ and $n \ge 1$, let $X,Y$ be drawn from the Gibbs measure $Q$ from \eqref{eqn:matching-intro} on the matching graph with $q = 3$ candidates. Fix $\epsilon > 0$. There exists $\delta = \delta(\epsilon,\beta) > 0$ such that at least one of the following occurs, for any Boolean functions $f,g,h : \{\pm 1\}^{2n} \to \{\pm 1\}$:
\begin{enumerate}
    \item The function $X \mapsto (f(X^1,Y^1),g(X^2,Y^2),h(X^3,Y^3))$ from $\mathfrak{S}_3^n \to \{\pm 1\}^3$ is $\epsilon$-close to a function in $\mathcal{F}_3$, i.e. the constitution is close to dictator or close to having a fixed top or bottom candidate.
    \item The probability of paradox is lower bounded by $\delta$: \[ Q((f(X^1),g(X^2),h(X^3)) \in \NAE_3) < 1 - \delta. \]
\end{enumerate}
\end{theorem}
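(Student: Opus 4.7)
The plan is to exploit the fact that, unlike the mean-field Gibbs measure, the matching-model distribution \eqref{eqn:matching-intro} factorizes across the $n$ matched edges. Writing $\mu$ for the probability measure on $\mathfrak{S}_3 \times \mathfrak{S}_3$ proportional to $\exp(-2\beta d_{\tau}(\sigma,\tau))$, the joint law of $(X,Y)$ under $Q$ is exactly $\mu^{\otimes n}$. For any finite $\beta \ge 0$ the measure $\mu$ has full support on the 36-element set $\mathfrak{S}_3^2$, and its two marginals are each uniform on $\mathfrak{S}_3$ by the symmetry of $d_{\tau}$. Thus the setting is that of $n$ i.i.d.\ ``super-voters,'' whose six induced Boolean pairwise preferences are correlated within a super-voter but independent across super-voters.

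Given this factorization, I would reduce the statement to the i.i.d.\ full-support Quantitative Arrow's Theorem of Mossel \cite{mossel2013reverse}, adapted so that each i.i.d.\ unit contributes the pair of Boolean triples $(X_i^1,X_i^2,X_i^3,Y_i^1,Y_i^2,Y_i^3) \in \{\pm 1\}^6$ rather than a single $\mathfrak{S}_3$-valued preference. The proof template of \cite{mossel2012quantitative,mossel2013reverse} passes through with only cosmetic changes: reverse hypercontractivity \cite{Borell:82} on $\mu^{\otimes n}$ is available because $\mu$ is full-support on a finite alphabet, giving a uniform lower bound on all atoms and hence a reverse hypercontractive constant that depends on $\beta$ but not $n$; the resulting junta / low-influence dichotomy combined with the invariance principle \cite{mossel2005noise} then yields that any $f,g,h$ whose paradox probability is $1-o(1)$ must be close, in $L^1(\mu^{\otimes n})$, to a constitution that either depends essentially on a single super-voter or has one pairwise election almost constant.

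It then remains to translate the conclusion back to the original $2n$-voter statement. If the constitution is $\epsilon$-close to depending only on super-voter $i$, then conditioning on $(X_i,Y_i)$ and averaging over the remaining super-voters shows that the induced two-voter constitution on $(X_i,Y_i)$ has paradox probability $O(\delta+\epsilon)$. A quantitative Arrow theorem for two voters---a finite combinatorial case, verifiable by enumeration over the $36$ profiles---forces this restricted constitution to be close to an element of $\mathcal{F}_3$, namely a dictator on $X_i$ or on $Y_i$, or a constitution that fixes a top or bottom candidate. The almost-constant-pairwise-election case is already in $\mathcal{F}_3$. This yields the claimed dichotomy of Theorem~\ref{thm:qa-matching}.

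The main technical obstacle I anticipate is verifying that the full-support i.i.d.\ argument of \cite{mossel2013reverse}, stated for preferences in $\mathfrak{S}_q$, extends cleanly to the case where each i.i.d.\ unit is $\mathfrak{S}_3^2$-valued with correlated Boolean projections. All the analytic ingredients---hypercontractivity, the invariance principle, Friedgut-style junta theorems---depend only on the full-support product-measure structure and are insensitive to the particular alphabet, so this step should be bookkeeping: decomposing each super-voter influence into influences of the two underlying voters, and ensuring that the NAE-test representation of non-transitivity couples properly with the Gaussian step on the extended six-dimensional base space.
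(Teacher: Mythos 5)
Your high-level plan coincides with the paper's: exploit the factorization of the matching Gibbs measure into $\mu^{\otimes n}$ over ``super-voters'' (matched pairs), then rerun the \cite{mossel2012quantitative,mossel2013reverse} template (Gaussian version via the invariance principle, low-influence dichotomy, reverse hypercontractivity for the two-influential-coordinate case, and a final reduction to a small finite Arrow instance). That much is right, and the observation that contiguity with the $\beta=0$ product measure fails here (so the mean-field-style reduction is unavailable) is the correct reason to go this route.

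Where the proposal understates the difficulty is in the claim that the analytic ingredients ``are insensitive to the particular alphabet, so this step should be bookkeeping.'' Two concrete issues. First, the Gaussian base space is not $6$-dimensional but $9$-dimensional: the space of real functions of a Boolean pair $(X^a,Y^a)$ is spanned (after centering) by $X^a$, $Y^a$, and the centered product $X^aY^a$, and all three must be carried along because the NAE indicator and the paradox functional are genuinely multilinear in these products; for three pairwise elections this gives a $9\times 9$ covariance matrix, and one must verify it is strictly positive definite (the paper's Lemma~\ref{lem:voter-freedom-2}) so that nonconstant-on-$\mathfrak{S}_3$ functions that happen to be Gibbs-a.s.\ constant do not collapse the Gaussian picture. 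Second, the reverse hypercontractive estimate needed in the Gaussian step is not an off-the-shelf consequence of Borell's theorem: one must prove a reverse-hypercontractive inequality for Markov kernels corresponding to the conditional law $Y\mid X$ of a jointly Gaussian vector with arbitrary cross-covariance $\Sigma_{XY}$ (the paper's Lemma~\ref{lem:corr-reverse}), which requires factoring $K = T_t \circ S$ with $t$ controlled by $\lambda_{\max}(\Sigma_Y^{-1/2}\Sigma_{YX}\Sigma_X^{-1}\Sigma_{XY}\Sigma_Y^{-1/2})$ and then applying Borell to the $T_t$ factor. Your proposal treats this as absorbed into ``full-support product-measure structure,'' but the existing full-support discrete estimate (Lemma~\ref{lem:reverse-discrete}) does not by itself give the Gaussian-side inequality needed after applying the invariance principle. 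These are the genuine new technical lemmas the paper had to supply, and they are not cosmetic.
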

This gives strong evidence for the validity of Conjecture~\ref{conj:universal-arrow}, as we have proved the result holds both in the densest model with very ``global'' interactions (the mean-field model) and in the matching, which is the extreme opposite case in the sense that every node has degree 1 and only local interactions are allowed to occur. At a technical level, the proof of Theorem~\ref{thm:qa-matching} is very different from the mean-field analysis (Theorem~\ref{thm:qa-mf-3-intro}), because the Gibbs measure on the matching is not mutually contiguous to the product measure for any $\beta > 0$. This makes a reduction to the i.i.d. setting impossible, because the notion of $\epsilon$-closeness is incompatible between the matching and the i.i.d. voter model. Instead, we prove the result by finding a generalization of the argument of \cite{mossel2012quantitative} for our setting, using a toolkit of reverse hypercontractive estimates, the Invariance Principle, and some new linear-algebraic arguments in Gaussian space. 
\subsection{Further Discusssion}
The standard generalization of the Ising model and Curie-Weiss model to spins valued in alphabets of size greater than two is the famous \emph{Potts model}, and it has been extensively studied in the literature --- see e.g. the review article \cite{duminil2015order}. In this model each spin takes a value in an alphabet $[q]$ and there is a fixed cost for neighboring spins to take distinct values. The mean-field Potts model has been rigorously analyzed for all $q$, see \cite{costeniuc2005complete,ellis1990limit,kesten1989behavior}. The behavior of the mean-field Potts model when $q \ge 3$ is fairly different from the Ising model ($q = 2$), see e.g. \cite{cuff2012glauber,duminil2015order} for discussion including the presence of what is known as a ``first order'' (or discontinuous) phase transition in the Potts model; in contrast, for the model considered in this paper such behavior is not expected\footnote{For example, in our analysis of our model with $q = 3$ we show the critical inverse temperature of the model is $\beta = 3/4$, which is the same threshold beyond which the all-zeros point stops being a local optimum of the related  variational problem defined in Lemma~\ref{lem:mf-eqn}; this differs from the behavior in the Potts model \cite{costeniuc2005complete,cuff2012glauber} and is related to the ``first order'' phase transition present in the latter.} \cite{raffaelli2005statistical}. Informally, the differences between the Potts model and the mean field model of \cite{raffaelli2005statistical} which we study reflect the different geometry of the simplex and the polytope corresponding to the inversion structure of permutations (see Section \ref{sec:tau-inner-product}).
%In the case of the mean-field Potts model, there is a major change of behavior between the case $q = 2$ (Ising model) and $q = 3$, where the latter model exhibits a ``discontinuous'' phase transition (see \cite{duminil2015order} for discussion; informally, this behavior means the limiting law of the net magnetization (average of the spins) does not go to a delta distribution at zero as the inverse temperature $\beta$ approaches criticality from above). In contrast, this is not expected to happen for any value of $q$ for the  
%It is well-known that for $q \ge 3$ the mean-field Potts model exhibits what is known in statistical physics language as a \emph{discontinuous} (or first order) phase transition; we leave precise discussion of the meaning of this terminology to a reference such as \cite{duminil2015order}, but informally this behavior means the limiting law of the net magnetization (average of the spins) does not go to a delta distribution at zero as the inverse temperature $\beta$ approaches criticality from above, whereas 
%what is known as a first-order phase transition
%; interestingly, our analysis of the $q = 3$ permutation-valued model (which has an alphabet of size 6) shows our model still has a second-order phase transition like the $q = 2$ (Ising) case, so the behavior of these two generalizations seems to be quite different. This reflects the different geometry of the simplex and the polytope corresponding to the inversion structure of permutations (see Section \ref{sec:tau-inner-product}).

The work of Starr \cite{starr2009thermodynamic} studied a different kind of mean-field model related to permutations, where the standard Mallows model over $\mathfrak{S}_q$ is considered in the limit of $q \to \infty$. As further discussed in the next section, the models we study are connected to a natural generalization of the Mallows model; it would be interesting to study all of the models discussed in this paper in the $q \to \infty$ limit, since the large $q$ behavior is not completely understood (see Conjecture~\ref{conj:large-q-conjecture} and related discussion).

There has also been a lot of interest in the computational tractability of dealing with intransitive preferences. Given a collection of intransitive pairwise preferences, finding the closest set of preferences induced by a permutation is the NP-hard \emph{feedback arc-set for tournaments} (FAST) problem; the \emph{Kemeny-Young rank aggregation} voting scheme is a special case of weighted FAST. 
See \cite{ailon2008aggregating,kenyon2007rank} for approximation algorithms for FAST, \cite{alon2009fast} for subexponential time algorithms, and \cite{braverman2009sorting} for algorithms for a closely related average case problem (MLE under the Mallows model). In our analysis we introduce a natural generalization of the Mallows model, the Inversion Tilt Model, and study its normalizing constant, which in the zero-temperature limit becomes weighted FAST. 
%Under the high-temperature regime of our model, finding the permutation which best agrees with society's aggregated preferences is a somewhat different average-case setting for FAST.
%In the high-temperature regime of our model, our results suggest that finding the permutation which best agrees with society's aggregated preferences is a nontrivial and interesting average case setting for weighted feedback arc-set.
%related problems involving permutations, tournaments, and inference in the Mallows model.  
%Conjecture~\ref{conj:large-q-conjecture}.
\section{Preliminaries}
In the Introduction, we discussed the relevant background and notation for Arrow's Theorem; here we discuss some other relevant background for the proofs. In sections 2.1 and 2.2, we recall some useful tools from probability theory which are used in the analysis of the mean-field model. In section 2.3 we note a simple but very useful way to define the Kendall's Tau distance in terms of an embedding of the permutation group into Euclidean space, which will be used throughout the paper. Finally, in the analysis of the matching model we will need some further background material (hypercontractive estimates, the Invariance Principle, Schur complement formulae) but we defer further discussion of those preliminaries to Section~\ref{sec:matching} where they are used.

\subsection{Notation} For any $\mu \in \mathbb{R}^d$ and $\Sigma : d \times d$ a positive semidefinite matrix, we let $N(\mu,\Sigma)$ denote the multivariate Gaussian distribution with mean $\mu$ and covariance $\Sigma$. For $v \in \mathbb{R}^d$, we let $\|v\| = \sqrt{\sum_i v_i^2}$ denote the Euclidean norm of $v$. For a finite set $S$, we let $Uni(S)$ denote the uniform measure on $S$,  where each element has probability mass $1/|S|$.

\subsection{Cumulant Generating Function and Gibbs Variational Principle}
The rigorous analysis of the mean-field model involves a few definitions from large deviations theory and the study of concentration inequalities which we briefly recall here: see \cite{ellis2007entropy,dembo2011large} for more. For $X \sim Q$ a mean-zero random vector defined over $\mathbb{R}^d$, its \emph{cumulant generating function} (CGF) evaluated at $\lambda \in \mathbb{R}^d$ is given by 
\[ \Psi(\lambda) = \log \E[e^{\langle \lambda, X \rangle}]  \]
provided that the expectation exists.
For $\sigma \ge 0$, we say that random vector $X$ is $\sigma^2$-\emph{sub-Gaussian} if the CGF exists everywhere and the inequality
\[ \Psi(\lambda) \le \sigma^2 \|\lambda\|^2/2 \]
is satisfied for all $\lambda \in \mathbb{R}^d$. The \emph{Chernoff bound} states that
\[  Q(\langle X, w \rangle > t) = Q(\exp \langle X, w \rangle > e^t) \le \exp \inf_{\gamma \ge 0} \left[\Psi(\gamma \langle X, w \rangle) - \gamma t\right]. \]
Note that if $X$ has sub-Gaussian constant $\sigma^2$, then this gives similar concentration estimates as when $X \sim N(0, \sigma^2 Id)$, justifying the terminology. Though we will not explicitly use this fact, in several important situations the Chernoff bound is (asymptotically) optimal because there is a matching lower bound, as in e.g. Cramer's Theorem \cite{ellis2007entropy,dembo2011large}.

The following variational principle is useful for analyzing cumulant generating functions and, more generally, for evaluating normalizing constants such as $\log Z$ from the definition of models like \eqref{eqn:general-model}. In particular, it expresses that exponentially reweighted measures with densities of the form $e^{f(X)}$ optimize a tradeoff between maximizing $\E_P[f(X)]$ and minimizing relative entropy.
\begin{lemma}[Gibbs variational principle, Lemma 4.10 of \cite{van2014probability}]\label{lem:gibbs-variational-general}
%\fnote{Let $\Omega$ be a probability space, let $Q$ be a probability measure over $\Omega$,...}
Let $X$ be a random variable on an arbitrary probability space $(\Omega,\mathcal{F},Q)$ and suppose that $f(X)$ is a measurable, real-valued function satisfying $\E_Q[e^{f(X)}] < \infty$. Then
\begin{equation}\label{eqn:gibbs-principle}
\log \E_Q[e^{f(X)}] = \sup_{P} [\E_P[f(X)] - \KL(P,Q)] 
\end{equation}
where $P$ ranges over all probability measures absolutely continuous with respect to $Q$, and $\KL(P,Q) = \E_P[\log \frac{dP}{dQ}(X)]$ is the \emph{relative entropy}, also known as \emph{Kullback-Liebler divergence}. Furthermore, the supremum is attained by the probability measure $P$ with density $\frac{dP}{dQ}(x) \propto e^{f(x)}$. 
\end{lemma}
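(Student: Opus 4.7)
The plan is to prove the identity by exhibiting an explicit optimizer and then verifying, via a direct algebraic manipulation, that the gap between the left side and the candidate value at an arbitrary $P$ is a nonnegative relative entropy. This is the classical ``completing the square in KL'' proof.

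First I would define the candidate optimal measure $P^{\star}$ by its Radon-Nikodym derivative with respect to $Q$:
\[ \frac{dP^{\star}}{dQ}(x) = \frac{e^{f(x)}}{\E_Q[e^{f(X)}]}. \]
The assumption $\E_Q[e^{f(X)}] < \infty$ makes this well-defined as a probability measure, and clearly $P^{\star} \ll Q$. Note also that $\log(dP^{\star}/dQ)(x) = f(x) - \log \E_Q[e^{f(X)}]$, which is the key identity we exploit.

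Next I would take an arbitrary competitor $P \ll Q$ (if $P$ is not absolutely continuous with respect to $Q$, then $\KL(P,Q) = +\infty$ and the inequality is trivial), and compute
\[ \E_P[f(X)] - \KL(P,Q) = \E_P\left[f(X) - \log \frac{dP}{dQ}(X)\right] = \E_P\left[\log \frac{e^{f(X)}}{dP/dQ}\right]. \]
Multiplying inside the logarithm by $\E_Q[e^{f(X)}]/\E_Q[e^{f(X)}]$ and using the definition of $P^\star$ converts this to
\[ \log \E_Q[e^{f(X)}] + \E_P\left[\log \frac{dP^{\star}/dQ}{dP/dQ}(X)\right] = \log \E_Q[e^{f(X)}] - \KL(P,P^{\star}), \]
where in the last step I would need to check that $P \ll P^{\star}$ whenever the expression is finite (which follows because $dP^{\star}/dQ > 0$ wherever $e^{f} > 0$, and $P$ must be supported there for $\E_P[f(X)]$ to avoid the $-\infty$ case trivially).

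Finally, since $\KL(P,P^{\star}) \ge 0$ by Gibbs' inequality (a one-line consequence of Jensen's inequality applied to $-\log$), we obtain the upper bound $\E_P[f(X)] - \KL(P,Q) \le \log \E_Q[e^{f(X)}]$ for all admissible $P$, with equality when $P = P^{\star}$ because $\KL(P^{\star},P^{\star}) = 0$. This simultaneously establishes \eqref{eqn:gibbs-principle} and identifies the maximizer. The only subtle step is the handling of null sets and the edge case $f = -\infty$ on a positive-measure set; I would dispatch those by the standard convention that $0 \log 0 = 0$ and by restricting to the support of $P$ throughout, which is routine.
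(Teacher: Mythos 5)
The paper does not prove this lemma: it cites it as Lemma~4.10 of \cite{van2014probability} and uses it as a black box. Your proof is the standard (and correct) argument for the Gibbs/Donsker--Varadhan variational formula: introduce the tilted measure $P^{\star}$ with $dP^{\star}/dQ = e^{f}/\E_Q[e^{f}]$, then rewrite $\E_P[f(X)] - \KL(P,Q) = \log\E_Q[e^{f(X)}] - \KL(P,P^{\star})$ and invoke nonnegativity of relative entropy. The algebra is right. One small simplification you can make at the end: since the lemma stipulates that $f$ is real-valued, $e^{f(x)} > 0$ everywhere, so $dP^{\star}/dQ > 0$ $Q$-a.e.; thus $P^{\star}$ and $Q$ are mutually absolutely continuous and $P \ll Q$ already gives $P \ll P^{\star}$ --- the edge case ``$f = -\infty$ on a positive-measure set'' cannot occur and no extra support restriction is needed. (If you also want to be fully careful, one usually assumes implicitly that the supremum is over $P$ for which $\E_P[f(X)]$ is well-defined in $[-\infty,+\infty)$, so that the difference on the right-hand side of \eqref{eqn:gibbs-principle} makes sense; this is the convention used in \cite{van2014probability} as well.)
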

When $X$ is valued in a cartesian product of sets like $\mathcal{X}_1 \times \cdots \times \mathcal{X}_n$ and $Q$ is a product measure, the \emph{naive mean-field approximation} (see e.g. \cite{parisi1988statistical,ellis2007entropy,eldan-gross}) is defined by restricting the right hand side of \eqref{eqn:gibbs-principle} to product measures. This approximation is always a lower bound on the true right hand side of \eqref{eqn:gibbs-principle}, is exact when $f$ is linear, and is useful for estimating the normalizing constant in classical models related to ours like the Curie-Weiss model \cite{ellis2007entropy}.
\subsection{Convergence of Measures}
We will need some fundamental tools from probability theory which we recall here; a general reference for this material
is \cite{billingsley2013convergence} or \cite{durrett2019probability}.
\begin{defn}
A sequence of real-valued random variables $\{X_n\}_{n = 1}^{\infty}$ is \emph{uniformly integrable} if
\[ \lim_{\alpha \to \infty} \sup_n \E[|X_n| \bone(|X_n| \ge \alpha)] = 0. \]
\end{defn}
\begin{defn}
A sequence of random vectors $\{X_n\}_n$  converges in distribution 
%(or equivalently, converges weakly) 
to $X$ if
$\lim_{n\to\infty} \E h(X_n) = \E h(X)$
for all bounded continuous functions $h$. This is equivalent to requiring that their CDFs converge at all points where the CDF of $X$ is continuous \cite{durrett2019probability}. Note that if $X_n \to X$ then $f(X_n) \to f(X)$ for $f$ continuous (this is sometimes called the continuous mapping theorem \cite{durrett2019probability}). %(``continuous mapping theorem'').
\end{defn}
\begin{theorem}[Corollary of Portmanteau Theorem, Theorem 2.1 of \cite{billingsley2013convergence}]
Suppose that $X_n \sim Q_n$, $X \sim Q$, $X_n \to X$ in distribution, and $X$ has a continuous pdf. 
Then \[ \lim_{n \to \infty} Q_n(A) = Q(A) \]
for all Borel-measurable sets $A$. In particular, the CDFs of a real-valued random variable converge.
\end{theorem}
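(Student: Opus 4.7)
The plan is to derive this as a direct consequence of the full Portmanteau theorem (Theorem 2.1 of Billingsley), which supplies several equivalent characterizations of weak convergence $X_n \to X$, the crucial one being: $Q_n(A) \to Q(A)$ for every Borel set $A$ satisfying $Q(\partial A) = 0$ (a so-called $Q$-continuity set). The continuous-pdf hypothesis on $X$ enters in order to turn the $Q$-continuity condition into a condition on the topological boundary of $A$, which in turn makes the ``in particular'' assertion about CDFs immediate.

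First I would establish Portmanteau itself. Starting from the definition $\E[h(X_n)] \to \E[h(X)]$ for every bounded continuous $h$, I would deduce $\liminf_n Q_n(U) \ge Q(U)$ for every open $U$ by approximating $\mathbf{1}_U$ from below by the bounded continuous functions $h_m(x) = 1 \wedge m\, d(x, U^c)$ and applying monotone convergence. Taking complements gives $\limsup_n Q_n(C) \le Q(C)$ for closed $C$. Sandwiching $A^\circ \subseteq A \subseteq \bar A$ then yields $Q(A^\circ) \le \liminf_n Q_n(A) \le \limsup_n Q_n(A) \le Q(\bar A)$, and the identity $Q(\bar A) = Q(A^\circ) + Q(\partial A)$ shows that both ends equal $Q(A)$ whenever $Q(\partial A)=0$.

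Second I would exploit the hypothesis that $X$ has a continuous density with respect to Lebesgue measure. Absolute continuity implies that any Borel set whose topological boundary is Lebesgue-null is automatically a $Q$-continuity set, and continuity moreover means $Q$ is atomless. For the ``in particular'' claim, this is decisive: every half-line $(-\infty, t]$ has boundary $\{t\}$ of zero $Q$-measure, so $Q_n((-\infty,t]) \to Q((-\infty,t])$ at every real $t$, i.e.\ the CDFs converge pointwise. The same observation covers intervals, rectangles, balls, convex sets, and more generally any Borel set with piecewise-smooth or otherwise Lebesgue-null boundary, which accounts for every concrete $A$ the paper will later invoke the theorem on.

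The main obstacle is that the clause ``for all Borel-measurable sets $A$,'' read literally, is strictly stronger than what weak convergence can deliver, even assuming a continuous density: the sequence $Q_n = Uni(\{0, 1/n, \dots, (n-1)/n\})$ converges weakly to $Uni([0,1])$, yet $Q_n(\mathbb{Q}) = 1$ for all $n$ while $Q(\mathbb{Q}) = 0$, and analogous examples arise from Borel sets (such as a fat Cantor set) whose boundary has positive Lebesgue measure. The usable content of the theorem is therefore convergence on $Q$-continuity sets, and the continuous-pdf assumption makes this class rich enough for every application in the paper, including the pointwise convergence of real-valued CDFs that is explicitly highlighted.
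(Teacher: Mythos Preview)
The paper does not supply a proof of this statement; it is listed in the preliminaries as a direct citation of Billingsley's Portmanteau theorem, so there is nothing to compare your argument against beyond the standard textbook derivation you have outlined, which is correct.

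Your diagnosis of the statement itself is also correct and worth flagging explicitly: the clause ``for all Borel-measurable sets $A$'' is an overstatement. Weak convergence together with a continuous limiting density only yields $Q_n(A)\to Q(A)$ for $Q$-continuity sets, i.e.\ sets with $Q(\partial A)=0$; your counterexample with $Q_n$ uniform on $\{0,1/n,\dots,(n-1)/n\}$ and $A=\mathbb{Q}\cap[0,1]$ is exactly right. The continuous-pdf hypothesis guarantees that every set with Lebesgue-null boundary is a $Q$-continuity set, which is what makes the ``in particular'' clause about CDFs go through and, as you note, covers every concrete application later in the paper (orthants, half-spaces, etc.). So the usable content of the theorem is precisely what you have proved, and the paper's subsequent invocations of it are all legitimate even though the blanket statement is not.
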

\begin{theorem}[Multivariate CLT, \cite{durrett2019probability}]\label{thm:clt}
Suppose $X_1,X_2,\ldots$ are an i.i.d. sequence of random vectors with mean $0$ and covariance matrix $\Sigma$. Then the sequence of partial sums $\left(\frac{1}{\sqrt{n}} \sum_{i =1 }^n X_i\right)_{n = 1}^{\infty}$ converges in distribution to a random variable with law $N(0,\Sigma)$.
\end{theorem}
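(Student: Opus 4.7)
The plan is to reduce the multivariate statement to the one-dimensional central limit theorem via the Cramér–Wold device. Recall that the Cramér–Wold device asserts: if $Y_n, Y$ are random vectors in $\mathbb{R}^d$, then $Y_n \to Y$ in distribution if and only if $\langle v, Y_n \rangle \to \langle v, Y \rangle$ in distribution for every fixed $v \in \mathbb{R}^d$. This is itself a consequence of Lévy's continuity theorem, since the characteristic function of $Y_n$ at $v$ equals the characteristic function of $\langle v, Y_n \rangle$ at $1$, so pointwise convergence of the one-dimensional characteristic functions at $1$ (which follows from convergence in distribution of the projections) is equivalent to pointwise convergence of the joint characteristic functions.

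Given Cramér–Wold, I would fix an arbitrary $v \in \mathbb{R}^d$ and define the real-valued random variables $W_i := \langle v, X_i \rangle$. These are i.i.d. (as a measurable function of the i.i.d.\ sequence $X_i$), they have mean $\E[W_i] = \langle v, \E X_i \rangle = 0$, and their variance is
\[ \Var(W_i) = \E[\langle v, X_i \rangle^2] = v^\top \E[X_i X_i^\top] v = v^\top \Sigma v, \]
which is finite because $\Sigma$ is a finite matrix. By the classical (one-dimensional) CLT, the partial sums $\frac{1}{\sqrt{n}} \sum_{i=1}^n W_i$ converge in distribution to $N(0, v^\top \Sigma v)$.

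To finish, let $Z \sim N(0,\Sigma)$ be a $d$-dimensional Gaussian. Then $\langle v, Z \rangle$ is a real-valued Gaussian with mean $0$ and variance $v^\top \Sigma v$, i.e.\ $\langle v, Z \rangle \sim N(0, v^\top \Sigma v)$. Comparing with the previous paragraph, for every $v$ we have
\[ \Big\langle v, \tfrac{1}{\sqrt{n}}\sum_{i=1}^n X_i \Big\rangle \;=\; \tfrac{1}{\sqrt{n}} \sum_{i=1}^n W_i \;\xrightarrow{d}\; N(0, v^\top \Sigma v) \;\stackrel{d}{=}\; \langle v, Z \rangle. \]
By Cramér–Wold, this yields the desired convergence in distribution $\tfrac{1}{\sqrt{n}}\sum_i X_i \to Z$.

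Since everything is a direct invocation of standard tools, there is no serious obstacle; the only subtlety is remembering to invoke Cramér–Wold (or equivalently Lévy continuity applied to joint characteristic functions) to upgrade one-dimensional convergence to multivariate convergence, and to verify that the limiting projections $N(0, v^\top \Sigma v)$ are exactly the one-dimensional marginals of $N(0,\Sigma)$, which holds by the definition of the multivariate Gaussian via its covariance matrix.
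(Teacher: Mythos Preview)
Your proof via Cram\'er--Wold is correct and is the standard textbook argument. Note that the paper itself does not prove this theorem; it is stated in the preliminaries as a citation to Durrett's textbook, so there is no ``paper's own proof'' to compare against --- your argument is precisely the one found in that reference.
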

\begin{theorem}[Theorem 3.5 of \cite{billingsley2013convergence}]\label{thm:ui-implies-convergence}
Suppose that $X_n \to X$ in distribution and the sequence of random variables $(X_n)_{n = 1}^{\infty}$ is uniformly integrable. Then $\lim_{n \to \infty} \E X_n = \E X$.
\end{theorem}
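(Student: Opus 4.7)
The plan is to reduce the problem to the convergence of expectations of bounded continuous functions, which is given directly by the definition of convergence in distribution, via a truncation argument in which uniform integrability is used to control the truncation error uniformly in $n$. Concretely, for $\alpha > 0$ I will use the bounded continuous truncation $\phi_\alpha(x) = \max(-\alpha, \min(x, \alpha))$ and the triangle inequality
\[ |\E X_n - \E X| \le |\E X_n - \E \phi_\alpha(X_n)| + |\E \phi_\alpha(X_n) - \E \phi_\alpha(X)| + |\E \phi_\alpha(X) - \E X|. \]

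Before invoking this decomposition I need to know that $\E|X|$ is finite, so that the right-most term is well-defined. For this, note that $x \mapsto |x| \wedge M$ is bounded continuous for each $M > 0$, so by convergence in distribution and the hypothesis of uniform integrability (which in particular implies $C := \sup_n \E|X_n| < \infty$, by choosing the UI threshold large enough that the tail contribution is at most $1$ and bounding the rest by the threshold), one has $\E(|X| \wedge M) = \lim_n \E(|X_n| \wedge M) \le C$; letting $M \to \infty$ by monotone convergence gives $\E|X| \le C < \infty$.

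With integrability in hand, I bound the three terms. The first satisfies $|\E X_n - \E \phi_\alpha(X_n)| \le \E[|X_n|\,\bone(|X_n| > \alpha)]$, which by uniform integrability can be made arbitrarily small, \emph{uniformly in $n$}, by choosing $\alpha$ sufficiently large. The third term is bounded analogously by $\E[|X|\,\bone(|X|>\alpha)]$, which tends to $0$ as $\alpha \to \infty$ by dominated convergence since $|X| \in L^1$. The middle term tends to $0$ as $n \to \infty$ for each fixed $\alpha$, since $\phi_\alpha$ is bounded and continuous and $X_n \to X$ in distribution. Given $\varepsilon > 0$, I would fix $\alpha$ large enough that the first and third terms are each at most $\varepsilon/3$, then take $n$ large enough that the middle term is at most $\varepsilon/3$; this shows $\limsup_n |\E X_n - \E X| \le \varepsilon$, and sending $\varepsilon \to 0$ completes the proof.

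There is no deep obstacle here; the only subtlety is that the tail error from the truncation must vanish \emph{uniformly} in $n$, which is precisely the content of the uniform integrability hypothesis and is the reason why mere pointwise integrability of each $X_n$ together with distributional convergence would not suffice (as simple counterexamples like $X_n = n \cdot \bone_{A_n}$ with $\bP(A_n) = 1/n$ show). Choosing a continuous rather than indicator-based truncation is what lets us pass to the distributional-convergence hypothesis cleanly on the middle term.
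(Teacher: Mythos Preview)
Your proof is correct and is essentially the standard truncation argument for this classical result. Note that the paper does not supply its own proof of this statement: it is quoted as Theorem~3.5 of \cite{billingsley2013convergence} and used as a black box, so there is no paper-side argument to compare against.
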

The proof the high temperature result will be based on establishing a contiguity result; informally, contiguity is like absolute continuity for sequences of measures. More precisely, we recall the relevant definition here (see e.g. \cite{van2000asymptotic} for a reference):
\begin{defn}
Let $(P_n)_{n = 1}^{\infty}$ and $(Q_n)_{n =1 }^{\infty}$ be two sequences of probability measures defined on the same sequence of measurable spaces. We say that $Q_n$ is \emph{contiguous} to $P_n$ if for any sequence of measurable sets $A_n$ and taking $n \to \infty$, $P_n(A_n) \to 0$ implies $Q_n(A_n) \to 0$. If $P_n$ is contiguous to $Q_n$ and $Q_n$ is contiguous to $P_n$ we say the sequence is \emph{mutually contiguous}.
\end{defn}
\subsection{Kendall's Tau and its kernel structure}\label{sec:tau-inner-product}
In the body of works on quantitative social choice (e.g. \cite{kalai2002fourier,friedgut2011quantitative,mossel2012quantitative}) it has been observed that the permutation group $\mathfrak{S}_q$ can usefully be embedded into the hypercube $\{\pm 1\}^{{q \choose 2}}$ by viewing a permutation as a list of inversions. For example, when modeling the outcome of a three-party election we can identify $\mathfrak{S}_3$ with the subset $\NAE_3 \subset \{\pm 1\}^3$ consisting of the 6 vectors where not all coordinates are equal, and then pairwise elections under majority correspond to averaging these embedded vectors and taking their coordinate-wise sign. 

This embedding also plays a key role in the solution of the models considered in this paper, because it gives a useful geometric interpretation of Kendall's Tau distance in terms of inner products, which lets us reduce questions about the behavior of mean-field models on $\mathfrak{S}_q^{n}$ to large deviation questions for random vectors in Euclidean space. Here we lay out this simple inner product structure explicitly. We note that outside of the previously mentioned context in quantitative social choice, this connection has also been used in the statistics and machine learning literature in the context of the ``kernel trick'' \cite{jiao2015kendall}.
%and then the fact that the predicate $\NAE_3$ has a nice Fourier expansion enables an easy calculation of the probability of paradox in the i.i.d. setting under pairwise majority elections \cite{kalai2002fourier,o2014analysis}.
%standard Kendall's Tau distance between permutations, which counts the number
%of inversions between two permutations, actually has a simple 
\begin{defn}
The \emph{Kendell's Tau} distance $d_{\tau}$ between two permutations is given by
\[ d_{\tau}(\pi,\pi') := \#\{ (i,j) : i < j, \bone[\pi(i) < \pi(j)] \ne \bone[\pi'(i) < \pi'(j)] \}. \]
\end{defn}
\begin{defn}
Define $\varphi : \mathfrak{S}_q \to \{\pm 1\}^{q \choose 2}$ by
\[ \varphi(\pi)_{i,j} = (-1)^{\bone[\pi(i) > \pi(j)]} \]
where the indices range over $\{i,j\} \in {q \choose 2}$ with $i < j$.
\end{defn}
\begin{lemma}\label{lem:tau-inner-product}
For any permutations $\pi,\pi' \in \mathfrak{S}_q$,
\[ \langle \varphi(\pi), \varphi(\pi') \rangle = {q \choose 2} - 2 d_{\tau}(\pi,\pi'). \]
\end{lemma}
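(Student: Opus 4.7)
The plan is to expand the inner product term-by-term over pairs $i<j$ and observe that each term records whether $\pi$ and $\pi'$ agree on the relative order of that pair. Concretely, by the definition of $\varphi$,
\[ \langle \varphi(\pi), \varphi(\pi') \rangle = \sum_{i < j} (-1)^{\bone[\pi(i) > \pi(j)] + \bone[\pi'(i) > \pi'(j)]}. \]
The sign of the $(i,j)$ summand is $+1$ exactly when the two indicators are equal, i.e.\ when $\pi$ and $\pi'$ rank $i$ and $j$ in the same order, and is $-1$ exactly when they rank them in opposite order.

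Let $A$ denote the number of pairs $(i,j)$ with $i<j$ on which $\pi$ and $\pi'$ agree, and $D$ the number of pairs on which they disagree. Then the inner product equals $A - D$, while $A + D = \binom{q}{2}$ since every pair is counted exactly once. By the definition of Kendall's tau, $D = d_\tau(\pi, \pi')$, so $A = \binom{q}{2} - d_\tau(\pi, \pi')$ and
\[ \langle \varphi(\pi), \varphi(\pi') \rangle = A - D = \binom{q}{2} - 2\,d_\tau(\pi, \pi'), \]
as claimed.

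There is essentially no obstacle here: the lemma is a direct unpacking of the definitions of $\varphi$ and $d_\tau$, and the only point to be slightly careful about is the bookkeeping that each unordered pair $\{i,j\}$ is indexed once (by $i<j$) in both the inner product sum and the definition of $d_\tau$, so that $A+D$ is precisely $\binom{q}{2}$.
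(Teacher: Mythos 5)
Your proof is correct and follows essentially the same route as the paper: both expand the inner product over pairs $i<j$ and observe that each term is $+1$ or $-1$ according to whether the two permutations agree on that pair, with the paper phrasing this via the identity $(-1)^{a+b}=1-2\cdot\bone[a\neq b]$ and you via the $A-D$ bookkeeping. The two write-ups are interchangeable.
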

\begin{proof}
By writing out the left hand side, we see 
\begin{align*}  \langle \varphi(\pi), \varphi(\pi') \rangle 
&= \sum_{i < j} (-1)^{\bone[\pi(i) > \pi(j)] + \bone[\pi'(i) > \pi'(j)]} \\
&= \sum_{i < j} (1 - 2\cdot \bone[\bone[\pi(i) > \pi(j)] \ne \bone[\pi'(i) > \pi'(j)]])
\end{align*}
and the last expression equals the right hand side.
\end{proof}
\begin{remark}
The particular choice of embedding $\varphi$ is not crucial;
%is in some sense just a technical convenience;
any embedding with the inner products prescribed by Lemma~\ref{lem:tau-inner-product} works equally well for our purposes.
%Instead of choosing the particular embedding $\varphi$, the previous Lemma gives an alternative, coordinate-free way to define the vectors $(\varphi(\pi))_{\pi \in \mathfrak{S}_q}$ geometrically in terms of their inner products, which induces a natural inner product on permutations.
%which is the only property of the vectors which we will really need to use.
\end{remark}

In the analysis of the models considered in this paper, the following exponential family of distributions over permutations, which we will refer to as the Inversion Tilt Model, appear naturally. As we explain below, the normalizing constant in this model is the CGF of the random vector $\varphi(\pi)$ for $\pi \sim Uni(\mathfrak{S}_q)$, so studying the inversion tilt model is closely related to the concentration of that random vector, which plays a central role in the mean-field analysis. 
%Informally, this happens because the analysis of the mean-field model will be reduced to understanding large deviations events for samples drawn from the uniform measure over $\mathfrak{S}_q$, and large deviations inequalities (e.g. Chernoff bounds) are typically saturated by exponential tilts of the base measure. See \cite{dembo2011large,ellis2007entropy} for more about these general aspects of large deviations theory. 
%This happens because we want to study the concentration of the random vector $\varphi(\pi)$.
%We also expect (see e.g. \cite{eldan2018decomposition}) that coarsely the mean-field models can be approximated by mixtures of Inversion Tilt Models, though we will not need this fact. 
These distributions are maximum entropy distributions over permutations given fixed value of $\E \langle \lambda, \varphi(\pi) \rangle$ (this is a consequence of the Gibbs Variational Principle, see Lemma~\ref{lem:gibbs-variational-general} and Lemma~\ref{lem:gibbs-variational}).
%It follows from general facts that these measures are maximum entropy distributions over permutations given a fixed value of $\E \langle \lambda, \varphi(\pi) \rangle$ \cite{ellis2007entropy}.
\begin{defn}
The \emph{Inversion Tilt Model} %\todo{better name?}
with parameter $\lambda \in \mathbb{R}^{q \choose 2}$ is the probability measure $P_{\lambda}$ on $\mathfrak{S}_q$ defined by
\[ P_{\lambda}(\pi) = \frac{1}{Z} \exp\left(\langle \lambda, \varphi(\pi) \rangle\right). \]
\end{defn}
\noindent
Up to additive constant, $\log Z$ is the cumulant generating function $\Psi(\lambda)$ of the random vector $\varphi(\pi)$ with $\pi \sim Uni \left(\mathfrak{S}_q\right)$.
Here the energy function $E(\pi) = -\langle \lambda, \varphi(\pi) \rangle$ can be interpreted as giving a (possibly negative) cost $-\lambda_{ij}$ for inverting the pair $(i,j)$. Besides appearing in the analysis of the mean-field model, it also appears in the conditional law of $X_i$ given the rest of $X$, as used in the natural Markov chain for sampling, Glauber dynamics (previously discussed in the introduction).

To the best of our knowledge, this is the first time that this general model on permutations has been considered in the literature. The Inversion Tilt Model contains the celebrated Mallows model \cite{mallows1957non} and Generalized Mallows Model \cite{fligner1986distance} under the Kendall's Tau distance as special cases. Although these models are studied under other distances on permutations as well, the Mallows model under Kendall's tau is by far the most popular and well-studied.
\begin{remark}
A main technical focus of this paper is understanding the behavior of the normalizing constant $Z$ from the Inversion Tilt Model, especially upper bounds. It is not too hard to see that $Z$ is NP hard to compute as a function of $q$ and $\lambda \in \mathbb{R}^{q \choose 2}$: if we take $\|\lambda\| \to \infty$ this becomes equivalent to the NP-hard weighted Feedback Arcset in Tournaments (FAST) problem \cite{ailon2008aggregating}.
\end{remark}
%(see e.g. \cite{mukherjee2016estimation} and references within). 
%We leave a more detailed study of this model to future work.
%Although the focus of this article is studying 
%We note that  Kendall's tau is probably the 
\section{Mean-Field Model of a Three-Way Election}\label{sec:mf-3}
In this section we thoroughly analyze the behavior of the mean-field model for an election between three candidates: as we will see this model has a critical inverse temperature $\beta = 3/4$ and we give a detailed description of the behavior of the model both in the high temperature regime $\beta < 3/4$ and low temperature regime $\beta > 3/4$. The two candidate model is classical (Curie-Weiss model) and the reader can refer to \cite{ellis2007entropy} for a complete analysis of it; our arguments follows a similar strategy to reduce various questions about this model to large deviations problems which we must then solve.

\textbf{Notation: }
In this section, instead of invoking the previously described embedding $\varphi$ of $\mathfrak{S}_3$ into $\{\pm 1\}^3$ throughout, it will be more convenient and consistent with previous work to use the embedding
\[ \tilde{\varphi}(\pi) := \left((-1)^{\bone[\pi(1) < \pi(2)]}, (-1)^{\bone[\pi(2) < \pi(3)]}, (-1)^{\bone[\pi(3) < \pi(1)]}\right) \]
which differs just in reordering coordinates and flipping the sign of the last coordinate. In particular, this embedding has the same inner product structure as the $\varphi$ embedding as used in Lemma~\ref{lem:tau-inner-product}.
The reason for picking this embedding (as done in \cite{o2014analysis})) is that its image has a convenient description as $\NAE_3$, the subset of $\{\pm 1\}^n$ given by removing $\{(+1,+1,+1),(-1,-1,-1)\}$. Equivalently $\NAE_3$ is the set of 6 vectors satisfying the not-all-equals predicate. %\todo{fixme: this convention differs from the general one described in previous section...}
Using the identification of $\mathfrak{S}_3$ and $\NAE_3$ we can think of the model as a distribution over $x \in \NAE_3^n \subset \left(\{\pm 1\}^3\right)^n$ given by
\begin{equation}\label{eqn:mf-nae}
Q(X = x) = \frac{1}{Z} \exp\left(\frac{\beta}{2n} \sum_{i = 1}^n \sum_{j = 1}^n \langle x_i, x_j\rangle\right) 
\end{equation}
and we will usually refer to $\NAE_3$ instead of $\mathfrak{S}_3$. Likewise, in what follows $Z = Z_n(\beta)$ will always refer to the normalizing constant $Z$ in the above expression, explicitly
\begin{equation}\label{eqn:Z-mf}
Z = Z_n(\beta) = \sum_{x \in \NAE_3^n} \exp\left(\frac{\beta}{2n} \sum_{i = 1}^n \sum_{j = 1}^n \langle x_i, x_j\rangle\right).
\end{equation}

%\begin{proof}[Proof sketch]
%By uniform integrability, we can replace $X_n$ by $X_n 1(|X_n| \le \alpha)$ while only changing the expectation by $\epsilon$. Then the result follows by weak convergence (and replacing the indicator by a bump type indicator). Finally, take $\epsilon \to 0$.
%\end{proof}
\subsection{Subcritical regime}
\subsubsection{Quantitative Arrow's Theorem}
We first recall the following quantitative version of Arrow's theorem in the i.i.d. setting. Informally it states that in an election under ``impartial culture'', i.e. where voters choices are drawn i.i.d. from uniform on $\NAE_3$, and all candidates have a positive chance of winning, then there is a positive probability of a paradox (independent of $n$) unless we have a near-dictatorship. %We prove this result by applying a contiguity argument and the following result in the independent setting:
%\begin{defn}
%Functions $f,g$ are $\epsilon$-close with respect to probability measure $P$ if $P(f \ne g) \le \epsilon$.
%\end{defn}
\begin{theorem}[Quantitative Arrow Theorem \cite{mossel2012quantitative}, $q = 3$ case]\label{thm:quantitative-arrow-original-3}
Fix $\epsilon > 0$.
Suppose each voter votes independently and uniformly at random from $\NAE_3$ and there are $n$ voters; i.e. votes are sampled from $P = Uni(\NAE_3)^{\otimes n}$.  There exists $\delta = \delta(\epsilon) > 0$ such that at least one of the following occurs, for any Boolean functions $f,g,h : \{\pm 1\}^n \to \{\pm 1\}$:
\begin{enumerate}
    \item The function $X \mapsto (f(X^1),g(X^2),h(X^3))$ from $\mathfrak{S}_3^n \to \{\pm 1\}^3$ is $\epsilon$-close to a function in $\mathcal{F}_3$, i.e. the constitution is close to dictator or close to having a fixed top or bottom candidate.
    \item The probability of paradox is lower bounded by $\delta$: 
    \[ Q((f(X^1),g(X^2),h(X^3)) \in \NAE_3) < 1 - \delta. \]
\end{enumerate}
%Fix $\epsilon > 0$. There exists $\delta = \delta(\epsilon) > 0$ such that at least one of the following occurs:
%\begin{enumerate}
%    \item Two of $f,g,h$ are $\epsilon$-close to constant functions of opposite sign.
%    \item There exists a variable $i$ such that $f,g,h$ are all $\epsilon$-close to the same dictator on voter $i$.
%    \item The probability of paradox is lower bounded by $\delta$: $Q(\NAE_3(f(X^1),g(X^2),h(X^3))) < 1 - \delta$.
%\end{enumerate}
%If at least two of boolean functions $f,g,h : \{\pm 1\}^n \to \{\pm 1\}$ are $\epsilon$-far from 
\end{theorem}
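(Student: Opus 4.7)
The plan is to express the probability of paradox as a linear combination of pairwise noise correlations on $\{\pm 1\}^n$, then apply the invariance principle together with reverse hypercontractivity to either lower bound this probability or extract structure from $(f,g,h)$. Fourier expansion on $\{\pm 1\}^3$ gives $\mathbf{1}[a=b=c] = \tfrac{1}{4}(1+ab+bc+ca)$, so applied coordinatewise,
\[
P\bigl(f(X^1)=g(X^2)=h(X^3)\bigr) = \tfrac{1}{4}\Bigl(1 + \mathbb{E}[f(X^1)g(X^2)] + \mathbb{E}[g(X^2)h(X^3)] + \mathbb{E}[f(X^1)h(X^3)]\Bigr).
\]
Under $\mathrm{Uni}(\NAE_3)^{\otimes n}$ each $X^k$ is uniform on $\{\pm 1\}^n$ while any pair $(X^k_i,X^\ell_i)$ has correlation $-1/3$ independently across voters; so each cross term equals a Bonami--Beckner noise inner product $\langle f,T_{-1/3}g\rangle$. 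Paradox-avoidance thus amounts to this three-term sum being very close to its maximum value $3$.

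Next I would carry out a two-case analysis in the style of \cite{mossel2012quantitative}. In the low-influence regime (where each variable has at most $\tau$ influence on $f,g,h$, for a small $\tau$ to be chosen), a slight noise-smoothing followed by the MOO invariance principle \cite{mossel2005noise} transfers the problem to correlated Gaussian space: replace each triple $(X^1_i,X^2_i,X^3_i)$ by a Gaussian triple with the same pairwise covariance $-1/3$, and replace $f,g,h$ by their multilinear extensions, changing the paradox probability by $o_\tau(1)$. In the resulting Gaussian problem, a multi-function reverse hypercontractive estimate (Borell's inequality \cite{Borell:82} in its form for negatively correlated vectors, as adapted in \cite{mossel2012quantitative}) yields
\[
P(\tilde f=\tilde g=\tilde h=+1) \geq c_+(\mathbb{E}\tilde f,\mathbb{E}\tilde g,\mathbb{E}\tilde h),
\]
and symmetrically for the all-$-1$ event, with $c_\pm$ strictly positive unless some bias $|\mathbb{E}\tilde f|$ is very close to $1$. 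If no such extreme bias is present, the paradox probability is already $\geq\delta(\epsilon)>0$. If one of $f,g,h$ is $\epsilon$-close to a constant, then the constitution has a pair of candidates with a nearly fixed winner, placing it $\epsilon$-close to the subfamily of $\mathcal{F}_3$ in which two output coordinates are constants of opposite sign.

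The high-influence regime is handled by an iterative extraction argument: whenever some variable has influence exceeding $\tau$ on one of $f,g,h$, restrict on that coordinate and recurse, tracking the accumulated paradox probability. The analysis of \cite{mossel2012quantitative} shows that a function avoiding paradox with probability $1-o(1)$, while not being close to a constant-pair constitution, must eventually concentrate almost all the influence on a single voter shared across $f,g,h$, yielding $\epsilon$-closeness to a dictator and hence to $\mathcal{F}_3$. Combining both cases gives the required dichotomy. The main obstacle in this plan is the multivariate reverse hypercontractive step in Gaussian space: the classical Borell inequality handles one function, but we need a three-function lower bound under the specific negative pairwise correlation $-1/3$ induced by $\NAE_3$, together with quantitative control over exactly which regime of biases forces the constitution into $\mathcal{F}_3$ rather than into an unstructured near-constant function; getting sharp enough dependence on $\epsilon$ to close the dichotomy cleanly is the delicate part.
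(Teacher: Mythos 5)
The paper does not prove this theorem; it is quoted verbatim from \cite{mossel2012quantitative}, and your sketch is a fair reconstruction of the four-step architecture of that proof (Gaussian case via reverse hypercontractivity, invariance principle to transfer to low-influence Boolean functions, a two-influential-voter step, and a one-influential-voter step), which is also how the paper itself summarizes the argument in Section~5.2.1.

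There is, however, a genuine gap in your account of the Gaussian dichotomy. You write that if no paradox lower bound is available, then ``one of $f,g,h$ is $\epsilon$-close to a constant,'' and that this places the constitution $\epsilon$-close to $\mathcal{F}_3$. Neither half of this is correct. The class $\mathcal{F}_3$ (in the non-dictator case) requires \emph{two} of the three output coordinates to be constant, and with \emph{opposite} signs; a single near-constant output is not enough. And conversely, a single near-constant function does not suppress the paradox probability: if, say, $f \equiv 1$ while $g$ and $h$ are unbiased with low influences, then $P(f=g=h=1)=P(g=h=1)$, and the correlated pair $(X^2,X^3)$ together with reverse hypercontractivity already forces $P(g=h=1)=\Omega(1)$. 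This function is nowhere near $\mathcal{F}_3$, so it must fall in the paradox alternative, not the structure alternative. The actual content of Mossel's Gaussian lemma is precisely that if \emph{neither} $P(\tilde f=\tilde g=\tilde h=1)$ \emph{nor} $P(\tilde f=\tilde g=\tilde h=-1)$ can be lower bounded via the Borell-type inequality, then the vector of biases $(\E\tilde f,\E\tilde g,\E\tilde h)$ must be near a corner of the cube in which two coordinates are opposite extremes — and that is what delivers ``two near-constant functions of opposite sign.'' Your plan would need this two-sided conclusion, with a quantitative chain from ``no paradox bound for either sign'' to ``two opposite near-constants,'' to close the argument. Finally, your observation that $P(f=g=h)=\tfrac14(1+\E[fg]+\E[gh]+\E[fh])$ is a correct identity, but the cited proof does not work by lower bounding this sum directly; it estimates the two single-sign probabilities $P(f=g=h=\pm1)$ separately, which is what makes the dichotomy with the opposite-sign constant pair come out right.
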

In this section we generalize this theorem up to the sharp threshold of $\beta_c = 3/4$ in the mean-field model of interacting voters; later we will show that past this point (low temperature) regime the analogous theorem is false because paradox is indeed avoidable. Note that the above theorem is the special case of the theorem we will prove with $\beta = 0$. The main technical step in this analysis is the following Lemma which will be proved at the end of the section:
\begin{lemma}\label{lem:w_n-ui}
Fix $\beta < 3/4$. Suppose that $Y_n := \frac{1}{\sqrt{n}} \sum_{i = 1}^n X_i$ where
$X_1,X_2,\ldots$ are drawn i.i.d. from the uniform measure on $\NAE_3$, and let $P$ be the joint law of the $X_i$. 
The sequence of random variables $W_n := \exp(\frac{\beta}{2} \langle Y_n, Y_n \rangle)$ is uniformly integrable.
\end{lemma}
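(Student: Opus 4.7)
The plan is to prove the stronger claim that $\sup_n \E[W_n^p] < \infty$ for some $p > 1$ chosen so that $p\beta < 3/4$, which is a standard sufficient condition for uniform integrability (the tail bound $\E[|W_n|\bone(|W_n| > M)] \le \E[W_n^p]/M^{p-1}$ follows from H\"older's inequality). Fix such a $p$.

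The main tool is a Gaussian linearization (Hubbard--Stratonovich identity): for any $y \in \R^3$,
\[ \exp\!\Bigl(\tfrac{p\beta}{2}\|y\|^2\Bigr) = \E_{W \sim N(0, p\beta I_3)}\bigl[\exp(\langle W, y\rangle)\bigr]. \]
Applying this to $y = Y_n$, swapping expectations by Fubini, and using the independence of the $X_i$ reduces the calculation to
\[ \E[W_n^p] = \E_W\!\left[\exp\!\bigl(n\,\Psi(W/\sqrt{n})\bigr)\right], \]
where $\Psi(\lambda) := \log \E[\exp\langle\lambda, X_1\rangle]$ is the CGF of $X_1 \sim Uni(\NAE_3)$. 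After change of variables $v = W/\sqrt{n}$ this becomes a saddle-point integral of the form $(n/2\pi p\beta)^{3/2} \int_{\R^3} e^{n\,h(v)}\, dv$ with $h(v) := \Psi(v) - \|v\|^2/(2p\beta)$. The rest of the argument is Laplace-method asymptotics for $h$.

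Two key ingredients feed into the Laplace analysis. First, a short symmetry calculation shows $\mathrm{Cov}(X_1) = \Sigma = \tfrac{4}{3}I_3 - \tfrac{1}{3}J_3$ where $J_3 = \bone\bone^T$; thus $\lambda_{\max}(\Sigma) = 4/3$, and since $\mathrm{Hess}\,\Psi(0) = \Sigma$, we get $\mathrm{Hess}\, h(0) = \Sigma - I_3/(p\beta) \prec 0$ as soon as $p\beta < 3/4$. Hence $h(v) \le -c\|v\|^2$ in a neighborhood of $0$ for some $c > 0$. Second, the boundedness $\|X_1\| = \sqrt{3}$ a.s.\ gives the crude global bound $\Psi(v) \le \sqrt{3}\|v\|$, from which $h(v) \le -\|v\|^2/(4p\beta)$ once $\|v\|$ is sufficiently large. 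Combining these two regimes with uniqueness of $v = 0$ as a global maximum, I split the integral into a neighborhood contribution bounded above by $\int e^{-cn\|v\|^2}\,dv = O(n^{-3/2})$ and a tail contributing $O(e^{-n\eta})$ for some $\eta > 0$. Multiplying by the prefactor $n^{3/2}$ yields the desired uniform bound on $\E[W_n^p]$.

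The main obstacle is justifying that $v = 0$ is the unique global maximum of $h$ on all of $\R^3$: the local analysis at $0$ and the quadratic decay at infinity leave a bounded annulus in which one must rule out other critical points of $h$, equivalently nonzero solutions of the mean-field consistency equation $v = p\beta \,\nabla\Psi(v)$. My plan is to exploit the $S_3$-symmetry of $\NAE_3$ that permutes the three coordinates, which by convexity of $\Psi$ forces any maximizer to lie on one of the symmetric axes; on each axis one can verify uniqueness by an explicit one-parameter computation (along the diagonal $(t,t,t)$, for instance, $\Psi(t,t,t) = \log\cosh(t)$, so the equation reduces to $t = p\beta \tanh(t)$, which has only $t = 0$ when $p\beta < 1$). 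Alternatively, since the threshold $3/4$ is identified elsewhere in the paper as the bifurcation point of the variational problem for $\log Z$, the same analysis applied to the elevated temperature $p\beta$ shows $0$ is the unique critical point whenever $p\beta < 3/4$.
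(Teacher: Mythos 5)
Your overall framing---bound $\sup_n \E[W_n^p]$ via Hubbard--Stratonovich and Laplace asymptotics---is a legitimate alternative to what the paper does (the paper instead bounds the tail $P(W_n > w)$ polynomially, via a Chernoff bound together with a $\delta$-net over the unit sphere). Both routes ultimately require the same pointwise cumulant-generating-function bound $\Psi(v) \le \tfrac{2}{3}\|v\|^2$ for \emph{all} $v \in \R^3$, and that is exactly what your proposal fails to supply. Your local Hessian computation only gives $h(v) < 0$ near the origin, and your crude $\Psi(v) \le \sqrt{3}\|v\|$ only handles large $\|v\|$; you correctly flag the intermediate annulus as the obstacle, but neither of your proposed fixes closes it. The claim that ``convexity of $\Psi$ plus $S_3$-symmetry forces any maximizer onto a symmetric axis'' is false as stated: Jensen applied to the orbit average gives inequalities pointing the wrong way for $h = \Psi - \|\cdot\|^2/(2p\beta)$, and indeed the nontrivial critical points found in Lemma~\ref{lem:mean-field-solns} live on the \emph{two-dimensional} fixed-point sets of transpositions (type 3, $\lambda_1 = \lambda_2 \ne \lambda_3$), not on a one-parameter axis, so the diagonal computation $\Psi(t,t,t) = \log\cosh t$ settles only the easiest case. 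Your fallback---invoking the bifurcation analysis at $\beta = 3/4$ from elsewhere in the paper---would import the computer-plot-assisted Lemma~\ref{lem:mean-field-solns}, a much heavier tool than the lemma being proved requires.

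The paper closes this gap with a self-contained two-line inequality (Lemma~\ref{lem:cosh-inequality}): a term-by-term Taylor comparison gives $\tfrac{1}{3}(\cosh a + \cosh b + \cosh c) \le \exp\bigl((a^2+b^2+c^2)/6\bigr)$, and substituting $a = v_1+v_2-v_3$, etc., together with the fact that the quadratic form $a^2+b^2+c^2$ has top eigenvalue $4$ in the $v$-variables, yields $\Psi(v) \le \tfrac{2}{3}\|v\|^2$ globally. With that in hand your Laplace argument trivializes to a Gaussian integral bound (no stationary-phase analysis of the annulus is needed, since $h(v) \le -c\|v\|^2$ everywhere for $p\beta < 3/4$). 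So the correct repair to your proof is to prove this elementary $\cosh$ inequality up front rather than attempting a critical-point classification.
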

%NOTE: what we are really showing below is that $Q_n$ is contiguous to $P_n$ in a slightly strong sense.
Given the above Lemma, one can show $Y_n$ converges to a Gaussian with variance depending on $\beta$ (Lemma~\ref{lem:clt-magnetization}, deferred to Section~\ref{sec:guilbaud}) and from these facts establish mutual contiguity between the mean-field model and the i.i.d. model of voters:
\begin{lemma}\label{lem:mutual-contiguity}
Fix $\beta < 3/4$. There exists $f : (0,\infty) \to (0,\infty)$ a decreasing function such that the following is true for all $n \ge 1$ and $\delta > 0$. Let $P$ denote the uniform measure on $\NAE_3^{n}$ and $Q$ the mean field model defined in \eqref{eqn:mf-nae}.
If $Q(A) > \delta$ then $P(A) > f(\delta)$, and if $P(A) > \delta$
then $Q(A) > f(\delta)$.
\end{lemma}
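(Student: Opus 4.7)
The strategy is to work directly with the Radon–Nikodym derivative of $Q$ with respect to $P$. Observe that for $x \in \NAE_3^n$,
\[ \frac{dQ}{dP}(x) = \frac{|\NAE_3|^n}{Z_n(\beta)} \exp\left(\frac{\beta}{2n}\sum_{i,j}\langle x_i, x_j\rangle\right) = \frac{W_n(x)}{\E_P[W_n]}, \]
where $W_n(x) = \exp\!\bigl(\tfrac{\beta}{2}\|Y_n(x)\|^2\bigr)$ and $Y_n = \tfrac{1}{\sqrt{n}}\sum_i X_i$. Thus for any event $A$,
\[ Q(A) = \frac{\E_P[W_n \bone_A]}{\E_P[W_n]}, \qquad W_n \ge 1. \]
The whole proof then comes down to two quantitative facts about the normalizing factor $\E_P[W_n]$ and the tails of $W_n$, both of which follow from the uniform integrability of $W_n$ (Lemma~\ref{lem:w_n-ui}).

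First I would show that $\E_P[W_n]$ is bounded above and below by positive constants independent of $n$. The lower bound is trivial since $W_n \ge 1$. For the upper bound, combine Lemma~\ref{lem:w_n-ui} with the (deferred) fact that $Y_n \to N(0,\Sigma)$ in distribution (Lemma~\ref{lem:clt-magnetization}); by the continuous mapping theorem $W_n \to W := \exp(\tfrac{\beta}{2}\|Y\|^2)$ in distribution, and Theorem~\ref{thm:ui-implies-convergence} then yields $\E_P[W_n] \to \E W$. In particular there exists $C = C(\beta) < \infty$ with $\E_P[W_n] \le C$ for all $n$.

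For the implication $P(A) > \delta \Rightarrow Q(A) > f(\delta)$, simply use $W_n \ge 1$ and the upper bound on $\E_P[W_n]$:
\[ Q(A) = \frac{\E_P[W_n \bone_A]}{\E_P[W_n]} \ge \frac{P(A)}{C} > \frac{\delta}{C}. \]
For the reverse implication $Q(A) > \delta \Rightarrow P(A) > f(\delta)$, unpack uniform integrability: for any $\eta > 0$ there exists $\alpha(\eta)$ with $\sup_n \E_P[W_n \bone(W_n \ge \alpha)] \le \eta$. Writing
\[ \E_P[W_n \bone_A] \le \E_P[W_n \bone(W_n \ge \alpha)] + \alpha P(A) \le \eta + \alpha P(A), \]
and using $\delta \le Q(A) = \E_P[W_n \bone_A]/\E_P[W_n] \le \E_P[W_n \bone_A]$ (since $\E_P[W_n] \ge 1$), taking $\eta = \delta/2$ gives $P(A) \ge \delta/(2\alpha(\delta/2))$. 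Combining the two bounds we can take $f(\delta) := \min\!\bigl(\delta/C,\ \delta/(2\alpha(\delta/2))\bigr)$, which is decreasing in $\delta$.

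The only nontrivial input is Lemma~\ref{lem:w_n-ui} itself; once we have it, the argument above is essentially bookkeeping. (The CLT statement in Lemma~\ref{lem:clt-magnetization} is used solely to get the upper bound on $\E_P[W_n]$ from Theorem~\ref{thm:ui-implies-convergence}; alternatively one could bypass it by showing directly, via a similar truncation estimate using uniform integrability, that $\sup_n \E_P[W_n] < \infty$ — which in fact is an immediate consequence of uniform integrability, as $\sup_n \E_P[W_n] \le \alpha(1) + 1$.)
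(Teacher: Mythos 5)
Your proof is correct and takes a genuinely different, cleaner route than the paper's. The paper routes the argument through two auxiliary lemmas (Lemmas~\ref{lem:contig1} and \ref{lem:contig2}) that bound $\log \frac{dQ}{dP}$ from above with high probability under $P$ and under $Q$ respectively; the first uses Hoeffding's inequality under the product measure $P$, and the second invokes the Central Limit Theorem for the magnetization under $Q$ (Lemma~\ref{lem:clt-magnetization}, proved in a later subsection), plus in both cases a bound on $\log Z_n$ derived from uniform integrability. You instead work directly with the density $\frac{dQ}{dP} = W_n/\E_P[W_n]$ and exploit only two consequences of Lemma~\ref{lem:w_n-ui}: the bound $\sup_n \E_P[W_n] \le \alpha(1)+1 < \infty$, and the truncation inequality $\E_P[W_n \bone_A] \le \eta + \alpha(\eta)P(A)$. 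Together with the trivial bound $W_n \ge 1$, these give both directions of the contiguity statement without any appeal to Hoeffding or to the CLT. This is more self-contained (it needs only Lemma~\ref{lem:w_n-ui} as input) and illustrates why uniform integrability of the likelihood ratio, rather than a CLT, is the essential mechanism behind mutual contiguity here. One small slip to fix: your $f(\delta) = \min\!\bigl(\delta/C,\ \delta/(2\alpha(\delta/2))\bigr)$ is in fact \emph{increasing} in $\delta$, not decreasing as you wrote (both arguments of the $\min$ are increasing, since $\alpha(\delta/2)$ decreases as $\delta$ grows). This does not affect anything substantive — the word ``decreasing'' in the lemma's statement appears to be a typo, as the paper's own construction $f(\delta) = e^{-K(\delta)}\delta/2$ is also increasing, and the downstream application only requires that $f(\delta) > 0$ for every $\delta \in (0,1)$.
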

\begin{proof}
%We give the proof one way, as it is essentially the same both ways. %\todo{this page does not properly cite lemma numbers, etc.}
First, we show that a lower bound of $P(A) > \delta$ implies a lower bound on $Q(A)$.
Observe that
$Q(A) = \E_Q[\bone_A] = \E_P\left[\frac{Q(X)}{P(X)} \bone_A\right]$.
By Lemma~\ref{lem:contig1} below, there exists $K = K(\delta,\beta) > 0$ such that that if $B$ is the event $\frac{Q(X)}{P(X)} > e^{-K}$ then $P(B) \ge 1 - \delta/2$. Therefore
\begin{align*} 
Q(A) = \E_P\left[\frac{Q(X)}{P(X)} \bone_A\right] \ge \E_P\left[\frac{Q(X)}{P(X)} \bone_{A \cap B}\right] 
&\ge e^{-K} P(A \cap B) \\
&= e^{-K}(P(A) - P(A \cap B^C)) 
\ge e^{-K}\delta/2.
\end{align*}
In the reverse direction, if $Q(A) > \delta$, the argument is symmetrical except that we replace the use of Lemma~\ref{lem:contig1} by Lemma~\ref{lem:contig2}.
\end{proof}
\begin{lemma}\label{lem:contig1}
%Fix $\delta > 0$ and $\beta < 3/4$.
Let $\delta \in (0,1)$ be arbitrary.
In the same setting as Lemma~\ref{lem:mutual-contiguity}, there exists $K = K(\delta,\beta)$ such that 
$P\left(\log \frac{P(X)}{Q(X)} > K\right) \le \delta/2$.
\end{lemma}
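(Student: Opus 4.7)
The plan is to exploit the fact that $W_n := \exp\bigl(\tfrac{\beta}{2}\|Y_n\|^2\bigr) \ge 1$ pointwise, which lets us bound $\log P/Q$ deterministically and reduces the lemma to the already-isolated uniform integrability statement.

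First I would compute the density ratio explicitly. Since $P$ is uniform on $\NAE_3^n$, $P(x) = 6^{-n}$, and the normalizing constant of $Q$ from \eqref{eqn:Z-mf} can be rewritten as $Z = 6^n \E_P[W_n]$ after recognizing the double sum $\sum_{i,j}\langle x_i, x_j\rangle$ as $n\|Y_n(x)\|^2$. Thus
\[ \frac{P(x)}{Q(x)} = \frac{\E_P[W_n]}{W_n(x)}, \qquad \log \frac{P(x)}{Q(x)} = \log \E_P[W_n] - \log W_n(x). \]
Because $\beta > 0$ and $\|Y_n\|^2 \ge 0$, we have $W_n \ge 1$ for every $x \in \NAE_3^n$, hence $\log W_n \ge 0$, giving the pointwise bound $\log(P(x)/Q(x)) \le \log \E_P[W_n]$.

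It remains to show that $\sup_n \E_P[W_n]$ is finite. The multivariate CLT (Theorem~\ref{thm:clt}) applied to the i.i.d.\ $X_i$'s sampled from $\mathrm{Uni}(\NAE_3)$ gives $Y_n \to Y_\infty$ in distribution, where $Y_\infty$ is a centered Gaussian; by the continuous mapping theorem $W_n \to \exp(\tfrac{\beta}{2}\|Y_\infty\|^2)$ in distribution. Combining this convergence with uniform integrability of $\{W_n\}$ from Lemma~\ref{lem:w_n-ui}, Theorem~\ref{thm:ui-implies-convergence} then yields $\E_P[W_n] \to \E[\exp(\tfrac{\beta}{2}\|Y_\infty\|^2)]$, a finite number (finiteness here is precisely what the high-temperature threshold $\beta < 3/4$ secures). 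Setting $K := \log \sup_n \E_P[W_n]$, the event $\{\log P(X)/Q(X) > K\}$ is empty, so its $P$-probability is $0 \le \delta/2$.

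The main technical obstacle is really packaged into Lemma~\ref{lem:w_n-ui}; granted that result, the present lemma reduces to bookkeeping. Note that the argument actually produces a $K$ independent of $\delta$ — the one-sidedness $W_n \ge 1$ is what makes this direction easy. By contrast, the companion statement Lemma~\ref{lem:contig2}, which controls $\log Q/P = \log W_n - \log \E_P[W_n]$ from above, should genuinely require a $\delta$-dependent threshold, since $\log W_n$ is unbounded on rare configurations with large $\|Y_n\|$ and must be dealt with via a tail bound rather than a pointwise inequality.
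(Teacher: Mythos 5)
Your proof is correct and follows the same high-level strategy as the paper — decompose $\log(P/Q)$ and control $\log Z - n\log 6$ via uniform integrability of $W_n$ from Lemma~\ref{lem:w_n-ui} — but you streamline the argument in a way the paper misses. The paper also writes $\log \frac{P(X)}{Q(X)} = (\log Z - n\log 6) - \frac{\beta}{2n}\langle\sum_i X_i, \sum_i X_i\rangle$ and then invokes Hoeffding's inequality to control the second term, paying a $\delta$-dependence in $K$. As you observe, this step is unnecessary: since $\beta \ge 0$ the second term is $-\frac{\beta}{2}\|Y_n\|^2 \le 0$ deterministically, so the inequality $\log(P/Q) \le \log \E_P[W_n] \le C_\beta$ holds pointwise, and the $\delta$ in the lemma statement is vacuous. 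Your further remark that the companion Lemma~\ref{lem:contig2} is the genuinely two-sided case is also apt — there the sign works against you and a tail bound (the paper uses the CLT for $Y_n$ under $Q_n$) really is needed.

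One very minor note: to conclude $\sup_n \E_P[W_n] < \infty$ you route through the CLT plus Theorem~\ref{thm:ui-implies-convergence}, which is fine, but uniform integrability alone already gives boundedness of $\E_P[W_n]$ directly (pick $\alpha$ with $\sup_n \E_P[W_n \bone(W_n \ge \alpha)] \le 1$ and note $\E_P[W_n] \le \alpha + 1$); the CLT step can be dispensed with here, although it is of course needed elsewhere in the paper.
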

\begin{proof}
Since $P(x) = 1/6^n$ for all $x \in \NAE_3^n$, we have from \eqref{eqn:mf-nae} that
\begin{equation}\label{eqn:logpq}
\log \frac{P(X)}{Q(X)} = (\log Z - n \log(6)) - \frac{\beta}{2n} \left\langle \sum_i X_i, \sum_i X_i \right\rangle 
\end{equation}
where $Z$ is the normalizing constant, as in \eqref{eqn:Z-mf}. Next, observe that $\E_{P}[X_i] = (0,0,0)$ for all $i$, so by Hoeffding's inequality \cite{vershynin2018high} applied under the i.i.d. measure $P$, we can bound each of the three coordinates of $\sum_i X_i$ in absolute value by $O(\sqrt{n \log(12/\delta)})$ with probability at least $1 - \delta/6$ individually, so by the union bound the same bound holds for all three coordinates at once with probability at least $1 - \delta/2$. It remains to control $\log Z$, which follows from uniform integrability (Lemma~\ref{lem:w_n-ui}), as
\begin{equation}\label{eqn:logz-nonneg}
\log Z - n \log 6 = \log \frac{1}{6^n} Z = \log \E_{X \sim P}[\exp((\beta/2) \langle Y_n, Y_n \rangle)] \in [0, C_{\beta}]   
\end{equation}
which proves the result.
\end{proof}
\begin{lemma}\label{lem:contig2}
Fix $\delta > 0$. In the same setting as Lemma~\ref{lem:mutual-contiguity}, there exists $K = K(\delta,\beta)$ such that with probability at least $1 - \delta$,
$Q\left(\log \frac{Q(X)}{P(X)} > K\right) \le \delta/2$.
\end{lemma}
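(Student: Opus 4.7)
The plan is to mirror the argument of Lemma~\ref{lem:contig1}, but now working under $Q$ instead of $P$. The point is that, because $Q$ has a known Radon-Nikodym derivative with respect to $P$, any event that we want to control under $Q$ can be rewritten as an expectation under $P$, and then the uniform integrability guaranteed by Lemma~\ref{lem:w_n-ui} can be brought to bear.

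First I would use formula \eqref{eqn:logpq} (with signs flipped) to write
\[ \log \frac{Q(X)}{P(X)} = -(\log Z - n \log 6) + \frac{\beta}{2} \|Y_n\|^2, \]
where $Y_n = \frac{1}{\sqrt{n}} \sum_i X_i$. By \eqref{eqn:logz-nonneg}, $\log Z - n\log 6 \ge 0$, so
\[ \log \frac{Q(X)}{P(X)} \le \frac{\beta}{2} \|Y_n\|^2. \]
Hence it suffices to exhibit $t = t(\delta, \beta)$ such that $Q(\|Y_n\|^2 > t) \le \delta/2$, and then take $K = \beta t/2$.

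Next I would change measure. Writing $W_n = \exp((\beta/2) \|Y_n\|^2)$ and noting that the density is $\frac{dQ}{dP}(X) = W_n / (Z/6^n)$,
\[ Q(\|Y_n\|^2 > t) = \frac{1}{Z/6^n}\, \E_P\!\left[W_n \, \bone_{\|Y_n\|^2 > t}\right] \le \E_P\!\left[W_n \, \bone_{W_n > e^{\beta t/2}}\right], \]
where in the last step I used $Z/6^n \ge 1$ (again \eqref{eqn:logz-nonneg}) and the fact that $\{\|Y_n\|^2 > t\} = \{W_n > e^{\beta t/2}\}$.

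Finally I invoke Lemma~\ref{lem:w_n-ui}: since $\{W_n\}$ is uniformly integrable under $P$, there exists $\alpha = \alpha(\delta, \beta)$ with $\sup_n \E_P[W_n \bone_{W_n > \alpha}] \le \delta/2$. Choosing $t = (2/\beta) \log \alpha$ makes $e^{\beta t/2} = \alpha$, so the previous display gives $Q(\|Y_n\|^2 > t) \le \delta/2$, and setting $K = \beta t/2 = \log \alpha$ completes the proof. There is no real obstacle here once Lemma~\ref{lem:w_n-ui} is granted; the only subtlety is remembering to use $Z/6^n \ge 1$ so that the change-of-measure factor does not hurt us in this direction (it helped us in Lemma~\ref{lem:contig1} only insofar as the upper bound $Z/6^n \le e^{C_\beta}$ was used there).
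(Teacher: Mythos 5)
Your proof is correct, and it takes a genuinely different route from the paper's. The paper's proof of this lemma invokes Lemma~\ref{lem:clt-magnetization} (that $Y_n \to N(0,\Sigma_\beta)$ in distribution under $Q$) together with Gaussian tail bounds, and then handles finitely many small $n$ by a separate crude bound. Your argument instead changes measure back to $P$ and applies the uniform integrability of $W_n$ under $P$ (Lemma~\ref{lem:w_n-ui}) directly, which yields a bound that is uniform in $n$ from the start, with no case split. This is cleaner and removes a dependency: the CLT-under-$Q$ statement is itself a downstream consequence of Lemma~\ref{lem:w_n-ui}, so your proof short-circuits that chain. The one cosmetic caveat is the degenerate case $\beta = 0$, where $W_n \equiv 1$ and the map $t \mapsto e^{\beta t/2}$ is not invertible; there $Q = P$ and the lemma is trivial with $K = 0$, so this does not affect correctness, but you may want to note it explicitly.
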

\begin{proof}
As in \eqref{eqn:logpq} in the proof of the previous Lemma, we have
\[ \log \frac{Q(X)}{P(X)} = (n \log 6 - \log Z) + \frac{\beta}{2} \left\langle \frac{1}{\sqrt{n}} \sum_i X_i, \frac{1}{\sqrt{n}} \sum_i X_i \right\rangle \]
and the first term is nonpositive by \eqref{eqn:logz-nonneg}. For the second term, we use Lemma~\ref{lem:clt-magnetization} which says that the law of $\frac{1}{\sqrt{n}} \sum_i X_i$ under $Q_n$ converges (weakly) to a Gaussian $N(0,\Sigma_{\beta})$ as $n \to \infty$.
For a random vector $Z \sim N(0,\Sigma_{\beta})$, standard Gaussian tail bounds and the union bound
%use the Central Limit Theorem (Theorem~\ref{thm:clt}) 
imply that with probability at least $1 - \delta/4$ that the magnitude of the coordinates of $Z$ are all at most $O_{\beta}(\sqrt{\log(2/\delta)})$. Therefore, by convergence in distribution the same bound holds with probability at least $1 - \delta/2$ for the random vector $\frac{1}{\sqrt{n}} \sum_{i = 1}^n X_i$ under $Q$ as long as $n$ is sufficiently large. 
For the remaining small values of $n$, we can simply bound the term by a constant (which is allowed to depend on $\beta$).
\end{proof}
\begin{theorem}[Mean-Field Quantitative Arrow Theorem, Restatement of Theorem~\ref{thm:qa-mf-3-intro}]\label{thm:qa-mf-3}
Fix $\beta < 3/4$. Suppose the vector of voter preferences $X \in \mathfrak{S}_3^{n}$ is drawn from the mean field model $Q$ as defined in \eqref{eqn:mean-field-intro} with $q = 3$ candidates. Fix $\epsilon > 0$. There exists $\delta = \delta(\epsilon) > 0$ such that at least one of the following occurs, for any Boolean functions $f,g,h : \{\pm 1\}^n \to \{\pm 1\}$:
\begin{enumerate}
    \item The function $X \mapsto (f(X^1),g(X^2),h(X^3))$ from $\mathfrak{S}_3^n \to \{\pm 1\}^3$ is $\epsilon$-close to a function in $\mathcal{F}_3$, i.e. the constitution is close to dictator or close to having a fixed top or bottom candidate.
    \item The probability of paradox is lower bounded by $\delta$: 
    \[ Q((f(X^1),g(X^2),h(X^3)) \in \NAE_3) < 1 - \delta. \]
\end{enumerate}
%Fix $\beta < 3/4$. The same statement as above holds when $X$ is distributed according to the mean-field model at inverse temperature $\beta$. 
\end{theorem}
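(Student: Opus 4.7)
The plan is to deduce Theorem~\ref{thm:qa-mf-3} from the i.i.d.\ version, Theorem~\ref{thm:quantitative-arrow-original-3} of \cite{mossel2012quantitative}, by transferring both the ``near-dictator'' condition and the paradox event through the mutual contiguity already established in Lemma~\ref{lem:mutual-contiguity}. Write $\eta$ for the decreasing contiguity function supplied by that lemma (depending only on $\beta$ and not on $n$), and write $F := (f(X^1), g(X^2), h(X^3))$ for the constitution under consideration. Because $\eta$ is $n$-independent, this two-step transfer will preserve the $n$-independence of the constants in the final conclusion.

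I would argue by contrapositive. Suppose item~1 of Theorem~\ref{thm:qa-mf-3} fails, so that for every $F' \in \mathcal{F}_3$ we have $Q(F \ne F') > \epsilon$. Apply the direction of Lemma~\ref{lem:mutual-contiguity} which sends lower bounds under $Q$ to lower bounds under $P = Uni(\NAE_3)^{\otimes n}$, separately to each event $\{F \ne F'\}$. This yields $P(F \ne F') > \eta(\epsilon)$ for every $F' \in \mathcal{F}_3$; that is, $F$ is not $\eta(\epsilon)$-close to any element of $\mathcal{F}_3$ under the i.i.d.\ measure $P$. No union bound over $\mathcal{F}_3$ is needed because the quantifier on $F'$ is existential in the definition of closeness; this is fortunate, since $|\mathcal{F}_3|$ grows with $n$.

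Now invoke Theorem~\ref{thm:quantitative-arrow-original-3} with closeness parameter $\eta(\epsilon)$. Since its item~1 fails under $P$, its item~2 must hold: there exists $\delta' = \delta'(\eta(\epsilon)) > 0$, independent of $n$, such that the paradox event $B := \{F \notin \NAE_3\}$ satisfies $P(B) \ge \delta'$. Applying the reverse direction of Lemma~\ref{lem:mutual-contiguity} to $B$ gives $Q(B) \ge \eta(\delta')$, so setting $\delta := \eta(\delta'(\eta(\epsilon)))$ completes the proof, with $\delta$ depending only on $\epsilon$ and $\beta$.

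The main technical obstacle has already been dispatched in the preceding lemmas: establishing Lemma~\ref{lem:mutual-contiguity} itself, which rests on the uniform integrability of $W_n = \exp((\beta/2)\|Y_n\|^2)$ throughout the high-temperature regime $\beta < 3/4$ (Lemma~\ref{lem:w_n-ui}) together with the Gaussian CLT for the normalized magnetization. Given those inputs, the deduction of Theorem~\ref{thm:qa-mf-3} is a purely structural two-step reduction with no further analytic content; the only subtlety worth flagging is that ``closeness'' is measure-dependent, which is precisely why the quantitative (not merely qualitative) contiguity from Lemma~\ref{lem:mutual-contiguity} is essential.
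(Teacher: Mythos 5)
Your proof is correct and follows essentially the same route as the paper: transfer ``not $\epsilon$-close to $\mathcal{F}_3$'' from $Q$ to $P$ via Lemma~\ref{lem:mutual-contiguity}, invoke the i.i.d.\ Quantitative Arrow's Theorem (Theorem~\ref{thm:quantitative-arrow-original-3}) to get a paradox lower bound under $P$, and transfer that bound back to $Q$ by the other direction of the same lemma. The paper phrases the contiguity step in contrapositive form (``if $P(A) > 1-\epsilon'$ then $Q(A) > 1-\epsilon$'') while you phrase it directly, and you correctly note that $\delta$ in fact also depends on $\beta$ (as in Theorem~\ref{thm:qa-mf-3-intro}), but the underlying argument is identical.
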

\begin{proof}
Fix $\epsilon > 0$. By Lemma~\ref{lem:mutual-contiguity} (applied to the complementary event) there exists $\epsilon' > 0$  such that for any event $A$,
if $P(A) > 1 - \epsilon'$ then $Q(A) > 1 - \epsilon$.
Let $\delta' = \delta'(\epsilon') > 0$ be as specified in Theorem~\ref{thm:quantitative-arrow-original-3}. Finally,
choose $\delta = \delta(\delta')$ by Lemma~\ref{lem:mutual-contiguity} so that
if $P(A) > 1 - \delta'$ then $Q(A) > 1 - \delta$. We claim that this $\delta$ (which depends only on $\epsilon$) satisfies the claim in the Theorem. Explicitly,
%if $A$ is the event that the constitution is $\epsilon$-close to an element of $\mathcal{F}_3$ and we suppose $A$ does not occur, 
suppose that the constitution given by $(f,g,h)$ is not $\epsilon$-close to any element of $\mathcal{F}_3$ under $Q$, i.e.
\[ \max_{r \in \mathcal{F}_3} Q((f(X^1),g(X^2),h(X^3)) = r(X)) \le 1 - \epsilon, \]
then the constitution is not $\epsilon'$-close to any element of $\mathcal{F}_3$ under the i.i.d. measure $P$. Therefore, by Theorem~\ref{thm:quantitative-arrow-original-3} the probability of paradox is lower bounded by $\delta'$ under the i.i.d. measure $P$, which implies a lower bound of $\delta$ under the mean-field measure $Q$.
%We show there exists $\delta' > 0$ depending only on $\epsilon$ such that the desired result is true. Suppose $f,g,h$ are three functions for which the constitution is not $\epsilon$-close to a dictator in $Q$ and for which every candidate has positive chance of winning. By previous Lemma they are not $\epsilon'$-far from a dictator or constant under $P$. 
%Let $E$ be the event that a paradox occurs (i.e. $f(X^1) = g(X^2) = h(X^3)$). Then by Theorem~\ref{thm:quantitative-arrow-original-3}, $E$ has positive probability under an i.i.d. measure $P$, and by Lemma~\ref{lem:mutual-contiguity} it also has positive probability under i.i.d. measure $Q$.
\end{proof}
Finally, we prove the key uniform integrability result. Recall that a $\delta$-net of the unit sphere $S^{d - 1} \subset \mathbb{R}^d$ (with respect to the Euclidean metric) is a set of points $N_{\delta} \subset S^{d - 1}$ such that for every point $x \in S^{d - 1}$, there exists $y \in N_{\delta}$ with $\|x - y\|_2 \le \delta$, and that there exist $\delta$-nets of $S^{d - 1}$ of size $(3/\delta)^d$, see e.g. Corollary 4.2.13 of \cite{vershynin2018high}. 
%In order to prove the key uniform integrability estimate, we will need the
We will use the following standard result about nets on the sphere:
\begin{lemma}[Exercise 4.4.2 of \cite{vershynin2018high}]\label{lem:net}
For $\delta \in (0,1)$, if $N_{\delta}$ is a $\delta$-net of the unit sphere in $\mathbb{R}^d$, then for any $x \in \mathbb{R}^d$, 
$\max_{u : \|u\| = 1} \langle x, u \rangle = \|x\| \le \frac{1}{1 - \delta} \max_{u \in N_{\delta}} \langle x, u \rangle$.
\end{lemma}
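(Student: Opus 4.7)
The plan is to reduce the bound to a single application of Cauchy--Schwarz, invoked together with the defining approximation property of a $\delta$-net. Both assertions in the lemma are really just Cauchy--Schwarz: the equality $\max_{\|u\|=1} \langle x, u \rangle = \|x\|$ is the standard equality case, attained at $u^\star := x/\|x\|$, and the inequality comparing the spherical maximum to the net maximum arises by approximating $u^\star$ by a point of $N_\delta$ and controlling the error.

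First I would dispose of the trivial case $x = 0$, where both sides of the inequality vanish. For $x \neq 0$, I set $u^\star := x/\|x\|$, so that $\|u^\star\| = 1$ and $\langle x, u^\star \rangle = \|x\|$. By the defining property of a $\delta$-net, I can choose $v \in N_\delta$ with $\|u^\star - v\| \le \delta$. Expanding by bilinearity,
\[
\langle x, v \rangle \;=\; \langle x, u^\star \rangle + \langle x, v - u^\star \rangle \;=\; \|x\| + \langle x, v - u^\star \rangle,
\]
and Cauchy--Schwarz yields $|\langle x, v - u^\star \rangle| \le \|x\| \cdot \|v - u^\star\| \le \delta \|x\|$. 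Hence $\langle x, v \rangle \ge (1-\delta) \|x\|$, and since $v \in N_\delta$ this gives $\max_{u \in N_\delta} \langle x, u \rangle \ge (1-\delta) \|x\|$. Rearranging, which is legal because $\delta < 1$, yields the claimed inequality.

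There is no real obstacle here; the entire argument uses one application of Cauchy--Schwarz, one call of the net property, and the hypothesis $\delta < 1$ only to justify dividing by $1 - \delta$. The one thing to keep in mind for how the lemma will be deployed later is that it converts a supremum over the sphere into a maximum over a finite set, which is exactly what is needed to combine a union bound over $N_\delta$ with sub-Gaussian tail control of the linear functionals $\langle Y_n, u \rangle$ when upper bounding $\mathbb{E}[\exp((\beta/2) \|Y_n\|^2)]$ in the subsequent proof of Lemma~\ref{lem:w_n-ui}.
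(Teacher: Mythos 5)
Your proof is correct and is the standard net argument (approximate the maximizer $u^\star = x/\|x\|$ by a net point and bound the error via Cauchy--Schwarz). The paper does not reprove this lemma — it cites it directly as Exercise 4.4.2 of Vershynin — but your argument is exactly the one Vershynin's solution would give, so there is nothing to compare beyond noting the match.
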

The following elementary inequality plays a key role in our argument.
\begin{lemma}\label{lem:cosh-inequality}
For any $a,b,c \in \mathbb{R}$,
$\frac{1}{3} (\cosh(a) + \cosh(b) + \cosh(c)) \le \exp(a^2/6 + b^2/6 + c^2/6)$.
\end{lemma}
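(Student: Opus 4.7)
The plan is to prove the inequality termwise after expanding both sides as power series. Writing
$$\cosh(x) = \sum_{k \ge 0} \frac{x^{2k}}{(2k)!}, \qquad \exp\!\bigl((a^2+b^2+c^2)/6\bigr) = \sum_{k \ge 0} \frac{(a^2+b^2+c^2)^k}{6^k\, k!},$$
reduces the claimed inequality to showing, for each $k \ge 0$, that
$$\frac{a^{2k}+b^{2k}+c^{2k}}{3\,(2k)!} \;\le\; \frac{(a^2+b^2+c^2)^k}{6^k\, k!}.$$

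For $k = 0$ both sides equal $1$. For $k \ge 1$ I would split the comparison into two elementary pieces. The first is the purely algebraic bound
$$a^{2k} + b^{2k} + c^{2k} \le (a^2 + b^2 + c^2)^k,$$
which follows from the multinomial expansion on the right: each of $a^{2k}, b^{2k}, c^{2k}$ appears with coefficient $1$, and every remaining term $\binom{k}{i,j,\ell}\, a^{2i} b^{2j} c^{2\ell}$ with $i + j + \ell = k$ is non-negative since its factors are squares. The second is the numerical fact
$$6^k\, k! \;\le\; 3\,(2k)!,$$
which I would verify by induction: setting $r_k := 3(2k)!/(6^k k!)$, one checks that $r_0 = 3$, $r_1 = 1$, and $r_{k+1}/r_k = (2k+1)/3 \ge 1$ for all $k \ge 1$, so $r_k \ge 1$ for every $k \ge 0$.

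Combining these two ingredients yields
$$\frac{a^{2k}+b^{2k}+c^{2k}}{3\,(2k)!} \;\le\; \frac{(a^2+b^2+c^2)^k}{3\,(2k)!} \;\le\; \frac{(a^2+b^2+c^2)^k}{6^k\, k!},$$
the desired termwise inequality, and summing over $k$ gives the lemma. There is no serious obstacle; the entire argument lives at the level of coefficient comparisons, pairing the multinomial bound with the numerical inequality so they combine cleanly at each order. A conceptual reformulation that I may or may not record is that the statement is equivalent to $\E[e^X] \le e^{\E[X^2]/2}$ for $X$ uniform on $\{\pm a, \pm b, \pm c\}$, i.e., a sub-Gaussian moment-generating function bound with variance proxy equal to the true variance.
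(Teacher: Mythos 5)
Your proof is correct and follows essentially the same route as the paper: expand both sides as power series, observe the extra multinomial cross terms on the right are nonnegative, and reduce to the coefficient inequality $6^k k! \le 3(2k)!$ proved by induction. The paper compresses the multinomial step into one sentence ("by nonnegativity of all terms and symmetry it suffices to check the coefficients only involving $a$"), but the underlying argument is identical.
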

\begin{proof}
By expanding both sides, it is equivalent to show that
\[ \frac{1}{3} \sum_{k} \frac{a^{2k} + b^{2k} + c^{2k}}{(2k)!} \le \sum_{k = 0}^{\infty} \frac{(a^2 + b^2 + c^2)^k}{6^k k!}. \]
By nonnegativity of all terms and symmetry it suffices to check this for the coefficients only involving $a$, where it reduces to $6^k k! \le 3 (2k)!$ for $k \ge 1$. This follows from induction, as $6(k + 1) \le (2k + 2)(2k + 1)$ for $k \ge 1$.
\end{proof}
%\begin{proof}
%Without loss of generality $\|x\|_2 = 1$ so the left hand side is 1 and the goal becomes $1 - \delta \le \max_{u \in N_{\delta}} \langle x, u\rangle$.
%Pick $u^*$ the maximizer of left hand side and $u^*_{\delta}$ the closest point in the net. Then by Cauchy-Schwartz
%\[ 1 = \langle x, u^* \rangle = \langle x, u^*_{\delta} \rangle + \langle x, u^* - u^*_{\delta} \rangle \le \langle x, u^*_{\delta} \rangle + \|u^* - u^*_{\delta}\| \le \langle x, u^*_{\delta} \rangle + \delta \]
%\end{proof}
\begin{proof}[Proof of Lemma~\ref{lem:w_n-ui}]
%\todo{would this proof be cleaner if we use Hubbard-Stratonovich transform method.}
%\fnote{did we define $P$ in statement}
Observe that
\begin{align*} 
%\E_{Y_n \sim P}
\E_P[W_n \bone(W_n > \alpha)] 
&= \int_0^{\infty} P(W_n 1(W_n > \alpha) > w) dw \\
&=  \alpha P(W_n > \alpha) + \int_{\alpha}^{\infty} P(W_n > w) dw. 
\end{align*}
so it suffices to show that $P(W_n > w) = O(w^{-(1 + \epsilon)})$ for any $\epsilon > 0$. Define $x > 0$ by $w = \exp((\beta/2) x^2)$ and fix $\delta > 0$ (to be taken small). Then by Lemma~\ref{lem:net} and the union bound
\begin{align*} P(W_n > w) = P(\langle Y_n, Y_n \rangle > x^2) 
&= P(\max_{u : \|u\|_2 = 1} \langle Y_n, u \rangle > x) \\
&\le |N(\delta)|\max_{u \in N_{\delta}}P( \langle Y_n, u\rangle > (1 - \delta) x).  
\end{align*}
By applying the Chernoff bound (i.e. the inequality $P(X > a) \le \E[e^{\lambda X}]/e^{\lambda a}$) %\footnote{Here we mean the general inequality $P(X > a) \le \E[e^{\lambda X}]/e^{\lambda a}$ which follows from Markov's inequality.}
we know that for any vector $u$,
$P(\langle Y_n, u \rangle > (1 - \delta) x) \le \min_{\lambda \ge 0} \frac{\E[\exp(\lambda \langle Y_n, u \rangle)]}{e^{\lambda(1 - \delta) x}}$.
Now we observe the following sub-gaussian bound for all unit vectors $u$: %(guessing: use $\cosh(x) \sim 1 + x^2/2$ and $\log(1 + x) \sim x$)
\begin{align*} 
\log &\E[\exp(\lambda \langle Y_n, u \rangle)] \\
&= n \log \E_{X \sim Uni(\NAE_3)}[\exp(\frac{\lambda}{\sqrt{n}} \langle X, u \rangle)] \\
&= n \log \frac{1}{3}\Big[\cosh\left(\frac{\lambda}{\sqrt{n}}(u_1 + u_2 - u_3)\right) + \cosh\left(\frac{\lambda}{\sqrt{n}} (u_1 - u_2 + u_3)\right) \\
&\qquad\qquad\qquad + \cosh\left(\frac{\lambda}{\sqrt{n}} (-u_1 + u_2 + u_3)\right)\Big] \\
&\le \frac{1}{6} \lambda^2 [(u_1 + u_2 - u_3)^2 + (u_1 - u_2 + u_3)^2 + (-u_1 + u_2 + u_3)^2]
= \frac{2}{3} \lambda^2
\end{align*}
where the inequality follows from Lemma~\ref{lem:cosh-inequality} and the last equality follows by computing the maximum of the quadratic form: explicitly,
\[ [(u_1 + u_2 - u_3)^2 + (u_1 - u_2 + u_3)^2 + (-u_1 + u_2 + u_3)^2] = u^T \begin{bmatrix} 3 & -1 & -1 \\ -1 & 3 & -1 \\ -1 & -1 & 3 \end{bmatrix} u \]
and the maximum eigenvalue of the matrix on the right hand side is $4$.
%as the top eigenvalue of the corresponding matrix -- the eigenvalue computation is done explicitly in Lemma~\ref{lem:cov-eigenvalues} for general $q$.
%if we know that $\frac{1}{3} (\cosh(a) + \cosh(b) + \cosh(c)) \le \exp(a^2/6 + b^2/6 + c^2/6)$ for all $a,b,c$. We now prove this fact:  
It follows that
\[ P(\langle Y_n, u \rangle > (1 - \delta)x) \le \min_{\lambda \ge 0} e^{(2/3) \lambda^2 - \lambda(1 - \delta)x} = e^{(3/8) (1 - \delta)^2 x^2 - (3/4)(1 - \delta)^2 x^2} = e^{-(3/8)(1 - \delta)^2 x^2}\]
(since the minimizer is at $\lambda = (3/4)(1 - \delta)x$)
so combining everything
\[ P(W_n > w) \le |N(\delta)| \exp(-(3/8)(1 - \delta)^2x^2) = |N(\delta)| w^{-\frac{3}{4\beta}(1 - \delta)^2}. \]
Note that $|N(\delta)|$ does not depend on $n$ since the net is over a sphere in fixed dimension (three dimensions). Finally, as long as $\beta < 3/4$ we can choose $\delta$ sufficiently small such that $\frac{3}{4\beta}(1 - \delta)^2 = 1 + \epsilon$ which proves the result.
\end{proof}
\subsubsection{Generalized Guilbaud's Formula}\label{sec:guilbaud}
In the following two lemmas we compute the limiting distribution of $Y_n$ under the mean-field Gibbs measure \eqref{eqn:mf-nae}. In \cite{raffaelli2005statistical} the authors also computed what the limiting covariance matrix should be, though they did not give a mathematically rigorous proof of this.
\begin{lemma}\label{lem:logz}
For every $\beta < 3/4$,
$\lim_{n \to \infty} (\log Z_n(\beta) - n \log(6)) = \log \sqrt{\frac{\det \Sigma_{\beta}}{\det \Sigma_{0}}}$
where
\[ \Sigma_{\beta} := \frac{1}{4\beta^2 - 15\beta + 9} \begin{bmatrix} 9 - 4\beta & -3 & -3 \\ -3 & 9 - 4\beta & -3 \\ -3 & -3 & 9 - 4\beta \end{bmatrix}. \]
and $Z_n = Z_n(\beta)$ is defined in \eqref{eqn:Z-mf}.
\end{lemma}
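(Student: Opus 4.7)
The plan is to combine the uniform integrability estimate of Lemma~\ref{lem:w_n-ui} with the multivariate CLT to reduce the computation of the limit to a finite-dimensional Gaussian integral, and then to identify this Gaussian integral with the determinantal expression in the statement.

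First I would rewrite the normalizing constant as
\[ \frac{Z_n(\beta)}{6^n} = \mathbb{E}_P\left[\exp\left(\frac{\beta}{2}\langle Y_n, Y_n\rangle\right)\right] = \mathbb{E}_P[W_n], \]
where $Y_n = \frac{1}{\sqrt{n}}\sum_{i=1}^n X_i$ with $X_i$ i.i.d.\ uniform on $\NAE_3$. A direct computation shows that $\mathbb{E}_P[X_1]=0$ and that $\mathrm{Cov}_P(X_1)=\Sigma_0$ (the diagonal entries equal $1$ and a simple enumeration over the six elements of $\NAE_3$ gives off-diagonal entries equal to $-1/3$, which matches the formula $\Sigma_\beta$ at $\beta=0$). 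By the multivariate CLT (Theorem~\ref{thm:clt}), $Y_n \to Z$ in distribution, where $Z \sim N(0,\Sigma_0)$. The continuous mapping theorem then gives $W_n \to \exp\bigl(\tfrac{\beta}{2}\langle Z, Z\rangle\bigr)$ in distribution. Combining this with the uniform integrability of $(W_n)$ from Lemma~\ref{lem:w_n-ui}, Theorem~\ref{thm:ui-implies-convergence} yields
\[ \lim_{n\to\infty} \frac{Z_n(\beta)}{6^n} = \mathbb{E}\left[\exp\left(\tfrac{\beta}{2} Z^T Z\right)\right]. \]

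Next I would compute the Gaussian integral explicitly. Since $Z \sim N(0,\Sigma_0)$,
\[ \mathbb{E}\bigl[e^{(\beta/2)Z^TZ}\bigr] = \frac{1}{(2\pi)^{3/2}\sqrt{\det\Sigma_0}}\int_{\mathbb{R}^3} \exp\left(-\tfrac{1}{2}z^T(\Sigma_0^{-1}-\beta I)z\right)\,dz. \]
The integral converges precisely when $\Sigma_0^{-1}-\beta I$ is positive definite, i.e.\ $\beta<1/\lambda_{\max}(\Sigma_0)$. A quick eigenvalue calculation gives $\lambda_{\max}(\Sigma_0)=4/3$, so convergence holds exactly in the regime $\beta<3/4$ assumed by the lemma. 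Carrying out the Gaussian integration yields
\[ \mathbb{E}\bigl[e^{(\beta/2)Z^TZ}\bigr] = \frac{1}{\sqrt{\det\Sigma_0 \cdot \det(\Sigma_0^{-1}-\beta I)}} = \sqrt{\frac{\det \Sigma_\beta}{\det \Sigma_0}}, \]
after setting $\Sigma_\beta := (\Sigma_0^{-1}-\beta I)^{-1}$ and using $\det \Sigma_\beta/\det \Sigma_0 = 1/\det(\Sigma_0(\Sigma_0^{-1}-\beta I)) = 1/\det(I-\beta\Sigma_0)$.

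Finally I would verify that this definition of $\Sigma_\beta$ coincides with the explicit matrix in the statement. Writing $\Sigma_0 = \tfrac{4}{3}I - \tfrac{1}{3}J$ (with $J$ the all-ones matrix) gives $\Sigma_0^{-1} = \tfrac{3}{4}(I+J)$, hence $\Sigma_0^{-1}-\beta I = (\tfrac{3}{4}-\beta)I + \tfrac{3}{4}J$, and inverting a matrix of the form $aI+bJ$ directly produces $\Sigma_\beta$ in the form stated, with the common denominator $(3-\beta)(3-4\beta) = 4\beta^2 - 15\beta + 9$. Taking logarithms of the relation $Z_n(\beta)/6^n \to \sqrt{\det \Sigma_\beta/\det \Sigma_0}$ then yields the lemma. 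The only real subtlety is justifying the passage from convergence in distribution to convergence of expectations, which is precisely the role of Lemma~\ref{lem:w_n-ui}; everything else is a short Gaussian computation.
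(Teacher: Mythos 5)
Your proposal is correct and takes essentially the same route as the paper's proof: rewrite $Z_n/6^n = \E_P[W_n]$, invoke the CLT together with the uniform integrability from Lemma~\ref{lem:w_n-ui} (via Theorem~\ref{thm:ui-implies-convergence}) to pass to the Gaussian expectation, and then evaluate the Gaussian integral by identifying the reweighted precision matrix $\Sigma_0^{-1}-\beta I$. The only differences are stylistic: you spell out the continuous-mapping step and the condition $\beta < 1/\lambda_{\max}(\Sigma_0)=3/4$ explicitly, and you verify the matrix identities by hand using the $aI+bJ$ structure, whereas the paper cites an external reference for $\Sigma_0$ and leaves the inversion as a claim.
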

\begin{proof}
Let $P$ and $Y_n$ be as in the statement of Lemma~\ref{lem:w_n-ui}, so that under $P$ the random vector $Y_n$ is distributed as a normalized sum of i.i.d. samples from $\NAE_3 \subset \{\pm 1\}^3$.  
Observe that
\[ \log Z_n - n \log 6 = \log \frac{Z_n}{6^n} = \log \E_{P}\left[\exp\left(\frac{\beta}{2} \langle Y_n, Y_n \rangle\right)\right]. \]
%= \log \E_{P}\left[\exp\left(\frac{\beta}{2} \langle Y_n, Y_n \rangle\right)\right]. \]
By Lemma~\ref{lem:w_n-ui}, $\exp(\frac{\beta}{2} \langle Y_n, Y_n \rangle)$ is uniformly integrable. Furthermore, we observe from the definition of $Y_n$ that it is a normalized sum of i.i.d. random vectors $X_1,X_2,\ldots$ with mean zero and covariance matrix 
\[ \Sigma_0 = \begin{bmatrix} 1 & -1/3 & -1/3 \\ -1/3 & 1 & -1/3 \\ -1/3 & -1/3 & 1 \end{bmatrix} \]
(this is a special case of Lemma 2.1 from \cite{mossel2012quantitative}).
Therefore from the Central Limit Theorem and Theorem~\ref{thm:ui-implies-convergence} we have
\[  \lim_{n \to \infty} \log \E_{P}\left[\exp\left(\frac{\beta}{2} \langle Y_n, Y_n \rangle\right)\right] = \log \E_{G \sim N(0,\Sigma_0)}\exp\left(\frac{\beta}{2} \langle G, G \rangle\right) = \log \sqrt{\frac{\det \Sigma_{\beta}}{\det \Sigma_{0}}}. \]
The last equality follows by computing normalizing constants for Gaussian distributions, since
\[ \E_{G \sim N(0,\Sigma_0)}\left[\exp\left(\frac{\beta}{2} \langle G,G \rangle\right)\right] = \frac{1}{\sqrt{2 \pi \det \Sigma_0}} \int e^{-g^T \Sigma_0^{-1} g/2 + (\beta/2)g^T g} dg \]
and the integral is just the normalizing constant for another multivariate Gaussian distribution.
%\todo{little more detail}
To perform the last calculation explicitly, observe that $\Sigma_0$ is the covariance matrix of $Y_n$ in the $\beta = 0$ case (as explained in detail in the proof of Lemma~\ref{lem:cov-eigenvalues}), so computing its inverse $\Sigma_0^{-1}$, we see that
%\[ \Sigma_0 = \begin{bmatrix} 1 & -1/3 & -1/3 \\ -1/3 & 1 & -1/3 \\ -1/3 & -1/3 & 1 \end{bmatrix}. \]
%The inverse matrix (i.e. precision matrix) is
%\[ \Theta_0 = \Sigma_0^{-1} = \frac{3}{4} \begin{bmatrix}2 & 1 & 1 \\ 1 & 2 & 1 \\ 1 & 1 & 2 \end{bmatrix} \]
under the exponential reweighting we get new inverse covariance (i.e. precision) matrix
\[ \Theta_{\beta} = \Sigma_0^{-1} - \beta I = \begin{bmatrix} 3/2 - \beta & 3/4 & 3/4 \\ 3/4 & 3/2 - \beta & 3/4 \\ 3/4 & 3/4 & 3/2 - \beta \end{bmatrix}  \]
and inverting $\Theta_{\beta}$ (using the assumption $\beta < 3/4$) gives the expression above for $\Sigma_{\beta}$,
%\[ \Sigma_{\beta} = \frac{1}{4\beta^2 - 15\beta + 9} \begin{bmatrix} 9 - 4\beta & -3 & -3 \\ -3 & 9 - 4\beta & -3 \\ -3 & -3 & 9 - 4\beta \end{bmatrix} \]
so using the standard formula for the normalizing constant of a multivariate Gaussian distribution gives the result.
\end{proof}
A similar argument 
%(deferred to the Appendix) 
proves the following Lemma as well.
\begin{lemma}\label{lem:clt-magnetization}
Fix $\beta < 3/4$ and suppose $X^{(n)} \sim Q_n$ where $Q_n$ is the mean-field Gibbs measure \eqref{eqn:mf-nae} on $n$ voters. Then the sequence of random variables $Y_n := \frac{1}{\sqrt{n}} \sum_{i = 1}^n X^{(n)}_i$ converges in distribution to
$Y \sim N(0, \Sigma_{\beta})$.
\end{lemma}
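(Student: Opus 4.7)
The plan is to prove this by a direct tilt/change-of-measure argument, reducing convergence under $Q_n$ to the central limit theorem under the i.i.d.\ measure $P_n := Uni(\NAE_3)^{\otimes n}$. By the definition \eqref{eqn:mf-nae}, the Radon--Nikodym derivative is
\[
\frac{dQ_n}{dP_n}(x) = \frac{6^n}{Z_n}\exp\!\left(\frac{\beta}{2}\langle Y_n(x), Y_n(x)\rangle\right),
\]
so for any bounded continuous test function $h:\mathbb{R}^3\to\mathbb{R}$,
\[
\E_{Q_n}[h(Y_n)] = \frac{6^n}{Z_n}\,\E_{P_n}\!\left[h(Y_n)\exp\!\left(\tfrac{\beta}{2}\langle Y_n,Y_n\rangle\right)\right].
\]
Our goal is to pass to the limit in both factors separately and identify the result with $\E[h(G)]$ for $G\sim N(0,\Sigma_\beta)$.

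First, I would handle the right-hand expectation under $P_n$. Under $P_n$ the vector $Y_n$ is a normalized sum of i.i.d.\ centered vectors with covariance $\Sigma_0$, so the multivariate CLT (Theorem~\ref{thm:clt}) gives $Y_n \Rightarrow G_0\sim N(0,\Sigma_0)$, and by the continuous mapping theorem the random variable $h(Y_n)\exp(\tfrac{\beta}{2}\langle Y_n,Y_n\rangle)$ converges in distribution to $h(G_0)\exp(\tfrac{\beta}{2}\langle G_0,G_0\rangle)$. Since $|h|\le \|h\|_\infty$, the sequence $h(Y_n)\exp(\tfrac{\beta}{2}\langle Y_n,Y_n\rangle)$ is dominated in absolute value by $\|h\|_\infty W_n$, which is uniformly integrable by the key estimate Lemma~\ref{lem:w_n-ui}; hence the family is itself uniformly integrable. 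Theorem~\ref{thm:ui-implies-convergence} then upgrades convergence in distribution to convergence of expectations:
\[
\E_{P_n}\!\left[h(Y_n)\exp\!\left(\tfrac{\beta}{2}\langle Y_n,Y_n\rangle\right)\right] \longrightarrow \E\!\left[h(G_0)\exp\!\left(\tfrac{\beta}{2}\langle G_0,G_0\rangle\right)\right].
\]

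Next, I would compute the Gaussian integral on the right explicitly. Writing out the density of $G_0\sim N(0,\Sigma_0)$, the exponent becomes $-\tfrac{1}{2}x^T(\Sigma_0^{-1}-\beta I)x = -\tfrac{1}{2}x^T\Sigma_\beta^{-1}x$ using the identity $\Theta_\beta = \Sigma_0^{-1}-\beta I$ derived in the proof of Lemma~\ref{lem:logz}. Renormalizing yields
\[
\E\!\left[h(G_0)\exp\!\left(\tfrac{\beta}{2}\langle G_0,G_0\rangle\right)\right] = \sqrt{\tfrac{\det \Sigma_\beta}{\det \Sigma_0}}\;\E_{G\sim N(0,\Sigma_\beta)}[h(G)].
\]
Combining this with Lemma~\ref{lem:logz}, which gives $6^n/Z_n \to \sqrt{\det\Sigma_0/\det\Sigma_\beta}$, the two determinantal factors cancel and we obtain $\E_{Q_n}[h(Y_n)] \to \E_{G\sim N(0,\Sigma_\beta)}[h(G)]$ for every bounded continuous $h$, which is precisely convergence in distribution.

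There is essentially no new obstacle here: the heavy lifting was already done in establishing uniform integrability (Lemma~\ref{lem:w_n-ui}) and in the determinantal computation inside Lemma~\ref{lem:logz}. The only point requiring mild care is ensuring that one may integrate the test function $h$ together with the exponential weight; this is handled cleanly by the domination $|h(Y_n)|\,W_n \le \|h\|_\infty W_n$ together with Lemma~\ref{lem:w_n-ui}, which is exactly why the sub-Gaussian threshold $\beta<3/4$ propagates from the normalizing-constant limit to the full weak-convergence statement.
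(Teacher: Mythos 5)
Your proof is correct and takes essentially the same route as the paper: a change of measure from $Q_n$ to the product measure $P_n$, followed by the multivariate CLT under $P_n$, with the uniform integrability of $W_n$ (Lemma~\ref{lem:w_n-ui}) used to upgrade weak convergence to convergence of expectations, and the Gaussian renormalization identity from Lemma~\ref{lem:logz} to identify the limit. The paper is slightly terser — it normalizes the tilt by $\E_P\exp(\tfrac{\beta}{2}\langle Y_n,Y_n\rangle)$ in place and then cites "the same calculation as the proof of Lemma~\ref{lem:logz}" — but your more explicit version, factoring out $6^n/Z_n$ and showing the determinants cancel, is the same argument.
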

\begin{proof}%[Proof of Lemma~\ref{lem:clt-magnetization}]
Let $h$ be an arbitrary continuous, bounded function. We need to show $\lim_{n \to \infty} \E h\left(Y_n\right) = \E_{Y \sim N(0,\Sigma_{\beta})} h(Y)$.
Let $Q = Q_n$ denote the mean-field Gibbs measure on $n$ voters and $P = P_n$ denote the measure where $X^{(n)} \sim Uni(\NAE_3^{n})$ and $Y_n$ is defined the same way. Observe that
$\frac{dQ}{dP} = \frac{2^n}{Z} \exp(\frac{\beta}{2} \langle Y_n, Y_n \rangle)$
and $1 = \E_P \frac{dQ}{dP} = \frac{2^n}{Z} \E_P \exp(\frac{\beta}{2} \langle Y_n, Y_n \rangle)$ so 
\[ \frac{2^n}{Z} = \frac{1}{\E_P \exp(\frac{\beta}{2} \langle Y_n, Y_n \rangle)}. \]
Therefore,
%Writing out $dQ/dP$ gives
\[ \E_Q h\left(Y_n\right) = \E_P \frac{dQ}{dP} h(Y_n) = \E_P\left[h(Y_n)\frac{\exp((\beta/2) \langle Y_n, Y_n\rangle)}{\E \exp((\beta/2) \langle Y_n, Y_n\rangle)}\right].  \]
As in the proof of Lemma~\ref{lem:logz}, by the Central Limit Theorem (Theorem~\ref{thm:clt})  we know that %under $P$, $Y_n
the law of $Y_n$ under $P_n$ converges (weakly) to $N(0,\Sigma_0)$ as $n \to \infty$ and we want to check uniform integrability to replace the expectation by the Gaussian one. Since $h$ is bounded it clearly suffices to check this for $W_n := \exp((\beta/2) \langle Y_n, Y_n\rangle)$, and this was proved in Lemma~\ref{lem:w_n-ui}. Then the result follows from the same calculation as the proof of Lemma~\ref{lem:logz}.
\end{proof}
The spherical symmetry of the standard Gaussian implies the following well-known formula which can be found in a variety of references, such as \cite{o2014analysis,vershynin2018high}:
\begin{lemma}[\cite{o2014analysis,vershynin2018high}]\label{lem:gaussian-sgn}
Suppose that $X \sim N(0,\Sigma)$ where $\Sigma : 2 \times 2$. Then
\[ \E[sgn(X_1)sgn(X_2)] = 1 - \frac{2 \arccos \rho}{\pi} \]
where $\rho = \frac{\Sigma_{12}}{\sqrt{\Sigma_{11} \Sigma_{22}}}$.
\end{lemma}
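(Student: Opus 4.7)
The plan is to reduce the problem to a standard angle-counting argument on the unit circle via spherical symmetry of the i.i.d. Gaussian.

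\textbf{Step 1 (reduction to correlation form).} Since $\operatorname{sgn}(cX) = \operatorname{sgn}(X)$ for any $c > 0$, we may rescale the coordinates and assume without loss of generality that $\Sigma_{11} = \Sigma_{22} = 1$, so that $\Sigma_{12} = \rho \in [-1,1]$. (The degenerate cases $\Sigma_{11} = 0$ or $\Sigma_{22} = 0$ can be handled separately, but do not arise in the applications in this paper.) Setting $\theta := \arccos(\rho) \in [0,\pi]$, a direct covariance computation shows that the distribution of $(X_1,X_2)$ coincides with that of $(Z_1,\, \cos\theta\cdot Z_1 + \sin\theta \cdot Z_2)$ for $Z_1, Z_2$ i.i.d.\ standard Gaussians.

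\textbf{Step 2 (polar coordinates).} Write $(Z_1,Z_2) = R(\cos\Phi, \sin\Phi)$, where spherical symmetry of the standard Gaussian on $\mathbb{R}^2$ guarantees that $\Phi$ is uniformly distributed on $[0,2\pi)$ and independent of $R$. The event $\{X_1 > 0\}$ becomes $\{\cos\Phi > 0\}$, i.e.\ $\Phi \in (-\pi/2,\pi/2)$, while the event $\{X_2 > 0\}$ becomes $\{\cos\theta\cos\Phi + \sin\theta\sin\Phi > 0\} = \{\cos(\Phi-\theta) > 0\}$, i.e.\ $\Phi \in (\theta-\pi/2,\theta+\pi/2)$ modulo $2\pi$.

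\textbf{Step 3 (arc length computation).} For $\theta \in [0,\pi]$, the intersection of the two arcs has length $\pi - \theta$, so
\begin{equation*}
\mathbb{P}(X_1 > 0,\, X_2 > 0) = \frac{\pi - \theta}{2\pi},
\end{equation*}
and by the symmetry $(X_1,X_2) \stackrel{d}{=} (-X_1,-X_2)$ the same holds for the event $\{X_1 < 0,\, X_2 < 0\}$. Since the event $\{X_1 = 0\} \cup \{X_2 = 0\}$ has probability zero (in the non-degenerate case), we conclude
\begin{equation*}
\mathbb{P}(\operatorname{sgn}(X_1) = \operatorname{sgn}(X_2)) = \frac{\pi - \theta}{\pi}, \qquad \mathbb{P}(\operatorname{sgn}(X_1) \neq \operatorname{sgn}(X_2)) = \frac{\theta}{\pi}.
\end{equation*}
Taking the difference gives $\mathbb{E}[\operatorname{sgn}(X_1)\operatorname{sgn}(X_2)] = 1 - 2\theta/\pi = 1 - (2\arccos\rho)/\pi$, as claimed.

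The only mildly delicate point is being careful about the range of $\theta$ and the case analysis for intersecting the two arcs; restricting $\arccos$ to take values in $[0,\pi]$ makes this automatic. No further technical obstacle arises — the result is genuinely a consequence of the rotational invariance of the standard Gaussian together with a one-line arc-length computation.
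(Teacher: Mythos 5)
Your proof is correct, and it is essentially the standard argument (sometimes called Sheppard's formula or Grothendieck's identity) that appears in the cited references, e.g.\ Vershynin's book; the paper itself does not prove this lemma but simply cites it, and your write-up via spherical symmetry and arc-length is the canonical way it is proved there.
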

\begin{theorem}\label{thm:generalized-guilbaud}
For $\beta < 3/4$, the asymptotic (in $n$) probability of a Condercet winner 
in the three-candidate mean-field model is $\frac{3}{2\pi} \arccos\left(\frac{3}{4\beta - 9}\right)$.
\end{theorem}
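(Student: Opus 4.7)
The plan is to reduce the event ``there is a Condorcet winner'' to a statement about the joint sign pattern of the three coordinates of $Y_n := \frac{1}{\sqrt{n}}\sum_{i=1}^n X_i \in \R^3$, transfer the computation to a Gaussian limit via Lemma~\ref{lem:clt-magnetization}, and then evaluate the limiting probability using the pairwise formula of Lemma~\ref{lem:gaussian-sgn}.

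Under the $\tilde\varphi$ embedding used throughout this section, each pairwise majority election is decided by the sign of a single coordinate of $\sum_i X_i$, and the two intransitive cycles on three candidates correspond precisely to the two sign vectors $(+1,+1,+1)$ and $(-1,-1,-1)$ that are excluded from $\NAE_3$. Hence
\[ E_n = \{(\operatorname{sgn}(Y_n^1), \operatorname{sgn}(Y_n^2), \operatorname{sgn}(Y_n^3)) \in \NAE_3\}. \]
By Lemma~\ref{lem:clt-magnetization}, $Y_n$ converges in distribution to $Y \sim N(0,\Sigma_\beta)$. A direct computation shows the eigenvalues of $\Sigma_\beta$ are $(3-4\beta)/(4\beta^2-15\beta+9)$ (simple) and $4(3-\beta)/(4\beta^2-15\beta+9)$ (with multiplicity two), and both are strictly positive for $\beta < 3/4$; therefore $Y$ has a continuous density on $\R^3$. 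The Portmanteau corollary quoted in Section~2.3 then yields $\lim_n Q_n(E_n) = P(\operatorname{sgn}(Y)\in\NAE_3)$.

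To evaluate the limit, write $s_k := \operatorname{sgn}(Y^k)$. Using $s_2^2 = 1$, one has the identity
\[ \bone[s_1=s_2=s_3] = \frac{(1+s_1 s_2)(1+s_2 s_3)}{4} = \frac{1 + s_1 s_2 + s_2 s_3 + s_1 s_3}{4}, \]
so taking expectations,
\[ P(\operatorname{sgn}(Y)\in\NAE_3) = \frac{3 - \sum_{i<j}\E[s_i s_j]}{4}. \]
The symmetric structure of $\Sigma_\beta$, with common diagonal entry $(9-4\beta)/(4\beta^2-15\beta+9)$ and common off-diagonal entry $-3/(4\beta^2-15\beta+9)$, gives the common pairwise correlation $\rho_\beta = \frac{-3}{9-4\beta} = \frac{3}{4\beta-9}$. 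Applying Lemma~\ref{lem:gaussian-sgn} to each pair yields $\E[s_i s_j] = 1 - \tfrac{2}{\pi}\arccos \rho_\beta$, and substituting gives the limit $\tfrac{3}{2\pi}\arccos \rho_\beta = \tfrac{3}{2\pi}\arccos\!\left(\tfrac{3}{4\beta-9}\right)$.

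The real analytic content has been absorbed upstream into Lemma~\ref{lem:clt-magnetization} (weak convergence to a Gaussian), which in turn rests on the uniform integrability estimate of Lemma~\ref{lem:w_n-ui}. Within this proof, the only subtle step is that $\operatorname{sgn}$ is discontinuous, which is why we must verify that $\Sigma_\beta$ is positive definite on the entire high-temperature interval $\beta < 3/4$ before invoking Portmanteau; everything else is the indicator-expansion bookkeeping above plus a pairwise application of Sheppard's formula.
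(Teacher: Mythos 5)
Your proof is correct and follows essentially the same route as the paper's one-line argument: transfer to the Gaussian limit via Lemma~\ref{lem:clt-magnetization}, then apply Lemma~\ref{lem:gaussian-sgn} pairwise; your indicator expansion and the symmetry $\E[s_i s_j]=\E[s_1 s_2]$ are exactly the reason the paper's expression equals $(3/4)(1-\E[\operatorname{sgn}(Y^1)\operatorname{sgn}(Y^2)])$. You have simply spelled out the bookkeeping the paper leaves implicit, including the (correct and worthwhile) check that $\Sigma_\beta\succ 0$ so Portmanteau applies to the discontinuous $\operatorname{sgn}$.
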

\begin{proof}
This follows from Lemma~\ref{lem:gaussian-sgn} and Lemma~\ref{lem:clt-magnetization} since the above expression equals $(3/4)(1 - \E[sgn(Y^1)sgn(Y^2)])$.
\end{proof}
This result gives the first part of the curve in Figure~\ref{fig:guilbaud}.
%At $\beta = 0$ this is around $91.2 \%$ as in ``Guilbaud's Formula'' and as $\beta$ approaches to the critical temperature of the model ($\beta = 3/4$) this goes up smoothly to $\frac{3}{2\pi} \arccos(-1/2) = 1$. 
%So as the strength of the interactions increases, the probability of a Condercet election goes smoothly up to 1 (and it hits 1 exactly at the critical temperature of the model).
\subsection{Supercritical regime}
At a high level, the analysis of the supercritical regime proceeds in a similar way to the analysis of other mean-field models such as Curie-Weiss. That is to say, we can show fairly directly that the naive mean-field approximation is an accurate estimate of $\log Z$ (see the proof of Theorem~\ref{thm:low-temperature-soln}), so that the asymptotic value of $\frac{1}{n} \log Z$ is given by a concrete optimization problem over measures on $NAE_3$ (maximizing $\Phi(s)$ defined below). From a large deviation perspective, we know that for $X \sim Uni(NAE_3)$, the quantity $\|\frac{1}{n} \sum_i X_i\|$ converges to zero by the law of large numbers, and this optimization problem is asking for the typical behavior of $\frac{1}{n} \sum_i X_i$ if we condition on the unlikely event that $\|\frac{1}{n} \sum_i X_i\| = \Omega(1)$ --- this is made precise by Cramer's Theorem \cite{ellis2007entropy,dembo2011large}. 
The next step is to solve this optimization problem, which in our case is quite involved; once we have solved this problem, we can use a symmetry argument to characterize the limiting law of $\frac{1}{n} \sum_{i = 1}^n X_i$ and prove that the probability of Condorcet paradox is exponentially small. 

%and the main difficulty is to solve 
%by finding the product measures maximizing the variational formula for $\log Z$ given below (Gibbs variational principle); we show directly that the restriction to product measures, which is referred to as the (naive) mean-field approximation in the literature \cite{parisi1988statistical,basak2017universality}, gives a tight lower bound for $\log Z$ and then we solve the problem for product measures by deriving and analyzing the solutions to the first-order optimality conditions for the optimization problem (these are called mean-field equations). 
\begin{comment}
Results in the literature imply that this part of the analysis is actually
universal over a larger class of models where such a ``mean-field approximation''
is accurate, like $d$-regular graphs with $d = \omega(1)$ \cite{basak2017universality} \todo{this discussion may be confusing.}; however, we are not aware of similar universality results for the subcritical regime so we will only concern ourselves with the complete graph model here.
\end{comment}

To begin the analysis, we recall a (slight special case of) the Gibbs variational principle \eqref{eqn:gibbs-principle}, when we look at $X$ distributed according to the uniform measure on a set $\mathcal{X}$.
\begin{lemma}[Gibbs variational principle, \cite{ellis2007entropy}]\label{lem:gibbs-variational}
Let $\mathcal{X}$ be a finite set. Then
\[ \log \sum_{x \in \mathcal{X}} e^{f(x)} = \sup_{P \in \mathcal{P}(\mathcal{X})} \E_P[f(X)] + H_P(X) \]
where $P$ ranges over all probability distributions of random variable $X$ valued in $\mathcal{X}$
and $H_P(X) = \E_P[-\log P(X)]$ is the Shannon entropy.
\end{lemma}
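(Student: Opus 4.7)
The plan is to derive this as an immediate specialization of Lemma~\ref{lem:gibbs-variational-general} by taking the reference measure $Q$ to be $Uni(\mathcal{X})$, the uniform distribution on the finite set $\mathcal{X}$. With this choice, the left-hand side of \eqref{eqn:gibbs-principle} becomes
\[ \log \E_Q[e^{f(X)}] = \log\!\Bigl(\tfrac{1}{|\mathcal{X}|}\sum_{x \in \mathcal{X}} e^{f(x)}\Bigr) = \log \sum_{x \in \mathcal{X}} e^{f(x)} \;-\; \log |\mathcal{X}|. \]

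Next I would handle the supremum on the right-hand side. Any probability measure $P$ on $\mathcal{X}$ is automatically absolutely continuous with respect to $Q = Uni(\mathcal{X})$ (using the convention $0 \log 0 = 0$), with Radon--Nikodym derivative $\frac{dP}{dQ}(x) = |\mathcal{X}|\,P(x)$. A direct calculation then gives
\[ \KL(P,Q) \;=\; \E_P\bigl[\log(|\mathcal{X}|\,P(X))\bigr] \;=\; \log|\mathcal{X}| \;-\; H_P(X), \]
since $H_P(X) = \E_P[-\log P(X)]$ by definition. Substituting this and the previous display into \eqref{eqn:gibbs-principle}, the additive constants $\log|\mathcal{X}|$ cancel on both sides, yielding exactly
\[ \log \sum_{x \in \mathcal{X}} e^{f(x)} \;=\; \sup_{P \in \mathcal{P}(\mathcal{X})} \bigl(\E_P[f(X)] + H_P(X)\bigr). \]

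There is no real obstacle once Lemma~\ref{lem:gibbs-variational-general} is taken as given; the argument is purely algebraic. The only minor point to verify is that ranging $P$ over all probability measures on $\mathcal{X}$ in the specialized statement matches ranging over all probability measures absolutely continuous with respect to $Uni(\mathcal{X})$ in the general statement, which is immediate since on a finite set these two classes coincide (measures supported outside $\mathcal{X}$ play no role). As a sanity check, one may verify that the supremum is attained by $P^\star(x) \propto e^{f(x)}$, consistent with the optimizer identified in Lemma~\ref{lem:gibbs-variational-general}; this also provides an alternative self-contained route via Jensen's inequality applied to $-\log$, if one preferred not to invoke the general principle.
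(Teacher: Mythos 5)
Your proof is correct and is precisely the paper's intended derivation: the paper introduces Lemma~\ref{lem:gibbs-variational} explicitly as a ``slight special case'' of Lemma~\ref{lem:gibbs-variational-general} obtained by taking $Q = Uni(\mathcal{X})$, and your computation of $\KL(P, Uni(\mathcal{X})) = \log|\mathcal{X}| - H_P(X)$ together with the cancellation of the $\log|\mathcal{X}|$ terms is the whole content of that specialization.
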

%Annoying bit: there should be 6 (symmetrical) solutions, how explicitly can we show the answer is 6. (As opposed to 12).
%Annoying bit: for $\beta \in (3/4,/\infty)$ there should be 12 solutions to the mean-field problem due to the symmetry group being $\mathfrak{S}_3 \times C_2$ (reorderings of the coordinates and sign flipping). Or there could be 6 if it turns out the solution is pretty symmetric (it is at least close). How explicitly to calculate/prove this?
%obviously there are 3 solutions to mean-field problem, so by symmetry their probability masses are $1/3,1/3,1/3$. However, how do we cleanly show those are the only 3 solutions.
%In the following Lemma, $\Phi(s)$ is a functional (related to the naive mean-field approximation) such that, as we will later show, the maximum of $\Phi$ corresponds to the value of the free energy $\frac{1}{n} \log Z$.
As explained above, $\Phi(s)$ is a functional such that the maximum of $\Phi$ corresponds to the value of the free energy $\frac{1}{n} \log Z$ (we prove this in Theorem~\ref{thm:low-temperature-soln}).
This raises the question of characterizing the maximum of $\Phi$. Because the inversion tilt model is a maximum-entropy distribution (from the above variational principle), we can derive mean-field equations by introducing dual variables $\lambda_1,\lambda_2,\lambda_3$, which are the parameters of an inversion tilt model, and relating them to primal variables $s_1,s_2,s_3$ at a critical point:
\begin{lemma}\label{lem:mf-eqn}
Fix $\beta$ and define 
\[ \Phi(s) := \frac{\beta}{2} \|s\|_2^2 + \max_{Q \in \mathcal{P}(NAE_3) : \E_Q X = s} H_Q(X). \]
where we interpret the maximum to be $-\infty$ if no such $Q$ exists.
%Let $\lambda_i = \beta s_i$, then
Then at any critical point of $\Phi$,
\[ \Phi(s) = -\frac{\beta}{2} \|s\|_2^2 + \log 2 +  \log (\cosh(\lambda_1 + \lambda_2 - \lambda_3) + \cosh(\lambda_1 - \lambda_2 + \lambda_3) + \cosh(-\lambda_1 + \lambda_2 + \lambda_3)) \]
for $\lambda_1,\lambda_2,\lambda_3$ which are solutions of
%and any critical point of $\Phi$ must satisfy the (mean-field) equations
\begin{align}\label{eqn:mf-eqn-1}
s_1 &= \frac{\sinh(\lambda_1 + \lambda_2 - \lambda_3) + \sinh(\lambda_1 - \lambda_2 + \lambda_3) - \sinh(-\lambda_1 + \lambda_2 + \lambda_3)}{\cosh(\lambda_1 + \lambda_2 - \lambda_3) + \cosh(\lambda_1 - \lambda_2 + \lambda_3) + \cosh(-\lambda_1 + \lambda_2 + \lambda_3)} \\
s_2 &= \frac{\sinh(\lambda_1 + \lambda_2 - \lambda_3) - \sinh(\lambda_1 - \lambda_2 + \lambda_3) + \sinh(-\lambda_1 + \lambda_2 + \lambda_3)}{\cosh(\lambda_1 + \lambda_2 - \lambda_3) + \cosh(\lambda_1 - \lambda_2 + \lambda_3) + \cosh(-\lambda_1 + \lambda_2 + \lambda_3)} \label{eqn:mf-eqn-2} \\
s_3 &= \frac{-\sinh(\lambda_1 + \lambda_2 - \lambda_3) + \sinh(\lambda_1 - \lambda_2 + \lambda_3) + \sinh(-\lambda_1 + \lambda_2 + \lambda_3)}{\cosh(\lambda_1 + \lambda_2 - \lambda_3) + \cosh(\lambda_1 - \lambda_2 + \lambda_3) + \cosh(-\lambda_1 + \lambda_2 + \lambda_3)} \label{eqn:mf-eqn-3}.
\end{align}
and also satisfy $\lambda_i = \beta s_i$ for all $i$.
%Furthermore at a critical point of $\Phi$, it must hold that $\lambda_i = \beta s_i$.
%\[ \lambda_1 = \beta s_i \]
%for $i = 1,2,3$.
\end{lemma}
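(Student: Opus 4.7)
The plan is to identify the constrained entropy maximization as a Legendre--Fenchel dual of the log-partition function of the Inversion Tilt Model restricted to $\NAE_3$, and then apply the envelope theorem to characterize critical points of $\Phi$.

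First, I would apply the Gibbs variational principle (Lemma~\ref{lem:gibbs-variational}) to an exponential family on $\NAE_3$. Concretely, for any $\lambda \in \mathbb{R}^3$ define
\[
Z(\lambda) := \sum_{x \in \NAE_3} \exp(\langle \lambda, x\rangle), \qquad Q_\lambda(x) = \frac{1}{Z(\lambda)} \exp(\langle \lambda, x \rangle).
\]
By a direct calculation, enumerating the six elements of $\NAE_3$ as three antipodal pairs $\{\pm(1,1,-1)\}, \{\pm(1,-1,1)\}, \{\pm(-1,1,1)\}$, one gets
\[
Z(\lambda) = 2\bigl[\cosh(\lambda_1+\lambda_2-\lambda_3) + \cosh(\lambda_1-\lambda_2+\lambda_3) + \cosh(-\lambda_1+\lambda_2+\lambda_3)\bigr].
\]
Standard exponential family theory (or a one-line application of Lemma~\ref{lem:gibbs-variational}) shows that the maximum entropy distribution on $\NAE_3$ with mean $s$, when one exists, is of the form $Q_{\lambda(s)}$ where $\lambda(s)$ is chosen so that $\nabla \log Z(\lambda(s)) = s$. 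Differentiating $\log Z$ coordinatewise produces exactly the three equations \eqref{eqn:mf-eqn-1}--\eqref{eqn:mf-eqn-3}. Moreover, for this maximizer
\[
H_{Q_{\lambda(s)}}(X) = -\mathbb{E}_{Q_{\lambda(s)}}[\langle \lambda(s), X\rangle - \log Z(\lambda(s))] = \log Z(\lambda(s)) - \langle \lambda(s), s\rangle.
\]

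Next, substitute this into the definition of $\Phi$ to obtain
\[
\Phi(s) = \tfrac{\beta}{2}\|s\|_2^2 + \log Z(\lambda(s)) - \langle \lambda(s), s\rangle.
\]
To characterize critical points, I would invoke the envelope theorem: since $\lambda(s)$ is chosen so that the inner variational problem is stationary in $\lambda$, the only effective $s$-dependence in the last two terms comes through the explicit $-\langle \lambda, s\rangle$ factor. A short computation using the chain rule and the matching identity $\nabla_\lambda \log Z(\lambda(s)) = s$ gives $\nabla_s[\log Z(\lambda(s)) - \langle \lambda(s), s\rangle] = -\lambda(s)$, hence
\[
\nabla_s \Phi(s) = \beta s - \lambda(s).
\]
Therefore at any critical point of $\Phi$, one has $\lambda_i = \beta s_i$ for each $i$, which is the last assertion of the lemma.

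Finally, substituting $\langle \lambda, s\rangle = \beta\|s\|_2^2$ into the displayed formula for $\Phi$ gives
\[
\Phi(s) = \tfrac{\beta}{2}\|s\|_2^2 + \log Z(\lambda) - \beta\|s\|_2^2 = -\tfrac{\beta}{2}\|s\|_2^2 + \log Z(\lambda),
\]
and plugging in the explicit expression for $\log Z(\lambda)$ yields the stated formula. The steps are essentially routine once the duality between the tilt parameters $\lambda$ and moment parameters $s$ is set up; the only mild point of care is the envelope/chain-rule computation, which should be stated cleanly to avoid confusing the implicit dependence of $\lambda$ on $s$ with its explicit appearance, and the verification that for $s$ in the interior of the convex hull of $\NAE_3$ the map $\lambda \mapsto \nabla \log Z(\lambda) = s$ is indeed a diffeomorphism so that $\lambda(s)$ is well-defined.
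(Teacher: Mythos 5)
Your proposal is correct and follows essentially the same route as the paper: both pass through the Legendre duality between the constrained entropy maximization and the log-partition function $\log Z(\lambda)$ of the exponential family on $\NAE_3$, read off the mean-matching equations from $\nabla_\lambda \log Z$, and then extract $\lambda = \beta s$ at critical points. The one small presentational difference is that you invoke the envelope theorem to differentiate $\Phi$ directly in $s$, whereas the paper changes variables and differentiates $\Psi(\lambda) = \Phi(s(\lambda))$ in $\lambda$, concluding from $(\beta s - \lambda)^T s'(\lambda) = 0$ via invertibility of the Jacobian $s'(\lambda) = \Sigma_\lambda$; both steps rest on the same underlying nondegeneracy (the covariance of the tilted measure being positive definite), so the two arguments are interchangeable.
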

\begin{proof}
Observe by the minimax theorem \cite{sion1958general} and the Gibbs variational principle (Lemma~\ref{lem:gibbs-variational}) that
\begin{align*}
\max_{Q \in \mathcal{P}(NAE_3) : \E_Q X = s} H_Q(X) 
&= \max_{Q \in \mathcal{P}(NAE_3)} \min_{\lambda} H_Q(X) + \langle \lambda, \E_Q X - s \rangle \\
&= \min_{\lambda} \max_{Q \in \mathcal{P}(NAE_3)} (H_Q(X) + \langle \lambda, \E_Q X - s \rangle) \\
&= \min_{\lambda} -\langle \lambda, s \rangle + \log \sum_{x \in NAE_3} e^{\langle \lambda, x \rangle}. %\\
%&= \min_{\lambda} \langle \lambda, s \rangle + \log 6 + \log \E_{X \sim NAE_3}[e^{-\langle \lambda, X \rangle}]
\end{align*}
Note that for arbitrary $\lambda$ (not necessarily related to $s$), we have
\[ \log \sum_{x \in NAE_3} e^{\langle \lambda, x \rangle} = \log 2 + \log (\cosh(\lambda_1 + \lambda_2 - \lambda_3) + \cosh(\lambda_1 - \lambda_2 + \lambda_3) + \cosh(-\lambda_1 + \lambda_2 + \lambda_3)) \]
so using that $\E_{\lambda}[X] = \nabla_{\lambda} \log \sum_{x \in NAE_3} e^{\langle \lambda, x \rangle}$ and considering the first-order optimality conditions, we see that the optimizer $\lambda$ in the minimization problem above must satisfy \eqref{eqn:mf-eqn-1}, \eqref{eqn:mf-eqn-2}, and \eqref{eqn:mf-eqn-3} above.
\begin{comment}
\begin{align*}
s_1 &= \frac{\sinh(\lambda_1 + \lambda_2 - \lambda_3) + \sinh(\lambda_1 - \lambda_2 + \lambda_3) - \sinh(-\lambda_1 + \lambda_2 + \lambda_3)}{\cosh(\lambda_1 + \lambda_2 - \lambda_3) + \cosh(\lambda_1 - \lambda_2 + \lambda_3) + \cosh(-\lambda_1 + \lambda_2 + \lambda_3)} \\
s_2 &= \frac{\sinh(\lambda_1 + \lambda_2 - \lambda_3) - \sinh(\lambda_1 - \lambda_2 + \lambda_3) + \sinh(-\lambda_1 + \lambda_2 + \lambda_3)}{\cosh(\lambda_1 + \lambda_2 - \lambda_3) + \cosh(\lambda_1 - \lambda_2 + \lambda_3) + \cosh(-\lambda_1 + \lambda_2 + \lambda_3)} \\
s_3 &= \frac{-\sinh(\lambda_1 + \lambda_2 - \lambda_3) + \sinh(\lambda_1 - \lambda_2 + \lambda_3) + \sinh(-\lambda_1 + \lambda_2 + \lambda_3)}{\cosh(\lambda_1 + \lambda_2 - \lambda_3) + \cosh(\lambda_1 - \lambda_2 + \lambda_3) + \cosh(-\lambda_1 + \lambda_2 + \lambda_3)}.
\end{align*}
\end{comment}
It remains to show that at a critical point, $\lambda$ and $s$ satisfy $\lambda_i = \beta s_i$ for all $i$.
Write
\begin{align*} 
\Psi(\lambda) 
&:= \log 2 + \frac{\beta}{2} \|s(\lambda)\|_2^2 - \langle \lambda, s(\lambda) \rangle \\
&\quad + \log(\cosh(\lambda_1 + \lambda_2 - \lambda_3) + \cosh(\lambda_1 - \lambda_2 + \lambda_3) + \cosh(-\lambda_1 + \lambda_2 + \lambda_3)) 
\end{align*}
so that $\Phi(s) = \Psi(\lambda)$.
Observe that%\todo{transposes are probably wrong here.} fixed
\[ \Psi'(\lambda) = \beta s(\lambda)^T s'(\lambda) - s(\lambda)^T - \lambda^T s'(\lambda) + s(\lambda)^T = \beta s(\lambda)^T s'(\lambda) - \lambda^T s'(\lambda). \]
Furthermore, we claim that the Jacobian $s'(\lambda)$ is always invertible,
so the solutions must satisfy $\lambda = \beta s(\lambda)$. To see that the Jacobian is invertible, observe that $s'(\lambda) = \nabla^2 \log \sum_{x \in \NAE_3} e^{\langle \lambda, x \rangle} = (\E_{\lambda}[X_a X_b] - \E_{\lambda}[X_a]\E_{\lambda}[X_b])_{a,b} = \Sigma_{\lambda}$ where $\Sigma_{\lambda}$ is the covariance matrix of $X = \varphi(\pi)$ where $\pi$ is drawn from the Inversion Tilt Model with parameter $\lambda$. For any finite $\lambda$ we know for $a,b,c$ distinct elements of $\{1,2,3\}$ that $\Var(X_a | X_b,X_c) > 0$ which shows there cannot be any linear relation between the coordinates of random vector $X \in \{\pm 1\}^3$, hence $\Sigma_{\lambda}$ is invertible.
%\[ \nabla s_1(\lambda) = e_1 + e_2\frac{\sinh(\lambda_1 + \lambda_2 - \lambda_3) - \sinh(\lambda_1 - \lambda_2 + \lambda_3) - \sinh(-\lambda_1 + \lambda_2 + \lambda_3)}{\cosh(\lambda_1 + \lambda_2 - \lambda_3) + \cosh(\lambda_1 - \lambda_2 + \lambda_3) + \cosh(-\lambda_1 + \lambda_2 + \lambda_3)} + e_3  \]
%\todo{fixme}
%However  $s'(\lambda)$ never equals 0 as increasing $\lambda_1$ always strictly increases $s_1$, so this gives the result.
%(The latter is impossible)
\end{proof} 
%TODO: can we just graph at this point? Comes down to structure of solutions of $\lambda_1/s_1 = \lambda_2/s_2 = \lambda_3/s_3$ which is a 1-dimensional object. Annoyingly for suff large $\beta$ there are spurious solutions to rule out along all-ones direction and 6 where a coordinate is fixed to be 0 (and the other two coordinates are opposites e.g. $\lambda_2 = -\lambda_3$). %This is the only family of suboptimal solutions. %, 6 along the lines $\lambda_1 = 0, \lambda_2 = 0, \lambda_3 = 0$ this was due to multiplying by 0 in mathematica
%The following Fact~\ref{fact:mean-field-solns} can straightforwardly be seen by numerically plotting the solutions to the mean-field equations (Lemma~\ref{lem:mf-eqn}): see Figure~\ref{fig:soln_families} and Figure~\ref{fig:soln_qualities}. 
%We leave a completely rigorous and/or analytical proof of this fact to the future\footnote{we note that solving systems of equations with exponentials is not as straightforward as with polynomials and in general is not even known to be decidable (!), see Tarski's exponential function problem and \cite{macintyre1996decidability}.}; for the rest of this section, we proceed assuming this Fact.
In order to understand the low-temperature behavior of the model, we will need to solve the mean-field equations from Lemma~\ref{lem:mf-eqn} which we do in the following Lemma~\ref{lem:mean-field-solns}. In the proof of Lemma~\ref{lem:mean-field-solns}, we solve the equations to the point where they reduce to concrete statements about analytic functions in one and two dimensions and then check those statements using some computer-generated plots (Figure~\ref{fig:u1u2solns} and Figure~\ref{fig:soln_qualities}). In principle the use of plots can be replaced by appeal to a formal decision procedure for the corresponding theory of real arithmetic, assuming a weak version of Schanuel's conjecture from field theory \cite{wilkie1995decidability}.
%In the proof (deferred to Appendix~\ref{apdx:mf-3}), we will reduce the equations to two-dimensional problems and then assume the accuracy of some computer-generated plots (Figure~\ref{fig:u1u2solns} and Figure~\ref{fig:soln_qualities}).
For completeness, in Appendix~\ref{apdx:mf-3} we also sketch the proof of a weaker version of Lemma~\ref{lem:mean-field-solns} which still suffices to prove Theorem~\ref{thm:low-temperature-soln} below, and doesn't rely upon the computer-generated plots.
%to reduce the technical load of the proofs.
%Since these plots simply describe the behavior of one and two-dimensional functions, we expect that if desired, the use of these plots should be replaceable by (likely involved) manual proofs using elementary inequalities and calculus.

\begin{lemma}\label{lem:mean-field-solns}
For all $\beta$, the solutions to the equations in Lemma~\ref{lem:mf-eqn} are of one of the following types, up to symmetries of permuting coordinates and $\lambda \mapsto -\lambda$:
\begin{enumerate}
    \item Of the form $\lambda_1 = \lambda_2 = \lambda_3$
    \item Of the form $\lambda_1 = 0, \lambda_2 = -\lambda_3$.
    \item Of the form $\lambda_1 = \lambda_2$ where $\lambda_3$ has the opposite sign of $\lambda_1$ and up to symmetries, this point is unique (for $\beta > 3/4$ it has an orbit of size exactly 6).
\end{enumerate}
Furthermore, the points of the third type are the global maximizers of $\Phi$ for every fixed value of $\beta$.
\end{lemma}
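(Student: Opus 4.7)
The plan is to reformulate the equations in symmetric variables, use the resulting structure to classify solutions via a geometric collinearity argument, and then compare the value of $\Phi$ at the candidate critical points. Concretely, I would introduce the variables $u_1 = \lambda_1+\lambda_2-\lambda_3$, $u_2=\lambda_1-\lambda_2+\lambda_3$, $u_3=-\lambda_1+\lambda_2+\lambda_3$, which are the pairings $\langle \lambda, x\rangle$ with representatives of the three antipodal pairs in $\NAE_3$. With $D := \cosh u_1+\cosh u_2+\cosh u_3$ and $\lambda_i = \beta s_i$, the system \eqref{eqn:mf-eqn-1}--\eqref{eqn:mf-eqn-3} collapses, after taking pairwise differences, to
\[ (u_i-u_j)\,D \;=\; 4\beta\,(\sinh u_i-\sinh u_j) \quad\text{for all }i\ne j, \]
together with a single scalar consistency relation $(u_1+u_2+u_3)\,D = \beta(\sinh u_1+\sinh u_2+\sinh u_3)$. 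The group $S_3\times\mathbb Z_2$ acts by permuting the $u_i$ and by $u\mapsto -u$, so it suffices to classify orbits.

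When the $u_i$ are not all equal the pairwise equations say the three points $(u_i,\sinh u_i)$ lie on a common line of slope $a := D/(4\beta)$; substituting $\sinh u_i=au_i+c$ into the scalar equation forces $c=a(u_1+u_2+u_3)$, so the line has the form $y=a(u+s)$ with $s=u_1+u_2+u_3$. I would then case-split on the coincidence pattern of the $u_i$. If $u_1=u_2=u_3=u$, the $\lambda_i$ all coincide (Type 1) and the system reduces to the scalar equation $3u=\beta\tanh u$. If exactly two of the $u_i$ coincide, say $u_2=u_3=u$ and $u_1=v$, then $\lambda_1=\lambda_2=(u+v)/2$ and $\lambda_3=u$, which is Type 3 precisely when $u(u+v)<0$; the remaining mean-field equations become two analytic relations in $(u,v)$ that one must solve, verifying the sign inequality for the solutions, and confirming that a non-trivial solution exists only for $\beta>3/4$ and is unique up to symmetry. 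This reduces to the two-dimensional picture used in the paper (cf.\ Figure~\ref{fig:soln_qualities}). Finally, if all three $u_i$ are distinct I would use the constraint $c=as$ together with the convexity structure of $\sinh$ (strictly concave on $(-\infty,0)$, strictly convex on $(0,\infty)$, and odd) to conclude $s=0$; then the line passes through the origin, its three intersections with $\sinh$ are $0,\pm u^*$ with $\sinh u^* = au^*$, and the configuration is Type 2 in $\lambda$-coordinates.

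For the global-maximum claim I would evaluate $\Phi$ explicitly on each branch using the closed form from Lemma~\ref{lem:mf-eqn}, which depends only on the $\lambda$-values. Along the branch of Type 3 solutions (which exists precisely for $\beta>3/4$) the comparison with Types 1 and 2 reduces to a pair of one-dimensional inequalities in $\beta$, verifiable by plotting the value of $\Phi$ at each branch against $\beta$ as in Figure~\ref{fig:soln_qualities}. For $\beta\le 3/4$ the only critical point is $\lambda=0$, which is automatically the maximum.

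The main obstacle is the third case of the coincidence case-split: showing that three distinct collinear points $(u_i,\sinh u_i)$ satisfying the extra constraint $c=as$ must have $s=0$. A naive Rolle-type argument is insufficient because once the common slope exceeds $1$ the graph of $\sinh$ does admit three-fold collinear chords with nonzero $y$-intercept; the conclusion $s=0$ really uses the additional constraint that the $y$-intercept is proportional to the sum of the abscissae. Reducing this to a computer-verifiable one- or two-dimensional analytic statement is what handles the obstruction, and the weaker plot-free argument sketched in Appendix~\ref{apdx:mf-3} still suffices for the downstream Theorem~\ref{thm:low-temperature-soln}.
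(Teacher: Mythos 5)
Your proposal is correct and follows essentially the same path as the paper: the change to $u$-coordinates, the case split on the coincidence pattern of $(u_1,u_2,u_3)$, and the ultimate reliance on plot/computer-verified one- and two-dimensional analytic statements to close the hard subcase (three distinct $u_i$) and to compare $\Phi$-values along the branches. Your collinearity reformulation of \eqref{eqn:mf-eqn-1}--\eqref{eqn:mf-eqn-3} --- three points $(u_i,\sinh u_i)$ on a line of slope $D/(4\beta)$ with $y$-intercept $a(u_1+u_2+u_3)$ --- is a clean repackaging of the paper's differenced system \eqref{eqn:summed}--\eqref{eqn:differenced} (and of \eqref{eqn:u3-fromu1u2}--\eqref{eqn:u1u2}), but it leads to the same residual checks that the paper verifies via Figures~\ref{fig:u1u3solns}, \ref{fig:u1u2solns}, and \ref{fig:soln_qualities}.
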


\begin{comment}
\begin{remark}
In each case we can simplify the mean-field equations to give relatively
explicit descriptions of the solutions.
\begin{enumerate}
    \item $\lambda_1 = \lambda_2 = \lambda_3$. The equations reduce to $s_1 = s_2 = s_3$ and
    \[ s_1 = \frac{1}{3} \tanh(\beta s_1). \]
    \item $\lambda_1 = 0, \lambda_2 = -\lambda_3$. The equations reduce to $s_1 = 0, s_3 = -s_2$ and
    \begin{align*}
        s_2 &= \frac{2\sinh(2\beta s_2)}{1 + 2\cosh(2\beta s_2)}.
    \end{align*}
    \item $\lambda_1 = \lambda_2$ and $\lambda_1$ has the opposite sign to $\lambda_3$. The equations reduce to
    \begin{align*}
        s_1 = \frac{\sinh(2\beta s_1 - \beta s_3)}{\cosh(2\beta s_1 - \beta s_3) + 2\cosh(\beta s_3)}
        \qquad s_3 = \frac{-\sinh(2\beta s_1 - \beta s_3) + 2\sinh(\beta s_3)}{\cosh(2\beta s_1 - \beta s_3) + 2\cosh(\beta s_3)}.
    \end{align*}
    %As before, the solutions to these equations are symmetric under $s \mapsto -s$. Assume w.l.o.g. that $s_1 \ge 0$. From the first equation, we get that $2s_1 - s_3 \ge 0$, and by adding the two equations we get that $s_1 + s_3$ has the same sign as $s_3$. 
%    Rearranging the second equation gives $s_3 - \tanh(\beta s_3) \le s_3 - \frac{2\sinh(\lambda_3)}{\cosh(2\lambda_1 - \lambda_3) + 2\cosh(\lambda_3)} = -s_1$;
\end{enumerate}
\end{remark}
\end{comment}
\begin{theorem}\label{thm:low-temperature-soln}
Fix arbitrary $\beta > 3/4$. Define $S_n := \frac{1}{\sqrt{n}} Y_n = \frac{1}{n} \sum_i X^{(n)}_i$ where $X^{(n)} \sim Q_n$ and $Q = Q_n$ is the mean-field model \eqref{eqn:mf-nae} on $n$ voters at inverse temperature $\beta$. In the limit as $n \to \infty$, the random variables $S_n$ converge in distribution to $S$ distributed
according to the uniform measure on the 6 global maximizers described in Lemma~\ref{lem:mean-field-solns}. Furthermore, if $\mathcal{S}_{\beta}$ is the set of these 6 points in the support of $S$, then for any $\epsilon > 0$ there exists $c = c_{\beta,\epsilon} > 0$ such that $Q(\min_{s \in \mathcal{S}_{\beta}} \|S_n - s\| \ge \epsilon) \le 2 e^{-c n}$.
\end{theorem}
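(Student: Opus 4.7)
The plan is to give a standard large-deviations argument for mean-field models, following the Curie-Weiss template, with $\Phi$ from Lemma~\ref{lem:mf-eqn} playing the role of the effective free energy.

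First I would rewrite the law of $S_n$ in terms of a type-counting quantity $N(s) := \#\{x \in \NAE_3^n : \tfrac{1}{n}\sum_i x_i = s\}$ as
\[ Q_n(S_n = s) = \frac{N(s)}{Z_n}\exp\left(\frac{\beta n}{2}\|s\|^2\right). \]
By Cram\'er's theorem (equivalently the method of types applied to i.i.d.\ draws from $\mathrm{Uni}(\NAE_3)$), the probability $N(B_\delta(s))/6^n$ that the empirical mean falls in a ball of radius $\delta$ around $s$ satisfies
\[ \frac{1}{n}\log\frac{N(B_\delta(s))}{6^n} = -I(s) + o_\delta(1), \]
where $I(s) = \log 6 - H(s)$ with $H(s) = \max_{\mu \in \mathcal{P}(\NAE_3) : \E_\mu X = s} H_\mu(X)$ the entropy function appearing in $\Phi$. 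Combined with a polynomial union bound over a finite covering of the compact convex hull of $\NAE_3$, this yields the naive-mean-field asymptotics $\tfrac{1}{n}\log Z_n \to \Phi^{*} := \max_s \Phi(s)$ and, more usefully, the estimate
\[ Q_n(S_n \in A) \;\le\; \mathrm{poly}(n)\exp\!\left(n\left(\sup_{s \in A}\Phi(s) - \Phi^{*}\right)\right) \]
valid for every measurable $A \subseteq \mathrm{conv}(\NAE_3)$.

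Next I would apply this bound to $A_\epsilon := \{s \in \mathrm{conv}(\NAE_3) : \min_{s^* \in \mathcal{S}_\beta}\|s - s^*\| \ge \epsilon\}$. This set is compact, $\Phi$ is continuous, and Lemma~\ref{lem:mean-field-solns} identifies $\mathcal{S}_\beta$ as the set of global maximizers of $\Phi$; hence $\sup_{A_\epsilon}\Phi < \Phi^{*}$, say by a gap $\eta = \eta(\epsilon,\beta) > 0$. This gives $Q_n(S_n \in A_\epsilon) \le \mathrm{poly}(n)e^{-n\eta}$, which is at most $2e^{-cn}$ for some $c = c(\epsilon,\beta) > 0$ after absorbing the polynomial prefactor (small $n$ are handled by enlarging $c$ if necessary), proving the exponential concentration claim.

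For the weak convergence, I would invoke the symmetry group $G$ of $Q_n$ generated by (i) any permutation of the three candidates, acting coordinate-wise on each $x_i \in \NAE_3$, and (ii) global sign reversal $x_i \mapsto -x_i$. Both operations preserve $\NAE_3$, leave $\|\bar{x}\|^2$ invariant, and hence leave $Q_n$ invariant; they descend to a linear action on $S_n$ preserving $\mathcal{S}_\beta$. By the uniqueness clause of Lemma~\ref{lem:mean-field-solns}, $G$ acts transitively on the $6$-element set $\mathcal{S}_\beta$. Consequently $Q_n(\|S_n - s^*\| < \epsilon)$ is the same number for every $s^* \in \mathcal{S}_\beta$, and the previous step shows the union of the six balls carries $Q_n$-mass $1 - O(e^{-cn})$; each ball thus has mass $(1+o(1))/6$, which gives $S_n \Rightarrow \mathrm{Uni}(\mathcal{S}_\beta)$. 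The main technical obstacle is keeping the method-of-types estimates uniform enough in $s$ so the $\mathrm{poly}(n)$ prefactor does not contaminate the exponential rate $e^{-n\eta}$; the standard fix of covering $\mathrm{conv}(\NAE_3)$ by finitely many $\delta$-balls and sending $\delta \to 0$ after $n \to \infty$ handles this cleanly, while the remaining input, transitivity of the $G$-action on $\mathcal{S}_\beta$, is already built into the orbit-size statement of Lemma~\ref{lem:mean-field-solns}.
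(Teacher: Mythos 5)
Your proposal is correct and follows essentially the same strategy as the paper: express $Q_n(S_n=s)$ via a type-counting factor, bound it above by the naive mean-field functional $\Phi$ (method of types / entropy subadditivity), lower-bound $\log Z_n$ by $n\Phi^*$ via the Gibbs variational principle, and combine these with the compactness gap and the symmetry of $Q_n$ to get exponential concentration and uniform weak convergence to $\mathcal{S}_\beta$. The only cosmetic differences are that you package the type-counting step as Cram\'er's theorem rather than the paper's direct entropy subadditivity plus polynomial count of achievable values of $S_n$, and you deduce weak convergence directly from the mass of the six balls under the symmetry action rather than passing through Prokhorov's theorem as the paper does; also note that to absorb the polynomial prefactor and handle small $n$ one \emph{shrinks} $c$, not enlarges it.
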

\begin{proof}
 Observe that the support of $S_n$ is contained in the compact set $[-1,1]^3$. Therefore, by Prokhorov's theorem \cite{billingsley2013convergence} there exists at least one subsequential limit $\mu$ of the sequence of measures of $S_1,S_2,S_3,\ldots$ and we proceed to characterize this limit.

Observe that
$Q(S_n = s) \propto \exp\left(n \frac{\beta}{2} \|s\|_2^2 + \log \#\{x \in \NAE_3^n : s = \sum_i x\}\right)$.
Therefore for any measurable set $K$,
$Q(S_n \in K) \propto \sum_{s \in U} \exp\left(n \frac{\beta}{2} \|s\|_2^2 + \log \#\{x \in \NAE_3^n : s = \sum_i x\}\right)$.
If $K$ is a closed set that does not contain one of the 6 maximizers of the mean-field problem described in Lemma~\ref{lem:mean-field-solns}, then %Sanov's theorem \cite{?} we see that 
%by large deviations theory (Chernoff bound and union bound) we see 
we will show by applying large deviations methods
that $\lim_{n \to \infty} Q(S_n \in K) = 0$. Explicitly, if we let $\epsilon > 0$ be such that $\max_{s \in K} \Phi(s) < \Phi(s^*) - \epsilon$ where $s^*$ is a maximizer of $\Phi$ over all $s$, where such $\epsilon$ exists by compactness of $K$, then by the union bound over the possible values of $Y_n$ which lie in $K$ we get that 
\begin{align*}
    Q(S_n \in K) 
    &= \frac{1}{Z_n} \sum_{s_n \in K} \exp\left(n \frac{\beta}{2} \|s\|_2^2 + \log \#\{x \in \NAE_3^n : s = \sum_i x\}\right) \\
    &\le \frac{C}{Z_n} n^3 \max_{s_n \in K} \exp\left(n \frac{\beta}{2} \|s\|_2^2 + \log \#\{x \in \NAE_3^n : s = \sum_i x\}\right) \\ 
    &\le \frac{C}{Z_n} n^3 \max_{s_n \in K} \exp\left(n \frac{\beta}{2} \|s\|_2^2 + n H(X_1 | S_n = s_n)\right)
    \le \frac{C n^3 \exp(n (\Phi(s^*) - \epsilon))}{Z_n}.
\end{align*}
where in the first inequality we used that there are only $O(n^3)$ possible values for $S_n$, in the second inequality we used that $\log |S| = H(X)$ where $X$ is chosen uniformly at random over $S$ \cite{cover2012elements}, the sub-additivity inequality $H(X_1,\ldots,X_n) \le \sum_i H(X_i)$ for entropy \cite{cover2012elements}, and the fact that the coordinates of $X$ are all symmetrical, and in the final inequality used the definitions of $\epsilon$ and $\Phi$. 
We know by the Gibbs variational principle (Lemma~\ref{lem:gibbs-variational}) that by plugging into the supremum the product measure $P_{\lambda}^{\otimes n}$ where $P_{\lambda}$ is the Inversion Tilt Model with $\lambda = s^* \beta$ that $\log Z_n \ge n\Phi(s^*)$ therefore we see that the above probability converges to $0$ as $n \to \infty$. Furthermore, this proves the large deviation bound in the statement of the Theorem by taking $K$ to be the set of points with distance at least $\epsilon$ from $\mathcal{S}_{\beta}$.
%Since we know that $\lim_{n \to \infty} \frac{1}{n} \log Z_n \ge \lim_{n \to \infty} \frac{1}{n} \log \max_{s_n}\exp\left(n \frac{\beta}{2} \|s\|_2^2 + \log \#\{x \in \NAE_3^n : s = \sum_i x\}\right) $
%$Q(Y_n \in U) \lessim n^3 \exp(-n \epsilon) \to 0$ as $n \to \infty$, since the Chernoff bound gives
%\[ \]
%\todo{elaborate}

It follows that any limiting distribution must be supported on the set of 6 symmetrical global maximizers from Lemma~\ref{lem:mean-field-solns}. Since they are symmetrical and the random variables $S_n$ respect this symmetry, the measure must be the uniform measure. It follows that this is the unique limiting distribution.
\end{proof}
\begin{remark}
Interestingly, this result tells us that society breaks symmetry in a way such that all pairwise elections are won by a $\Omega(n)$ margin of votes (which would not happen if, for example, the solutions of type 2 in Lemma~\ref{lem:mean-field-solns} were optimal).
\end{remark}
\begin{corollary}\label{corr:no-paradox}
For $\beta > 3/4$, the asymptotic (as $n \to \infty$) probability of a Condorcet winner in the three-candidate mean-field model under pairwise majority is equal to $1$. Furthermore, there exists $c = c_{\beta} > 0$ such that the probability is at least $1 - 2 \exp(-c_{\beta} n)$.
\end{corollary}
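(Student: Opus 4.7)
The event that there is a Condorcet winner under pairwise majorities is exactly the event that the sign vector $(\operatorname{sgn}(S_n^1), \operatorname{sgn}(S_n^2), \operatorname{sgn}(S_n^3))$ lies in $\NAE_3$, where $S_n = \frac{1}{n}\sum_i X_i^{(n)}$ is the magnetization vector used in Theorem~\ref{thm:low-temperature-soln}. The plan is therefore to deduce the result directly from the large deviation conclusion of Theorem~\ref{thm:low-temperature-soln}: with exponentially high probability $S_n$ sits inside a small neighborhood of one of the six global maximizers of $\Phi$, so it suffices to verify that each of those six points has sign vector in $\NAE_3$ and bounded-away-from-zero coordinates.

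First I would inspect the structure of the six maximizers using Lemma~\ref{lem:mean-field-solns}. They are of type (3), i.e.\ obtained from a single point $s^*$ with $s^*_1 = s^*_2$ and $s^*_3$ of opposite sign by applying coordinate permutations together with $s \mapsto -s$; the relation $\lambda_i = \beta s_i$ from Lemma~\ref{lem:mf-eqn} transfers the corresponding sign pattern on the $\lambda_i$'s to the $s_i$'s. Thus the sign vector of every maximizer is, up to the $S_3 \times \{\pm 1\}$ action, $(+,+,-)$, and in particular it is never $(+,+,+)$ or $(-,-,-)$. This verifies that at each of the six maximizers the pairwise majority outcome is transitive. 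Next I would argue that each coordinate of $s^*$ is nonzero: if some $s^*_i$ vanished, then the solution would collapse into type (1) or type (2) of Lemma~\ref{lem:mean-field-solns}, contradicting that $s^*$ is of type (3), which is asserted to be a distinct global maximizer with orbit of size exactly $6$ for $\beta > 3/4$. Set $\eta_\beta := \min_i |s^*_i| > 0$.

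Fix any $\epsilon < \eta_\beta/2$. By Theorem~\ref{thm:low-temperature-soln} there exists $c_\beta = c_{\beta,\epsilon} > 0$ such that
\[
Q\!\left(\min_{s \in \mathcal{S}_\beta} \|S_n - s\| \ge \epsilon\right) \le 2 e^{-c_\beta n}.
\]
On the complementary event, $S_n$ is within $\epsilon < \eta_\beta$ of some maximizer $s \in \mathcal{S}_\beta$, and by the choice of $\eta_\beta$ the sign vector of $S_n$ coincides coordinatewise with that of $s$; by the previous paragraph this sign vector lies in $\NAE_3$, so a Condorcet winner exists. Therefore $Q_n(E_n) \ge 1 - 2e^{-c_\beta n}$, which gives both the asymptotic value $1$ and the quantitative exponential tail claimed.

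The only real content beyond invoking Theorem~\ref{thm:low-temperature-soln} is the two structural facts about the maximizers: nonvanishing of every coordinate and non-equality of the sign pattern. I expect the main (minor) obstacle to be making precise the first of these, namely that at $\beta > 3/4$ no coordinate of a type-(3) solution degenerates to $0$; this is exactly the symmetry breaking noted in the remark following Theorem~\ref{thm:low-temperature-soln}, and can be read off from Lemma~\ref{lem:mean-field-solns} together with $\lambda_i = \beta s_i$.
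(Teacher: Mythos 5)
Your proposal is correct and follows essentially the same route as the paper: invoke the large-deviation concentration of $S_n$ around the six type-(3) maximizers from Theorem~\ref{thm:low-temperature-soln} and check, using the sign pattern $\lambda_1=\lambda_2$, $\lambda_3$ opposite, together with $\lambda_i=\beta s_i$, that each maximizer has sign vector in $\NAE_3$ with all coordinates bounded away from zero. The only difference is that you spell out explicitly (and correctly) the two structural facts the paper leaves implicit — nonvanishing of every coordinate and $\NAE_3$ membership of the sign pattern — which is the right level of care.
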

\begin{proof}
This follows from Theorem~\ref{thm:low-temperature-soln} and analysis of the six symmetrical cases: in particular, if $\lambda_1 = \lambda_2 > 0$ and $\lambda_3 < 0$, then under pairwise majority election with probability $1 - o_{n \to \infty}(1)$, candidate $1$ will beat candidates $2$ and $3$ in the corresponding pairwise elections, and $2$ will beat $3$, so candidate 1 is a Condorcet winner. The existence of $c_{\beta} > 0$ follows from the above case analysis and the large deviation bound in Theorem~\ref{thm:low-temperature-soln}.
%for all sufficiently large $n$ and from the fact that the probability of a Condorcet winner is non-zero to handle the case of small $n$.
\end{proof}
\section{Mean-Field Model with Arbitrarily Many Candidates}\label{sec:large-q}
Based on the analysis in the $q = 3$ setting, a plausible conjecture would be that many of the same behaviors occur in the case $q > 3$: if $\beta_c$ corresponds to the ``critical temperature'' of the model (for us, the point where the limit of the free energy $\frac{1}{n} \log Z(\beta)$ as $n \to \infty$ is not analytic, which we expect to be unique) then for the entire high-temperature phase $\beta < \beta_c$ the Quantitative Arrow's Theorem holds, and in the entire low-temperature phase $\beta_c < \beta$ elections by pairwise majority should yield a Condorcet winner with probability $1 - o_{n \to \infty}(1)$. 

It turns out generalizing these results to larger $q$ is difficult. 
One key step in the proof of Theorem~\ref{thm:qa-mf-3} fails to generalize to larger values of $q$: in Lemma~\ref{lem:w_n-ui}, the upper bound on the cumulant generating function $\log \E[\exp(\lambda \langle Y_n, u \rangle)]$ by Taylor expansion and naively comparing monomials term by term, i.e. Lemma~\ref{lem:cosh-inequality}, gives a very weak bound for larger values of $q$ (the constant grows exponentially with $q$). Therefore new ideas are needed to prove an upper bound on this constant (the sub-Gaussian constant of the random vector $\varphi(\pi)$) which scales correctly with $q$. 

In the next section, we prove a large deviation bound which is optimal up to a constant factor of at most $3$ (for all $q$), from which the Quantitative Arrow's Theorem for a high-temperature regime $\beta \le 1/(q - 1)$ is derived. In the final section we show the model enters a low-temperature phase for $\beta > 3/(q + 1)$ by lower bounding $\frac{1}{n} \log Z$, ruling out the possibility that $\frac{1}{n} \log Z$ is an analytic extension of its high temperature behavior (i.e. constant) which means the critical temperature must be in-between. We also state a simple conjecture which would eliminate the gap of $3$ between these two regimes, identifying the location of the phase transition.
\subsection{Large deviations inequality}
As described above, the key tool we need to extend the $q = 3$ high-temperature analysis to $q > 3$ is a good estimate on the cumulant generating function of $\varphi(\pi)$ for $\pi \sim \mathfrak{S}_q$. The following Lemma gives a sub-Gaussian inequality, proved via martingale and symmetrization techniques, which for every $q$ is sharp up to a constant factor of at most 3; however we conjecture below (Conjecture~\ref{conj:large-q-conjecture}) that the sharp constant for this inequality is better, generalizing the bound we proved earlier for the $q = 3$ case in a natural way.

Crucially, Lemma~\ref{lem:general-large-deviations} is much stronger than the trivial estimate one gets from the fact $\|X\|_2 = O(q)$, which by Hoeffding's inequality \cite{vershynin2018high} implies a bound for the sub-Gaussian constant of the incorrect order $O(q^2)$. 
\begin{lemma}\label{lem:general-large-deviations}
For any vector $\lambda$, if if we let random vector $X = \varphi(\pi)$ for $\pi \sim Uniform(\mathfrak{S}_q)$ with $q \ge 2$ then
\[ \log \E[\exp(\langle \lambda, X \rangle)] \le \frac{q - 1}{2}\|\lambda\|_2^2. \]
\end{lemma}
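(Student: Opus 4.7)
I would prove this by induction on $q$, with base case $q=2$ handled by Hoeffding's classical estimate $\log\cosh(\lambda) \le \lambda^2/2$, which matches $(q-1)\lambda^2/2$ for $q=2$.

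For the inductive step I would condition on the element $m := \pi^{-1}(q)$ at the last position, which is uniform on $[q]$. Since $\pi(m) = q$, every coordinate $X_{\{i,m\}}$ becomes deterministic: $X_{im}=+1$ for $i<m$ and $X_{mi}=-1$ for $i>m$. Hence the pairs involving $m$ contribute a deterministic
\[
c_m := \sum_{i<m}\lambda_{im} - \sum_{i>m}\lambda_{mi},
\]
while the remaining coordinates form $\varphi(\rho)$ for a uniform $\rho \in \mathfrak{S}_{q-1}$ on $[q]\setminus\{m\}$. Writing $v_m^2 := \sum_{i\ne m}\lambda_{\{i,m\}}^2$ and applying the inductive hypothesis to $\rho$ gives
\[
\E\bigl[e^{\langle\lambda,X\rangle}\bigr] \le e^{(q-2)\|\lambda\|^2/2}\cdot \frac{1}{q}\sum_{m=1}^{q} \exp\!\Bigl(c_m - \tfrac{q-2}{2}v_m^2\Bigr),
\]
so the induction reduces to proving the key inequality
\[
\frac{1}{q}\sum_{m=1}^{q} \exp\!\Bigl(c_m - \tfrac{q-2}{2}v_m^2\Bigr) \le \exp\!\bigl(\tfrac{1}{2}\|\lambda\|^2\bigr). \qquad(\star)
\]
Along the way the structural identities $\sum_m c_m = 0$ (each $\lambda_{ij}$ contributes $+\lambda_{ij}$ to $c_j$ and $-\lambda_{ij}$ to $c_i$) and $\sum_m v_m^2 = 2\|\lambda\|^2$ (each pair counted twice) will be essential.

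To establish $(\star)$ I would use symmetrization. Observe that $c_m = \sum_{i\ne m}\operatorname{sgn}(m-i)\,\lambda_{\{i,m\}}$ is a signed sum with a specific deterministic sign pattern; for its Rademacher-randomized version $\sum_{i\ne m}\eta_{im}\lambda_{\{i,m\}}$ (with $\eta_{im}$ independent $\pm 1$), Hoeffding's lemma directly gives $\E_\eta \exp\!\bigl(\sum_i \eta_{im}\lambda_{\{i,m\}}\bigr) = \prod_{i\ne m}\cosh(\lambda_{\{i,m\}}) \le e^{v_m^2/2}$. The plan is to thread this Rademacher comparison through the average over $m$, using the cancellation $\sum_m c_m=0$ to control the dependence between the sign patterns for different $m$, so that $\frac{1}{q}\sum_m e^{c_m - (q-2)v_m^2/2}$ is dominated by a single Rademacher MGF that collapses to $e^{\|\lambda\|^2/2}$.

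The main obstacle is $(\star)$ itself. A pointwise Cauchy--Schwarz bound $|c_m|\le\sqrt{q-1}\,v_m$ followed by a log-sum-exp estimate loses a factor that is too large precisely when $\|\lambda\|^2$ is small, so the argument must genuinely exploit averaging in $m$ and the sign cancellations across the $c_m$'s. Checking gradients and Hessians at $\lambda=0$ confirms that both sides agree to first order and that the LHS Hessian is $\le (2/q)\,I$ while the RHS Hessian is $I$, so $(\star)$ holds locally with substantial slack for $q\ge 3$; but $\log(\text{LHS})$ is not globally convex, so propagating this to a uniform bound really does require a genuine symmetrization that converts the deterministic signs to Rademacher signs and then uses the quadratic correction $-(q-2)v_m^2/2$ to absorb the resulting $e^{v_m^2/2}$ terms. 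This symmetrization step is where the bulk of the technical work lies, and is the reason the resulting constant $q-1$ is off by a factor of at most $3$ from the conjecturally sharp $(q+1)/3$.
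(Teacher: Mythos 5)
Your reduction to the inequality $(\star)$ is correct: conditioning on $m = \pi^{-1}(q)$ does make all pairs involving $m$ deterministic with the stated signs, the remaining coordinates form $\varphi(\rho)$ for $\rho$ uniform on $\mathfrak{S}_{q-1}$, and the inductive hypothesis with $\|\lambda'\|^2 = \|\lambda\|^2 - v_m^2$ gives exactly the chain you write. The structural identities $\sum_m c_m = 0$ and $\sum_m v_m^2 = 2\|\lambda\|^2$ are also correct.

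However, the proposal stops precisely at the point where the work is. You have not proved $(\star)$ --- and you say so yourself (``this symmetrization step is where the bulk of the technical work lies''). The plan you sketch is not a proof: the single-$m$ Rademacher MGF bound $\prod_{i\neq m}\cosh(\lambda_{\{i,m\}}) \le e^{v_m^2/2}$ does not apply to $c_m$, which has one fixed deterministic sign pattern rather than a Rademacher one, and you do not explain how averaging over $m$ converts the deterministic pattern into a symmetric one. The cancellation $\sum_m c_m = 0$ controls only the first-order term; it is not clear how it ``threads'' a Rademacher comparison through the average. And, as you observe, any pointwise bound on $c_m - \tfrac{q-2}{2}v_m^2$ (say via $|c_m| \le \sqrt{q-1}\,v_m$) is too weak when $\|\lambda\|$ is small, so a genuine global argument is needed and is not supplied. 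So this is a genuine gap, even though the reduction itself is clean.

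For comparison: the paper avoids ever facing an averaged inequality like $(\star)$. It first pairs each permutation with its rank-reversal (which sends $\varphi(\pi) \mapsto -\varphi(\pi)$), giving $\E[\exp\langle\lambda,X\rangle] = \E[\cosh\langle\lambda,X\rangle]$ in one stroke. It then runs a Fisher--Yates / Doob martingale on the ranked positions $j_1 = \pi^{-1}(1), j_2 = \pi^{-1}(2), \dots$, and uses $\cosh(a+b) = \cosh a\cosh b + \sinh a\sinh b$ together with $\E[\sinh(\langle\lambda_{\sim j_1}, X_{\sim j_1}\rangle)\mid j_1] = 0$ (by the same reversal symmetry restricted to $[q]\setminus\{j_1\}$) to turn the expectation into $\E\bigl[\prod_t \cosh(\langle\lambda_{j_t}, X_{j_t}\rangle)\bigr]$. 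This is a \emph{product}, not an average, and admits a pointwise bound: $\cosh(x)\le e^{x^2/2}$ and $\langle\lambda_{j_t}, X_{j_t}\rangle^2 \le (q-1)\|\lambda_{j_t}\|^2$ give the result because the $\lambda_{j_t}$ partition the coordinates of $\lambda$. The conceptual ingredient missing from your proposal is exactly this: the reversal trick turns $\exp$ into $\cosh$ \emph{before} conditioning, and the $\cosh$ addition formula then factors the conditional expectation multiplicatively, so the final step is a deterministic pointwise inequality rather than a nontrivial averaged one. If you want to salvage the induction, the most promising route is to prove $(\star)$, but you would likely be better off running the $\cosh$/reversal argument inside the induction so that the ``average over $m$'' of $e^{c_m}$ becomes a factorized $\cosh$ product.
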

\begin{proof}
Observe by pairing up each permutation with its reversed version (i.e. the permutation $\pi' = \pi \circ (i \mapsto q - i + 1)$) that $\E[\exp(\langle \lambda, X \rangle)] = \E[\cosh(\langle \lambda, X \rangle)]$. 

We now prove the inequality using a Doob martingale. For the filtration, we think of the permutation as being generated by a Fisher-Yates shuffle, i.e. picking in order $j_1 = \pi^{-1}(1), j_2 = \pi^{-1}(2), \ldots$ where at each step $j_t$ is chosen uniformly at random from the elements of $[q] \setminus \{j_1,\ldots,j_{t - 1}\}$. Define $\lambda_{j_1}$ by projection onto the set of coordinates which involve $j_1$ (i.e. indexed by pairs $(i,j_1)$ and $(j_1,k)$) and define $\lambda_{\sim j_1}$ to be the complement, and define $X_{j_1},X_{\sim j_1}$ likewise. Using the identity $\cosh(a + b) = \cosh(a)\cosh(b) + \sinh(a)\sinh(b)$, observe that
\begin{align*} 
\E[\cosh(\langle \lambda, X \rangle)] 
&= \E[\cosh(\langle \lambda_{j_1}, X_{j_1} \rangle)\cosh(\langle \lambda_{\sim j_1}, X_{\sim j_1} \rangle) + \sinh(\langle \lambda_{j_1}, X_{j_1} \rangle)\sinh(\langle \lambda_{\sim j_1}, X_{\sim j_1}  \rangle)] \\
&= \E[\cosh(\langle \lambda_{j_1}, X_{j_1} \rangle)\cosh(\langle \lambda_{\sim j_1}, X_{\sim j_1} \rangle)] \\
&= \E[\cosh(\langle \lambda_{j_1}, X_{j_1} \rangle)\E[\cosh(\langle \lambda_{\sim j_1}, X_{\sim j_1} \rangle) | j_1]] \\
&= \E[\cosh(\langle \lambda_{j_1}, X_{j_1} \rangle) \cdots \cosh(\langle \lambda_{j_{q - 1}}, X_{j_{q - 1}} \rangle)]
\end{align*}
where in the second equality we used that $\E[\sinh(\langle \lambda_{\sim j_1}, X_{\sim j_1} \rangle) | j_1] = 0$ by symmetry considerations, and in the last equality we applied the argument inductively on $\E[\cosh(\langle \lambda_{\sim j_1}, X_{\sim j_1} \rangle) | j_1]$.
Using the inequality $\cosh(x) \le e^{x^2/2}$, the Cauchy-Schwartz inequality, and Holder's inequality we see that
\begin{align*} 
\cosh(\langle \lambda_{j_1}, X_{j_1} \rangle) \cdots \cosh(\langle \lambda_{j_{q - 1}}, X_{j_{q - 1}} \rangle) 
&\le \exp\left(\sum_t \langle \lambda_{j_t}, X_{j_t} \rangle^2/2\right) \\
&\le \exp\left(\sum_t \|\lambda_{j_t}\|^2 \frac{q - 1 - t}{2}\right)
\le \exp\left(\|\lambda\|^2 \frac{q - 1}{2}\right)
\end{align*}
so taking the expectation and logarithm gives the result.
\end{proof}
\begin{remark}
The replacement of $\exp$ by $\cosh$ in the proof of Lemma~\ref{lem:general-large-deviations} is equivalent to ``symmetrizing'' the Fisher-Yates shuffle: at every step of the shuffle, we flip a fair coin and depending on its outcome inserts a randomly chosen element either at the front of the list or at the back of the final list, then iterate recursively on the remaining elements.
\end{remark}
\begin{conjecture}\label{conj:large-q-conjecture}
In the same setting as Lemma~\ref{lem:general-large-deviations}, we conjecture that
\[ \log \E[\exp(\langle \lambda, X \rangle)] \le \frac{q + 1}{6} \|\lambda\|_2^2. \]
If true, this constant is sharp because it is attained in the limit $\lambda \to 0$ (by Taylor expansion and computing the top eigenvalue of the covariance matrix, see Lemma~\ref{lem:cov-eigenvalues}).
\end{conjecture}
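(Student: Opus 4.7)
The plan is to establish $\Psi(\lambda) := \log \E[\exp(\langle \lambda, X \rangle)] \le \frac{q+1}{6}\|\lambda\|_2^2$ by refining the Fisher--Yates martingale argument used in the proof of Lemma~\ref{lem:general-large-deviations}, whose looseness by a factor of about $3$ is precisely the obstacle to matching the sharp constant. First verify that $(q+1)/3$ is indeed the spectral norm of the covariance matrix $\Sigma$ of $X = \varphi(\pi)$: one has $\E[X_{ij}X_{kl}] = 1$ on the diagonal, $0$ for disjoint pairs, $+1/3$ when the two pairs share an ``extreme'' endpoint (a common smallest or common largest index), and $-1/3$ in the ``chain'' configuration $i < j = k < l$; a direct eigenvalue calculation on $\Sigma$ in block form then gives the top eigenvalue $(q+1)/3$. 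This forces the constant in any sub-Gaussian inequality for $X$ to be at least $(q+1)/6$, so the conjecture is unimprovable.

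The refined martingale strategy is as follows. The proof of Lemma~\ref{lem:general-large-deviations} obtains
\[ \E[\cosh(\langle \lambda, X \rangle)] = \E\Bigl[\prod_{t=1}^{q-1} \cosh(\langle \lambda_{j_t}, X_{j_t} \rangle)\Bigr] \]
via the reversal symmetry, and then bounds each factor in isolation by $\cosh(x) \le e^{x^2/2}$ together with Cauchy--Schwartz. The factor-of-$3$ slack arises precisely here, because one discards the fact that, conditional on $j_1, \ldots, j_{t-1}$, the next element $j_t$ is uniform over $q - t + 1$ labels, and averaging $\cosh$ over this choice should yield a much sharper bound than any pointwise estimate. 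This is exactly the mechanism by which Lemma~\ref{lem:cosh-inequality} turns the naive constant $1/2$ into the sharp $1/6$ in the $q=3$ case. The plan is to establish a many-term generalization of Lemma~\ref{lem:cosh-inequality}, roughly of the form
\[ \frac{1}{m}\sum_{s=1}^m \cosh(\langle v_s, \lambda \rangle) \le \exp\Bigl(\frac{1}{6} \sum_{s=1}^m \langle v_s, \lambda \rangle^2\Bigr) \]
for the sign patterns $v_s$ arising at each step of the shuffle, and combine it with the induction to telescope and obtain the constant $(q+1)/6$.

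The hardest part will be identifying and proving the correct $\cosh$-averaging inequality. Unlike the $q=3$ case, where the three $\NAE_3$ sign patterns combine into a symmetric quadratic form amenable to termwise Taylor coefficient comparison, for general $q$ the sign patterns at step $t$ of Fisher--Yates are less symmetric and depend on the relative labels of the already-placed elements, so a direct combinatorial approach may not suffice. An alternative route is to observe that the conjecture is implied by the uniform Hessian bound $\|\Cov_{P_\mu}(X)\|_{\mathrm{op}} \le (q+1)/3$ for every $\mu \in \mathbb{R}^{q \choose 2}$, where $P_\mu$ denotes the Inversion Tilt Model: since $\nabla^2 \Psi(\mu) = \Cov_{P_\mu}(X)$, integrating this bound along the segment from $0$ to $\lambda$ (twice) yields the conjecture directly. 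This reformulation is clean and intuitively appealing (tilting should reduce, not increase, variance of a bounded random vector in each direction), but establishing it uniformly in $\mu$ appears to require genuinely new structural input about the Inversion Tilt Model, and I expect this is the true technical core of the problem.
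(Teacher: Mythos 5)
The statement you are addressing is explicitly a \emph{conjecture} in the paper; the paper contains no proof of it, only the sharpness observation (that $(q+1)/6$ matches half the top eigenvalue of $\Sigma$ via Lemma~\ref{lem:cov-eigenvalues}) and circumstantial evidence from the $q=3$ case (Lemma~\ref{lem:cosh-inequality}) and simulations. Your submission is likewise a plan of attack rather than a proof, and you correctly flag the places where it is not closed, so there is no error to correct against the paper's (nonexistent) argument. The sharpness discussion you give is right and is essentially the paper's own remark. The main useful thing to record is what is sound and what is shakier in your two proposed routes.

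The Hessian reformulation is a genuine sufficient condition: since $\Psi(0)=0$ and $\nabla\Psi(0)=\E X = 0$, Taylor's theorem with integral remainder gives $\Psi(\lambda)=\int_0^1 (1-t)\,\lambda^T \nabla^2\Psi(t\lambda)\,\lambda\,dt$, and $\nabla^2\Psi(\mu)=\Cov_{P_\mu}(X)$, so a uniform operator-norm bound $\|\Cov_{P_\mu}(X)\|\le (q+1)/3$ would deliver the conjecture immediately. But this is a strictly stronger claim, and the intuition you offer for it (``tilting should reduce variance of a bounded vector in each direction'') is false in general: for a bounded scalar supported on $\{-1,0,1\}$, tilting toward the endpoints \emph{increases} variance. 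It does happen to hold for $q=2$ (where $\Psi''=\sech^2\le 1$), but whether the covariance of the Inversion Tilt Model is maximized at $\mu=0$ is itself an open structural question, which you acknowledge. On the martingale route, the obstacle you identify is real and concrete: in Lemma~\ref{lem:cosh-inequality} the three $\NAE_3$ sign patterns form a fixed symmetric family, and the termwise Taylor comparison exploits that symmetry; at step $t$ of the Fisher--Yates filtration the sign patterns of $X_{j_t}$ live in a history-dependent subset of $\{\pm 1\}^{q-t}$ with no comparable symmetry, so a direct generalization of the averaging inequality is not automatic, and the bookkeeping needed to telescope the per-step constants into $(q+1)/6$ after averaging over the random $j_t$ is itself a nontrivial identity you have not supplied.
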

\begin{remark}
In terms of concentration inequalities, the sub-Gaussian bound in Lemma~\ref{lem:general-large-deviations} gives that for $X = \varphi(\pi)$, $\pi \sim Uni(\mathfrak{S}_q)$, and $\|w\| \le 1$
\[ Q[\langle w, X \rangle > t] \le e^{-t^2/2(q - 1)} \]
where $Q$ is the law of $X$,
and Conjecture~\ref{conj:large-q-conjecture} would give the improved estimate
\[ Q[\langle w, X \rangle > t] \le e^{-3t^2/2(q + 1)} \]
sharpening the constant in the exponent. We remark that in the special case that $w$ is along the all-ones direction, the quantity $\langle w, X \rangle$ correspond to Kendall's $\tau$ statistic and its concentration plays an important role in statistical tests (see e.g. \cite{hoeffding1948class}).
\end{remark}
Some preliminary computer simulations support this conjecture for small values of $q$, and were also performed in \cite{raffaelli2005statistical}. As we will see in the next sections, if we knew this conjecture then we could identify
the critical temperature for these models exactly as well as prove a Quantitative Arrow's Theorem in the entire high-temperature regime.
\subsection{High-temperature Quantitative Arrow's Theorem}
From the large deviation inequality above (Lemma~\ref{lem:general-large-deviations}), we can use similar arguments to the proof of Theorem~\ref{thm:qa-mf-3-intro} to prove a Quantitative Arrow's Theorem under the assumption $\beta < 1/(q - 1)$. The details of the proof are left to the Appendix.
\subsection{Low-temperature behavior}
In this section, we prove the model must be in its low-temperature phase for $\beta > 3/(q + 1)$ in the sense it is not mutually contiguous to the product measure, and the limiting behavior of $\frac{1}{n} \log Z$ is not an analytic extension of its high temperature behavior (i.e. it is not equal to a constant); therefore the model must exhibit a phase transition at or before $3/(q + 1)$ and the high-temperature contiguity estimate from the previous section is off by at most a factor of (slightly less than) three. The techniques are based upon the naive mean-field approximation and computing its second order expansion around its ``trivial'' critical point at the origin; details and proofs are left to the Appendix.
\section{Perfect Matching Model}\label{sec:matching}
We proceed to study the perfect matching model \ref{eqn:matching-intro}
mentioned in the introduction. We recall that we denote the pairs of voters in a matching by $X_i$ and $Y_i$ and let $n$ denote the total number of matchings (i.e. half the number of total voters), so that the joint measure will be
\begin{equation} 
Q(X = x, Y = y) = \frac{1}{Z} \exp\left(\beta \sum_{i = 1}^n \langle \varphi(x_{i}), \varphi(y_{i}) \rangle\right).
\label{eqn:matching-model}
\end{equation}

%We consider a similar setting to before, but where the underlying graph is a perfect matching so that every node (i.e. voter) interacts only with a single other node. For simplicity we focus on the $q = 3$ case. We compute the probability of a Condorcet winner under pairwise majority elections, as in the generalized Guilbaud formula we showed for the mean-field model. 

In the first subsection below, we compute the probability of a Condorcet winner under pairwise elections.
Surprisingly, our calculation shows that one feature of the mean-field model, that the probability of a Condorcet winner is increasing in $\beta$, is \emph{not} universal. Instead, the probability is increasing only for small values of $\beta$; for larger values of $\beta$, increasing the strength of interactions in the model \emph{monotonically decreases} the probability of a Condorcet winner (see Figure~\ref{fig:matching-behavior}).  

We then proceed to prove the main result of this section, that the Quantitative Arrow's Theorem holds for every $\beta \ge 0$. The proof is technically involved: we first give a high level overview of the proof, state and prove some needed estimates, and then show step-by-step how to adapt the proof from the original product measure setting \cite{mossel2012quantitative}. 
%\todo{check if this phenomenon is related to the ising model on the graph with log-periodic fluctuations. \url{https://link.springer.com/content/pdf/10.1007/BF01018834.pdf}}
\subsection{Probability of paradox under pairwise majority}
Using the Central Limit Theorem, we can derive the analogous version of Guilbaud's formula for the matching by computing the covariance matrix corresponding to each pair of voters. We leave the calculation to the Appendix.
\begin{theorem}[Restatement of Theorem~\ref{thm:matching-guilbaud}]\label{thm:matching-guilbaud}
For $\beta > 0$, let $(X,Y)$ be distributed according to the Gibbs measure $Q_n$ \eqref{eqn:matching-model} with $2n$ voters.
%For $\beta > 0$ and $n$ even, define the Gibbs measure
%\[ Q(X = x) = \frac{1}{Z} \sum_{x \in \mathfrak{S}_3^n} \exp\left(\sum_{0 \le i < n/2} \beta \langle \varphi(x_{2i}), \varphi(x_{2i + 1}) \rangle\right) \]
Let $E_n$ be the event that there is a Condorcet winner under pairwise majority elections. Then
\[ \lim_{n \to \infty} Q_n(E_n) = \frac{3}{2\pi} \arccos \left(\frac{-1/3 - \frac{\sinh(3\beta) + 2\sinh(\beta)}{3(\cosh(3\beta) + 2\cosh(\beta))}}{ 1 + \frac{3\sinh(3\beta) + 2\sinh(\beta)}{3(\cosh(3\beta) + 2\cosh(\beta))}}\right). \]
\end{theorem}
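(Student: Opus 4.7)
The proof follows the same template as Theorem~\ref{thm:generalized-guilbaud}, but is considerably simpler because of the special structure of the matching model: under $Q$, the $n$ pairs $(X_i, Y_i)$ are i.i.d.\ draws from a single joint distribution on $\mathfrak{S}_3 \times \mathfrak{S}_3$ with density proportional to $\exp(\beta \langle \varphi(x), \varphi(y)\rangle)$. Therefore the classical multivariate CLT (Theorem~\ref{thm:clt}) applies directly to $S_n := \frac{1}{\sqrt{n}} \sum_{i=1}^n (\varphi(X_i) + \varphi(Y_i))$, giving $S_n \to N(0, \Sigma)$ in distribution with $\Sigma = \Cov(\varphi(X_1) + \varphi(Y_1))$, thereby bypassing the contiguity and uniform-integrability machinery needed for the mean-field analogue.

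The main step is to compute $\Sigma$ in closed form: by symmetry in the pair, $\Sigma = 2\Sigma_0 + 2C$, where $\Sigma_0 = \Cov(\varphi(X_1))$ is the $\beta = 0$ marginal covariance already computed in Lemma~\ref{lem:logz} and $C = \E[\varphi(X_1)\varphi(Y_1)^T]$. To evaluate $C$, I would enumerate pairs $(x,y) \in \NAE_3 \times \NAE_3$ by $\langle x, y\rangle \in \{3,1,-1,-3\}$, which occur with multiplicities $6, 12, 12, 6$ respectively. This directly yields the partition function $Z_1 = 12(\cosh(3\beta) + 2\cosh(\beta))$, matching the denominator in the theorem. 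Joint symmetry under coordinate permutations forces all diagonal entries of $C$ to be equal and all off-diagonals to be equal, so the diagonal entry is $\frac{1}{3}\E[\langle \varphi(X), \varphi(Y)\rangle]$, obtained immediately from the enumeration. For the off-diagonal entry, the cleanest route is to compute $\E[s(X) s(Y)]$ where $s(x) := \sum_i x_i \in \{\pm 1\}$ on $\NAE_3$, and then solve for the common off-diagonal value using $\sum_{i,j} C_{ij} = \E[s(X) s(Y)]$. The only real obstacle is simply the bookkeeping in this enumeration.

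Plugging the resulting $\Sigma_{11}$ and $\Sigma_{12}$ into $\rho = \Sigma_{12}/\Sigma_{11}$ gives exactly the expression inside $\arccos$ in the theorem statement. The event $E_n$ that there is a Condorcet winner is $\{\mathrm{sign}(S_n) \in \NAE_3\}$; since the limiting Gaussian $N(0,\Sigma)$ has a continuous density and the boundary of this event is Lebesgue-null, the Portmanteau theorem (cited in Section~\ref{sec:guilbaud}) gives $\lim_n Q_n(E_n) = P_{Y \sim N(0, \Sigma)}(\mathrm{sign}(Y) \in \NAE_3)$. By the same identity used at the end of Theorem~\ref{thm:generalized-guilbaud}, the right-hand side equals $\frac{3}{4}(1 - \E[\mathrm{sgn}(Y^1)\mathrm{sgn}(Y^2)])$ (valid because exactly two of the three pairs $\{i,j\}$ disagree in sign whenever the triple is not all-equal); applying Lemma~\ref{lem:gaussian-sgn} with correlation $\rho$ then yields $\frac{3}{2\pi} \arccos(\rho)$, completing the proof. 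No new technical ingredients beyond those already used in Section~\ref{sec:guilbaud} are required.
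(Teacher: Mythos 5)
Your proposal is correct and follows essentially the same approach as the paper's proof in the Appendix: both rely on the key observation that the $n$ pairs $(X_i,Y_i)$ are i.i.d., apply the classical multivariate CLT to the normalized sum of the per-pair contributions, compute the cross-covariance $\E[\varphi(X_1)\varphi(Y_1)^T]$ by enumerating pairs in $\NAE_3\times\NAE_3$ by $\langle x,y\rangle$ (yielding $Z_1 = 12(\cosh(3\beta)+2\cosh(\beta))$), and finish via the $\frac{3}{4}(1-\E[\mathrm{sgn}(Y^1)\mathrm{sgn}(Y^2)])$ identity together with Lemma~\ref{lem:gaussian-sgn}. Your bookkeeping shortcut for the off-diagonal entries (via $\E[s(X)s(Y)]$ and the trace) is a minor stylistic variation on the paper's direct matrix enumeration, and your explicit invocation of Portmanteau to pass to the limit is a small but welcome gap-fill over the paper's terse final sentence.
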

%See the discussion after Theorem~\ref{thm:matching-guilbaud-intro}.
\iffalse % moved earlier
As we can see in Figure~\ref{fig:matching-behavior}, the probability
of Condorcet election goes up and then goes back down to its original value as we increase $\beta$. The fact that the boundary cases $\beta = 0$ and $\beta = \infty$ behave the same way is not a coincidence: when $\beta = \infty$ the two nodes in a matching are perfectly correlated, so they should act the same way as a single node in the $\beta = 0$ setting.
\fi

%\todo{i think we can also say this for the optimal election strategy by appealing to the endpoint cases? for some set of election rules maybe}
\subsection{Quantitative Arrow's Theorem}
In this section, for all finite values of $\beta \ge 0$ we prove that given $\epsilon > 0$ the probability of paradox for constitutions $\epsilon$-far from $\mathcal{F}_3$ is lower bounded by $\delta = \delta(\beta,\epsilon)$ independent of $n$ for any constitution satisfying the hypotheses of Arrow's theorem. In this setting, the result cannot be proved by a mutual contiguity argument: there are constitutions with a low probability of paradox in the matching model which have a high probability of paradox in the product measure setting; this is consistent with the Quantitative Arrow's Theorem because the notion of $\epsilon$-close is distribution dependent. For example, constitutions which count the number of agreements between $X_{a1}$ and $Y_{a1}$ for $a = 1$ to $n$ will behave very differently in the product measure ($\beta = 0$) and finite temperature ($\beta > 0$) setting.

\subsubsection{Proof Strategy} 
The proof strategy follows the same general template as the proof for independent voters developed in \cite{mossel2012quantitative} and uses general reverse hypercontractive estimates developed in \cite{mossel2013reverse}. The key similarity that allows us to use these techniques is that the distribution of votes is still given by a product measure if we group pairs of voters; the main obstacle that we have to overcome is that unlike the setting with truly independent voters, the conditional law of the election between candidates $2$ and $3$ given the results of an election between candidates $1$ and $2$ is significantly more complicated. In particular, this complicates the step of the argument where low-influence functions are handled using the Invariance Principle and arguments in Gaussian space. In the first two subsections below, we develop the necessary estimates needed to overcome this issue using reverse hypercontractivity and linear algebraic tools such as Schur complement formulae.

For the benefit of the readers who are not familiar with~\cite{mossel2012quantitative}, we describe the main steps and the modifications needed for proof here at a high level. 

\begin{enumerate}
\item[I.] First a Gaussian version of the Theorem is formulated and proved. One advantage of Gaussian space is that it has no dictators, and therefore, the statement is simpler: that unless some choices are almost fixed, there is a good probability of paradox. The covariance structure of the Gaussian random variables should match those of the binary random variable. In the case of i.i.d. voters, this is a $3$ dimensional structure representing the correlation between different choices of the same voter. In our case, choices of adjacent voters are correlated. Thus we need to work with a $9$-dimensional covariance structure.\footnote{The factor of $3$ for the 3 different candidates is the same as before, and another factor of $3$ appears because the preferences from a pair of adjacent voters between two candidates corresponds to an element of $\{\pm 1\}^2$, the space of functions from $\{\pm 1\}^2 \to \mathbb{R}$ is four-dimensional, and one dimension corresponds to constant functions.}
However, it turns our that the details of the covariance structure do not matter much, as long as 
Reverse Hyper-Contractive inequalities hold (see III. below). 

\item[II.] Once a Gaussian version is proven, and using the Majority is Stablest Theorem, one can deduce the same statement as long as all of the influences are small. Here we prove a similar statement involving the influences of each pair of voters.

\item[III.] Using the Reverse Hyper-Contractive inequality by Borell~\cite{Borell:82}, in \cite{mossel2012quantitative} it was shown that if two voters $i,j$ have high influence for two different functions $f,g$, then the probability of a paradox is high.
We prove a similar statement here, though we need to apply a more general Hyper-Contractive inequality 
from~\cite{mossel2013reverse}. Moreover, we require a number of estimates to show that we can apply such inequalities both in the discrete and in the Gaussian setup (for Step I.), which are given in Section~\ref{subsubsec:voter-freedom}.

\item[IV.] The remaining case is where is only one voter that is influential. In this case, by conditioning on the vote of this voter and applying the low influence result in II., it is possible to conclude that the function is either close to a dictator or has a high probability of paradox. We use a similar argument, where some additional work is needed to get from two voters to one voter. 
\end{enumerate}

\subsubsection{Reverse hypercontractive estimates}\label{subsubsec:reverse-hypercontractive}
First we recall an important estimate for discrete distributions which follows
from a general form of reverse hypercontractivity.
\begin{lemma}[Lemma 8.3 of \cite{mossel2013reverse}]\label{lem:reverse-discrete}
Suppose $\Omega$ is a finite probability space and $(X_i,Y_i)_{i = 1}^n$ are i.i.d. jointly distributed $\Omega^2$-valued random variables; let $P$ be their joint law.
%with marginals $\mu,\nu$, and $K(a,b) := P(Y_i = b | X_i = a)$. Suppose
Suppose
\[ \alpha := \min_{a,b : P(Y_1 = b) > 0} \frac{P(X_1 = a, Y_1 = b)}{P(X_1 = a)P(Y_1 = b)} > 0. \]
Let $X = (X_1,\ldots,X_n)$ and $Y = (Y_1,\ldots,Y_n)$.
Then for any sets $A,B \subset \Omega^n$ such that $P(X \in A) \ge \epsilon$ and $P(Y \in B) \ge \epsilon$ it follows that
\[ P(X \in A, Y \in B) \ge \epsilon^{\frac{2 - \sqrt{1 - \alpha}}{1 - \sqrt{1 - \alpha}}}. \]
\end{lemma}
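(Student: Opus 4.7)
The plan is to prove the bound via reverse hypercontractivity applied to the Markov operator associated to the joint distribution of $(X_1,Y_1)$, followed by tensorization over the $n$ i.i.d. coordinates. First, I would introduce the Markov operator $T \colon L^2(\Omega,\nu_Y) \to L^2(\Omega,\nu_X)$ defined by $(Tg)(x) = \mathbb{E}[g(Y_1)\mid X_1 = x]$, where $\nu_X,\nu_Y$ are the marginals of $X_1,Y_1$. By the i.i.d. product structure, the tensor power $T^{\otimes n}$ acts on $L^2(\Omega^n)$, and one has the identity $P(X\in A,\, Y\in B) = \mathbb{E}_{\nu_X^{\otimes n}}[\mathbf{1}_A \cdot T^{\otimes n}\mathbf{1}_B]$.

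Second, I would translate the hypothesis $\alpha > 0$ into a hypercontractive condition on $T$. The assumption means the joint distribution dominates the product of marginals by a factor of $\alpha$; writing $P = \alpha \nu_X \otimes \nu_Y + (1-\alpha) Q$ for a probability measure $Q$ whose marginals are also $\nu_X,\nu_Y$ (a quick computation shows $Q$ exists and is a probability measure), one sees that the maximal correlation coefficient of $(X_1,Y_1)$ is bounded by $\sqrt{1-\alpha}$. The general framework of \cite{mossel2013reverse} then yields a two-function reverse hypercontractive inequality for $T$: for a range of parameters $p,q \le 1$ (possibly negative) whose admissibility is controlled by $\alpha$, we have $\mathbb{E}[f(X_1) g(Y_1)] \ge \|f\|_{L^p(\nu_X)} \|g\|_{L^q(\nu_Y)}$ for all strictly positive $f,g$. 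By the tensorization property of reverse hypercontractivity, the same inequality persists for $P^{\otimes n}$ with the same $(p,q)$, and applying it to regularized indicators $f = \mathbf{1}_A + \delta$, $g = \mathbf{1}_B + \delta$ and sending $\delta \to 0$ yields $P(X \in A, Y \in B) \ge P(A)^{1/p} P(B)^{1/q} \ge \epsilon^{1/p + 1/q}$.

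The main obstacle is obtaining the exact exponent $\tfrac{2 - \sqrt{1-\alpha}}{1 - \sqrt{1-\alpha}}$ rather than the weaker symmetric-choice exponent $\tfrac{2}{1-\sqrt{1-\alpha}}$ that one gets from taking $p = q = 1 - \sqrt{1-\alpha}$ in a Borell-type bound $(1-p)(1-q) \ge 1-\alpha$. Noting that $\tfrac{2-\sqrt{1-\alpha}}{1-\sqrt{1-\alpha}} = 1 + \tfrac{1}{1-\sqrt{1-\alpha}}$, I expect the sharp exponent emerges from an asymmetric application where one function is treated more like an ordinary indicator (contributing an $\epsilon^1$ factor) and the other is handled by the full reverse hypercontractive machinery (contributing $\epsilon^{1/(1-\sqrt{1-\alpha})}$), e.g.\ by combining the one-sided bound $\|T^{\otimes n}\mathbf{1}_B\|_q \ge P(B)^{1/p}$ with a level-set/Paley-Zygmund type argument on $T^{\otimes n}\mathbf{1}_B$ restricted to $A$. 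The precise execution will require navigating the general semigroup formalism of \cite{mossel2013reverse}, which already contains the needed RHC estimates for Markov chains parameterized by $\alpha$.
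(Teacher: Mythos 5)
This statement is quoted verbatim from Lemma 8.3 of \cite{mossel2013reverse} and the paper gives no proof of it, so there is no in-paper argument to compare against. Your high-level plan --- define the conditional-expectation Markov kernel $T$, observe $P(X\in A, Y\in B) = \mathbb{E}[\mathbf{1}_A\, T^{\otimes n}\mathbf{1}_B]$, exploit the $\alpha$-condition to obtain a reverse-hypercontractive inequality for $T$, tensorize, and specialize to (regularized) indicators --- is indeed the route the source paper takes, and the same scaffolding reappears in the Gaussian analogue proved later in this paper (Lemma~\ref{lem:corr-reverse}), so the outline is sound.

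The genuine gap, which you correctly flag, is recovering the stated exponent. Your symmetric choice $p=q=1-\sqrt{1-\alpha}$ under $(1-p)(1-q)\ge 1-\alpha$ gives $\tfrac{2}{1-\sqrt{1-\alpha}}$, which is strictly larger than $\tfrac{2-\sqrt{1-\alpha}}{1-\sqrt{1-\alpha}}$ and hence strictly weaker. The asymmetric reading $1+\tfrac{1}{1-\sqrt{1-\alpha}}$ reproduces the numerology, but $p=1$ is not admissible under any constraint of the form $(1-p)(1-q)\ge c>0$, and the Paley--Zygmund alternative on $T^{\otimes n}\mathbf{1}_B$ restricted to $A$ that you propose introduces multiplicative constants which cannot be absorbed into a clean power of $\epsilon$ (the bound must equal $1$ at $\epsilon=1$). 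Two subsidiary points: maximal correlation is not the quantity that enters the argument --- what is used is the operator factorization $K=T_{1-\alpha}\circ S$ with $T_\rho=\rho\,\mathrm{Id}+(1-\rho)E$ and $S$ a Markov kernel with the right marginals --- and your decomposition $P=\alpha\,\nu_X\otimes\nu_Y+(1-\alpha)Q$ (with $Q$ having the same marginals) in fact gives maximal correlation at most $1-\alpha$, not $\sqrt{1-\alpha}$. To close the gap you should follow the explicit optimization carried out in the proof of Lemma 8.3 of \cite{mossel2013reverse} rather than reconstruct it.
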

We also need an analogous estimate for correlated Gaussian vectors, which we will prove below as Lemma~\ref{lem:corr-reverse}. To state the result, we need an important Schur complement formula for multivariate Gaussians (see e.g. \cite{zhang2006schur} for a reference).
\begin{lemma}[Schur Complement Formula \cite{zhang2006schur}]
Suppose $X$ and $Y$ are zero-mean correlated Gaussian vectors with joint covariance matrix
\[ \Sigma = \begin{bmatrix} 
\Sigma_{X} & \Sigma_{XY} \\
\Sigma_{YX} & \Sigma_{Y}
\end{bmatrix}. \]
Then the law of $Y$ conditional on $X$ is given by 
$N\left(\Sigma_{YX} \Sigma_{XX}^{-1} X, \Sigma_Y - \Sigma_{YX} \Sigma_{XX}^{-1} \Sigma_{XY}\right)$
\end{lemma}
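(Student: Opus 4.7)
The plan is to prove this by the classical decorrelation trick: define the residual $Z := Y - \Sigma_{YX} \Sigma_{XX}^{-1} X$, show that $Z$ is independent of $X$, and then read off the conditional law of $Y = Z + \Sigma_{YX}\Sigma_{XX}^{-1} X$ given $X$ directly. Since $(X,Y)$ is jointly Gaussian and $Z$ is a linear function of $(X,Y)$, the pair $(X,Z)$ is also jointly Gaussian; for jointly Gaussian random vectors, independence is equivalent to being uncorrelated, so the whole argument reduces to a covariance computation.

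First I would compute $\E Z = 0$, which is immediate from $\E X = 0$ and $\E Y = 0$. Next, I would verify the key cancellation
\[ \Cov(Z, X) = \Cov(Y, X) - \Sigma_{YX} \Sigma_{XX}^{-1} \Cov(X, X) = \Sigma_{YX} - \Sigma_{YX} \Sigma_{XX}^{-1} \Sigma_{XX} = 0, \]
which together with joint Gaussianity gives that $Z \perp X$. Then I would compute the covariance of $Z$ itself, expanding
\[ \Cov(Z) = \Cov(Y) - \Sigma_{YX}\Sigma_{XX}^{-1}\Cov(X,Y) - \Cov(Y,X)\Sigma_{XX}^{-1}\Sigma_{XY} + \Sigma_{YX}\Sigma_{XX}^{-1}\Sigma_{XX}\Sigma_{XX}^{-1}\Sigma_{XY}, \]
which simplifies to $\Sigma_Y - \Sigma_{YX}\Sigma_{XX}^{-1}\Sigma_{XY}$ after two of the cross terms combine with the last one.

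Finally, since $Y = \Sigma_{YX}\Sigma_{XX}^{-1} X + Z$ with $Z \perp X$ and $Z \sim N(0, \Sigma_Y - \Sigma_{YX}\Sigma_{XX}^{-1}\Sigma_{XY})$, conditioning on $X$ leaves $\Sigma_{YX}\Sigma_{XX}^{-1} X$ as a deterministic shift and preserves the law of $Z$, yielding exactly the claimed conditional distribution. A minor caveat is the invertibility of $\Sigma_{XX}$, which is implicit in the statement; if one wants to avoid this assumption, the same proof goes through with $\Sigma_{XX}^{-1}$ replaced by the Moore--Penrose pseudoinverse, with the only nontrivial step being to verify that $\Sigma_{YX}$ lies in the column space of $\Sigma_{XX}$ (a standard consequence of positive semidefiniteness of the joint covariance). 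Since this is cited as a known fact in the linear algebra literature, I do not anticipate any real obstacle — the proof is essentially a single covariance computation packaged with the Gaussian fact that uncorrelated implies independent.
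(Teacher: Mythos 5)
Your proof is correct and is the standard decorrelation argument: the paper itself does not prove this lemma but merely cites \cite{zhang2006schur}, so there is no paper proof to compare against, and your argument fills the gap exactly as any textbook would. The covariance computations check out, the appeal to ``uncorrelated jointly Gaussian implies independent'' is exactly right, and your remark about replacing $\Sigma_{XX}^{-1}$ by the Moore--Penrose pseudoinverse (with the range condition $\ker \Sigma_{XX} \subseteq \ker \Sigma_{YX}$ following from positive semidefiniteness of the joint covariance) correctly handles the degenerate case; the only cosmetic point is that the lemma statement uses $\Sigma_X$ and $\Sigma_{XX}$ interchangeably, which you silently and correctly treat as the same object.
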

In particular, note that the conditional covariance matrix does not depend on the particular value of $X$. We also need the reverse hypercontractive estimate for Gaussians established in \cite{Borell:82}. Here as usual $\|f\|_p := \E[f^p]^{1/p}$, and we use this notation even when $p < 1$ (where $\| \cdot \|_p$ is no longer a norm).
\begin{defn}[Ornstein-Uhlenbeck semigroup]
The Ornstein-Uhlenbeck semigroup $T_t$ is defined for $t \ge 0$ as
\[ (T_t f)(x) := \E_{Z \sim N(0,1)}[f(e^{-t} x + \sqrt{1 - e^{-2t}} Z)] \]
for $f \in L^1(\gamma)$ and $\gamma$ the standard Gaussian measure $N(0,1)$.
\end{defn}
\begin{theorem}[Reverse Hypercontractivity for Gaussians \cite{Borell:82}]\label{thm:borell-gaussian}
For any strictly positive $f$ and $p,q < 1$
\[ \|T_t f\|_{L^q(\gamma)} \ge \|f\|_{L^p(\gamma)} \]
as long as $t \ge \frac{1}{2}\log \frac{1 - q}{1 - p}$,
where $\gamma$ is the standard Gaussian measure.
\end{theorem}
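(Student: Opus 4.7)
The plan is to follow Gross's semigroup approach to hypercontractivity, adapted to the reverse regime $p, q < 1$. For fixed $p < 1$, I would introduce a smooth exponent schedule $q : [0,\infty) \to (-\infty, 1)$ with $q(0) = p$, defined implicitly by $1 - q(t) = (1 - p)\, e^{2t}$, so that $q(t^*) = q$ precisely at the critical time $t^* = \tfrac12 \log \tfrac{1-q}{1-p}$ from the statement, and $q'(t) = -2(1 - q(t))$. Setting $\Phi(t) := \|T_t f\|_{L^{q(t)}(\gamma)}$, the theorem reduces to the monotonicity statement $\Phi'(t) \ge 0$ on $[0, t^*]$, since $\Phi(0) = \|f\|_p$ and $\Phi(t^*) = \|T_{t^*} f\|_q$. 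Strict positivity of $f$ is preserved by $T_t$ (which is a Gaussian average), so $u := T_t f$ and its fractional or negative powers remain well-defined throughout.

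I would then compute $\Phi'(t)$ by logarithmic differentiation. Using $\partial_t u = L u$ for the OU generator $L$ (satisfying the integration-by-parts identity $\E_\gamma[v \cdot L w] = -\E_\gamma[v' w']$), a direct calculation yields
\[
\Phi(t)^{-1}\Phi'(t) \;=\; \frac{q'(t)}{q(t)^2\, \E[u^{q(t)}]}\,\mathrm{Ent}_\gamma\!\bigl(u^{q(t)}\bigr) \;-\; \frac{q(t)-1}{\E[u^{q(t)}]}\,\E\bigl[u^{q(t)-2}(u')^2\bigr].
\]
After substituting $q'(t) = -2(1 - q(t))$ and rearranging (with one sign flip, since $q(t) - 1 < 0$), the inequality $\Phi'(t) \ge 0$ reduces to
\[
\frac{2}{q(t)^2}\,\mathrm{Ent}_\gamma\!\bigl(u^{q(t)}\bigr) \;\le\; \E\bigl[u^{q(t)-2}(u')^2\bigr].
\]
This is precisely Gross's Gaussian log-Sobolev inequality $\mathrm{Ent}_\gamma(g^2) \le 2\,\E_\gamma[(g')^2]$ applied to $g = u^{q(t)/2}$, for which $(g')^2 = (q(t)^2/4)\, u^{q(t)-2}(u')^2$. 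Integrating the resulting differential inequality from $0$ to $t^*$ then yields $\Phi(t^*) \ge \Phi(0)$, which is the claim.

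The main obstacle I anticipate is technical: justifying the formal calculation when $q(t) < 0$ or when $f$ is unbounded above or approaches $0$, since then $u^{q(t)}$ and the various integration-by-parts manipulations are no longer automatic. My plan is to handle this via truncation --- first establish the inequality for $f_N := \max(1/N, \min(N, f))$, which is bounded and bounded away from $0$ (so all manipulations are uncontroversial), then pass to the limit $N \to \infty$ using monotone/dominated convergence together with the a priori bound $\|f\|_p < \infty$; the direction in which $L^q$ quantities respond to truncation when $q < 1$ is favorable here. As a robust backup in case the direct semigroup computation runs into delicate regularity issues, an alternative route is to first establish the analogous two-point reverse hypercontractivity on $\{\pm 1\}$ with noise $\rho = e^{-t}$ (a direct case check), tensorize automatically because the noise operator factorizes across product measures, and pass to the Gaussian limit by applying the multivariate CLT to normalized sums of i.i.d.\ Bernoullis --- under this rescaling the noise operators on the cube converge to the OU semigroup on $\gamma$.
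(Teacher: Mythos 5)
The paper does not prove this theorem at all: it is quoted from Borell's 1982 paper as a black box (the citation appears in the theorem header), and the paper's only use of it is inside the proof of Lemma~\ref{lem:corr-reverse}, where the tensorized version of the statement is applied to the factored Markov kernel. So there is no internal proof to compare against.

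Assessed on its own terms, your semigroup argument is correct and is one of the two standard routes to this result. The exponent schedule $1 - q(t) = (1-p)e^{2t}$ is the right one, your logarithmic derivative formula for $\Phi(t) = \|T_t f\|_{L^{q(t)}(\gamma)}$ is correct, the substitution $q'(t) = -2(1 - q(t)) = 2(q(t) - 1)$ together with the sign flip from dividing by $q(t) - 1 < 0$ gives
\[
\frac{2}{q(t)^2}\,\mathrm{Ent}_\gamma\!\left(u^{q(t)}\right) \le \E\!\left[u^{q(t)-2}|\nabla u|^2\right],
\]
and this is precisely Gross's log-Sobolev inequality for $\gamma$ applied to $g = u^{q(t)/2}$ (since $|\nabla g|^2 = \tfrac{q^2}{4}u^{q-2}|\nabla u|^2$). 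Your treatment of the edge cases is also sound: when $q \le p$ the critical time is nonnegative, and when $q > p$ (so $t^* < 0$) one only needs the reverse monotonicity $\|T_t g\|_q \ge \|g\|_q$ for $q < 1$, which is Jensen. The truncation $f_N = \max(1/N,\min(N,f))$ is a standard way to justify the formal integration by parts; one needs to split the limit into the $\{f < 1\}$ and $\{f \ge 1\}$ parts and use monotone/dominated convergence and Fatou separately since $f_N$ is not globally monotone in $N$, but the signs are favorable exactly as you indicate. Your fallback --- the two-point reverse hypercontractive inequality on $\{\pm1\}$, tensorized and passed to the Gaussian limit by the CLT --- is in fact closer to Borell's original argument and is how the discrete and general-kernel versions used elsewhere in this paper (Lemma~\ref{lem:reverse-discrete}, from~\cite{mossel2013reverse}) are derived. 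Either route is acceptable as a self-contained proof of the statement.
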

%\begin{remark}
%The above guarantee implies the following weaker estimates: (a) for $q < p \le 0$ that the result holds for $t \ge \log \frac{2 - q}{2 - p}$ and (b) for $0 \le q < p < 1$ that the result holds for $t \ge \log \frac{(1 - q)(2 - p)}{(1 - p)(2 - q)}$. This can be seen by comparing derivatives, as in Theorem~\ref{thm:borell-gaussian} the partial derivative of the $t$ lower bound w.r.t. $q$ is $\frac{-1}{2 - 2q}$, in (a) it is $\frac{-1}{2 - q}$, and in (b) it is $\frac{-1}{1 - q} + \frac{1}{2 - q}$.
%\end{remark}
We can now give the needed reverse hypercontractive estimates for general correlated Gaussian vectors:
\begin{lemma}\label{lem:corr-reverse}
Suppose that $(X_i,Y_i)_{i = 1}^n$ are i.i.d. samples from $N(0,\Sigma)$ with $\Sigma \succ 0$ and block decompose 
\[ \Sigma = \begin{bmatrix} \Sigma_X & \Sigma_{XY} \\ \Sigma_{YX} & \Sigma_Y \end{bmatrix}. \]
Let $P$ be the joint law of $(X_i,Y_i)_{i = 1}^n$.
%where the marginal law of $X_i$ and $Y_i$ is $N(0,I_{d \times d})$, i.e. $\Sigma_{X} = \Sigma_{Y} = I$ and $\Sigma : 2d \times 2d$. 
%Let $\Sigma_{Y | X}$ denote the conditional covariance matrix of $Y_i$ given $X_i$ (as given by the Schur complement formula). 
Define
\[ \alpha := \frac{1}{\lambda_{max}(\Sigma_{Y}^{-1/2} \Sigma_{YX} \Sigma_{X}^{-1} \Sigma_{XY} \Sigma_{Y}^{-1/2})} - 1 \]
and suppose $\alpha > 0$ (this is implied by $\Sigma_{Y | X} \succ 0$).
Let $X = (X_1,\ldots,X_n)$ and $Y = (Y_1,\ldots,Y_n)$.
Then:
\begin{enumerate}
    \item 
    Let $K$ be the Markov kernel corresponding to the conditional law of $Y_1 | X_1$, then for any strictly positive $f$ and arbitrary $q < p < 1$,
    \[ \|(K^{\otimes n} f)(X)\|_{L^q(\mu^{\otimes n})} \ge \|f(Y)\|_{L^p(\nu^{\otimes n})} \]
    where $\mu$ is the (Gaussian) law of $X_1$, $\nu$ is the law of $Y_1$,
    and $t \ge \frac{1}{2} \log \frac{1 - q}{1 - p}$ where
    %either: (a) $q < p \le 0$ and $t \ge \log \frac{2 - q}{2 - p}$ or (b) $0 \le q < p < 1$ and $t \ge \log \frac{(1 - q)(2 - p)}{(1 - p)(2 - q)}$ where in either case we define
    \[ t := \frac{1}{2} \log (1 + \alpha). \]
    \item For all $0 < p,q < 1$ and all nonnegative $f,g$,
    \[ \E[f K^{\otimes n} g] \ge \|f\|_{L^q(\mu^{\otimes n})} \|g\|_{L^p(\nu^{\otimes n})} \]
    where $K,\mu,\nu,t$ are defined as above and assuming $t \ge \frac{1}{2} \log \frac{1}{(1 - p)(1 - q)}$.
    %$t \ge \log \frac{(2 - p)(2 - q)}{4(1 - p)(1 - q)}$.
    \item For any measurable sets $A,B \subset \mathbb{R}^{nd}$ such that $P(X \in A) \ge \epsilon$ and $P(Y \in B) \ge \epsilon$ it follows that
\[ P(X \in A, Y \in B) \ge \epsilon^{\frac{2 - e^{-t/2}}{1 - e^{-t/2}}} \]
where $t$ is as defined above.
\end{enumerate}
\end{lemma}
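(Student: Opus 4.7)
My plan is to reduce the Lemma to the classical Borell reverse hypercontractive inequality for the one-dimensional Ornstein-Uhlenbeck semigroup (Theorem~\ref{thm:borell-gaussian}) by performing a canonical correlation analysis on the pair $(X_1,Y_1)$ and then tensorizing across the resulting independent one-dimensional factors. Parts 1, 2, and 3 will then follow by standard semigroup/duality arguments applied on top of this reduction.

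\textbf{Reduction via canonical correlations.} I would first perform a linear change of variables to diagonalize the correlation structure. Let $C := \Sigma_X^{-1/2}\Sigma_{XY}\Sigma_Y^{-1/2}$ and take its singular value decomposition $C = UDV^{\top}$, where $D = \mathrm{diag}(\rho_1,\ldots,\rho_d)$ with $\rho_1 \ge \cdots \ge \rho_d \ge 0$. By the assumption that $\alpha > 0$ (equivalent to $\Sigma_{Y\mid X}\succ 0$), we have $\rho_1^2 = \lambda_{\max} < 1$, and by definition $\rho := \rho_1 = e^{-t}$ with $t = \frac{1}{2}\log(1+\alpha)$. Setting $\tilde X_j := U^{\top}\Sigma_X^{-1/2}X_j$ and $\tilde Y_j := V^{\top}\Sigma_Y^{-1/2}Y_j$ for $j = 1,\ldots,n$, the pairs $(\tilde X_j, \tilde Y_j)$ become i.i.d.\ jointly Gaussian with mean zero and covariance $\bigl[\begin{smallmatrix} I & D \\ D & I \end{smallmatrix}\bigr]$, i.e.\ a product of $d$ independent bivariate Gaussian pairs with correlations $\rho_i$. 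In these coordinates the conditional kernel $K^{\otimes n}$ factorizes as a tensor product of one-dimensional OU semigroups $T_{t_i}$ on the $(i,j)$ factors, with $t_i := -\log\rho_i \ge t$.

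\textbf{Parts 1 and 2.} Part 1 now follows by applying Borell's inequality (Theorem~\ref{thm:borell-gaussian}) coordinatewise: since $t_i \ge t \ge \frac{1}{2}\log\frac{1-q}{1-p}$, each one-dimensional factor $T_{t_i}$ satisfies $\|T_{t_i}f\|_{L^q(\gamma)} \ge \|f\|_{L^p(\gamma)}$ for strictly positive $f$. The reverse inequality tensorizes under taking tensor products of semigroups (we apply it one coordinate at a time and use Fubini together with monotonicity of the $L^p$/$L^q$ semi-norms in product measures); dominating the faster $T_{t_i}$ by $T_t$ via monotonicity of Borell's inequality in $t$ gives the uniform constant $t$ on the product space. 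Part 2 then follows from Part 1 by the standard two-function trick: write $\E[f\cdot K^{\otimes n}g]$ and apply reverse H\"older to pull out $\|f\|_{L^q}$, then use a version of Borell's inequality with complementary exponents to lower bound $\|K^{\otimes n}g\|$ by $\|g\|_{L^p}$. The arithmetic relating the threshold $t\ge \tfrac12\log\tfrac{1-q}{1-p}$ of Part 1 to the threshold $t\ge \tfrac12\log\tfrac{1}{(1-p)(1-q)}$ of Part 2 is routine and identical to that in the bivariate Gaussian case in \cite{mossel2013reverse}.

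\textbf{Part 3 and main obstacle.} For Part 3 I plug $f = \mathbf{1}_A$, $g = \mathbf{1}_B$ into Part 2 (first regularizing by $\mathbf{1}_A+\delta$ and passing $\delta\to 0^+$ to handle strict positivity, using bounded convergence on both sides). The left side becomes $P(X\in A,\,Y\in B)$ while the right side is $P(A)^{1/q}P(B)^{1/p} \ge \epsilon^{1/p+1/q}$; choosing $(p,q)$ to saturate the constraint $(1-p)(1-q) = e^{-2t}$ and optimizing delivers the claimed exponent. The main technical obstacle is the tensorization in Part 1 when the coordinatewise parameters $t_i$ differ, since reverse hypercontractivity does not tensorize as transparently as the forward version; this is addressed by the monotonicity argument above, but it must be checked carefully to obtain the uniform parameter $t$. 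A secondary obstacle is matching exactly the stated exponent $\tfrac{2-e^{-t/2}}{1-e^{-t/2}}$ in Part 3 (as opposed to the symmetric-optimization value), which requires a specific (possibly asymmetric) choice of $(p,q)$ mirroring the argument in Lemma~8.3 of \cite{mossel2013reverse}.
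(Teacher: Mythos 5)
Your CCA decomposition and the paper's conditional decomposition are really the same argument seen from two angles. After the paper's reduction to $\Sigma_X = \Sigma_Y = I$, it writes $Y \stackrel{d}{=} \Sigma_{YX}\sqrt{1+\alpha}\,(e^{-t}X + \sqrt{1-e^{-2t}}\xi_1) + \xi_2$ and factors the kernel as $K = S\circ T_t$ with $S$ a Markov kernel, then applies the multivariate version of Theorem~\ref{thm:borell-gaussian} to $T_t$ and Jensen to $S$. If you express that same $S$ in your canonical-correlation coordinates, it is precisely the tensor product $\bigotimes_i T_{t_i - t}$ with $t_i = -\log\rho_i \ge t$; so the paper's ``Jensen for $S$'' step and your ``monotonicity in $t$'' step are the same thing. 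Both of your flagged obstacles are in fact non-issues. For the first: reverse hypercontractivity tensorizes by the same coordinate-at-a-time argument as the forward version (this is done in \cite{Borell:82,mossel2013reverse}), and since Theorem~\ref{thm:borell-gaussian} holds for \emph{all} $t' \ge \frac{1}{2}\log\frac{1-q}{1-p}$, heterogeneous times $t_i\ge t$ pose no difficulty --- you never need an explicit monotonicity lemma, just apply Borell per coordinate with its own $t_i$. For the second: the symmetric choice $p = q = 1-e^{-t}$ is feasible (it lies in $(0,1)$ and saturates $(1-p)(1-q) = e^{-2t}$) and gives $P(X\in A, Y\in B)\ge \epsilon^{2/(1-e^{-t})}$; since $\frac{2}{1-e^{-t}} < \frac{2-e^{-t/2}}{1-e^{-t/2}}$, this is \emph{stronger} than the exponent stated in the lemma, so there is nothing to ``match'' --- the stated bound follows immediately and you need not reverse-engineer an asymmetric $(p,q)$ from Lemma 8.3 of \cite{mossel2013reverse}.
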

\begin{proof}
First we can reduce to the case $\Sigma_X = I, \Sigma_Y = I$ by defining $X' = \Sigma_X^{-1/2} X$ and $Y' = \Sigma_Y^{-1/2} Y$; observe that $\Sigma_{X'} := \E[(X')(X')^T] = I$, similarly $\Sigma_{Y'} = I$, and $\Sigma_{X' Y'} := \E[X' (Y')^T] = \Sigma_X^{-1/2} \Sigma_{XY} \Sigma_Y^{-1/2}$, so
\[ \Sigma_{Y' X'} \Sigma_{X' Y'} = \Sigma_Y^{-1/2} \Sigma_{YX} \Sigma_{XX}^{-1} \Sigma_{XY} \Sigma_{YY}^{-1/2} \]
which shows that moving to $X'$ preserves the value of $\alpha$, and all other quantities (functional norms, etc.) are clearly basis invariant. From now on we assume $\Sigma_X = I, \Sigma_Y = I$.

%We will only show (1) here, because the proof of (2) and (3) given (1) is exactly the same as the proof of Corollary 7.2 and Lemma 8.3 in \cite{mossel2013reverse}. 
%The proof uses a similar strategy to Section 8 of \cite{mossel2013reverse} to reduce to an application of reverse hypercontractivity for the (tensorized) Ornstein-Uhlenbeck semigroup.
%We first show (1) and then show how (2) follows.
(1). For (1), this form of reverse hypercontractive inequality is known to tensorize (see \cite{Borell:82,mossel2013reverse}) so it suffices to prove the result in the case $n = 1$.
Explicitly, %if $K$ is the Markov kernel corresponding to the conditional law of $Y_1 | X_1$ we
we show that $K$ factorizes so that $K(f) = T_{t}(S(f))$ for $t > 0$ defined in the theorem statement and $S$ a Markov kernel, and hence
\[ \|K f\|_{L^q(\mu)} = \|T_{t} (S f)\|_{L^q(\mu)} \ge \|S f\|_{L^p(\mu)} \ge \|f\|_{L^p(\nu)} \]
where the first inequality is by the tensorized version of Theorem~\ref{thm:borell-gaussian}, and the last inequality follows from pointwise application of Jensen's inequality, as in the proof of Lemma 8.1 in \cite{mossel2013reverse}: if $p \in (0,1)$ then $(S f)^p \ge S(f^p)$ and if $p < 0$ then $(S f)^p \le S(f^p)$. %\todo{check the last bit?}
%(TODO: citation for middle step, check it matches our statement, and explain the last step.) 

In order to derive the factorization, we define
$\xi := Y - \E[Y | X]$ and observe that $\xi \sim N(0, \Sigma_{Y | X})$ and $\xi$ is independent of $X$. Using the formula for $\E[Y | X]$ in Gaussians and $\Sigma_{X} = I_{d \times d}$ gives
\[ Y = \Sigma_{YX} X + \xi. \]
Also observe the following decomposition holds (a form of law of total variance)
\[ I_{d \times d} = \E[Y Y^T] = \Sigma_{YX} \Sigma_{XY} + \Sigma_{Y | X} \]
so $\Sigma_{Y | X} = I_{d \times d} - \Sigma_{YX} \Sigma_{XY}$. 
Therefore conditional on $X$, we have the equality in law
\[ Y \stackrel{d}{=} \Sigma_{YX} (X + \sqrt{\alpha} \xi_1) + \xi_2\]
where $\xi_1 \sim N(0,I)$, $\xi_2 \sim N(0, \Sigma_{Y | X} - \alpha \Sigma_{YX} \Sigma_{XY})$. Note that $\Sigma_{Y | X} - \alpha \Sigma_{YX} \Sigma_{XY} = I - (1 + \alpha) \Sigma_{YX} \Sigma_{XY} \succeq 0$ by the definition of $\alpha$, so this is valid. Therefore conditional on $X$ we have the equality in law
\[ Y \stackrel{d}{=} \Sigma_{YX}\sqrt{1 + \alpha} (\sqrt{1 - \alpha} X + \sqrt{\frac{\alpha}{1 + \alpha}} \xi_1) + \xi_2 = \Sigma_{YX} \sqrt{1 + \alpha}(e^{-t} X + \sqrt{1 - e^{-2t}} \xi_1) + \xi_2 
\]
since $t$ is defined so that $1 - e^{-2t} = \frac{\alpha}{1 + \alpha}$. Defining $X_t := e^{-t} X + \sqrt{1 - e^{-2t}} \xi_1$ we have from the above decomposition of the law of $Y$ given $X$ that
\[ X \to X_t \to Y \]
is a Markov chain. The Markov kernel $S$ in our desired factorization then is given by the conditional law of $Y$ given $X_t$, since the conditional law of $X_t$ given $X$ matches the Ornstein-Uhlenbeck process as desired.
%We can further decompose $\xi = \sqrt{\alpha} \xi_1 + \xi_2$ where $\xi_1 \sim N(0, I_{d \times d})$ and $\xi_2 \sim N(0, \Sigma_{Y | X} - \alpha I_{d \times d})$ are independent of each other and of $X$. Therefore
%\[ Y = \E[Y | X] + \sqrt{\alpha} \xi_1 + \xi_2\]

(2). By the reverse Holder's inequality (\cite{Borell:82}, Lemma 5.2 of \cite{mossel2013reverse}),
\[ \E[f K^{\otimes n} g] \ge \|f\|_{L^q} \|K^{\otimes n} g\|_{L^{q'}}\]
where $q' := \frac{q}{q - 1}$ so that $\frac{1}{q} + \frac{1}{q'} = 1$. Then the result follows by part (1) as long as
\[ t \ge \frac{1}{2} \log \frac{1 - q'}{1 - p} = \frac{1}{2} \log \frac{1}{(1 - p)(1 - q)}. \]
\begin{comment}
In order to derive the factorization, we define
$\xi := Y - \E[Y | X]$ and observe that $\xi \sim N(0, \Sigma_{Y | X})$ and $\xi$ is independent of $X$. We can further decompose $\xi = \xi_1 + \sqrt{\alpha} \xi_2$ where $\xi_1 \sim N(0, \Sigma_{Y | X} - \alpha I_{d \times d})$ and $\xi_2 \sim N(0, I_{d \times d})$  are independent of each other and of $X$. Therefore if we define $t$ as the solution of $1 - e^{-2t} = \alpha$, we have
\[ Y = e^{-t} e^t (\E[Y | X] + \xi_1) + \sqrt{1 - e^{-2t}} \xi_2. \]
Defining $Y' := e^t(\E[Y | X] + \xi_1)$ we see that $X \rightarrow Y' \rightarrow Y$ is a Markov chain. THIS IS THE WRONG WAY TO DO IT
\end{comment}

(3). This follows from (2) in the same way as the proof of Lemma 8.3 of \cite{mossel2013reverse}.
\end{proof}
\subsubsection{Voters are not determined by their neighbors.}\label{subsubsec:voter-freedom}
In order to apply the reverse hypercontractive estimates from the previous section, we need to show that for any $\beta$ and $(X,Y)$ a pair of neighboring voters that they have a positive probability of choosing any one of the voting outcomes in $\NAE_3 \times \NAE_3$. 
\begin{lemma}\label{lem:voter-freedom-1}
Fix $\beta \ge 0$. Suppose that $(X,Y)$ are random vectors jointly valued in $\NAE_3 \times \NAE_3$ and distributed according to the Gibbs measure
\[ Q(X = x, Y = y) = \frac{1}{Z} \exp\left(\beta \langle x, y \rangle\right) \]
where $Z$ is the normalizing constant.
Then
\[ \alpha := \min_{x,y \in \NAE_3} \frac{Q(X = x, Y = y)}{Q(X = x)Q(Y = y)} > 0. \]
\end{lemma}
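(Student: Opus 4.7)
The plan is to observe that this lemma reduces to checking strict positivity of finitely many probabilities. Since $\NAE_3 \times \NAE_3$ has only $36$ elements, the minimum in the definition of $\alpha$ is taken over a finite set. It therefore suffices to show that for every pair $(x,y) \in \NAE_3 \times \NAE_3$ the ratio $Q(X=x, Y=y)/(Q(X=x)Q(Y=y))$ is a strictly positive real number; the minimum of a finite collection of positive numbers is then automatically positive.

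Strict positivity of $Q(X=x,Y=y)$ is immediate from the form of the Gibbs measure. For any $x, y \in \{\pm 1\}^3$ we have $|\langle x, y \rangle| \le 3$, so $e^{\beta \langle x, y \rangle} \in [e^{-3\beta}, e^{3\beta}]$, which is strictly positive since $\beta$ is finite. The partition function $Z = \sum_{x,y \in \NAE_3} e^{\beta \langle x, y \rangle}$ is therefore a finite positive number, so $Q(X=x,Y=y) > 0$ for every pair. The marginals $Q(X=x) = \sum_{y'} Q(X=x,Y=y')$ and $Q(Y=y) = \sum_{x'} Q(X=x',Y=y)$ are then strictly positive as sums of strictly positive terms, so each ratio appearing in the definition of $\alpha$ is indeed a ratio of positive numbers.

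In fact the same bookkeeping yields an explicit lower bound: combining $Q(X=x,Y=y) \ge e^{-3\beta}/Z$ with $Q(X=x), Q(Y=y) \le 6 e^{3\beta}/Z$ and $Z \ge 36 e^{-3\beta}$ gives $\alpha \ge e^{-12\beta}$, a positive constant depending only on $\beta$. I do not expect any real obstacle to this argument; the content of the lemma is essentially that a Gibbs measure with bounded energy on a finite state space has full support and uniformly bounded ratios $Q(x,y)/(Q(x)Q(y))$, which is precisely the hypothesis needed to invoke the reverse hypercontractive inequality of Lemma~\ref{lem:reverse-discrete} in the subsequent analysis of the matching model.
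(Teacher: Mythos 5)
Your proposal is correct and takes essentially the same approach as the paper: reduce to a finite minimum, bound the energy $|\langle x,y\rangle|\le 3$, and conclude full support with an explicit positive lower bound on $\alpha$. The paper's proof is marginally slicker—it bounds $Q(X=x)Q(Y=y)\le 1$ outright, giving $\alpha \ge \min_{x,y} Q(X=x,Y=y) \ge e^{-6\beta}/36$—whereas you bound the marginals individually to get $\alpha \ge e^{-12\beta}$; both are valid and of the same character.
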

\begin{proof}
Since probabilities are upper bounded by 1, we have
\[ \alpha \ge \min_{x,y} Q(X = x, Y = y) = \min_{x,y} \frac{\exp\left(\beta \langle x, y \rangle\right)}{\sum_{x',y'}\exp\left(\beta \langle x, y \rangle\right)} \ge \frac{\exp(-3 \beta)}{36 \exp(3 \beta)} = e^{-6\beta}/36 \]
using that $|\langle x, y\rangle| \le 3$ and $|\NAE_3 \times \NAE_3| = 36$.
\end{proof}
When applying the Invariance Principle using results from \cite{mossel2010gaussian} we will need a bound on the maximum correlation between the first coordinates of $X_1,Y_1$ and the second coordinates of $X_2,Y_2$. We adopt a multi-index notation where $X_{11}$ is the first coordinate of $X_1$, $X_{12}$ is the second coordinate, etc.
\begin{lemma}\label{lem:max-correlation}
Let $\beta,X,Y$ be as defined in Lemma~\ref{lem:voter-freedom-1} and let $i \ne j$ be elements of $\{1,2,3\}$.
Define the maximum correlation coefficient
\[ \rho_{ij}(\beta) := \sup_{f,g : \{\pm 1\}^2 \to \mathbb{R}} \frac{\Cov(f(X_{1i},Y_{1i}), g(X_{1j},Y_{1j}))}{\sqrt{\Var(f(X_{1i},Y_{1i})) \Var(g(X_{1j},Y_{1j}))}}. \]
For all $\beta \ge 0$, $\rho_{ij}(\beta) < 1$.
\end{lemma}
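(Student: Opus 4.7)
The plan is to deduce $\rho_{ij}(\beta) < 1$ from a standard fact about the maximum correlation coefficient: if a pair of discrete random variables $(U,V)$ has joint distribution with full support on $\mathrm{supp}(U) \times \mathrm{supp}(V)$, then $\rho(U,V) < 1$. I will apply this with $U = (X_{1i}, Y_{1i})$ and $V = (X_{1j}, Y_{1j})$, each of which takes values in $\{\pm 1\}^2$. So the whole argument reduces to a small combinatorial check plus a short Cauchy--Schwarz / compactness step.

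The main step is to verify that the joint law of $(U,V)$ has full support on $\{\pm 1\}^2 \times \{\pm 1\}^2$. For any $i \neq j$ in $\{1,2,3\}$ and any $(a,c) \in \{\pm 1\}^2$, there is at least one element of $\NAE_3$ whose $i$-th coordinate equals $a$ and $j$-th coordinate equals $c$: the third coordinate can always be chosen so as to avoid the two all-equal vectors $\pm(1,1,1)$. Applying this independently to $x$ and to $y$, for every $(a,b,c,d) \in \{\pm 1\}^4$ there exists $(x,y) \in \NAE_3 \times \NAE_3$ with $x_i = a$, $y_i = b$, $x_j = c$, $y_j = d$. Because the Gibbs measure assigns the strictly positive weight $e^{\beta \langle x,y \rangle}/Z$ to every such $(x,y)$, every point of $\{\pm 1\}^2 \times \{\pm 1\}^2$ is hit with positive probability, as required. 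Since this holds for all $i \neq j$ by symmetry, one really only needs to verify it in a single case such as $(i,j) = (1,2)$.

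To finish, after restricting to $f, g$ with mean zero and unit variance the supremum in the definition of $\rho_{ij}(\beta)$ is taken over a compact subset of a finite-dimensional space of functions $\{\pm 1\}^2 \to \mathbb{R}$, with a continuous objective, so it is attained at some $(f^*,g^*)$. If $\rho_{ij}(\beta)$ were equal to $1$, the Cauchy--Schwarz equality case would force $f^*(X_{1i},Y_{1i}) = g^*(X_{1j},Y_{1j})$ almost surely under $Q$; then the full-support property established in the previous step would force $f^*(a,b) = g^*(c,d)$ for every $(a,b),(c,d) \in \{\pm 1\}^2$, which means $f^*$ and $g^*$ must both be constant, contradicting unit variance. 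I do not anticipate any serious obstacle in executing this plan; the only potentially tricky issue is simply to spell out carefully that almost-sure equality combined with the joint full support forces pointwise equality of the functions.
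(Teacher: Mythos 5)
Your proof is correct and takes essentially the same approach as the paper's: both normalize to unit variance, use compactness to reduce to a single pair $(f,g)$, invoke the full-support property of the Gibbs measure on $\NAE_3 \times \NAE_3$ (Lemma~\ref{lem:voter-freedom-1}), and derive a contradiction from the Cauchy--Schwarz equality case. The paper phrases the final contradiction in terms of conditional variance ($\Var(g(X_{12},Y_{12}) \mid X_{11},Y_{11}) > 0$, contradicting $g$ being a deterministic function of $(X_{11},Y_{11})$), while you phrase it as both $f^*$ and $g^*$ being forced constant, but these are the same observation in different dress.
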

\begin{proof}
Without loss of generality $i = 1$ and $j = 2$. By rescaling, we can restrict to $f,g$ satisfying $\Var(f) = \Var(g) = 1$. After the rescaling, the set of functions we are optimizing over is compact so it suffices to show that for any particular choice of $f$ and $g$ that $\Cov(f,g) < 1$. Suppose otherwise, so $\Cov(f,g) = 1$, then $g(X_{12},Y_{12})$ is a deterministic affine function of $f(X_{11},Y_{11})$. However, from the assumption $\Var(g) = 1$ we know that there are two inputs to $g$ such that it takes on different values, and from Lemma~\ref{lem:voter-freedom-1} we know that both of those inputs have positive probability of occurring regardless of the value of $X_{11},Y_{11}$, so $\Var(g(X_{12},Y_{12}) | X_{11},Y_{11}) > 0$. By contradiction, $\rho_{12}(\beta) < 1$.
\end{proof}
We also prove a strong nondegeneracy property of the joint covariance matrix which we will need when we apply reverse hypercontractivity to moment-matched Gaussians coming from the Invariance Principle. Informally, it expresses the fact that although there exist nontrivial functions like $X \mapsto \bone[(X_{11},X_{12},X_{13}) \in \NAE_3]$ which are constant under the Gibbs measure, they are not linear functions of $V$ defined in Lemma~\ref{lem:voter-freedom-2} (or multilinear functions of the vector version defined in Lemma~\ref{lem:gaussian-version}), which will mean that those functions essentially do not exist when we move to the Gaussian version of the problem.
\begin{lemma}\label{lem:voter-freedom-2}
Fix $\beta \ge 0$ and let $(X,Y)$ be as defined in Lemma~\ref{lem:voter-freedom-1} so that $X$ and $Y$ are correlated random vectors both valued in $\NAE_3$.
Define $\Sigma : 9 \times 9$ to be the covariance matrix of the random vector $V = (V_1,V_2,V_3)$ where
\[ V_i := \left( X_{i}, Y_{i}, \frac{X_{i}Y_{i} - \mu}{\sigma} \right)_{i = 1}^3 \]
and where, by symmetry, $\mu = \E[X_{i}Y_{i}]$ and $\sigma = \sqrt{\Var(X_{i}Y_{i})}$ do not depend on $i$. Then $\Sigma$ is positive definite, i.e. $\Sigma \succ 0$.
\end{lemma}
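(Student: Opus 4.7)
The plan is to show that no nontrivial linear combination of the nine coordinates of $V$ is almost surely constant. Since $Q(X=x, Y=y) = \exp(\beta\langle x, y\rangle)/Z$ is strictly positive on all of $\NAE_3 \times \NAE_3$, the support of $(X,Y)$ is the full set, so this reduces to a purely combinatorial question: the ten functions $\{1, X_1, X_2, X_3, Y_1, Y_2, Y_3, X_1 Y_1, X_2 Y_2, X_3 Y_3\}$, viewed as real-valued functions on $\NAE_3 \times \NAE_3$, should be linearly independent. Indeed, if $\sum_i a_i X_i + b_i Y_i + c_i (X_i Y_i - \mu)/\sigma$ were identically constant on the support, absorbing $-\sigma^{-1}\sum_i c_i \mu$ into a constant term would yield a dependence among these ten functions.

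The key preliminary fact is that $\{1, X_1, X_2, X_3\}$ are linearly independent as functions on $\NAE_3$. This is essentially already used in the paper: under $\mathrm{Uni}(\NAE_3)$, the covariance matrix of $(X_1, X_2, X_3)$ is the matrix $\Sigma_0$ appearing in the proof of Lemma~\ref{lem:logz}, whose eigenvalues are $4/3, 4/3, 1/3$, and each $X_i$ has mean zero while the constant function does not, so the four are independent.

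Given this, I would conclude by a ``fix $Y$, vary $X$'' argument. Suppose that
\[
c_0 + \sum_i a_i X_i + \sum_i b_i Y_i + \sum_i c_i X_i Y_i \;=\; 0
\]
identically on $\NAE_3 \times \NAE_3$. For each fixed $Y \in \NAE_3$, the left-hand side is the linear combination $\sum_i (a_i + c_i Y_i) X_i + (c_0 + \sum_i b_i Y_i)$ of $\{1, X_1, X_2, X_3\}$ on $\NAE_3$, so by the preliminary fact all coefficients vanish: for every $Y \in \NAE_3$, one has $a_i + c_i Y_i = 0$ for $i=1,2,3$ and $c_0 + \sum_i b_i Y_i = 0$. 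As $Y$ ranges over $\NAE_3$, each coordinate $Y_i$ assumes both values $\pm 1$, so the first set of relations forces $a_i = c_i = 0$ for all $i$. Applying the preliminary fact again (with $X$ and $Y$ swapped) to $c_0 + \sum_i b_i Y_i = 0$ gives $c_0 = b_i = 0$. Hence all coefficients vanish, which establishes $\Sigma \succ 0$.

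This is mostly a short linear-algebraic verification, so there is no significant obstacle; the one conceptual point worth keeping in mind (matching the informal remark before the lemma) is that while $\NAE_3$ is a proper subset of $\{\pm 1\}^3$ and therefore admits nontrivial multilinear identities (e.g.\ the degree-$3$ indicator $\bone[X \in \NAE_3]$ is not zero as a polynomial yet vanishes nowhere on $\NAE_3$), the restrictions of the degree $\le 1$ monomials remain independent, which is precisely what prevents the $9$-dimensional linear span in $V$ from collapsing.
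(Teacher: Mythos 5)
Your proof is correct. The only possible nitpick is that you should note the reduction step explicitly: because $V_{ij}$ are centered, a vanishing eigenvalue of $\Sigma$ means some nontrivial linear combination $\sum w_{ij}V_{ij}$ is zero almost surely, and since $Q>0$ on all of $\NAE_3\times\NAE_3$, ``almost surely'' upgrades to ``identically,'' which is exactly your linear-independence question for the ten functions $\{1,X_i,Y_i,X_iY_i\}$. You do say this, so the argument is complete.

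The route is genuinely different from the paper's, though the conceptual content is the same. The paper applies the law of total variance, conditioning on all of $(X_2,X_3,Y_2,Y_3)$ to isolate the pair $(X_1,Y_1)$; the conditional law then has full support on $\{\pm 1\}^2$, where $\{1,X_1,Y_1,X_1Y_1\}$ is the standard Fourier basis, so the three functions $\{X_1,Y_1,(X_1Y_1-\mu)/\sigma\}$ are independent. In contrast, you fix only the $Y$-half entirely, which reduces the problem to linear independence of $\{1,X_1,X_2,X_3\}$ as functions on $\NAE_3$, and then you sweep $Y$ over $\NAE_3$ to force the $c_i$ and $a_i$ to vanish, then repeat with the roles of $X$ and $Y$ exchanged. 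Your decomposition is ``condition on one side of the matching'' versus the paper's ``condition on all but one pairwise election''; both reduce to a small, explicit independence statement, but yours uses the covariance matrix $\Sigma_0$ of $\mathrm{Uni}(\NAE_3)$ which is already computed elsewhere in the paper, while the paper's uses the Fourier basis over $\{\pm 1\}^2$. Either argument would serve; yours is arguably more self-contained since it avoids invoking the law of total variance and reuses $\Sigma_0\succ 0$.
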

\begin{proof}
Just for this proof, we adopt the multi-index notation that $V_{ij} = (V_i)_j$.
Let $w$ be an arbitrary vector in $\mathbb{R}^9$ with $\|w\|_2 = 1$ and indexed in the same way as $V$. To prove the Theorem, it suffices to show that for every such $w$, $\Var(\sum_{i,j} w_{ij} V_{ij}) > 0$ as this implies the minimum eigenvalue of $\Sigma$ is positive. There must exist $a \in \{1,2,3\}$ and $b \in \{1,2,3\}$ be such that $w_{ab} \ne 0$ and without loss of generality assume that $a = 1$. From the law of total variance, we have
\[ \Var(\sum_{i,j} w_{ij} V_{ij}) \ge \E \Var(\sum_j w_{1j} V_{1j} | V_2, V_3) = \E \Var(\sum_j w_{1j} V_{1j} | X_2, X_3, Y_2, Y_3). \]
Since all values of $X,Y$ in $\NAE_3^2$ occur with positive probability by Lemma~\ref{lem:voter-freedom-1}, to show the above is positive it suffices to show that $\Var(\sum_j w_{1j} V_{1j} | X_2 = 1, X_3 = -1, Y_2 = 1, Y_3 = -1) > 0$. In this case the conditional law of $X_1,Y_1$ is such that $\E X_1 = \E Y_1 = 0$ and $|\E[X_1 Y_1]| < 1$, so the functions $\{1, X_1, Y_1, X_1 Y_1\}$ form a linearly independent basis for the space of functions under this measure, hence $\{X_1,Y_1, \frac{X_1 Y_1 - \mu}{\sigma}\}$ must also be linearly independent: otherwise, we could solve for $X_1$ in terms of the other three basis elements.
%is this false because of $nae_3$? no, because $X_1 X_2$ is not in the span.
\end{proof}
\begin{comment}% not the right thing
\begin{lemma}\label{lem:voter-freedom-2}
Let $\beta,X,Y$ be as defined in Lemma~\ref{lem:voter-freedom-1}. Let $\Sigma$ be the joint covariance matrix of $(X,Y)$ which has block decomposition
\[ \Sigma = \begin{bmatrix} \Sigma_X & \Sigma_{XY} \\ \Sigma_{YX} & \Sigma_Y \end{bmatrix}. \]
The matrix $\Sigma$ is strictly positive definite, i.e. $\Sigma \succ 0$.
% and let $(X,Y) \sim N(0,\Sigma)$, i.e. $(X,Y)$ are Gaussians moment-matching $(X^*,Y^*)$.
% Let $S_X,S_Y$ be arbitrary subsets of $\{1,2,3\}$ such that either:
% \begin{enumerate}
%     \item $\max(|S_X|,|S_Y|) \le 1$, or
%     \item $|S_X| = 3$ and $|S_Y| \le 1$ or $|S_Y| = 3$ and $|S_X| \le 1$ 
% \end{enumerate}
% and let $S_X^C, S_Y^C$ denote their set complements in $\{1,2,3\}$. Let $Cov(X_{S^C_X}, Y_{S^C_Y} | X_{S_X},Y_{S_Y})$ denote the joint conditional covariance matrix (which is given by a Schur complement).
% Then $Cov(X_{S^C_X}, Y_{S^C_Y} | X_{S_X},Y_{S_Y}) \succ 0$.
\end{lemma}
\begin{proof}
Suppose otherwise, so that $\Sigma$ has a zero eigenvalue; then there exists $u \ne 0$ so that $\Var(u \cdot (X,Y)) = 0$. Since $u \ne 0$ it must have at least one nonzero entry; without loss of generality suppose this entry corresponds to entry $i$ of $X$. By the law of total variance,
\[ \Var(u \cdot (X,Y)) \ge \E \Var(u_i X_i | X_{\sim i}, Y) \]
and the last expression is strictly positive, because by Lemma~\ref{lem:voter-freedom-1} the vector $(X,Y)$ takes on all outcomes in $\NAE_3^2$ with positive probability and there exist two outcomes which agree on $X_{\sim i},Y$ but disagree on $X_i$. 
%We use the following fact from linear algebra: $\Sigma$ is positive definite iff $\Sigma_X \succ 0$ and its Schur complement $\Sigma_{Y | X} \succ 0$.
%Because
%\[ \Sigma_X = \begin{bmatrix} 1 & -1/3 & -1/3 \\ -1/3 & 1 & -1/3 \\ -1/3 & -1/3 & 1 \end{bmatrix} \]
%we know by Gershgorin's circle theorem that $\Sigma_X \succ 0$. TODO
%Suppose otherwise, then there must be a zero eigenvalue of the conditional covariance matrix and so there must exist a vector $u = (u_1,u_2)$ such that $\Var(u_1 \cdot X_{S^C_X} + u_2 \cdot Y_{S^C_Y} | X_{S_X},Y_{S_Y}) = 0$. The Schur complement formula then implies that there exists a vector $w$ such that TODO.
\end{proof}
\end{comment}
\subsubsection{Proof of Quantitative Arrow's Theorem}
Equipped with the estimates derived in the previous two subsections and a general formulation of the Invariance Principle \cite{mossel2005noise,mossel2010gaussian}, it now becomes relatively straightforward to adapt the proof of Quantitative Arrrow's Theorem for independent voters developed in \cite{mossel2012quantitative,mossel2013reverse}. The steps in this argument were sketched at the beginning of this section (i.e. steps I-IV) and the argument follows the same order. Essentially, compared to the proof for independent voters we need to: 1. be careful about replacing voters by pairs of voters in various parts of the proof: for example, in the discussion of influence and in the appeal to the Invariance principle. 2. replace the appeal to various estimates by their replacement given in the previous two subsections, and 3. at the end of the argument, appeal to (the non-quantitative version of) Arrow's Theorem to show that a constitution which is close to a junta of two paired voters is either close to an actual dictator or has a significant probability of paradox.% The order of the steps below mirrors the sketch given at the beginning of this Section (i.e. steps I-IV).

\sparagraph{Gaussian Version}
We start with the proof of the Gaussian analogue. In this analogue, the correlated random vectors $V(1),V(2),V(3)$ defined below contain information about the three pairwise elections between candidates. Since the Gaussian analogue corresponds to the setting with low-influence functions (see next subsection), the dictator function is no longer a way to avoid paradox. For technical reasons involving the reduction via the Invariance Principle (and as in \cite{mossel2012quantitative,mossel2013reverse}), this result needs to be proved for functions valued in $[-1,1]$.
%The key to making this proof work in the current setting is showing the reverse hypercontractive estimates from Lemma~\ref{lem:corr-reverse} can be applied for the Gaussian analogue of our model for any fixed value of $\beta \ge 0$, which we do in the following Lemma.
\begin{lemma}\label{lem:gaussian-version} 
Fix $\beta > 0$ and define $\Sigma : 9 \times 9$ as in Lemma~\ref{lem:voter-freedom-2}.
Suppose that $V_1,\ldots,V_n \sim N(0, \Sigma)$ where each $V_i$ further decomposes as $V_i = (V_{i1}, V_{i2}, V_{i3})$ as in Lemma~\ref{lem:voter-freedom-2}. For $a \in \{1,2,3\}$, define $V(a) = (V_{ia})_{i = 1}^n$. 
Fix $\epsilon > 0$.
There exists $\delta = \delta(\epsilon) > 0$ such that at least one of the following occurs, for any $f,g,h$ which are measurable functions from $\mathbb{R}^{3n} \to [-1,1]$:
\begin{enumerate}
    \item Two of the random variables $f(V(1)), g(V(2)), h(V(3))$ are $\epsilon$-close to constant functions of the opposite sign. Here we say $f$ is $\epsilon$-close to the constant function $1$ if $\E[f] \ge 1 - 2\epsilon$.
    \item The probability of paradox is lower bounded by $\delta$: \[ \E[\text{nae}_3(f(V(1)), g(V(2)), h(V(3)))] > 1 - \delta \]
    where $\text{nae}_3 : [-1,1]^3 \to [0,1]$ is defined to be the harmonic extension of the indicator function for the set $\NAE_3$, explicitly
    $\text{nae}_3(x,y,z) = \frac{1}{4}\left(3 - xy - yz - xz\right)$.
\end{enumerate}
\end{lemma}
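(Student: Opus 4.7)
My plan follows the strategy of the Gaussian quantitative Arrow's theorem in \cite{mossel2012quantitative}: convert the hypothesis on paradox probability into two simultaneous smallness conditions, and then use reverse hypercontractivity to force structural conclusions on $f, g, h$. The reduction begins with the pointwise identity
\[ \text{nae}_3(f, g, h) \;=\; 1 \;-\; f_+ g_+ h_+ \;-\; f_- g_- h_-, \]
where $f_\pm := (1 \pm f)/2 \in [0,1]$ (and analogously for $g_\pm, h_\pm$), verified by expanding both sides as symmetric multilinear polynomials in $f, g, h$ and checking agreement on $\{\pm 1\}^3$. Under this identity, case 2 of the lemma (paradox probability bounded below by $\delta$) is equivalent to
\[ \E\bigl[f_+(V(1)) g_+(V(2)) h_+(V(3))\bigr] + \E\bigl[f_-(V(1)) g_-(V(2)) h_-(V(3))\bigr] \;\le\; \delta, \]
so each of these two nonnegative three-function expectations is individually bounded by $\delta$.

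The main step is a three-function reverse hypercontractive inequality of the form
\[ \E\bigl[f_+(V(1)) g_+(V(2)) h_+(V(3))\bigr] \;\ge\; \|f_+\|_{L^{p_1}} \|g_+\|_{L^{p_2}} \|h_+\|_{L^{p_3}} \]
for exponents $p_1, p_2, p_3 \in (0, 1)$ depending only on $\Sigma$, obtained by iterating the two-function Gaussian inequality in Lemma \ref{lem:corr-reverse} twice. Concretely, I would first apply part (2) of Lemma \ref{lem:corr-reverse} to the pair $X := V(1)$, $Y := (V(2), V(3))$, producing a product of $\|f_+\|_{L^{p_1}}$ and a reverse $L^p$ norm of $g_+ h_+$ under the joint Gaussian law of $(V(2), V(3))$. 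A second application of the two-function inequality to this latter norm, using part (2) with $X := V(2)$ and $Y := V(3)$ under their joint Gaussian law, bounds it below by $\|g_+\|_{L^{p_2}} \|h_+\|_{L^{p_3}}$. Since the $9 \times 9$ covariance matrix $\Sigma$ is positive definite by Lemma \ref{lem:voter-freedom-2}, the Schur complements arising in each iteration are positive definite, the associated correlation parameters $\alpha$ in Lemma \ref{lem:corr-reverse} are strictly positive, and the exponents $p_a$ are finite constants independent of $n$; the tensor structure of $V(a) = (V_{ia})_{i = 1}^n$ enters through the $n$-fold tensorization already built into Lemma \ref{lem:corr-reverse}.

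Combining the upper and lower bounds forces one of $\|f_+\|_{L^{p_1}}, \|g_+\|_{L^{p_2}}, \|h_+\|_{L^{p_3}}$ to be tiny. Since these are norms of nonnegative functions bounded by $1$ and $x^{p_a} \ge x$ on $[0, 1]$ for $p_a \in (0,1)$, we get $\E[f_+] \le \|f_+\|_{L^{p_1}}^{p_1}$, so smallness of the norm translates to $\E[f] \le -1 + 2\epsilon$; that is, one of $f, g, h$ is $\epsilon$-close to the constant $-1$. The symmetric analysis applied to $\E[f_- g_- h_-]$ identifies a function that is $\epsilon$-close to the constant $+1$. A function valued in $[-1, 1]$ cannot simultaneously be close to $+1$ and to $-1$, so the two distinguished functions must be different, yielding case 1 of the lemma.

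The main technical obstacle is executing the iterated reverse hypercontractive estimate with clean quantitative constants: although the template is standard, one must explicitly identify the conditional Gaussian law of $(V(2), V(3))$ given $V(1)$ and of $V(3)$ given $V(2)$, verify via Lemma \ref{lem:voter-freedom-2} that the associated correlation parameters are strictly less than $1$ (so $\alpha > 0$ in both rounds), and calibrate the exponents $p_a$ across both iterations so that the resulting $\delta(\epsilon)$ depends only on $\epsilon$ and $\Sigma$, not on $n$. Once these verifications are in place, the remaining steps are routine bookkeeping paralleling the Gaussian analysis of \cite{mossel2012quantitative,mossel2013reverse}.
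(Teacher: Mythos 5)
Your proposal is correct and follows essentially the same route as the paper, which defers to the proof of Theorem 11.7 of \cite{mossel2012quantitative} with Lemma~\ref{lem:corr-reverse} replacing Mossel's Lemma 2.5 and Lemma~\ref{lem:voter-freedom-2} supplying the required $\alpha > 0$; your $\text{nae}_3 = 1 - f_+g_+h_+ - f_-g_-h_-$ decomposition and iterated reverse hypercontractivity are exactly the engine of that proof. The one step to make explicit when writing this up is the second reverse-hypercontractive pass: part (2) of Lemma~\ref{lem:corr-reverse} bounds $\E[fg]$ rather than $\|fg\|_p$, so you should first observe $\|g_+h_+\|_p^p = \E[g_+^p h_+^p]$, apply part (2) to the nonnegative functions $g_+^p, h_+^p$, and take $p$-th roots to get $\|g_+h_+\|_{p} \ge \|g_+\|_{p_2}\|h_+\|_{p_3}$ with $p_2, p_3 \in (0,1)$ (also note the displayed inequality in case (2) of the lemma should read $\le 1 - \delta$, matching the verbal description and the analogous theorems in the paper; you correctly argue the contrapositive either way).
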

\begin{proof}
This follows from the proof of Theorem 11.7 in \cite{mossel2012quantitative}, except that we use the reverse hypercontractive estimate from Lemma~\ref{lem:corr-reverse} instead of Lemma 2.5 of \cite{mossel2012quantitative}; the condition $\alpha > 0$ is satisfied by Lemma~\ref{lem:voter-freedom-2} and the fact that the Schur complement of a positive definite matrix is always positive definite. 
\end{proof}

\sparagraph{Low (Cross-) Influence Functions.} Next, we derive Arrow's Theorem for low influence functions using the Invariance Principle to reduce to the Gaussian case. 
First we state the needed invariance principle.
\begin{defn}
Suppose $X_1,\ldots,X_n$ are i.i.d. random variables each valued in finite set $\mathfrak{X}$ with law $P$. Let $e_0,\ldots,e_k$ be an orthonormal basis for $L^2(P)$ with $e_0 = 1$ and for a multi-index $\sigma$ define $e_{\sigma} = \prod_i e_{i \sigma_i}$ in the usual way.
The \emph{$d$-low-degree influence} of coordinate $i$ on function $f$ is defined by
\[ I_i^{\le d}(f) := \sum_{\sigma : |\sigma| \le d, \sigma_i > 0} \E[e_{\sigma}(X) f(X)]^2.  \]
The \emph{influence} of coordinate $i$ is $I_i(f) := I_i^{\le n}(f)$. 
\end{defn}
\begin{lemma}\label{lem:invariance}
Let $\epsilon > 0$ and $i \ne j$ two distinct elements of $\{1,2,3\}$. Fix $\beta \ge 0$ and let $(X_1,Y_1),\ldots,(X_n,Y_n)$ be distributed according to the Gibbs measure \eqref{eqn:matching-model}. There exists $\tau := \tau(\epsilon,\beta) > 0$ such that the following is true.

Let $(N_1,M_1),\ldots,(N_n,M_n)$ be i.i.d. jointly Gaussian random variables drawn from $N(0,\Sigma)$ where $\Sigma : 6 \times 6$ is the covariance matrix of the random vector
\[ \left( X_{1i}, Y_{1i}, \frac{X_{1i}Y_{1i} - \mu}{\sigma}, X_{1j}, Y_{1j}, \frac{X_{1j}Y_{1j} - \mu}{\sigma} \right) \]
where $\mu = \E[X_{1i}Y_{1i}] = \E[X_{1j} Y_{1j}]$, $\sigma = \sqrt{\Var(X_{1i}Y_{1i})} = \sqrt{\Var(X_{1j}Y_{1j})}$,
and with $N_1$ corresponding to the first three coordinates and $M_1$ the last three.
Let $f : \{\pm 1\}^{2n} \to [-1,1]$ and $g : \{\pm 1\}^{2n} \to [-1,1]$ be arbitrary functions such that for all $1 \le a \le n$,
\[ \max\left\{I_a^{\le \log(1/\tau)}(f), I_a^{\le \log(1/\tau)}(g)\right\} < \tau \]
where $I_a^{\le d}(f)$ is the $d$-low degree influence of coordinate $a \in \{1,\ldots,n\}$ on $f$ viewed as a function of $(X_{ai},Y_{ai})_{i = 1}^n$, and defined analogously for $g$.
Then there exist functions $\tilde{f},\tilde{g} : \mathbb{R}^{3n} \to [-1,1]$ such that
\[ |\E[f(X(i), Y(i)) g(X(j), Y(j))] - \E[\tilde{f}(N) \tilde{g}(M)]| \le \epsilon \]
where the notation $X(i)$ denotes the vector $(X_{ai})_{a = 1}^n$, and where $\tilde{f}$ is defined only in terms of $f$ (i.e. it is independent of the choice of $g$).
\end{lemma}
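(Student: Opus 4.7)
The plan is to apply the vector-valued Invariance Principle (as developed in \cite{mossel2010gaussian} and used similarly in \cite{mossel2012quantitative,mossel2013reverse}) after exploiting the key structural observation that under the Gibbs measure \eqref{eqn:matching-model}, the matching pairs $(X_a,Y_a)$ for $a=1,\ldots,n$ are jointly independent across $a$. Bundling the four coordinates from each matching that are relevant to $f$ and $g$ into a single vector-valued random variable $W_a := (X_{ai},Y_{ai},X_{aj},Y_{aj}) \in \{\pm 1\}^4$, the sequence $(W_a)_{a=1}^n$ is i.i.d.\ with a fixed marginal depending only on $\beta$, and each of $f$ and $g$ is a function of this i.i.d.\ sequence depending only on a two-dimensional projection of each $W_a$. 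This is precisely the scenario covered by the two-function vector-valued Invariance Principle.

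For the Fourier expansion we choose an orthonormal basis $\{1, e_1^{(i)}, e_2^{(i)}, e_3^{(i)}\}$ for $L^2$ of the joint law of $(X_{1i},Y_{1i})$, where $e_3^{(i)}$ is a scalar multiple of $X_{1i}Y_{1i} - \mu$; the only nontrivial Gram--Schmidt step is between $X_{1i}$ and $Y_{1i}$, since a short computation using $X_{1i}^2 = 1$ gives $\E[X_{1i}Y_{1i} \cdot X_{1i}] = \E[Y_{1i}] = 0$ and similarly for $Y_{1i}$. We do the same for the coordinate $j$ and expand $f,g$ as multilinear polynomials in these basis elements $\{e_{a,k}\}_{a,k}$. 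The two hypotheses required for the Invariance Principle are then: (i) that the low-degree influences $I_a^{\le \log(1/\tau)}(f)$ and $I_a^{\le \log(1/\tau)}(g)$ are all smaller than $\tau$, which is exactly the given assumption; and (ii) that the maximum correlation between functions of $(X_{1i},Y_{1i})$ and functions of $(X_{1j},Y_{1j})$ is strictly less than one, which is precisely Lemma~\ref{lem:max-correlation}.

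Applying the vector-valued Invariance Principle replaces the discrete vectors $W_a$ by i.i.d.\ Gaussians $\tilde{W}_a$ with matching covariance. Because the span of $\{X_{1i}, Y_{1i}, (X_{1i}Y_{1i}-\mu)/\sigma\}$ agrees (up to an invertible linear transformation) with the span of $\{e_1^{(i)},e_2^{(i)},e_3^{(i)}\}$, and analogously on the $j$-side, the Gaussian produced after this substitution has exactly the $6 \times 6$ covariance matrix $\Sigma$ described in the lemma statement. The multilinear polynomials $\tilde{f}_0,\tilde{g}_0$ obtained by substitution may take values outside $[-1,1]$, so we define $\tilde{f},\tilde{g}$ by composing with the $1$-Lipschitz clamp $x \mapsto \max(-1,\min(1,x))$; since $f,g$ already take values in $[-1,1]$, this truncation only improves the bound (a standard step also used in \cite{mossel2012quantitative}). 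By construction $\tilde{f}$ depends only on the multilinear expansion of $f$, not on $g$, as required, and choosing $\tau = \tau(\epsilon,\beta)$ sufficiently small makes the Invariance Principle error at most $\epsilon$. The main technical point is the careful invocation of the two-function, vector-valued Invariance Principle and the verification that its hypotheses hold with constants depending only on $\beta$; this is where Lemma~\ref{lem:max-correlation} plays the essential role of ensuring a uniform-in-$n$ maximum-correlation bound strictly below $1$.
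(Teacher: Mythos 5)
Your proof is correct and takes essentially the same route as the paper's: both invoke the vector-valued two-function Invariance Principle of \cite{mossel2005noise,mossel2010gaussian} in the style of Theorem~11.9 of \cite{mossel2012quantitative} / Lemma~A.4 of \cite{mossel2013reverse}, using the uniform-in-$n$ max-correlation bound of Lemma~\ref{lem:max-correlation} (and implicitly the nondegeneracy from Lemma~\ref{lem:voter-freedom-2}) as the key hypotheses, bundling each matching pair's four relevant coordinates into a single i.i.d.\ unit. The only additions you supply over the paper's terse citation are the explicit basis choice (noting $X_{1i}Y_{1i}-\mu$ is orthogonal to $X_{1i}$ and $Y_{1i}$ because $X_{1i}^2=1$) and the standard clamping step to ensure the range is $[-1,1]$, both of which are correct and implicit in the cited references.
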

\begin{proof}
%We observe that a basis for the space of functions $\{\pm 1\}^2 \to \mathbb{R}$ is given by $\{1, X_1, X_2, X_1 X_2\}$. 
This follows from the argument of Theorem 11.9 of \cite{mossel2012quantitative} (see also Lemma A.4 of \cite{mossel2013reverse}) by combining Lemma~\ref{lem:max-correlation}, Lemma 6.1 of \cite{mossel2010gaussian}, and Theorem 3.20 of \cite{mossel2005noise} using hypothesis H3 there.
\end{proof}

\begin{lemma}\label{lem:low-influence}
For every $\beta \ge 0$ and $\epsilon > 0$, there exists $\delta(\epsilon,\beta), \tau(\epsilon,\beta) > 0$ such that the following result holds, where $X,Y$ are sampled from the Gibbs measure $Q$ \eqref{eqn:matching-model}. 
Let $f,g,h : \{\pm 1\}^{2n} \to \{\pm 1\}$ be arbitrary. Suppose that for all $1 \le a \le n$, at most one of $f,g,h$ satisfies $I_a^{\log^2(1/\tau)} > \tau$ where we view $f$ as a function of the independent pairs $(X_{a1},Y_{a1})_{a = 1}^n$ and similarly for $g$ and $h$. Then either:
\begin{enumerate}
    \item Two of $f,g,h$ are $\frac{3}{2} \epsilon$-close to constant functions of opposite sign.
    \item The probability of paradox is lower bounded by $\delta$, i.e. \[ Q((f(X^1,Y^1),g(X^2,Y^2),h(X^3,Y^3)) \in \NAE_3) \le 1 - \delta. \]
\end{enumerate}
\end{lemma}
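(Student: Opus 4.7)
The plan is to reduce the discrete problem to the Gaussian setting via the Invariance Principle (Lemma~\ref{lem:invariance}) and then apply the Gaussian Arrow's Theorem (Lemma~\ref{lem:gaussian-version}). The starting point is the identity
\[ Q\big((f(X^1,Y^1),g(X^2,Y^2),h(X^3,Y^3)) \in \NAE_3\big) = \frac{1}{4}\big(3 - \E[fg] - \E[gh] - \E[fh]\big), \]
which follows from the definition of $\mathrm{nae}_3$, so controlling the paradox probability reduces to controlling three pairwise correlations.

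I would first fix parameters from the outside in: let $\delta_G = \delta_G(\epsilon)$ be the constant supplied by Lemma~\ref{lem:gaussian-version}, pick some small $\epsilon' = \min(\epsilon,\delta_G)/100$, and then take $\tau = \tau(\epsilon',\beta)$ from Lemma~\ref{lem:invariance}. The final $\delta$ will be $\delta_G/2$. Then I would apply Lemma~\ref{lem:invariance} to each of the three pairs $(f,g)$, $(g,h)$, $(f,h)$ separately (with indices $(i,j) \in \{(1,2),(2,3),(1,3)\}$) to produce Gaussian analogues $\tilde f,\tilde g,\tilde h:\mathbb{R}^{3n}\to[-1,1]$ with
\[ \bigl|\E[f(X(i),Y(i))\,g(X(j),Y(j))] - \E[\tilde f(V(i))\,\tilde g(V(j))]\bigr| \le \epsilon', \]
where $V=(V(1),V(2),V(3))$ is the Gaussian vector of Lemma~\ref{lem:gaussian-version}. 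Crucially, because Lemma~\ref{lem:invariance} asserts that the Gaussian analogue $\tilde f$ depends only on $f$, the three pairwise applications yield a \emph{consistent} triple $(\tilde f,\tilde g,\tilde h)$, which can be evaluated on a single realization of $V$.

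The main technical obstacle here is that Lemma~\ref{lem:invariance} requires both functions in the pair to have low low-degree influence on \emph{every} coordinate, whereas our hypothesis only guarantees that at most one of $f,g,h$ is influential on each coordinate. For a pair such as $(f,g)$, the exceptional coordinates are those where $f$ or $g$ is the influential function. By the standard bound on the sum of low-degree influences, there are at most $O(\log^2(1/\tau)/\tau)$ such coordinates; I would handle them by conditioning on the pairs $(X_{a1},Y_{a1})$ (or the analogous Boolean variables for the relevant election) at each exceptional coordinate and applying the invariance principle to the resulting conditional functions, which have only low-influence coordinates remaining. Averaging over the conditioning and using that each marginal measure is still a product measure gives the desired uniform approximation, with a mild inflation of $\tau$ that can be absorbed by shrinking $\tau$ at the start.

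Finally, I would apply Lemma~\ref{lem:gaussian-version} to $(\tilde f,\tilde g,\tilde h)$. In its Case 2, the Gaussian paradox probability exceeds $\delta_G$, so combining the three pairwise approximations yields a discrete paradox probability at least $\delta_G - 3\epsilon' \ge \delta$, establishing conclusion (2). In Case 1, two of $\tilde f,\tilde g,\tilde h$ are $\epsilon$-close to constants of opposite sign, say $\E[\tilde f]\ge 1-2\epsilon$ and $\E[\tilde g]\le -1+2\epsilon$. Applying Lemma~\ref{lem:invariance} with a constant partner (which has zero low-degree influence) gives $|\E[f]-\E[\tilde f]|\le \epsilon'\le \epsilon$, so $\E[f]\ge 1-3\epsilon$; since $f\in\{\pm 1\}$, this means $P(f\ne 1)\le \tfrac{3}{2}\epsilon$, i.e. $f$ is $\tfrac{3}{2}\epsilon$-close to the constant $+1$, and analogously for $g$, giving conclusion (1).
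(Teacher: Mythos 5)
Your top-level strategy — express the transitivity probability via $\text{nae}_3$ and the three pairwise correlations, transfer each correlation to a Gaussian analogue via Lemma~\ref{lem:invariance}, invoke Lemma~\ref{lem:gaussian-version}, and translate the resulting near-constancy or paradox bound back to the discrete side — is exactly the route the paper takes, and you correctly identified the central obstacle: Lemma~\ref{lem:invariance} requires \emph{both} members of the pair to have small low-degree influence on every atom, while the hypothesis only forbids two of $f,g,h$ from being simultaneously influential at any given atom. Your parameter cascade, the identity $\frac{1}{4}(3 - \E fg - \E gh - \E fh)$, and the translation in Case 1 from $\E[\tilde f]\ge 1-2\epsilon$ to $P(f\ne 1)\le \tfrac32\epsilon$ are all fine.

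The gap is in how you resolve the obstacle. You propose to condition on the $O(\log^2(1/\tau)/\tau)$ exceptional atoms of the pair and assert that the conditional restrictions ``have only low-influence coordinates remaining.'' That assertion is not justified: conditioning does not preserve low-degree influences. For a fixed conditioning $s$ of the set $S$ of exceptional atoms, the conditional influence $I_a^{\le d}(f|_s)$ of a non-exceptional coordinate $a$ can exceed $I_a^{\le d}(f)$ by a factor up to $\Pr(s)^{-1}$, and with $|S| = \Theta(\log^2(1/\tau)/\tau)$ atoms over a constant-size alphabet this blow-up is exponential in $1/\tau$ — nothing like the ``mild inflation'' you invoke. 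Any attempt to absorb it by shrinking $\tau$ at the outset is circular, since the inflation factor itself depends on $\tau$. Averaging over $s$ only controls the \emph{expected} conditional influences, not the per-conditioning ones that the invariance hypothesis demands, and a Markov-plus-union argument over the remaining $n - |S|$ coordinates fails because $n$ is unbounded. The paper's own proof (which simply cites Theorem 11.11 of Mossel (2012) together with Lemmas~\ref{lem:invariance} and \ref{lem:gaussian-version}) does not pass through a global conditioning on exceptional coordinates; reconciling the ``at most one influential function per atom'' hypothesis with the two-sided low-influence requirement of the invariance step is precisely the nontrivial technical content that your sketch leaves unaddressed.
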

\begin{proof}
The proof follows Theorem 11.11 of \cite{mossel2012quantitative}, except that the version of invariance we use is Lemma~\ref{lem:invariance} and the proof in the Gaussian case is Lemma~\ref{lem:gaussian-version}. 
\end{proof}

\sparagraph{Constitution with Two Influential Pairs.} In the case that two pairs of voters are both influential, there is always a significant probability of paradox.
\begin{lemma}\label{lem:two-influential}
For every $\beta \ge 0$ and $\epsilon > 0$, there exists $\delta(\epsilon,\beta)$ such that the following result holds, where $X,Y$ are sampled from the Gibbs measure $Q$ \eqref{eqn:matching-model}. As in the previous Lemmas, we consider arbitrary functions $f,g,h : \{\pm 1\}^{2n}$ and view them as functions of independent pairs of coordinates.
Suppose there exist two indices $1 \le a < b \le n$ and $f_1 \ne f_2$ are two distinct elements of $\{f,g,h\}$ such that  $I_a(f_1) > \epsilon$ and $I_b(f_2) > \epsilon$. Then the probability of paradox is lower bounded: $Q((f(X^1,Y^1),g(X^2,Y^2),h(X^3,Y^3)) \in \NAE_3) > 1 - \delta$.
\end{lemma}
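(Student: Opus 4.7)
The plan is to adapt the two-influential-coordinate argument in the proof of the Quantitative Arrow's Theorem from \cite{mossel2012quantitative} (the case in which two distinct output coordinates have two distinct influential input coordinates) to the matching setting. The crucial structural point is that although within a matched pair the two voters $X_a$ and $Y_a$ are correlated, the pairs $(X_a,Y_a)$ themselves are i.i.d.\ across $a$ under the Gibbs measure \eqref{eqn:matching-model}, taking values in the $36$-element alphabet $\NAE_3 \times \NAE_3$. So the underlying probability space is still a product measure, and the discrete reverse hypercontractive inequality Lemma~\ref{lem:reverse-discrete} applies with its parameter $\alpha > 0$ guaranteed by Lemma~\ref{lem:voter-freedom-1} for any fixed $\beta \ge 0$. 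This is the adaptation that makes the general strategy of \cite{mossel2013reverse} go through in our non-i.i.d.\ setting.

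Without loss of generality assume $f_1 = f$ and $f_2 = g$, and view $f$ as a function of the pairs $((X^1_a, Y^1_a))_{a=1}^n \in (\{\pm 1\}^2)^n$. Because $f$ is $\{\pm 1\}$-valued with $I_a(f) > \epsilon$, one can extract values $u,u' \in \{\pm 1\}^2$ and an event $B^f$, measurable with respect to the pairs other than $a$, with $\Pr(B^f) = \Omega(\epsilon)$, on which $f$ switches its output according to whether $(X^1_a,Y^1_a) = u$ or $u'$. Partitioning $B^f$ and relabeling $u \leftrightarrow u'$ if necessary, we obtain a subevent on which $f = +1$ whenever $(X^1_a,Y^1_a) = u$. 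Similarly we obtain an analogous event $B^g$ for $g$ on pair $b$ with some values $v,v' \in \{\pm 1\}^2$. Applying Lemma~\ref{lem:reverse-discrete} to the product measure over the remaining pairs (which has positive $\alpha$ by Lemma~\ref{lem:voter-freedom-1}) shows that $\Pr(B^f \cap B^g)$ is bounded below by a positive constant depending only on $\epsilon$ and $\beta$.

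We next choose full permutations $\pi_a,\pi_b \in \mathfrak{S}_3$ for the matchings $a$ and $b$ (i.e.\ we fix $X_a,Y_a,X_b,Y_b$) consistently with $(X^1_a,Y^1_a) = u$ and $(X^2_b,Y^2_b) = v$. These choices automatically fix the remaining pairwise coordinates $(X^2_a,Y^2_a),(X^3_a,Y^3_a),(X^1_b,Y^1_b),(X^3_b,Y^3_b)$ through the $\NAE_3$ constraint on each of the four permutations. A finite case analysis---mirroring the one in \cite{mossel2012quantitative} but now with pair-valued coordinates in $\{\pm 1\}^2$ in place of single-voter coordinates in $\{\pm 1\}$---shows that we can choose these permutations, together with target signs for $f$ and $g$, so that no matter which sign $h$ takes on election $3$ there is a paradox-forcing combination of outcomes with positive conditional probability. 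Combining this with a second application of Lemma~\ref{lem:reverse-discrete} to the triple product event involving $h$ yields the claimed $\delta = \delta(\epsilon,\beta) > 0$.

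The main obstacle is the combinatorial case analysis: we must check that every pattern of outputs of $h$ and every valid pair of preferences $u,v$ can be completed to permutations $\pi_a,\pi_b$ that produce the paradox. Because pair alphabets have size $4$ rather than $2$, this analysis has more branches than in \cite{mossel2012quantitative}, but its structure is the same, and the key quantitative inputs---positivity of $\alpha$ and the influence-based events $B^f,B^g$---depend only on $\epsilon$ and $\beta$, giving bounds that are uniform in $n$.
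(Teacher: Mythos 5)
Your proposal takes essentially the same route as the paper: both adapt Theorem 3.3 of \cite{mossel2012quantitative} by treating each matched pair $(X_a,Y_a)$ as a single i.i.d.\ "super-voter" over the alphabet $\NAE_3\times\NAE_3$, and both justify the crucial reverse-hypercontractive step via Lemma~\ref{lem:reverse-discrete} with the parameter $\alpha>0$ supplied by Lemma~\ref{lem:voter-freedom-1}. The paper's proof is a one-sentence reference, so you have simply written out the details it leaves implicit (extracting the influence-based events $B^f,B^g$, fixing the four permutations in matchings $a$ and $b$, and the concluding case analysis); the high-level strategy is identical, and no genuine discrepancy or gap stands out.
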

\begin{proof}
The proof follows Theorem 3.3 of \cite{mossel2012quantitative}, except that we look at pairs of voters instead of single voters and use the general reverse hypercontractive estimate from Lemma~\ref{lem:reverse-discrete}, in the application of which we can justify that $\alpha > 0$ by appealing to Lemma~\ref{lem:voter-freedom-1}.
\end{proof}
%TODO: so far we probably only reduced to two influential matchings. could handle with conditioning or mutual contiguity now?

\sparagraph{Constitutions with One Influential Pair.} In the case that only one pair in the matching has large influence, we can condition on this pair to generate a situation where no pairs are influential. This implies by the previous Lemmas that the pair is a dictator, from which it is easy to show that a single element in the pair is the dictator. 

\begin{lemma}\label{lem:one-influential}
For every $\beta \ge 0$ and $\epsilon > 0$, there exists $\delta(\epsilon,\beta), \tau(\epsilon,\beta) > 0$ such that the following result holds, where $X,Y$ are sampled from the Gibbs measure $Q$ \eqref{eqn:matching-model}.  As in the previous Lemmas, we consider arbitrary functions $f,g,h : \{\pm 1\}^{2n}$ and view them as functions of independent pairs of coordinates. Assume that there exists $1 \le a \le n$ such that for all $b \ne a$,
\[ \max(I_b(f), I_b(g), I_b(h)) < \tau. \]
Then either:
\begin{enumerate}
    \item the function $(X,Y) \mapsto (f(X^1,Y^1),g(X^2,Y^2),h(X^3,Y^3))$ is $\epsilon$-close to a function in $\mathcal{F}_3$, i.e. the constitution is $\epsilon$-close to either being dictator or having a fixed bottom or top candidate.
    \item Or, the probability of paradox is lower-bounded, i.e. 
    \[ Q((f(X^1,Y^1), g(X^2,Y^2), h(X^3,Y^3)) \in \NAE_3) \ge \delta. \]
\end{enumerate}
\end{lemma}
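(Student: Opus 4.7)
The plan is to condition on the pair of voters $(X_a, Y_a)$ and reduce to a low-influence situation on the remaining $n - 1$ pairs, then apply Lemma~\ref{lem:low-influence}. Because the Gibbs measure \eqref{eqn:matching-model} factorizes over edges of the matching, conditioning on $(X_a, Y_a) = v$ leaves the remaining pairs distributed exactly as an independent matching model on $n - 1$ pairs, independently of $v$. Lemma~\ref{lem:voter-freedom-1} guarantees that every $v \in \NAE_3 \times \NAE_3$ has conditional probability at least $\alpha(\beta) > 0$, and low-degree influences in the conditional functions $f_v, g_v, h_v$ are at most $1/\alpha$ times their unconditional values, so shrinking $\tau$ appropriately preserves the low-influence hypothesis of Lemma~\ref{lem:low-influence} for each of the $36$ conditional triples.

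Next I would apply Lemma~\ref{lem:low-influence} to each $(f_v, g_v, h_v)$ with parameter $\epsilon/2$. If for some $v$ the conditional probability of paradox is at least $\delta_1 = \delta_1(\epsilon, \beta)$, then the overall paradox probability is at least $\alpha \delta_1$, yielding case (2). Otherwise, for each $v$ at least two of $f_v, g_v, h_v$ are within $3\epsilon/4$ of constants of opposite signs; recording these approximate constants defines a pattern $(s_v^1, s_v^2, s_v^3) \in \{+1, -1, \star\}^3$ with at most one $\star$ per $v$. This determines a 2-voter partial constitution on the pair $(X_a, Y_a)$ which, when completed arbitrarily in the $\star$ slots, takes values in $\NAE_3$ (two determined entries of opposite signs rule out both $(+1,+1,+1)$ and $(-1,-1,-1)$).

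Finally I would invoke the non-quantitative Arrow's Theorem applied to this 2-voter completion $\widetilde F : \NAE_3 \times \NAE_3 \to \NAE_3$: it satisfies IIA and always yields transitive outputs, so it lies in $\mathcal{F}_3$. For two voters, the elements of $\mathcal{F}_3$ are either a dictator on $X_a$, a dictator on $Y_a$, or functions with two pairwise coordinates fixed of opposite sign. In the dictator cases the original constitution is $\epsilon$-close to a single-voter dictator; in the fixed-top/bottom case it is $\epsilon$-close to having the same fixed top or bottom candidate globally, since the two determined coordinates of $\widetilde F$ agree with the original pairwise elections in probability at least $1 - \epsilon$. The main obstacle is the careful treatment of the $\star$ coordinate: the associated original pairwise function depends on the other voter pairs and is not pinned down by $\widetilde F$ alone, so we must separately ensure that this extra degree of freedom does not allow the overall constitution to escape $\mathcal{F}_3$. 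In the dictator cases this requires showing that the $\star$ coordinate is approximately a function of the pair $a$ alone; combining the opposite-sign constraint with a second application of Lemma~\ref{lem:low-influence} (and the fact that the $\star$-coordinate function itself has low influence in every $b \neq a$) forces the remaining coordinate to be either close to constant or itself a single-voter dictator consistent with the pattern, after which the classification carries over to the overall constitution and the parameters $(\tau, \delta)$ can be fixed in terms of $\alpha(\beta)$, $\epsilon$, and the constants from Lemma~\ref{lem:low-influence}.
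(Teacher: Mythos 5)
Your proof follows the same strategy as the paper (condition on pair $a$, apply Lemma~\ref{lem:low-influence} to the conditional functions, then invoke Arrow's Theorem on the resulting two-voter junta), which is in turn the strategy of Theorem~7.1 of Mossel~(2012). Steps 1--4 are correct: the Gibbs measure does factor over edges of the matching, Lemma~\ref{lem:voter-freedom-1} gives a uniform lower bound $\alpha$ on each conditional probability, and the bound $I_b(f_v)\le I_b(f)/\alpha$ follows from $I_b(f)=\sum_v\Pr(v)\,I_b(f_v)$, so shrinking $\tau$ keeps Lemma~\ref{lem:low-influence} applicable; and if some conditional paradox probability is bounded below, the unconditional one is too.

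The weak point is your treatment of the $\star$ coordinate. As you note, the paper's $\mathcal{F}_3$ includes constitutions with two \emph{globally} constant coordinates of opposite sign and the third arbitrary non-constant, so when the $\star$ position is fixed across all $v$ the conclusion is immediate. But your closing sentence --- ``a second application of Lemma~\ref{lem:low-influence} \ldots forces the remaining coordinate to be either close to constant or itself a single-voter dictator consistent with the pattern'' --- is not the right mechanism: by the hypothesis $I_b<\tau$ for $b\neq a$, the $\star$ coordinate cannot be (close to) a dictator on any voter outside pair $a$, and the low-influence lemma gives no information about it at all. What actually closes the gap is a purely combinatorial observation you do not make: since $f_v$ depends only on $v^1$, $g_v$ only on $v^2$, $h_v$ only on $v^3$, and since for each of the $36$ valid $v$ the two determined approximate constants must have \emph{opposite} signs, a short case analysis (ranging $v^2,v^3$ over the valid completions of a fixed $v^1$) shows that if some $f_{v^1}$ is far from constant, then $g_{v^2}$ and $h_{v^3}$ are forced to equal a single opposite-sign pair of \emph{global} constants, i.e.\ the $\star$ position is forced to coordinate $1$ for every $v$. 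If instead every $f_{v^1},g_{v^2},h_{v^3}$ is close to constant, there is no $\star$ at all, $\widetilde F$ is fully determined (and close to the original constitution), and Arrow's Theorem applied to $\widetilde F$ lands it in $\mathcal{F}_3$ exactly as you say. With this fix, the rest of your argument goes through and is essentially identical to the paper's.
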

\begin{proof}
First, we prove the result with $\epsilon$ replaced by $\epsilon/4$ and where in case (1) we expand the definition so that the constitution is allowed to depend nontrivially on both voters in the matching (i.e. those two voters form a junta). In this case, the proof follows as in Theorem 7.1 of \cite{mossel2012quantitative}, except that we use Lemma~\ref{lem:low-influence} to handle the low-influence case after conditioning in the argument.

Finally, if the constitution is close to constitution depending on only the two voters in the matching indexed by $a$, the standard Arrow's Theorem (see Proposition 3.1 of \cite{mossel2012quantitative}) and the triangle inequality can be applied to show that it either has a significant probability of paradox (so it falls into case (2)), or it is $\epsilon$-close to an element of $\mathcal{F}_3$.
\end{proof}

\sparagraph{Deduction of Quantitative Arrow's Theorem.}
\begin{theorem}
For every $\beta \ge 0$ and $\epsilon > 0$, there exists $\delta(\epsilon,\beta) > 0$ such that the following result holds, where $X,Y$ are sampled from the Gibbs measure $Q$ \eqref{eqn:matching-model}.  As in the previous Lemmas, we consider arbitrary functions $f,g,h : \{\pm 1\}^{2n}$ and view them as functions of independent pairs of coordinates. 
Then either:
\begin{enumerate}
    \item the function $(X,Y) \mapsto (f(X^1,Y^1),g(X^2,Y^2),h(X^3,Y^3))$ is $\epsilon$-close to a function in $\mathcal{F}_3$, i.e. the constitution is $\epsilon$-close to either being dictator or having a fixed bottom or top candidate.
    \item Or, the probability of paradox is lower-bounded, i.e. 
    \[ Q((f(X^1,Y^1), g(X^2,Y^2), h(X^3,Y^3)) \in \NAE_3) \ge \delta. \]
\end{enumerate}
\end{theorem}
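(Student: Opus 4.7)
The plan is to deduce the theorem from Lemmas~\ref{lem:low-influence}, \ref{lem:two-influential}, and~\ref{lem:one-influential} via a case analysis on the influence pattern of $f, g, h$, regarded as functions of the $n$ independent pairs $(X_a, Y_a)$. Fix $\epsilon > 0$ and choose a single threshold $\tau = \tau(\epsilon, \beta) > 0$ small enough to satisfy the hypotheses of Lemma~\ref{lem:low-influence} (applied with accuracy $\frac{2\epsilon}{3}$, so that the resulting $\frac{3}{2}\cdot\frac{2\epsilon}{3}$-closeness is $\epsilon$-closeness) and of Lemma~\ref{lem:one-influential} (applied with accuracy $\epsilon$); because total influence dominates low-degree influence, one value of $\tau$ controls both hypotheses. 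Call a pair $(a, F) \in \{1,\ldots,n\} \times \{f,g,h\}$ \emph{heavy} if $I_a(F) > \tau$.

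A short combinatorial argument shows that the set of heavy pairs falls into exactly one of three types: (i) there are two heavy pairs $(a, F_1)$ and $(b, F_2)$ with $a \ne b$ and $F_1 \ne F_2$; (ii) all heavy pairs share a common block $a^{*}$; or (iii) all heavy pairs involve a common function $F^{*}$ (the empty case being included in (ii) and (iii) trivially). Indeed, if (i) fails then any two heavy pairs must share either their block or their function, from which an easy case check on a third heavy pair shows that all heavy pairs share the same block or all share the same function.

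In case (i), Lemma~\ref{lem:two-influential} applied with parameter $\tau$ immediately yields a paradox lower bound $\delta_{\mathrm{two}}(\tau, \beta)$. In case (ii), the hypothesis of Lemma~\ref{lem:one-influential} is met at the block $a^{*}$ and the lemma produces either $\epsilon$-closeness to $\mathcal{F}_3$ or a paradox lower bound $\delta_{\mathrm{one}}$. In case (iii), at every block at most one of $f, g, h$ has high low-degree influence (only $F^{*}$ can), so Lemma~\ref{lem:low-influence} applies and gives either a paradox lower bound $\delta_{\mathrm{low}}$ or that two of the output coordinates of the constitution are $\epsilon$-close to constants of opposite sign, which is by definition an element of $\mathcal{F}_3$. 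Setting $\delta := \min\{\delta_{\mathrm{two}}, \delta_{\mathrm{one}}, \delta_{\mathrm{low}}\}$ completes the proof.

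The genuine mathematical content is entirely within the three lemmas, so the hard part of this final step is purely one of bookkeeping: aligning the different accuracy parameters and the two notions of influence (total versus low-degree), and verifying that the ``two near-constant coordinates of opposite sign'' conclusion in case (iii) corresponds to the sub-family of $\mathcal{F}_3$ in which one candidate is fixed at the top and another at the bottom of the aggregate ranking.
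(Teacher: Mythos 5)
Your proof is correct and takes essentially the same route as the paper, which deduces the theorem by case analysis on influences from Lemmas~\ref{lem:low-influence}, \ref{lem:two-influential}, and~\ref{lem:one-influential}, using that low-degree influence lower bounds total influence. The one small bookkeeping slip is the accuracy passed to Lemma~\ref{lem:low-influence} in case (iii): if two of $f,g,h$ are each $\eta$-close to constants of opposite sign, then the constitution is only $2\eta$-close to the corresponding element of $\mathcal{F}_3$, so you should feed in $\epsilon/3$ (not $2\epsilon/3$) so that $\tfrac{3}{2}\cdot\tfrac{\epsilon}{3}=\tfrac{\epsilon}{2}$ per function, hence $\epsilon$ overall. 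Also, to make the case-(ii) hypothesis $\max_b I_b(\cdot)<\tau$ hold with strict inequality, define heavy as $I_a(F)\ge\tau$ rather than $>\tau$. Both are cosmetic.
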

\begin{proof}
This is by case analysis as in Theorem 11.14 of \cite{mossel2012quantitative}: all constitutions fall into the setting of one of Lemma~\ref{lem:one-influential}, Lemma~\ref{lem:two-influential}, or Lemma~\ref{lem:low-influence}; here we use that low-degree influences always lower bound normal influences.
\end{proof}

\section{Conclusion}
Beyond the conjectures already stated, a number of interesting open problems remain. We state a few natural questions below:
\begin{enumerate}
    \item Supposing that Conjecture~\ref{conj:universal-arrow} is true, it's also interesting to ask for each $q$ about the supremum of $\beta(q)$ such that the result holds; it seems very plausible that the sharp $\beta(q)$ in general is determined by the mean-field case where $G$ is the complete graph. In some sense this would show that the complete graph is the ``best case'' for avoiding paradox.
    \item 
    In the cases considered in this paper, pairwise majority-based elections were shown to be threshold-optimal in the models considered with $q = 3$, in the sense that whenever Quantitative Arrow's Theorem does not hold (i.e. Arrow's paradox is avoidable asymptotically almost surely), pairwise majority also avoids a paradox asymptotically almost surely. This is consistent with the following much more general hypothesis: for any $q \ge 3$, $\gamma > 0$, there exists a function $\delta(q,\epsilon,\gamma)$ such that if elections under pairwise majority exhibit at least an $\gamma$ probability of paradox, the Quantitative Arrow's Theorem also holds -- either the constitution is $\epsilon$-close to a function in $\mathcal{F}_k$ or the probability of a paradox is at least $\delta(q,\epsilon,\gamma) > 0$. This hypothesis implies that no other voting rule (which is far from dictator and far from constant in every pairwise election) succeeds in avoiding a paradox when pairwise majority fails. Is the hypothesis true?  %This would be useful because pairwise majority is much easier to analyze than general voting rules. 
    \item Conjecture~\ref{conj:universal-arrow} asks for the behavior on general graphs, but it is also interesting to understand the sharp regime for a Quantitative Arrow's Theorem to hold on particular families of graphs (e.g. lattices). This is closely related to the previous two questions.    
    \item In the low-temperature case ($\beta$ large) and for general $q$, is it true that for all connected graphs, pairwise majority avoids a paradox asymptotically almost surely? If so, is the probability of paradox always exponentially small in $n$?
    \item What more can be said about the probability of a paradox as a function of $\beta$ --- for example, how does it behave if we zoom in to the critical temperature?
    In the case of the perfect matching, is the minimum probability of paradox still attained at the same point if instead of pairwise majority, we consider the optimal voting rule for each value of $\beta$? More generally, when is the probability of paradox monotone in $\beta$?
\end{enumerate}
% COMMENTED OUT FOR ANONYMOUS SUBMISSION
\appendix
\section{Appendix: Deferred Proofs from Section~\ref{sec:mf-3}}\label{apdx:mf-3}
%\subsection{Proof of Lemma~\ref{lem:clt-magnetization}}
\subsection{Proof of Lemma~\ref{lem:mean-field-solns}}
\begin{proof}
By using $\lambda_i = \beta s_i$ and eliminating $\beta$, we see that any solution $(\lambda_1,\lambda_2,\lambda_3)$ to the mean-field equations (Lemma~\ref{lem:mf-eqn}) for some value of $\beta$
must satisfy the three equations
\begin{align*}
\lambda_1 &(\sinh(\lambda_1 + \lambda_2 - \lambda_3) - \sinh(\lambda_1 - \lambda_2 + \lambda_3) + \sinh(-\lambda_1 + \lambda_2 + \lambda_3)) \\
&= \lambda_2 (\sinh(\lambda_1 + \lambda_2 - \lambda_3) + \sinh(\lambda_1 - \lambda_2 + \lambda_3) - \sinh(-\lambda_1 + \lambda_2 + \lambda_3)) \\
\lambda_1 &(-\sinh(\lambda_1 + \lambda_2 - \lambda_3) + \sinh(\lambda_1 - \lambda_2 + \lambda_3) + \sinh(-\lambda_1 + \lambda_2 + \lambda_3)) \\
&= \lambda_3(\sinh(\lambda_1 + \lambda_2 - \lambda_3) + \sinh(\lambda_1 - \lambda_2 + \lambda_3) - \sinh(-\lambda_1 + \lambda_2 + \lambda_3)) \\
\lambda_2&(-\sinh(\lambda_1 + \lambda_2 - \lambda_3) + \sinh(\lambda_1 - \lambda_2 + \lambda_3) + \sinh(-\lambda_1 + \lambda_2 + \lambda_3) \\
&= \lambda_3(\sinh(\lambda_1 + \lambda_2 - \lambda_3) - \sinh(\lambda_1 - \lambda_2 + \lambda_3) + \sinh(-\lambda_1 + \lambda_2 + \lambda_3)).
\end{align*}
We make the change of variables
\begin{align*}
    u &:= \begin{bmatrix} 
    1 & 1 & -1 \\
    1 & -1 & 1 \\
    -1 & 1 & 1
    \end{bmatrix} \lambda.
\end{align*}
Note that this change of variables preserves the symmetry of the equations under permutation of $u_1,u_2,u_3$. 
This allows us to focus mostly on the first two equations above
\begin{align*}
    \frac{u_1 + u_2}{2} (\sinh(u_1) - \sinh(u_2) + \sinh(u_3)) &= \frac{u_1 + u_3}{2} (\sinh(u_1) + \sinh(u_2) - \sinh(u_3)) \\
    \frac{u_1 + u_2}{2} (-\sinh(u_1) + \sinh(u_2) + \sinh(u_3)) &= \frac{u_2 + u_3}{2} (\sinh(u_1) + \sinh(u_2) - \sinh(u_3))
\end{align*}
and use the third equation when arguing by symmetry. 
These equations are equivalent to their sum and difference, which are:
\begin{align}
 (u_1 + u_2)\sinh(u_3) &= \frac{u_1 + u_2 + 2u_3}{2} (\sinh(u_1) + \sinh(u_2) - \sinh(u_3)) \label{eqn:summed}\\
    (u_1 + u_2)(\sinh(u_1) - \sinh(u_2)) &= \frac{u_1 - u_2}{2}(\sinh(u_1) + \sinh(u_2) - \sinh(u_3)). \label{eqn:differenced}
\end{align}
We now break into two cases:
\begin{itemize}
    \item Case 1: $u_1 = u_2$. Then \eqref{eqn:summed} gives
    \begin{equation}\label{eqn:u1u3}
    2u_1 \sinh(u_3) = (u_1 + u_3) (2\sinh(u_1) - \sinh(u_3)). \end{equation}
    We plot the solution locus in Figure~\ref{fig:u1u3solns}.
    Rearranging, the equation is $2u_1 \sinh(u_1) + 2u_3\sinh(u_1) = 3\sinh(u_3) u_1 + u_3 \sinh(u_3)$. For a fixed value of $u_3$, since the left hand side is strictly convex in $u_1$ and the right hand side is linear, there exist at most two solutions to the equation. Furthermore for $u_3 \ne 0$ the left hand side is zero and the right hand side is positive, so there are exactly two solutions. One family of solutions is given by $u_1 = u_3$, and then the other family of solutions has $u_1$ with the opposite sign of $u_3$.
    \item Case 2: $u_1 \ne u_2$. We split into further subcases.
    \begin{enumerate}
    \item $u_1 = u_3$. Then the symmetrical version of \eqref{eqn:summed} gives us a symmetrical version of \eqref{eqn:u1u3}: the argument above tells us the resulting equation has a single family of solutions where $u_2$ has opposite sign to $u_1 = u_3$. (There would be another family of solutions where $u_1 = u_2$ but in case 2 we have ruled out those solutions.)
    \item $u_2 = u_3$. This case is symmetrical to the previous case as well, giving a single family of solutions with $u_1$ having opposite sign to $u_2$ and $u_3$. 
    \item $u_2 \ne u_3$ and $u_1 \ne u_3$. Using that $u_1 \ne u_2$ and \eqref{eqn:differenced} lets us solve for $u_3$:
    \begin{equation}\label{eqn:u3-fromu1u2}
    \sinh(u_3) = \sinh(u_1) + \sinh(u_2)-\frac{2(u_1 + u_2)(\sinh(u_1) - \sinh(u_2))}{u_1 - u_2}.  
    \end{equation}
%    TODO USE SYMMETRICAL VERSIONS?
%    We view $u_3 = u_3(u_1,u_2)$ to be defined by the above equation,
    Using this \eqref{eqn:summed} can be rewritten as
\begin{align}\label{eqn:summed-rewrite1}
    (u_1 + u_2)\sinh(u_3) = (u_1 + u_2 + 2u_3) \frac{(u_1 + u_2)(\sinh(u_1) - \sinh(u_2))}{u_1 - u_2}
\end{align}
We consider some further subcases:
\begin{enumerate}
    \item $u_1 + u_2 = 0$.  Then \eqref{eqn:u3-fromu1u2} lets us solve to get $u_3 = 0$.
    \item Two symmetrical cases to the previous one: $u_1 + u_3 = 0, u_2 + u_3 = 0$ which have symmetrical solution families.
    \item Finally, we have the case where $0 \notin \{u_1 + u_2, u_2 + u_3, u_1 + u_3\}$. In this case we will argue there is no solution. Dividing by $u_1 + u_2$ in \eqref{eqn:summed-rewrite1} gives
    \begin{align}\label{eqn:u1u2}
    \sinh(u_3) = (u_1 + u_2 + 2u_3) \frac{\sinh(u_1) - \sinh(u_2)}{u_1 - u_2}.
    \end{align}
\end{enumerate}
We plot the solution locus of the equation \eqref{eqn:u1u2} (with $u_3$ defined in terms of $u_1,u_2$ by \eqref{eqn:u3-fromu1u2}, and requiring $u_1 \ne u_2$) in Figure~\ref{fig:u1u2solns}. The five curves which appear are the five subcases of case 2 above (case 2.1, 2.2, 2.3.a and two in 2.3.b) so we indeed covered all of the cases. 
\end{enumerate}
\end{itemize}
Assuming the accuracy of Figure~\ref{fig:u1u2solns} and changing back into the original variables, we get the classification described in the first part of the Lemma. For the second part, it remains to plot the objective value achieved by each of the solution families as a function of $\beta$.
Assuming the accuracy of Figure~\ref{fig:soln_qualities}, we have that the solutions of the third type have the largest objective value. %\todo{try to work out explicitly?}
\end{proof}

\begin{figure}
    \centering
    \includegraphics[scale=0.3,trim=1in 3.5in 0.5in 1in, clip]{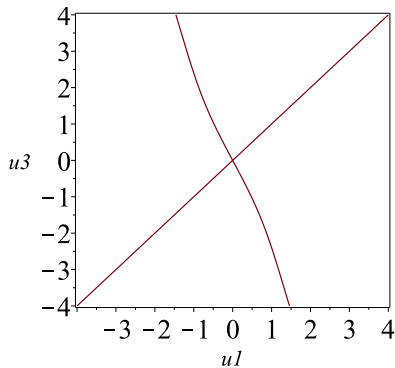}
    \caption{Solution set in terms of $u_1,u_3$ of \eqref{eqn:u1u3} %(reparameterization of mean-field equations from Lemma~\ref{lem:mf-eqn}) 
    where $u_2 = u_1$.}
    \label{fig:u1u3solns}
\end{figure}
\begin{figure}
    \centering
    \includegraphics[scale=0.3,trim=1in 2in 0.5in 1in, clip]{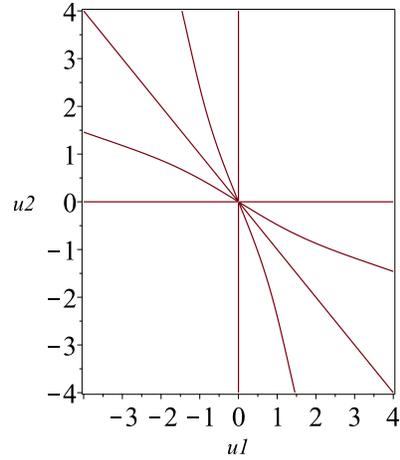}
    \caption{Solution set in terms of $u_1,u_2$ of \eqref{eqn:u1u2}
    where $u_3$ is determined by \eqref{eqn:u3-fromu1u2} and we require $u_1 \ne u_2$.}
    %mean-field equations (Lemma~\ref{lem:mf-eqn}) when $u_1 \ne u_2$, where $u_3$ is determined by \eqref{eqn:u3-fromu1u2}.}
    \label{fig:u1u2solns}
\end{figure}

\begin{comment}
\begin{figure}
    \centering
    \includegraphics[scale=0.3,trim=350 0 0 50, clip]{surface2.jpg}
    \caption{Plot of solution locus in terms of $\lambda$ to mean-field equations in Lemma~\ref{lem:mf-eqn}, Fact~\ref{fact:mean-field-solns} (with numerical artifacts appearing near the singular point at the origin). More precisely, pictured is the set $\{(\lambda_1,\lambda_2,\lambda_3) : \exists \beta, \lambda = \beta s(\lambda) \}$
    plotted using the \texttt{intersectplot} command in Maple.}
    \label{fig:soln_families}
\end{figure}
\end{comment}

\begin{figure}
    \centering
    \includegraphics[scale=0.20]{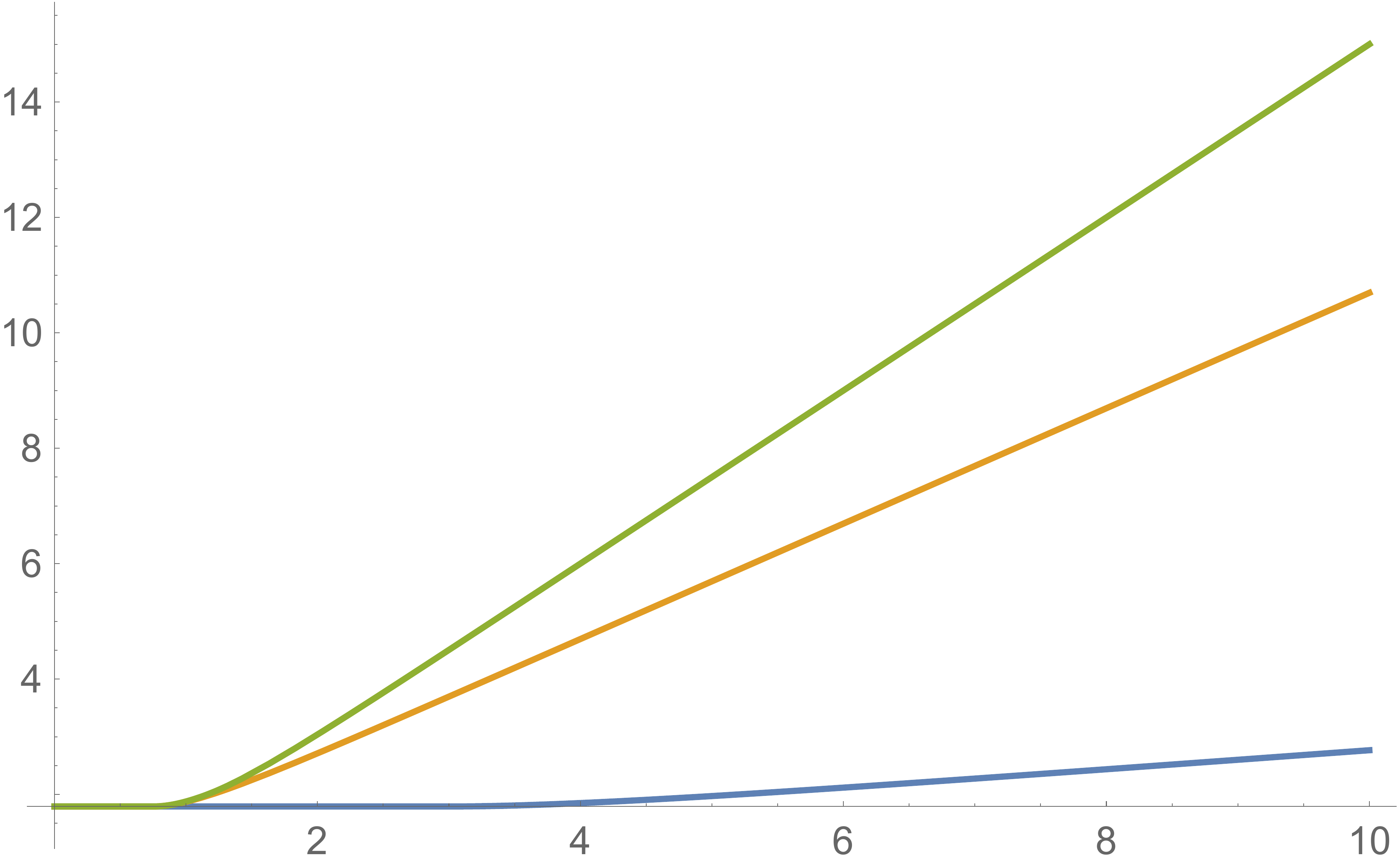}
    \caption{Plot of $\Phi(s)$ for varying $\beta$ and for each of the three types of solutions from Lemma~\ref{lem:mean-field-solns}: blue is type (1), orange is type (2), green is type (3).}
    \label{fig:soln_qualities}
\end{figure}

\sparagraph{Sketch of alternative analysis: } Here we sketch the proof of a weaker version of Lemma~\ref{lem:mean-field-solns} which avoids the use of the computer-generated plots and suffices for Theorem~\ref{thm:low-temperature-soln}. The weaker result we prove is that there are no solutions to the mean field equations from Lemma~\ref{lem:mf-eqn} except of the form $\lambda_1 = \lambda_2 = \lambda_3$ where all three of $\lambda_1,\lambda_2,\lambda_3$ have the same sign. We can check by plugging into the explicit solutions that for $\beta > 3/4$, the solutions of with all coordinates equal (i.e. type (1) in Lemma~\ref{lem:mf-eqn}) have smaller objective value than those of type (2) (or, of type (3) which are actually the global maximizer), and so no maximizer of the variational problem will not have all three of $\lambda_1,\lambda_2,\lambda_3$ with the same sign. Therefore (using the same argument in Theorem~\ref{thm:low-temperature-soln}) the law of $S_n$ will converge to a mixture of delta distributions supported on election results where the asymptotic probability of Condorcet paradox is zero. 

To prove the weaker result described above, we start from the remaining Case 2.3.c and \eqref{eqn:u3-fromu1u2} and \eqref{eqn:u1u2} derived previously.
Substituting with \eqref{eqn:u3-fromu1u2} on the left hand side of \eqref{eqn:u1u2} and rearranging gives
\begin{align*}
     \sinh(u_1) + \sinh(u_2)
     &= (3 u_1 + 3 u_2 + 2u_3) \frac{\sinh(u_1) - \sinh(u_2)}{u_1 - u_2}
\end{align*}
    so
    \[ u_3 = \frac{(\sinh(u_1) + \sinh(u_2))(u_1 - u_2)}{2(\sinh(u_1) - \sinh(u_2))} - \frac{3}{2}(u_1 + u_2). \]
    or equivalently
    \[ \frac{2u_3}{u_1 + u_2} = \frac{(\sinh(u_1) + \sinh(u_2))(u_1 - u_2)}{(\sinh(u_1) - \sinh(u_2))(u_1 + u_2)} - 3. \]
    Define $a = (u_1 + u_2)/2$ and $b = (u_1 - u_2)/2$ so $u_1 = a + b,u_2 = a - b$ then since $\sinh(x + y) = \sinh(x)\cosh(y) + \cosh(x)\sinh(y)$ the above is
    \[ \frac{u_3}{a} = \frac{\sinh(a)\cosh(b)b}{\cosh(a)\sinh(b)a} - 3 = \frac{\tanh(a) b}{\tanh(b) a} - 3\]
    i.e.
    \[ \frac{2u_3}{u_1 + u_2} = \frac{\tanh((u_1 + u_2)/2)}{\tanh((u_1 - u_2)/2)} \frac{u_1 - u_2}{u_1 + u_2} - 3. \]
    Define $h(x) = x/\tanh(x)$ which is a convex even function and note the above is
    \begin{equation}\label{eqn:weak-useful}
    \frac{2u_3}{u_1 + u_2} = \frac{h((u_1 - u_2)/2)}{h((u_1 + u_2)/2)} - 3
    \end{equation}
    Using symmetry we may assume that if there exists a solution, there exists
    one with $u_1 > u_2 \ge 0$ since two out of three numbers always have the same sign. In this case we see the right hand side is strictly smaller than $-2$ so $u_3 < -u_1 - u_2 < -2u_2$. Changing back to the original variables we have 
    \[ \lambda = \frac{1}{2} \begin{bmatrix} 
    1 & 1 & 0 \\
    1 & 0 & 1 \\
    0 & 1 & 1 \end{bmatrix} u
    \]
    and we see that $\lambda_1 > 0$ while $\lambda_2,\lambda_3 < 0$. Since the orbits of such a solution under $\pm$ symmetry and $\mathfrak{S}_3$ always have two coordinates of one sign and one coordinate of the opposite sign, this proves the weaker form of Lemma~\ref{lem:mean-field-solns}.
\begin{remark}
From \eqref{eqn:weak-useful} and its two symmetrical versions, we can also prove there are no other solutions to the mean field equations (from Lemma~\ref{lem:mf-eqn}) other than the claimed ones if we are given the following fact about a 1-parameter family of discrete time dynamical systems: that for all $u > 0$ the only points with orbits of periodicity 1 or 2 under the map
\[ g(\alpha) = \frac{\tanh(\frac{1 + \alpha}{2} u)}{\tanh(\frac{1 - \alpha}{2} u)} \frac{1 - \alpha}{2} - 3 \frac{1 + \alpha}{2}, \]
i.e. solutions to $(g \circ g)(\alpha) = \alpha$, are contained in $[-1,0]$; note it's easy to see that $g(-1) = 0$ and $g(0) = -1$. This fact also can be verified in principle by a decision theory for real arithmetic with exponentiation \cite{wilkie1995decidability}, and seems independently interesting. 
\end{remark}
\section{Deferred material from Section~\ref{sec:large-q}}
\subsection{Proof of Theorem~\ref{thm:quantitative-arrow-allq-intro}}
As before we will derive the Quantitative Arrow's Theorem from the product measure case and a contiguity argument. First we recall the statement in the product measure setting:
\begin{theorem}[Quantitative Arrow Theorem \cite{mossel2012quantitative}]\label{thm:quantitative-arrow-original}
Fix $q \ge 3$.
Suppose each voter votes independently uniformly at random from $\mathfrak{S}_q^{n}$. Fix $\epsilon > 0$. There exists $\delta = \delta(\epsilon,q) > 0$ such that for any constitution $F : \mathfrak{S}_q^n \to \{\pm 1\}^{q \choose 2}$ satisfying Independence of Irrelevant Alternatives (IIA), either:
\begin{enumerate}
    \item $F$ is $\epsilon$-close to a function in $\mathcal{F}_q$ with respect to the law of $X$; in particular, $F$ is close to being a dictator in some elections, or having some fixed pairwise elections.
    \item Or, the probability of paradox is lower bounded by $\delta$: if $X$ is the vector of votes drawn from the model \eqref{eqn:mean-field-intro}, the probability that the aggregated preference vector $F(X) \in \{\pm 1\}^{q \choose 2}$ satisfies transitivity is at most $1 - \delta$.
\end{enumerate}
%If at least two of boolean functions $f,g,h : \{\pm 1\}^n \to \{\pm 1\}$ are $\epsilon$-far from 
\end{theorem}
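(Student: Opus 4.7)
The plan is to follow the blueprint of \cite{mossel2012quantitative,mossel2013reverse}, first establishing the $q=3$ case and then bootstrapping to arbitrary $q$ by a triple-by-triple analysis. The overall strategy mirrors the proof of Theorem~\ref{thm:qa-matching} given in Section~\ref{sec:matching}, but without the complication of adjacent voters interacting — so reverse hypercontractivity reduces to the classical Borell case and the invariance principle is applied in its standard product form.

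First I would dispatch $q=3$ by a four-step case analysis on the influence structure of the pair $(f,g,h)$ representing the three pairwise elections. (I) \emph{Gaussian analogue:} identify $\mathfrak{S}_3$ with $\NAE_3$ via $\tilde{\varphi}$ and consider i.i.d.\ Gaussian vectors $V_1,\ldots,V_n\sim N(0,\Sigma)$ matching the covariance of a uniform $\NAE_3$-vector; show that if $f,g,h:\mathbb{R}^n\to[-1,1]$ satisfy $\E[\mathrm{nae}_3(f,g,h)]>1-\delta$, then two of them are close to constants of opposite sign. This is shown by applying Borell's reverse hypercontractive inequality (Theorem~\ref{thm:borell-gaussian}) to the three pairs among $f,g,h$. (II) \emph{Low-influence case:} if every voter has low low-degree influence on $f$, $g$, and $h$, apply the Invariance Principle of \cite{mossel2005noise,mossel2010gaussian} (in its standard product form — no maximum-correlation bookkeeping needed here since single-voter preferences are independent) to transfer the conclusion of (I) to the discrete setting. (III) \emph{Two influential voters on distinct functions:} apply reverse hypercontractivity in the discrete cube (Lemma~\ref{lem:reverse-discrete} with $\alpha>0$ given by the product structure) to force a large paradox probability. (IV) \emph{One influential voter $i$:} condition on $X_i$, apply (II)–(III) to the restricted functions, and conclude that the constitution is close to a junta on a single voter; then apply classical (non-quantitative) Arrow to conclude the junta is itself close to an element of $\mathcal{F}_3$ or admits a paradox.

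Next I would promote the $q=3$ result to general $q$ by the following reduction. For every triple of candidates $T=\{a,b,c\}\subset[q]$, the IIA restriction $F_T:\mathfrak{S}_3^n\to\{\pm 1\}^3$ is a well-defined constitution on $3$ candidates (where each voter's full ranking in $\mathfrak{S}_q$ induces a uniform ranking in $\mathfrak{S}_3$ on $T$). Applying the $q=3$ case with parameter $\epsilon'=\epsilon'(\epsilon,q)$ chosen appropriately, either $F_T$ admits a paradox on the triple $T$ with probability $\ge\delta'(\epsilon',q)$ — in which case the whole constitution does too — or $F_T$ is $\epsilon'$-close to an element of $\mathcal{F}_3$. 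Taking a union bound over the $\binom{q}{3}$ triples, we may assume the latter holds simultaneously for every $T$.

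Finally, one must \emph{patch} these local descriptions into a single element of $\mathcal{F}_q$. For each triple $T$, the element of $\mathcal{F}_3$ it approximates is one of three types: (i) a fixed top/bottom candidate in $T$, (ii) a dictator on some voter $j_T$, or (iii) a "two-candidate" pattern where one pair of candidates in $T$ is ranked in a fixed order and the third candidate is above/below both. The goal is to show the dictator voter $j_T$, whenever present, is the \emph{same} voter $j$ across all triples in which a dictator appears, and that the "fixed" orderings are transitively consistent, so they induce a partition $[q]=A_1\cup\cdots\cup A_t$ of the candidates of the claimed form. Consistency of the dictator is established by choosing $\epsilon'$ small enough that a would-be dictator for triple $T$ and a would-be dictator for overlapping triple $T'$ (sharing two candidates with $T$) must agree on an overwhelming fraction of voting profiles, forcing equality via a counting/probabilistic argument on the marginal over those two candidates' preferences. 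Consistency of fixed pairwise orderings is proved analogously, using that an inconsistency on two overlapping triples would create a noticeable probability of triangle-closing paradox inside one of them, contradicting the $\epsilon'$-closeness. Summing the local approximation errors across the $\binom{q}{3}$ triples and using a union bound yields the desired $\epsilon$-closeness to an element of $\mathcal{F}_q$.

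I expect the first three steps of the $q=3$ analysis to be routine given the tools already developed in Section~\ref{subsubsec:reverse-hypercontractive} (specialized to the product case), and the hardest step will be the patching argument in the general-$q$ reduction: keeping careful track of the quantitative dependence between $\epsilon$ at the global scale, $\epsilon'$ at the triple scale, and the combinatorial cost of forcing a coherent global partition is delicate, and a naive union bound over $\binom{q}{3}$ triples will turn the $q$-dependence of the final $\delta(\epsilon,q)$ into a polynomial factor which must be absorbed into the application of the $q=3$ estimate.
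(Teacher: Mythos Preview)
This theorem is not proved in the paper: it is stated as a known result from \cite{mossel2012quantitative} (the citation appears in the theorem heading itself) and is used as a black box in the proof of Theorem~\ref{thm:quantitative-arrow-allq-intro}. The paper's contribution in that section is the contiguity argument reducing the mean-field model to the i.i.d.\ case, not a re-derivation of the i.i.d.\ Quantitative Arrow's Theorem.

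That said, your sketch is a faithful outline of how the proof in \cite{mossel2012quantitative} actually goes --- the four-step influence case analysis for $q=3$ (Gaussian analogue via Borell, low-influence via invariance, two influential voters via reverse hypercontractivity, one influential voter via conditioning) followed by a triple-by-triple reduction and a patching argument to assemble a global element of $\mathcal{F}_q$. So if the intent were to reprove the cited result, the plan is sound and the identified difficulty (tracking the quantitative dependence through the patching step) is the right place to expect work. But for the purposes of this paper no proof is required here; you should simply cite \cite{mossel2012quantitative} and move on.
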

%\begin{theorem}[Mean-Field Quantitative Arrow's Theorem]
%Fix $\beta < \frac{1}{q - 1}$. The same statement as above holds when the votes are distributed according to the mean-field model at inverse temperature $\beta$. 
%\end{theorem}
\begin{proof}[Proof of Theorem~\ref{thm:quantitative-arrow-allq-intro}]
The proof follows the same strategy as in the $q = 3$ case, where we reduce to the known case of product measures using a contiguity estimate. Repeating the argument in the proof of Theorem~\ref{thm:qa-mf-3}, we will see that mutual contiguity holds for any $\beta$ such that we can prove for $X_i$ drawn i.i.d. from the uniform measure on the image of $\varphi$,
$Y_n := \frac{1}{\sqrt{n}} \sum_{i = 1}^n \varphi(X_i)$, and $W_n := \exp(\frac{\beta}{2} \langle Y_n, Y_n \rangle)$ that $W_n$ is uniformly integrable. Following the argument of Lemma~\ref{lem:w_n-ui}, we observe that by Lemma~\ref{lem:general-large-deviations}
\begin{align*} 
\max_{\|u\|_2 = 1} \log \E[\exp(\lambda \langle Y_n, u \rangle)]
= \max_{\|u\|_2 = 1} n \log \E_{\pi \sim \mathfrak{S}_q}[\exp(\frac{\lambda}{\sqrt{n}} \langle \varphi(\pi), u \rangle]
\le \frac{q - 1}{2} \|\lambda\|_2^2
\end{align*}
and this implies by the Chernoff bound that
\begin{align*} 
P(\langle Y_n, u \rangle > (1 - \delta)x) \le \min_{\lambda \ge 0} e^{\frac{q - 1}{2} \lambda^2 - \lambda(1 - \delta)x} 
&= e^{(1 - \delta)^2x^2/2(q - 1) - (1 - \delta)^2 x^2/(q - 1)}  \\
& = e^{-(1 - \delta)^2x^2/2(q - 1)}
\end{align*}
since the optimizer is $\lambda = (1 - \delta)x/(q - 1)$. Therefore if $w = \exp((\beta/2)x^2)$ and $N(\delta)$ is a $\delta$-net of the appropriate sphere,
\[ P(W_n > w) \le |N(\delta)| \exp(-(1 - \delta)^2x^2/2(q - 1)) = |N(\delta)| w^{-(1 - \delta)^2x^2/\beta (q - 1)} \]
and as long as $\beta < 1/(q - 1)$ we can choose $\delta$ sufficiently small such that $\frac{1}{\beta(q - 1)} (1 - \delta)^2 = 1 + \epsilon$ which suffices to prove the uniform integrability estimate.
%\todo{elaborate more.}
\end{proof}
\subsection{Low-temperature behavior}\label{sec:low-temp}
In this section, we prove the model must be in its low-temperature phase for $\beta > 3/(q + 1)$ in the sense it is not mutually contiguous to the product measure, and the limiting behavior of $\frac{1}{n} \log Z$ is not an analytic extension of its high temperature behavior (i.e. it is not equal to a constant); therefore the model must exhibit a phase transition at or before $3/(q + 1)$ and the high-temperature contiguity estimate from the previous section is off by at most a factor of (slightly less than) three. First we start with a basic eigenvalue calculation, which was also performed in \cite{raffaelli2005statistical}.
\begin{lemma}\label{lem:cov-eigenvalues}
Fix a number of candidates $q$ and let $\Sigma = \E X X^T$ where $X = \varphi(\pi)$ for $\pi \sim Uni(\mathfrak{S}_q)$. Then the eigenvalues of $\Sigma$ are $(q + 1)/3$ with multiplicity $q - 1$ and $1/3$ with multiplicity ${q - 1 \choose 2}$.
\end{lemma}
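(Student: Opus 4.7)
The plan is to express $\Sigma$ as a simple linear function of a matrix whose spectrum is classical (a graph Laplacian), and then read off the eigenvalues. The key observation is that the three-case structure of $\Sigma$ matches the Gram matrix of the columns of the signed incidence matrix of $K_q$.

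First I would compute the entries of $\Sigma$ indexed by unordered pairs $\{i,j\}$, $\{k,l\}$ with $i<j$, $k<l$. On the diagonal, $\Sigma_{\{i,j\},\{i,j\}} = 1$ since $\varphi(\pi)_{ij}^2 = 1$. When $\{i,j\}\cap\{k,l\}=\emptyset$, by symmetry (swap $\pi(i),\pi(j)$) we get $\E\varphi(\pi)_{ij}=0$ independently of the other coordinate, so $\Sigma_{\{i,j\},\{k,l\}}=0$. When the pairs share exactly one index, only three values of $\pi$ matter, and enumerating the $6$ orderings of those three values shows that $\Sigma_{\{i,j\},\{k,l\}} = \pm 1/3$: the sign is $-1$ exactly when the shared index is the maximum of one pair and the minimum of the other, and $+1$ otherwise.

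Next I would introduce $M$, the $\binom{q}{2}\times q$ signed incidence matrix of $K_q$, whose row indexed by $\{i,j\}$ with $i<j$ is $e_i - e_j \in \mathbb{R}^q$. A direct calculation gives $\langle e_i-e_j,\, e_k-e_l\rangle$ equal to $2$ on the diagonal, $0$ for disjoint pairs, and $\pm 1$ otherwise; moreover, comparing signs case by case, the $+$ and $-$ patterns for $(MM^T)_{\{i,j\},\{k,l\}}$ on the shared-index case exactly match those computed for $\Sigma$. Therefore
\[
\Sigma \;=\; \tfrac{1}{3}\bigl(I_{\binom{q}{2}} + M M^T\bigr).
\]

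To diagonalize this, note $M^T M$ is the Laplacian of $K_q$: one checks $(M^T M)_{aa} = q-1$ (the degree of $a$) and $(M^T M)_{ab} = -1$ for $a\neq b$, i.e.\ $M^T M = q I_q - J_q$, where $J_q$ is the all-ones matrix. Since $J_q$ has eigenvalue $q$ (multiplicity $1$, eigenvector $\mathbf{1}$) and $0$ (multiplicity $q-1$), $M^T M$ has eigenvalue $0$ (multiplicity $1$) and $q$ (multiplicity $q-1$). Because $MM^T$ and $M^T M$ share all nonzero eigenvalues with multiplicities, and $MM^T$ is $\binom{q}{2}\times\binom{q}{2}$, we conclude that $MM^T$ has eigenvalue $q$ with multiplicity $q-1$ and $0$ with multiplicity $\binom{q}{2}-(q-1)=\binom{q-1}{2}$. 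Substituting into $\Sigma=\tfrac{1}{3}(I+MM^T)$ gives eigenvalue $(q+1)/3$ with multiplicity $q-1$ and $1/3$ with multiplicity $\binom{q-1}{2}$, as claimed.

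There is no real obstacle here: the only potentially error-prone step is matching the sign patterns between $\Sigma$ and $MM^T$ in the shared-index case, which is handled by a small case analysis over which of $\{i,j\}$ and $\{k,l\}$ contains the shared index as its minimum or maximum.
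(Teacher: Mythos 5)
Your proof is correct, and it rests on the same key decomposition the paper uses, namely
\[
\Sigma \;=\; \tfrac{1}{3}\bigl(I + MM^T\bigr),
\]
which the paper writes equivalently as $\Sigma = \tfrac13 I + \tfrac13\sum_{r=1}^q v_r v_r^T$ with $v_r$ the $r$-th column of $M$. Where you differ is in how you diagonalize $MM^T$. The paper argues directly: it verifies that each $v_r$ is an eigenvector of $\Sigma$ with eigenvalue $(q+1)/3$ (using $v_r^T v_s = -1$ for $r\neq s$ together with $\sum_r v_r = 0$), and then counts $\dim\operatorname{span}(v_1,\dots,v_q) = q-1$ by showing $\sum_r v_r = 0$ is the unique linear relation. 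You instead pass to $M^T M$, recognize it as the Laplacian $qI_q - J_q$ of $K_q$, read off its spectrum from the all-ones matrix, and transfer the nonzero eigenvalues back to $MM^T$. Both routes are sound; yours leans on a standard, reusable fact (the spectrum of the complete-graph Laplacian) and avoids having to argue from scratch about the kernel of $M$, while the paper's is more self-contained and does the rank and eigenvector computation by hand. The only detail you delegate, the sign-matching between $\Sigma$ and $MM^T$ in the shared-index case, does check out against the explicit table of $\Sigma$ entries in the paper's proof.
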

\begin{proof}
First, we compute the entries of $\Sigma$ --- this has also been done before in \cite{mossel2012quantitative}, but we include the details for completeness.

For convenience, we index the rows and columns of $\Sigma$ by ordered pairs $ij$ with $1 \le i < j \le q$. For entries of the form $\Sigma_{ij,k\ell}$ with all of $i,j,k,\ell$ distinct, we see that they must be zero as $\varphi(\pi)_{ij}$ and $\varphi(\pi)_{k\ell}$ are independent and each is mean zero. It remains to compute $\Sigma_{ij,k\ell}$ when there is at least one repeat among $i,j,k,\ell$. Observe that we can reduce to the case $q = 3$ because the induced ordering on three elements given by permutation of any size is the same as the induced ordering of a random permutation on those three elements, by symmetry. In the case $q = 3$, we can directly compute that
\[ \E \varphi(\pi)_{12} \varphi(\pi)_{13} = 1/3. \]
Explicitly, there are three permutations which satisfy $\pi(1) < \pi(2)$, $(1\;2\;3),(1)(2)(3),$ and $(1)(2\;3)$
and $\pi(1) < \pi(3)$ in the latter two examples, so this contributes a net value of $+1/6$ to the expectation, and the permutations with $\pi(2) < \pi(1)$ similarly contribute a net value of $+1/6$. Similarly, we can compute
\[ \E \varphi(\pi)_{12} \varphi(\pi)_{23}) = -1/3 \]
and by symmetry these values determine the remaining entries of $\Sigma$ in the case $q = 3$.

%We observe first (as used in e.g. \cite{mossel2012quantitative}) that by using symmetry and the solution of the case $q = 3$ that 
In general, by using the above solution of the case $q = 3$, we determine the entries of $\Sigma$ to be:
\begin{itemize} 
\item $+1$ on the diagonal.
\item $+1/3$ for indices of the form $(ij,ik)$ with $i < j$, $i < k$ and for indices of the form $(ij,kj)$ with $i < j$, $k < j$. 
\item $-1/3$ for indices of the form $(ij,jk)$ with $i < j < k$ and for indices of the form $(ij,ki)$ with $i < j$, $k < i$.
\item $0$ for all other entries.
\end{itemize}

Next, we can check that from the above description of $\Sigma$ that
\[ \Sigma = (1/3) I + (1/3) \sum_{r = 1}^q v_r v_r^T \]
where for each $r$ and for $i < j$, $(v_r)_{ij}$ equals $1$ if $i = r$, equals $-1$ if $j = r$, and otherwise equals zero. Observe that each vector $v_r$ is an eigenvector of $\Sigma$ with eigenvalue $1 + (q - 2)/3 = (q + 1)/3$. It follows that the eigenvalues of $\Sigma$ are $(q + 1)/3$ with multiplicity $\dim(\operatorname{span}(v_1,\ldots,v_q))$ and $1/3$ with multiplicity ${q \choose 2} - \dim(\operatorname{span}(v_1,\ldots,v_q))$. Finally, we observe that the only linear relation among the vectors $v_r$ is that $\sum_{r = 1}^q v_r = 0$: to see this is the only linear relation, observe that for any sum of the vectors $w = a_1 v_1 + \cdots + a_{q - 1} v_{q - 1}$ that the coefficient $a_i$ can be recovered from the fact that $w_{i q} = a_i$. Hence, the dimension of their span is $q - 1$ and this concludes the proof.
\end{proof}
\begin{theorem}
For any $q \ge 3$ and $\beta > 3/(q + 1)$, $\lim\inf_{n \to \infty} \frac{1}{n}(\log Z) - \log |\mathfrak{S}_q| > 0$ and the mean-field model is not mutually contiguous to the uniform measure.
\end{theorem}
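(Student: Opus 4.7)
The plan is to lower-bound the free energy via the Gibbs variational principle using a product ansatz from the Inversion Tilt Model family, and then extract non-contiguity as a consequence. The threshold $3/(q+1)$ arises as the point at which the Hessian of the variational objective at the trivial critical point $\lambda = 0$ first loses negative semidefiniteness along the top eigendirection of $\Sigma$, so that $\lambda = 0$ ceases to be a local maximizer.

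Following the convention from Section~\ref{sec:mf-3}, I would first use Lemma~\ref{lem:tau-inner-product} to rewrite the Gibbs measure, pulling out an explicit factor of $\exp(-n\beta\binom{q}{2}/2)$ from~\eqref{eqn:mean-field-intro} to obtain the rescaled partition function
\[ Z \;=\; \sum_{x \in \mathfrak{S}_q^n} \exp\!\left(\tfrac{\beta}{2n}\sum_{i,j}\langle\varphi(x_i),\varphi(x_j)\rangle\right) \;=\; |\mathfrak{S}_q|^n\cdot\E_{Uni}\!\left[\exp\!\left(\tfrac{n\beta}{2}\|\bar\varphi\|^2\right)\right], \]
where $\bar\varphi(x) = \tfrac{1}{n}\sum_i \varphi(x_i)$. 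With this normalization $Z = |\mathfrak{S}_q|^n$ at $\beta = 0$, so the claim is that $\tfrac{1}{n}\log Z$ strictly exceeds $\log|\mathfrak{S}_q|$ by an $n$-independent amount. Plugging the product ansatz $P_\lambda^{\otimes n}$ (with $P_\lambda$ the Inversion Tilt Model at parameter $\lambda \in \mathbb{R}^{\binom{q}{2}}$) into Lemma~\ref{lem:gibbs-variational-general}, and using $\E_{P_\lambda^{\otimes n}}[\tfrac{n\beta}{2}\|\bar\varphi\|^2] = \tfrac{n\beta}{2}\|\nabla\Psi(\lambda)\|^2 + \tfrac{\beta}{2}\,\text{tr}(\Sigma_\lambda)$ together with $\KL(P_\lambda^{\otimes n}, Uni^{\otimes n}) = n(\langle\lambda,\nabla\Psi(\lambda)\rangle - \Psi(\lambda))$, gives
\[ \tfrac{1}{n}\log Z - \log|\mathfrak{S}_q| \;\ge\; \tfrac{\beta}{2}\|\nabla\Psi(\lambda)\|^2 - \langle\lambda,\nabla\Psi(\lambda)\rangle + \Psi(\lambda) + O(1/n), \]
where $\Psi(\lambda) := \log\E_{\pi \sim Uni(\mathfrak{S}_q)}[e^{\langle\lambda,\varphi(\pi)\rangle}]$.

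Next I would Taylor-expand this lower bound around $\lambda = 0$. By the reversal symmetry $\pi \mapsto \pi \circ (i \mapsto q - i + 1)$, which negates every coordinate of $\varphi(\pi)$, all odd moments of $\varphi(\pi)$ vanish under the uniform measure, so $\Psi(\lambda) = \tfrac{1}{2}\lambda^T\Sigma\lambda + O(\|\lambda\|^4)$ and $\nabla\Psi(\lambda) = \Sigma\lambda + O(\|\lambda\|^3)$, with $\Sigma$ as in Lemma~\ref{lem:cov-eigenvalues}. Substituting and cancelling yields
\[ \tfrac{1}{n}\log Z - \log|\mathfrak{S}_q| \;\ge\; \tfrac{1}{2}\,\lambda^T \Sigma(\beta\Sigma - I)\lambda + O(\|\lambda\|^4) + O(1/n). \]
Taking $\lambda = t v$ where $v$ is a unit eigenvector of $\Sigma$ for its maximum eigenvalue $\mu_{\max} = (q+1)/3$ from Lemma~\ref{lem:cov-eigenvalues}, the quadratic form equals $\tfrac{1}{2}t^2\mu_{\max}(\beta\mu_{\max} - 1)$, which is strictly positive precisely when $\beta > 3/(q+1)$. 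Fixing $t$ small but positive and independent of $n$ then produces an $n$-independent constant $c = c(\beta,q) > 0$ with $\tfrac{1}{n}\log Z - \log|\mathfrak{S}_q| \ge c$ for all sufficiently large $n$.

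For the non-contiguity conclusion, I would exhibit a set $A_n = \{x : \|\bar\varphi(x)\|^2 \ge \eta\}$ for a small constant $\eta > 0$ on which $P_n := Uni^{\otimes n}$ and $Q_n$ separate exponentially. A net-plus-Chernoff argument on the unit sphere in $\mathbb{R}^{\binom{q}{2}}$ using the sub-Gaussian bound of Lemma~\ref{lem:general-large-deviations}, in exact analogy with the proof of Lemma~\ref{lem:w_n-ui}, gives $P_n(A_n) \le e^{-c'n}$ for some $c' = c'(\eta, q) > 0$. Conversely, using the free-energy lower bound $Z \ge e^{cn}|\mathfrak{S}_q|^n$ just established,
\[ Q_n(A_n^c) \;=\; \frac{1}{Z}\sum_{x\,:\,\|\bar\varphi\|^2 < \eta}\exp\!\bigl(\tfrac{n\beta}{2}\|\bar\varphi\|^2\bigr) \;\le\; \frac{|\mathfrak{S}_q|^n\,e^{n\beta\eta/2}}{Z} \;\le\; e^{-n(c - \beta\eta/2)}, \]
which is exponentially small once $\eta < 2c/\beta$. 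For such a choice of $\eta$, $P_n(A_n) \to 0$ while $Q_n(A_n) \to 1$, so the two sequences fail to be mutually contiguous (in fact they are asymptotically mutually singular). The main bookkeeping obstacle is just keeping the two normalizations of $Z$ straight --- the Gibbs partition function from~\eqref{eqn:mean-field-intro} versus the $\varphi$-normalized version used above, which differ by the explicit factor $e^{-n\beta\binom{q}{2}/2}$; beyond that, there is no real technical difficulty, since the Taylor remainder is controlled uniformly over any fixed neighborhood of $\lambda = 0$ by smoothness of $\Psi$.
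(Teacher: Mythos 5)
Your proposal is correct and for the core free-energy bound it follows the same route as the paper: lower-bound $\log Z$ by the Gibbs variational principle restricted to product measures $P_\lambda^{\otimes n}$ from the Inversion Tilt family, observe the resulting objective $\Psi$ has a critical point at $\lambda = 0$ with value $\log|\mathfrak{S}_q|$, and show the Hessian there, $\Sigma(\beta\Sigma - I)$, acquires a positive eigendirection precisely when $\beta\mu_{\max}(\Sigma) = \beta(q+1)/3 > 1$, using Lemma~\ref{lem:cov-eigenvalues}. Your Taylor expansion with the parity argument (odd moments of $\varphi(\pi)$ vanish under reversal) is the explicit version of the paper's Hessian computation and is fine; the extra $\frac{\beta}{2n}\mathrm{tr}(\Sigma_\lambda)$ term you note is nonnegative and only helps.

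Where you diverge is in the non-contiguity step, and your version is slightly stronger. The paper argues: the free-energy gap forces $\E_Q[\langle Y_n, Y_n\rangle] = \Omega(n)$ (since $H(Q) \le n\log|\mathfrak{S}_q|$ in the Gibbs identity $\log Z = \frac{\beta}{2}\E_Q[\langle Y_n,Y_n\rangle] + H(Q)$), and since $\langle Y_n,Y_n\rangle/n$ is bounded this gives $\langle Y_n,Y_n\rangle = \Omega(n)$ with $\Omega(1)$ probability under $Q$, whereas it is $o(1)$ under the product measure — a Paley--Zygmund-type argument yielding $\Omega(1)$-vs-$o(1)$ separation. You instead exhibit the explicit separating sets $A_n = \{\|\bar\varphi\|^2 \ge \eta\}$ and prove exponential separation on both sides: $P_n(A_n) \le e^{-c'n}$ by Chernoff--net plus Lemma~\ref{lem:general-large-deviations}, and $Q_n(A_n^c) \le e^{-n(c - \beta\eta/2)}$ by directly bounding the low-energy contribution against the established lower bound on $Z$. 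This buys you asymptotic mutual singularity rather than merely non-contiguity, at the cost of invoking the sub-Gaussian concentration machinery already developed for the high-temperature regime. Both arguments are valid; yours is more quantitative and a little more self-contained since it does not rely on the exact Gibbs identity for $\log Z$. As a minor caution, the paper's appendix display $\log Z = \sup_P \beta\E_P[\langle Y_n,Y_n\rangle] + H(P)$ appears to drop a factor of $1/2$ (consistent with the $\frac{\beta}{2}$ in its own definition of $\Psi$ and with Lemma~\ref{lem:mf-eqn}); your normalization handles this correctly.
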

\begin{proof}
By the Gibbs variational principle (Lemma~\ref{lem:gibbs-variational})
and by restricting the supremum to product measures of the form $P(X_1,\ldots,X_n) = \prod_{i = 1}^n Q(X_i)$ we see
\begin{align*} 
\log Z 
= \sup_{P} \beta \E_P[\langle Y_n, Y_n \rangle] + H(P) 
&\ge \sup_Q \left[n\beta \langle \E_Q[X], \E_Q[X] \rangle + nH(Q)\right] \\
&= n \max_s \sup_{Q : \E_Q[X] = s} \left[\beta \|s\|_2^2 + H(Q)\right]
%&= n \max_s \sup_{\lambda : \E_{\lambda}[X] = s} [\log \E[e^{\beta \langle s, X \rangle}] + \log |\mathfrak{S}_q|] \\
%&= n \sup_{\lambda} [\log \E[e^{\beta \langle \E_{\lambda}[X], X \rangle}] + \log |\mathfrak{S}_q|]
%\\
%&= n \max_{u : \|u\|_2 = 1} \sup_Q \left[\beta \langle \E_Q[X], u \rangle^2 + H(Q)\right]
\end{align*}
Let $\Phi(s)$ be the functional in the maximization problem above; repeating the argument from Lemma~\ref{lem:mf-eqn}, we can rewrite $\max_s \Phi(s) = \max_{\lambda} \Psi(\lambda)$ where% $\Phi(s(\lambda)) = \Psi(\lambda)$ where
\[ \Psi(\lambda) = \frac{\beta}{2} \|s(\lambda)\|_2^2 - \langle \lambda, s(\lambda) \rangle + \log \sum_x e^{\langle \lambda, x \rangle} \]
where in the sum $x$ ranges over the image of $\mathfrak{S}_q$ under the embedding $\varphi$,
and $s(\lambda) = \frac{\sum_x x e^{\langle \lambda, x \rangle}}{\sum_x e^{\langle \lambda, x \rangle}}$ satisfies $\Phi(s(\lambda)) = \Psi(\lambda)$. 
As in Lemma~\ref{lem:mf-eqn} we have $\nabla_{\lambda} \Psi(\lambda) = \beta s(\lambda) s'(\lambda) - \lambda^t s'(\lambda)$ which is zero at $\lambda = 0$, and we can
compute that the Hessian at $\lambda = 0$ is given by $\beta \Sigma^2 - \Sigma$ where $\Sigma = \E_{\pi \sim \mathfrak{S}_q}[\varphi(\pi) \varphi(\pi)^T]$. This has a positive eigenvalue whenever $\beta \lambda_{max}(\Sigma) > 1$ and by Lemma~\ref{lem:cov-eigenvalues} we know that $\lambda_{max}(\Sigma) = \frac{q + 1}{3}$. 

Therefore, for $\beta > 3/(q + 1)$ the point $\lambda = 0$ is a critical point of $\Psi(\lambda)$ where the Hessian has positive eigendirections, so the maximum of $\Psi(\lambda)$ must be strictly greater than at $0$. Since $\Psi(0) = \log |\mathfrak{S}_q|$ this proves the inequality $\lim_{n \to \infty} \frac{1}{n} \log Z - \log |\mathfrak{S}_q| > 0$.
In particular, under the Gibbs measure $\E_P[\langle Y_n, Y_n \rangle] = \Omega(n)$.
Since $\langle Y_n, Y_n \rangle/n$ is bounded, this implies $|\langle Y_n, Y_n \rangle| = \Omega(n)$ with positive probability, whereas by basic concentration estimates (e.g. Markov's inequality) we know this happens with probability $o(1)$ for the product measure. This proves the sequences of measures are not mutually contiguous.
% where it equals $\log |\mathfrak{S}_q|$
%where the last expectation is over $X$ drawn i.i.d. from the uniform measure on the image of $\varphi$ and $Y_n$ defined in terms of this $X$. TODO: do the thing where the free energy is rewritten in terms of dual variables $\lambda$, take the Hessian, it does work out.
% maybe can only easily show the gap is \omega(1) instead of \Omega(n)?
\end{proof}
Note in the statement we wrote $\lim\inf$ just because we did not prove the limit of $\frac{1}{n} \log Z$ exists.
The above result proves mutual contiguity fails;
it seems likely that in this regime, as in the $q = 3$ case, the probability of paradox is also $o(1)$. %However it is also plausible that this does not happen until a larger value of $\beta$ for some values of $q$. 
Proving or disproving this will probably require understanding the solutions of the mean-field equations for all values of $q$, and also perhaps finer grained (i.e. moderate deviations) behavior of the model in low temperature. 
\section{Deferred Material from Section 5}
\subsection{Proof of Theorem~\ref{thm:matching-guilbaud}}
\begin{proof}[Proof of Theorem~\ref{thm:matching-guilbaud}]
By the central limit theorem, to understand this model it will suffice to compute correlations
in a single matching. We have
\[ \E[X_1 Y_1^T] = \frac{1}{Z} \sum_{x_1,y_1} e^{\beta \langle x_1, y_1 \rangle} x_1 y_1^T \]
We group the terms by $\langle x_1, y_1 \rangle$. The possible values are $3$ (all agree), $1$ (1 disagreement), $-1$, and $3$. Therefore
\begin{align*} \E[X_1 Y_1^T] 
= \frac{1}{Z} \sum_{x_1} x_1 
&\Big(e^{3 \beta} x_1^T + e^{\beta} (2 \text{ double agreements}) \\
&\quad + e^{-\beta} (2 \text{ single agreements}) + e^{-3\beta} (-x_1^T)\Big)
\end{align*}
When $x_1 = (1,1,-1)$ the contribution to the sum is
\begin{align*}
&(1,1,-1) (e^{3\beta} (1,1,-1) + e^{\beta} ((-1,1,-1) + (1,-1,-1)) \\
&\qquad \qquad + e^{-\beta} ((1,-1,1) + (-1,1,1)) + e^{-3\beta}(-1,-1,1))^T \\
&= 2(1,1,-1) (\sinh(3\beta) (1,1,-1) + \sinh(\beta) (0,0,-2))^T \\
&= 2(1,1,-1) (\sinh(3\beta), \sinh(3\beta), -\sinh(3\beta) - 2\sinh(\beta))^T \\
&= 2\begin{bmatrix}
\sinh(3\beta) & \sinh(3\beta) & -\sinh(3\beta) - 2\sinh(\beta) \\
\sinh(3\beta) & \sinh(3\beta) & -\sinh(3\beta) - 2\sinh(\beta) \\
-\sinh(3\beta) & -\sinh(3\beta) & \sinh(3\beta) + 2\sinh(\beta)
\end{bmatrix}
\end{align*}
Note is the same for $-x_1$ and that there is a symmetry between the three coordinates. Therefore summing over all the possibilities for $x_1$ gives
\begin{align*} 
&4\begin{bmatrix}
\sinh(3\beta) & \sinh(3\beta) & -\sinh(3\beta) - 2\sinh(\beta) \\
\sinh(3\beta) & \sinh(3\beta) & -\sinh(3\beta) - 2\sinh(\beta) \\
-\sinh(3\beta) & -\sinh(3\beta) & \sinh(3\beta) + 2\sinh(\beta)
\end{bmatrix} + (2\text{ symmetrical terms}) \\
&=  4\begin{bmatrix}
3\sinh(3\beta) + 2\sinh(\beta) & -\sinh(3\beta) - 2\sinh(\beta) & -\sinh(3\beta) - 2\sinh(\beta) \\
 -\sinh(3\beta) - 2\sinh(\beta) & 3\sinh(3\beta) + 2\sinh(\beta)  & -\sinh(3\beta) - 2\sinh(\beta) \\
  -\sinh(3\beta) - 2\sinh(\beta) &  -\sinh(3\beta) - 2\sinh(\beta) & 3\sinh(3\beta) + 2\sinh(\beta)
\end{bmatrix}
\end{align*}
Since
\[ Z = 2\sum_{x_1} (\cosh(3\beta) + 2\cosh(\beta)) = 12(\cosh(3\beta) + 2\cosh(\beta)) \]
we see that %$\E[X_1 Y_1^T]$ equals
\begin{align*}%\label{eqn:cross-term-matching}
\E[X_1 Y_1^T] 
&= \left(\frac{1}{3(\cosh(3\beta) + 2\cosh(\beta))}\right) \\
&\qquad \cdot 
\begin{bmatrix}
3\sinh(3\beta) + 2\sinh(\beta) & -\sinh(3\beta) - 2\sinh(\beta) & -\sinh(3\beta) - 2\sinh(\beta) \\
 -\sinh(3\beta) - 2\sinh(\beta) & 3\sinh(3\beta) + 2\sinh(\beta)  &  -\sinh(3\beta) - 2\sinh(\beta) \\
  -\sinh(3\beta) - 2\sinh(\beta) &  -\sinh(3\beta) - 2\sinh(\beta) & 3\sinh(3\beta) + 2\sinh(\beta)
\end{bmatrix}.
\end{align*}
By symmetry, the marginal distribution of $X_1$ is uniform over $\NAE_3$ so as before, it must be that 
\[ \E[X_1 Y_1^T] = \begin{bmatrix} 1 & -1/3 & -1/3 \\ -1/3 & 1 & -1/3 \\ -1/3 & -1/3 & 1 \end{bmatrix}. \] 
Therefore $\frac{1}{2}\E[(X_1 + Y_1)(X_1 + Y_1)^T]$ equals
\[ \begin{bmatrix} 1 + \frac{3\sinh(3\beta) + 2\sinh(\beta)}{3(\cosh(3\beta) + 2\cosh(\beta))} & -1/3 - \frac{\sinh(3\beta) + 2\sinh(\beta)}{3(\cosh(3\beta) + 2\cosh(\beta))} & -1/3 - \frac{\sinh(3\beta) + 2\sinh(\beta)}{3(\cosh(3\beta) + 2\cosh(\beta))} \\
-1/3 - \frac{\sinh(3\beta) + 2\sinh(\beta)}{3(\cosh(3\beta) + 2\cosh(\beta))} & 1 + \frac{3\sinh(3\beta) + 2\sinh(\beta)}{3(\cosh(3\beta) + 2\cosh(\beta))}&  -1/3 - \frac{\sinh(3\beta) + 2\sinh(\beta)}{3(\cosh(3\beta) + 2\cosh(\beta))}\\
 -1/3 - \frac{\sinh(3\beta) + 2\sinh(\beta)}{3(\cosh(3\beta) + 2\cosh(\beta))} &  -1/3 - \frac{\sinh(3\beta) + 2\sinh(\beta)}{3(\cosh(3\beta) + 2\cosh(\beta))} &  1 + \frac{3\sinh(3\beta) + 2\sinh(\beta)}{3(\cosh(3\beta) + 2\cosh(\beta))}
\end{bmatrix}. \]
Finally by using the Central Limit Theorem and Lemma~\ref{lem:gaussian-sgn}, it follows that the asymptotic probability of a Condorcet winner is %(using Lemma~\ref{lem:gaussian-sgn}) is
\[ \frac{3}{2\pi} \arccos \left(\frac{-1/3 - \frac{\sinh(3\beta) + 2\sinh(\beta)}{3(\cosh(3\beta) + 2\cosh(\beta))}}{ 1 + \frac{3\sinh(3\beta) + 2\sinh(\beta)}{3(\cosh(3\beta) + 2\cosh(\beta))}}\right) \]
as claimed.
\end{proof}
\textbf{Acknowledgements: } We thank Mehtaab Sawhney and Jonathan Kelner for  interesting discussions about Conjecture~\ref{conj:large-q-conjecture}.
\bibliographystyle{plain}
\bibliography{bib,all}

\end{document}